\title{Affine Deformations of Divisible Convex Cones and Affine Spacetimes}
\author{Antoine Ablondi}
\address{Antoine Ablondi: IMAG, Universit\'e de Montpellier, CNRS, Montpellier, France.} 
\email{antoine.ablondi@umontpellier.fr}
\date{}
\pgfplotsset{compat=1.16}
\definecolor{newred}{RGB}{200, 30, 45}
\definecolor{newyellow}{RGB}{255, 222, 23}
\definecolor{newblue}{RGB}{25, 48, 126}
\definecolor{newgrey}{RGB}{115, 110, 95}
\definecolor{newbrown}{RGB}{160, 62, 45}
\newtheorem{maintheorem}{Theorem}
\newtheorem{maincorollary}[maintheorem]{Corollary}
\newtheorem{theorem}{Theorem}[section]
\newtheorem*{theorem*}{Theorem}
\newtheorem{corollary}[theorem]{Corollary}
\newtheorem{lemma}[theorem]{Lemma}
\newtheorem{proposition}[theorem]{Proposition}
\theoremstyle{definition}
\newtheorem{definition}[theorem]{Definition}
\theoremstyle{remark}
\newtheorem{remark}[theorem]{Remark}
\newtheorem*{remark*}{Remark}
\numberwithin{equation}{section}
\numberwithin{figure}{section}
\newcommand{\R}{\mathbb{R}} 
\newcommand{\N}{\mathbb{N}} 
\newcommand{\V}{\mathbb{V}} 
\newcommand{\A}{\mathbb{A}} 
\renewcommand{\P}{\mathbb{P}} 
\renewcommand{\H}{\mathbb{H}} 
\newcommand{\M}{\mathbb{M}} 
\newcommand{\B}{\mathbb{B}} 
\newcommand{\Sph}{\mathbb{S}} 
\newcommand{\Id}{\mathrm{Id}} 
\newcommand{\dif}{\mathrm{d}} 
\DeclareMathOperator{\grad}{grad} 
\DeclareMathOperator{\Hess}{Hess} 
\DeclareMathOperator{\graph}{graph} 
\DeclareMathOperator{\epi}{epigraph} 
\DeclareMathOperator{\dom}{dom} 
\newcommand{\SL}{\mathrm{SL}} 
\newcommand{\SO}{\mathrm{SO}} 
\newcommand{\SA}{\mathrm{SA}} 
\newcommand{\PGL}{\mathrm{PGL}} 
\newcommand{\C}{\mathcal{C}} 
\renewcommand{\S}{\Sigma_\C} 
\newcommand{\G}{\mathcal{G}} 
\newcommand{\vol}{\mathrm{vol}} 
\newcommand{\dev}{\mathrm{dev}} 
\newcommand{\Pg}{\mathcal{P}_g} 
\newcommand{\Pgaff}{\widehat{\mathcal{P}_g}} 
\newcommand{\st}{\; \middle\vert \;} 
\newcommand{\namelessfunction}[4]{ \left\lbrace \begin{matrix}
{#1} &\longrightarrow& {#2}\\
{#3} & \longmapsto & {#4}
\end{matrix} \right. }
\newcommand{\function}[5]{{#1}: \namelessfunction{#2}{#3}{#4}{#5}}
\newcommand{\namelessembedding}[4]{ \left\lbrace \begin{matrix}
{#1} & \lhook\joinrel\longrightarrow& {#2}\\
{#3} & \longmapsto & {#4}
\end{matrix} \right. }
\newcommand{\vp}[1]{{\left( {#1} \right)}} 
\newcommand{\bp}[1]{{\bigl( {#1} \bigr)}} 
\begin{document}

\begin{abstract}
Let $G$ be a subgroup of $\SL (\R^{d+1}) \ltimes \R^{d+1}$ obtained by adding a translation part to a torsion-free discrete subgroup of $\SL (\R^{d+1})$ dividing a convex cone in the sense of Benoist. We consider the maximal convex domains in $\R^{d+1}$ on which the affine action of $G$ is free and properly discontinuous, and show its quotient by $G$ is naturally endowed with an ‘‘affine spacetime'' structure, which is a generalisation of the notion of flat Lorentzian spacetime. More precisely, we show that this quotient is a Maximal Globally Hyperbolic affine spacetimes admitting a $C^2$ locally uniformly Convex and Compact Cauchy surface (denoted as a MGHCC affine spacetimes), and that it comes with a cosmological time function with Cauchy hypersurfaces foliating the quotient affine spacetime as level sets. Finally, we show such quotients are the only examples of MGHCC affine spacetimes. All these results generalise the work of Mess, Barbot and Bonsante on affine deformations of uniform lattices of $\SO_0(d,1)$.
\end{abstract}

\maketitle

{\hypersetup{linkcolor=black} \hypersetup{bookmarksdepth=3} \setcounter{tocdepth}{1} \tableofcontents}

\begin{spacing}{1.2}

\section*{Introduction}

In this article we study some particular discrete subgroups of $\SA(\R^{d+1}) = \SL(\R^{d+1}) \ltimes \R^{d+1}$, the group of \emph{special affine transformations of} $\R^{d+1}$. Our interest for such groups comes from previous works in two different settings generalising uniform lattices of $\SO_0(d,1)$.

\subsection*{\texorpdfstring{Affine deformations of uniform lattices of $\SO_0(d,1)$}{Affine deformations of uniform lattices of SO(d,1)}}

Let $d \geq 2$. The $(d + 1)$-dimensional \emph{Minkowski vector space} $\R^{d,1}$ is the oriented vector space $\R^{d+1}$ endowed with a non-degenerate bilinear form $\langle\cdot,\cdot \rangle_{d,1}$ of signature $(d,1)$ and a \emph{time orientation on $\R^{d,1}$}, i.e. a favourite component  $\C^{d,1}$ among the two connected component of the cone of non-trivial timelike vectors, called the \emph{future cone}. The bilinear form $\langle\cdot,\cdot \rangle_{d,1}$ induces a Riemannian metric on $\H^{d} \subset \R^{d,1}$, the one-sheeted hyperboloid of all unit future timelike vectors, giving the classical hyperboloid model of the $d$-dimensional \emph{hyperbolic space}. A model for the group of orientation preserving isometries of the hyperbolic space $\H^d$ is then given by $\SO_0(d,1)$, the group of linear transformation of $\R^{d,1}$ preserving its orientation, its bilinear form and its time orientation.

The $(d + 1)$-dimensional \emph{Minkowski affine space} $\M^{d,1}$ is the oriented real affine space endowed with a flat Lorentzian metric and a time orientation. Thus, its vector space of translations is the Minkowski vector space $\R^{d,1}$, and a model for $\M^{d,1}$ is the manifold $\R^{d+1}$ (seen as an affine space) endowed with the flat Lorentzian metric induced by the bilinear form $\langle\cdot,\cdot \rangle_{d,1}$. Its group of orientation and time-orientation preserving isometries is $\SO_0(d,1) \ltimes \R^{d,1}$, the group of affine transformations of $\M^{d,1}$ having linear part in $\SO_0(d,1)$.

A \emph{uniform lattice of $\SO_0(d,1)$} is discrete subgroup $\Lambda < \SO_0(d,1)$ such that the quotient $\H^d/\Lambda$ is compact. Thus, torsion-free uniform lattices of $\SO_0(d,1)$ are precisely the holonomy groups of closed hyperbolic manifolds. An \emph{affine deformation of a uniform lattice of $\SO_0(d,1)$} is a subgroup of $\SO_0(d,1) \ltimes \R^{d+1}$ obtained by adding translation parts to a uniform lattice. In particular, any uniform lattice of $\SO_0(d,1)$ can be seen as an affine deformation of itself (by adding a trivial translation part).

Those subgroups of affine transformations have been the focus of the work of Mess \cite{Mess_07,Mess_notes} for $d=2$, as well as the ones of Barbot \cite{Barbot_05} and Bonsante \cite{Bonsante_05} in higher dimension. They have proved a correspondence between affine deformations of torsion-free uniform lattices of $\SO_0(d,1)$ and \emph{flat maximal globally hyperbolic spacetimes (denoted MGHC) with hyperbolic compact Cauchy surfaces}. They are time-oriented flat Lorentzian manifolds $M$ admitting a Cauchy surface $S$, that is an hypersurface intersecting every inextensible causal curve exactly once, such that $S$ is compact and admits a hyperbolic metric, and $M$ satisfies some maximality condition (see Definition~\ref{def Maximal}).

When $d=2$, that setting is closely related to \emph{classical Teichm\"uller theory}. Indeed, if $S_g$ is a closed oriented and connected surface of genus $g \geq 2$, its \emph{Teichm\"uller space} $ \mathcal{T}(S_g)$ can be seen as a connected component of the character variety
\begin{equation*}
\chi \bp{ \pi_1(S_g) , \SO_0(2,1)} = \text{Hom}\bp{\pi_1(S_g),\SO_0(2,1)} \big/ \SO_0(2,1) \, , 
\end{equation*}
where the quotient is by conjugation. More precisely, $ \mathcal{T}(S_g)$ is diffeomorphic to $\chi^{fd}( \pi_1(S_g) , \SO_0(2,1))$ one of the two connected components of the character variety consisting of conjugacy classes of faithful and discrete representations \cite{Goldman_thesis}, i.e. representations $\rho$ such that $\rho(\pi_1(S_g))$ is a uniform lattice of $\SO_0(2,1)$.

Before describing the other setting generalising uniform lattices of $\SO_0(d,1)$, let us remark that more recent works \cite{Fillastre_Veronelli_16,Bonsante_Fillastre_17, Fillastre_Seppi_19, Barbot_Fillastre_20} have shown that tools from \emph{convex geometry} inside the \emph{affine Minkowski space $\M^{d,1}$} (on which flat Lorentzian spacetimes are locally modelled) were very useful in the study of those flat MGHC Lorentzian spacetimes with hyperbolic Cauchy surface. In these works, the use of the Lorentzian metric of the affine Minkowski space $\M^{d,1}$ is not much more relevant than its \emph{causal} and \emph{affine structure}. 

\subsection*{Divisible convex sets}

Another generalisation of uniform lattices of $\SO_0(d,1)$ is given by \emph{divisible convex sets} in the sense of Benoist \cite{Benoist_04}. An open proper (i.e. non-empty and with closure not containing any entire line) convex cone $\C$ in the vector space $\R^d$ is \emph{divisible} by a
discrete torsion-free subgroup $\Gamma$ of $\SL(\R^{d+1})$ if $\Gamma$ preserves $\C$ and the projective action of $\Gamma$ on $\P(\C)$, the projectivisation of the cone, is properly discontinuous and cocompact. Equivalently, we also say that the convex projective domain $\P(\C)$ is divisible by the subgroup $\Gamma_{\P} < \PGL(\R^{d+1})$ induced by $\Gamma$. In particular, every uniform lattice of $\SO_0(d,1)$ divides the cone $\C^{d,1}$ of non-trivial future timelike vectors in $\R^{d,1}$. 

While torsion-free uniform lattices of $\SO_0(d,1)$ are holonomy groups of closed hyperbolic manifolds, subgroups of $\SL(\R^{d+1})$ dividing cones  are holonomy groups of closed manifolds admitting a \emph{convex projective structure} \cite{Benoist_08}. 

Benoist \cite{Benoist_05} has highlighted that divisibility gives a geometric interpretation to some connected components of higher character varieties (higher in the sense that they have images in higher rank Lie groups, such as $\SL(\R^{d+1})$, but also as they concern representations of fundamental groups of manifolds of dimension $d \geq 2$). Letting $\Gamma_0$ be a finitely generated group, he considered $\mathrm{Hom}_{\mathrm{div}} (\Gamma_0, \SL(\R^{d+1}))$ the space of faithful group representations $\rho \in \mathrm{Hom}(\Gamma_0, \SL(\R^{d+1}))$ such that there is an open proper convex cone $\C_\rho \subset \R^{d+1}$ divisible by $\rho(\Gamma_0)$. He proved that if $\mathrm{Hom}_{\mathrm{div}} (\Gamma_0, \SL(\R^{d+1}))$ admits a strongly irreducible representation then it consists of a union of connected components of $\mathrm{Hom}(\Gamma_0, \SL(\R^{d+1}))$.

In the particular case where $d=2$, divisibility is related with projective structure on surfaces. Let $S_g$ be a closed oriented and connected surface of genus $g \geq 2$. By the work of Choi and Goldman \cite{Choi_Goldman_93}, the \emph{moduli space $\Pg$ of convex projective structures on $S_g$} is identified with the \emph{Hitchin component} \cite{Hitchin_92} of the representation space of $\pi_1(S_g)$ into $\SL(\R^3)$, and it can be described as 
\begin{equation*}
\Pg = \mathrm{Hom}_{\mathrm{div}} \bp{\pi_1(S_g), \SL(\R^3)} \big/\SL(\R^3) \, ,
\end{equation*}
where the quotient is by conjugation. 

Labourie \cite{Labourie_07} and Loftin \cite{Loftin_01} have both independently used the Choi--Goldman Theorem to get a holomorphic parametrisation of $\Pg$. It can be recovered from the theory of \emph{affine differential geometry}, using the \emph{affine invariants} of the unique \emph{Cheng--Yau affine sphere} asymptotic to a divisible cone.

\subsection*{An affine generalisation of the Mess--Barbot--Bonsante correspondence}

In this article, we shall consider \emph{affine deformations of groups dividing a cone}. That is, subgroups of $\SL(\R^{d+1}) \ltimes \R^{d+1}$ obtained by adding a translation part to a discrete torsion-free subgroup of $\SL(\R^{d+1})$ dividing some open proper convex cone in $\R^{d+1}$. Our goal is to characterise the geometric structures associated to those subgroups, extending the Mess--Barbot--Bonsante correspondence.

We introduce and study \emph{affine spacetimes}, which are a natural generalisation of flat Lorentzian spacetimes. Indeed, we shall define a $(d+1)$-dimensional affine spacetime $(M,\C)$ as a manifold $M$ endowed with both an \emph{equiaffine structure}, that is a $(G,X)$-structure where $G = \SL (\R^{d+1}) \ltimes \R^{d+1}$ is the group of orientation and volume preserving affine transformations acting on the real affine space $X = \R^{d+1}$, and a \emph{parallel distribution of open proper convex cones} $\C$ in its tangent bundle.

The most basic example of an affine spacetime is $(\R^{d+1}, \C)$, the real affine space $\R^{d+1}$ endowed with the parallel cone distribution given at every point $p \in \R^{d+1}$ by the same proper convex cone $\C \subset \R^{d+1} =\mathrm{T}_p\R^{d+1}$. By definition, every affine spacetime is actually locally modelled on $(\R^{d+1}, \C)$ for some proper convex cone $\C$, and the linear part of a transition map between two affine charts of its equiaffine structure is an element of $\SL(\R^{d+1})$ preserving $\C$.

In particular, flat Lorentzian spacetimes are naturally affine spacetimes: they are locally modelled on the Minkowski space $\M^{d,1}$, so they admit a $(\SO_0(d,1) \ltimes \R^{d,1}, \M^{d,1})$-structure and a natural parallel cone distribution given by the interiors of the future cones in the tangent spaces $\mathrm{T}_p\M^{d,1}=\R^{d,1}$. 

All of the definitions and notions arising purely from the causal structure of Lorentzian spacetimes naturally extend to that new setting (see subsection~\ref{subsec causality}). If $(M,\C)$ is an affine spacetime, a vector $v \in \mathrm{T}_pM$ is called \emph{$\C$-timelike} if $v \in \C_p \cup (-\C_p)$, \emph{$\C$-lightlike} if $v \in \partial \C_p \cup (-\partial\C_p)$, and \emph{$\C$-spacelike} if $v \notin \overline{\C_p} \cup(- \overline{\C_p})$. Then, a $\C$-causal curve is a curve having only $\C$-timelike or $\C$-spacelike velocity vector. Hence, one can again consider \emph{global hyperbolic} affine spacetimes: an affine spacetime $(M,\C)$ is \emph{globally hyperbolic} if it admits a $C^1$ \emph{Cauchy surface}, that is an embedded $C^1$ hypersurface intersecting every inextensible future oriented $\C$-causal curve at exactly one point. 

Note that we can also naturally define \emph{$\C$-spacelike} and \emph{$\C$-null} hyperplanes inside the affine spacetime $(\R^{d+1},\C)$, which is an affine space. Thus, we proceed to generalise notions from \emph{convex geometry inside the Minkowski affine space} \cite{Fillastre_13,Fillastre_Veronelli_16,Bonsante_Fillastre_17}. A \emph{$\C$-convex domain} $K \subset \R^{d+1}$ (see Definition~\ref{def C-convex} and Remark~\ref{rem asymptotic cone}) is an open convex set having \emph{asymptotic cone} equal to $\overline{\C}$, the closure of $\C$, i.e. such that
\begin{equation*}
 \left\lbrace V \in \R^{d+1} \st \forall P \in K, \ P+ \R_+ V \subseteq K \right\rbrace = \overline{\C} \, .
\end{equation*}

We then prove the following theorem generalising the work of Mess \cite{Mess_07,Mess_notes}, Barbot \cite{Barbot_05}, and Bonsante \cite{Bonsante_05}, and providing many examples of maximal globally hyperbolic affine spacetime.

\begin{maintheorem}
\label{Theorem Intro domain}
Let $\Gamma_\tau < \SL (\R^{d+1}) \ltimes \R^{d+1}$ be a subgroup obtained by adding translations part to a discrete torsion-free subgroup $\Gamma < \SL(\R^{d+1})$ dividing an open proper convex cone $\C \subset \R^{d+1}$. Then, there exists a unique maximal $\Gamma_\tau$-invariant $\C$-convex domain $D_\tau \subset \R^{d+1}$. It satisfies the following properties:
\begin{enumerate}[label=(\arabic*)]
\item \label{theo11} The action of $\Gamma_\tau$ on $D_\tau$ is free and properly discontinuous, and there is a homeomorphism
\begin{equation*}
D_\tau /\Gamma_\tau \simeq \P(\C)/\Gamma \times \R \, .
\end{equation*}
\item \label{theo12} The affine spacetime $(D_\tau / \Gamma_\tau,\C)$ is a maximal globally hyperbolic affine spacetime admitting a $C^2$ locally uniformly future-convex and compact Cauchy surface (denoted as MGHCC affine spacetime).
\end{enumerate}
\end{maintheorem}

\begin{remark}
In Theorem~\ref{Theorem Intro domain}, a Cauchy surface $S$ of an affine spacetime $(M,C)$ is said to be $C^2$ locally uniformly future-convex if in every affine chart of the equiaffine manifold $M$, the hypersurface $S$ is locally the graph of a convex function with positive definite Hessian, and every $\C$-future vectors on $S$ points towards its convex side. 
\end{remark}

\begin{remark}
The existence and uniqueness of the maximal $\Gamma_\tau$-invariant $\C$-convex domain $D_\tau$, on which $\Gamma_\tau$ acts properly discontinuously was already proved in the work of Choi \cite[Theorems 4.3.1 and 4.3.7]{Choi_25} in a more general setting and with different language and approach. In dimension $2+1$, that, as well as the topology of the quotient $D_\tau /\Gamma_\tau$, was also independently shown by Nie and Seppi \cite[Theorem~A]{Nie_Seppi_23}. The topology of the quotient in higher dimension, the notion of affine spacetime, the whole point \ref{theo12}, as well as Theorems~\ref{Theorem Intro dev} and \ref{Theorem Intro cosmological time} below are completely new to the best of our knowledge. 
\end{remark}

Note that Theorem~\ref{Theorem Intro domain} can also be applied using the cone $-\C$, which is also divisible by $\Gamma$, in order to produce another domain and MGHCC affine spacetime.

In order to complete our generalisation of the correspondence of Mess, Barbot, and Bonsante, we also prove the following theorem, stating that these examples of MGHCC affine spacetimes given by Theorem~\ref{Theorem Intro domain} happen to be the only ones.

\begin{maintheorem}
\label{Theorem Intro dev}
Let $(M,\C^M)$ be MGHCC affine spacetime. Any pair $(\dev,\rho)$ of a developing map
\begin{equation*}
\dev: \widetilde{M} \to \R^{d+1} \, ,
\end{equation*}
and a holonomy map 
\begin{equation*}
\rho: \pi_1M \to \SL (\R^{d+1}) \ltimes \R^{d+1} \, ,
\end{equation*}
for the equiaffine structure of $M$ satisfies the following:
\begin{itemize}
\item $\rho$ is injective, i.e. it is a group isomorphism onto its image $ \rho(\pi_1M)$, 
\item $\rho(\pi_1M)$ is an affine deformation $\Gamma_\tau$ of its linear part $\Gamma < \SL (\R^{d+1})$ which divides the open proper convex cone $\C \coloneqq \dif \, \dev(\C^M)$,
\item $\dev$ is an isomorphism of affine spacetimes onto its image which is $(D_\tau,\C)$, where $D_\tau $, the domain associated with $\Gamma_\tau \coloneqq \rho(\pi_1M)$ and $\C$, is given by Theorem~\ref{Theorem Intro domain}.
\end{itemize}
\end{maintheorem}

\begin{remark}
In Theorem~\ref{Theorem Intro dev}, an isomorphism of affine spacetimes $\Psi:(M,\C^M) \to (N,\C^N)$ is an isomorphism of equiaffine structures such that $\dif \Psi(\C^M)=\C^N$. 
\end{remark}

Theorems~\ref{Theorem Intro domain} and \ref{Theorem Intro dev} thus give a complete classification of MGHCC affine spacetimes via affine deformations of groups dividing cones.

A group $\Gamma < \SL(\R^{d+1})$ dividing a convex cone does not always admit a non-trivial affine deformation, that is, an affine deformation which is not a conjugation of $\Gamma$ by a translation. When $d=2$, Nie and Seppi \cite{Nie_Seppi_23} have proved that if $\Gamma$ is isomorphic to the fundamental group of the closed connected surface of genus $g \geq 2$, it always does. Concerning higher dimensions, we shall give examples of non-trivial affine deformations of subgroups dividing a cone and which are not conjugated to a uniform lattice of $\SO_0(d,1)$ (see Appendix~\ref{sec Examples of affine deformations}).

Adopting a projective approach , we shall also highlight that the discrete subgroups $\Gamma_\tau$ considered in Theorem~\ref{Theorem Intro domain} have a \emph{convex cocompact action} on an open subset of $\P (\R^{d+2})$ (see Appendix~\ref{sec Projective approach}). When $\Gamma$ is \emph{word hyperbolic}, those are moreover \emph{strongly convex cocompact} in the sense of Danciger, Gu\'eritaud, and Kassel \cite{DGK_17}, and thus, following from their work, are also \emph{projective Anosov}.

\subsection*{Affine sphere cosmological time}

A central tool in the study of MGHCC affine spacetimes and especially in the proof of point~\ref{theo11} in Theorem~\ref{Theorem Intro domain} is the \emph{cosmological time function} on $\C$-convex domains.

Affine spacetimes significantly differ from flat Lorentzian spacetimes as their cone distribution does not come from the distribution of future cones of a Lorentzian metric. Nevertheless, we shall introduce a way to define length on the family of causal curves, satisfying, among other properties, a \emph{time inequality}. That approach is similar to recent works concerning metric generalisations of Lorentzian geometry, based on the seminal work of Busemann \cite{Busemann_67} on the axiomatisation of \emph{timelike spaces}. We refer, for example, to the work of Kunziger and S\"amann \cite{Kunziger_Samann_18} concerning \emph{Lorentzian length spaces}, or the work of Papadopoulos and Yamada \cite{Papadopoulos_Yamada_19} and Buro \cite{Buro_23}, concerning \emph{pseudo-Finsler geometry}. 

Then, using that length function, a \emph{cosmological time function} (see Section~\ref{sec Cosmological time on C-convex domains}) can be defined on $\C$-convex domains of the model affine spacetime $(\R^{d+1},\C)$. Its definition can be taken to be very similar to the usual one for Lorentzian spacetimes \cite{Andersson_Galloway_Howard_98} (see Proposition~\ref{prop cosmological time and length causal curve}). We then prove the following fact. 

\begin{maintheorem}
\label{Theorem Intro cosmological time}
Let $\C$ be an open proper convex cone in the vector space $\R^{d+1}$. Every $\C$-convex domain $K$ in the affine spacetime $(\R^{d+1},\C)$ admits a concave $C^1$ cosmological time function $\mathcal{T}_K: K \to \R^*_+$ whose level sets are $C^1$ future-convex Cauchy surfaces of $(K,\C)$ foliating $K$. 
Moreover, $\mathcal{T}_K$ is invariant by any transformation of $\SA(\R^{d+1}) = \SL(\R^{d+1}) \ltimes \R^{d+1}$ preserving $K$.
\end{maintheorem}

The cosmological time on a domain $D_\tau$ from Theorem~\ref{Theorem Intro domain} is thus $\Gamma_\tau$-invariant and descends to the quotient affine spacetime $(D_\tau / \Gamma_\tau,\C)$. That MGHCC affine spacetime is thus foliated by the level hypersurfaces of its cosmological time, which are $C^1$ embedded future-convex Cauchy surfaces. Using Theorems~\ref{Theorem Intro domain},~\ref{Theorem Intro dev}, and~\ref{Theorem Intro cosmological time}, that gives the following.

\begin{maincorollary}
\label{Corollary Intro MGHCC cosmological time}
Any MGHCC affine spacetime $(M,\C)$ is naturally endowed with a $C^1$ locally concave cosmological time $\mathcal{T}:M\to \R^*_+$. The level hypersurfaces of this cosmological time are $C^1$ $\C$-spacelike future-convex Cauchy surfaces foliating $M$.
\end{maincorollary}

\begin{remark}
In Corollary~\ref{Corollary Intro MGHCC cosmological time}, locally concave means that in every affine chart of the equiaffine manifold $M$, the restriction of $\mathcal{T}$ is a concave function. 
\end{remark}

We actually use the unique \emph{Cheng--Yau affine sphere} associated with the proper convex cone $\C$ \cite{Cheng_Yau_86} in order to normalise $\C$-timelike vectors, measure lengths of $\C$-timelike curves, and produce the cosmological time function from Theorem~\ref{Theorem Intro cosmological time}. Here are some motivation for that choice.

First, let us note that it is truly a generalisation of the Lorentzian case: in any tangent space of a Lorentzian spacetime, timelike vectors are normalised by the hyperboloid which is the Cheng--Yau affine sphere associated with the isotropic cone of the Lorentzian metric. In the general case, the work of Minguzzi \cite{Minguzzi_17} provides a physical interpretation to that affine sphere renormalisation.

Moreover, the unique Cheng--Yau affine sphere associated with a proper convex cone $\C$ is invariant under the subgroup $\mathrm{Aut}_\SL(\C)$ of elements of $\SL(\R^{d+1})$ preserving $\C$. That provides the invariance property of the cosmological time function in Theorem~\ref{Theorem Intro cosmological time}. Those affine spheres also behave well with the duality and convexity tools that were used in the Lorentzian case \cite{Fillastre_Veronelli_16,Bonsante_Fillastre_17}. Indeed, fundamental tools from convex analysis, such as \emph{duality, subdifferentials} or the \emph{Legendre--Fenchel transform}, were already used by Cheng and Yau \cite{Cheng_Yau_77,Cheng_Yau_86} in order to prove their fundamental theorem about existence and uniqueness of a unit complete hyperbolic affine sphere asymptotic to any proper convex cone. Actually, substituting those hyperbolic affine spheres for the sphere or the hyperboloid in, respectively, Euclidean or Minkowski convex geometry \cite{Schneider_93,Fillastre_13}, one can also naturally define and study an affine version of the Minkowski problem, generalising results in the Minkowski space \cite{Bonsante_Fillastre_17}. In that setting, the choice of affine sphere will allow to recover some results about surfaces with \emph{constant affine Gaussian curvature} \cite{Labourie_07,Nie_Seppi_22}. We address this problem in  \cite{Ablondi_prep}.

Finally, as the Labourie--Loftin parametrisation of the Hitchin component has a geometric interpretation through the use of Cheng--Yau affine sphere, we can also hope, at least when $d=2$, for some higher Teichm\"uller theory translation of the tools introduced in this article.

Note that one could use any other locally convex hypersurfaces invariant by special linear automorphisms of the cone and with similar properties (strict convexity, smoothness, asymptotic behaviour) in order to produce a cosmological time for $\C$-convex domains. Another example of such an   hypersurface is given by the level sets of the \emph{Vinberg functional} \cite{Vinberg_63,Goldman_22}. 

\subsection*{Relation with higher Teichm\"uller theory}

We conclude this introduction with some remarks on the case where $d=2$.

Let $S_g$ be a closed oriented and connected surface of genus $g \geq 2$. The \emph{moduli space $\Pgaff$ of affine deformations of convex projective structures on $S_g$}, can be described as 
\begin{equation*}
\Pgaff = \mathrm{Hom}_{\mathrm{div}} \bp{\pi_1(S_g) , \SL(\R^3) \ltimes \R^3} \big/ \bp{\SL(\R^3) \ltimes \R^3 } \, ,
\end{equation*}
where the quotient is by conjugation and $\mathrm{Hom}_{\mathrm{div}} (\pi_1(S_g) , \SL(\R^3) \ltimes \R^3)$ is the set of group representations $\rho_\tau \in \mathrm{Hom}(\pi_1(S_g) , \SL(\R^3) \ltimes \R^3)$ with linear part $\rho \in \mathrm{Hom}_{\mathrm{div}}(\pi_1(S_g) , \SL(\R^3))$.

Recently, Nie and Seppi \cite{Nie_Seppi_22} have highlighted how $\Pgaff$ is naturally a topological vector bundle of rank $6g-6$ on the $(16g-16)$-dimensional Hitchin component given by the natural projection 
\begin{equation*}
\namelessfunction{\mathrm{Hom}_{\mathrm{div}} \bp{\pi_1(S_g) , \SL(\R^3) \ltimes \R^3}}{\mathrm{Hom}_{\mathrm{div}} \bp{\pi_1(S_g) , \SL(\R^3)}}{\rho_\tau}{\rho} \, .
\end{equation*}
Through the special identification between the Lie algebra $\mathfrak{so}(2,1)$ and the Minkowski space $\R^{2,1}$, affine deformations of Fuchsian subgroups of $\SO_0(2,1)$ can be seen as first order derivatives of variation of hyperbolic structures on $S_g$. Then the part of $\Pgaff$ over the Teichm\"uller space $\mathcal{T}_g \subset \Pg$, studied by Mess \cite{Mess_07}, can be identified with the tangent bundle of $\mathcal{T}_g$. Theorem~\ref{Theorem Intro domain} and Theorem~\ref{Theorem Intro dev} thus give a correspondence between each element of $\Pgaff$ and two (maybe equal) isomorphisms classes of $(2+1)$-dimensional MGHCC affine spacetimes with Cauchy surfaces homeomorphic to $S_g$. In the Minkowski case \cite{Mess_07,Bonsante_05}, one makes this correspondence one to one using the time orientation of the Minkowski space. In the general case, as the divisible cones vary, one cannot make such a simple choice.

In \cite[Section 7]{Labourie_07}, Labourie explicitly gives two different parametrisations of the moduli space $\Pgaff$, as well as some symmetries on it. Using the correspondence presented in this article, and a geometric study of the $(2+1)$-dimensional MGHCC affine spacetimes associated with elements of $\Pgaff$, we can hope for new geometric counterparts and explanations to Labourie's work (similarly to previous works on the Teichm\"uller case \cite{Bonsante_Seppi_16,Barbot_Fillastre_20}). 

It is also compelling to try to link our approach to the recent work of Bobb and Farre \cite{Bobb_Farre_24} concerning the generalisation of the part of the Mess correspondence concerning measured geodesic laminations. It could be possible to make those works meet using convex geometry and area measures associated with $\C$-convex domain, similarly to what was done in the Minkowski case \cite{Fillastre_Veronelli_16,Bonsante_Fillastre_17,Barbot_Fillastre_20}.

The cosmological time on MGHCC affine spacetimes, given by Theorems~\ref{Theorem Intro cosmological time} and Corollary~\ref{Corollary Intro MGHCC cosmological time} is a natural generalisation of the classical one on MGHCC Lorentzian spacetimes which happens to be related to \emph{hyperbolic grafting}. Indeed, by the work of Benedetti and Bonsante \cite{Benedetti_Bonsante_09}, if $\rho$ is a Fuchsian representation of $\pi_1(S_g)$ and $\Gamma_\tau$ is an affine deformation of $\Gamma = \rho(\pi_1S_g) < \SO_0(2,1)$, the level set $\{\mathcal{T} = 1\}$ of the cosmological time on $D_\tau/\Gamma_\tau$ is the $C^1$ Riemannian surface obtained by the grafting of the hyperbolic surface $\H^{2}/\Gamma$ along a measured geodesic lamination associated with the affine deformation $\Gamma_\tau$. Thus, we can hope for a generalised grafting process for convex projective surfaces. As we define cosmological time through affine spheres, we could hope that such a generalised grafting process would have some nice behaviour in the Labourie--Loftin parametrisation of the $\SL(\R^3)$ Hitchin component.

\subsection*{Organisation of the article} 

We first introduce the definition of an affine spacetime in Section~\ref{Sec Affine spacetimes}. We then give a few useful results and definitions from affine differential geometry and convex analysis in respectively Section~\ref{sec Background ADG} and Section~\ref{sec Background CG}. Section~\ref{sec About C-convex domains} is dedicated to the study of the special case of the model affine spacetime $(\R^{d+1}, \C)$: the real affine space $\R^{d+1}$ endowed with a parallel cone structure given by an open proper convex cone $\C$. In Section~\ref{sec Affine deformations of divisible convex cones}, we focus on the case where $\C$ is divisible by a subgroup $\Gamma$ of $\SL(\R^{d+1})$. In that setting, we build for any affine deformation of $\Gamma$ the maximal invariant domain $D_\tau$ from Theorem~\ref{Theorem Intro domain}. Within Section~\ref{sec Cosmological time on C-convex domains}, we introduce and study a cosmological time function on $\C$-convex domains, and give its geometric interpretation in the context of pseudo-Finsler geometry. That allows us, in Section~\ref{sec Quotient of the maximal domain}, to study the spacetime structure of the quotients $D_\tau / \Gamma_\tau$ and prove point~\ref{theo11} and the GHCC part of point~\ref{theo12} of Theorem~\ref{Theorem Intro domain}, as well as Theorem~\ref{Theorem Intro cosmological time}. Finally, Section~\ref{sec Correspondence between affine deformations of dividing groups and affine spacetimes} consists of the proofs of the maximality part of point~\ref{theo12} in Theorem~\ref{Theorem Intro domain} and of Theorem~\ref{Theorem Intro dev}, thus extending the Mess correspondence. 

In addition, Appendix~\ref{sec Examples of affine deformations} gives examples in any dimension of groups dividing an open proper convex domains not projectively equivalent to an ellipse (giving the Klein ball model of the hyperbolic space), and admitting non-trivial affine deformations, while Appendix~\ref{sec Projective approach} links the work of this article to the notion of convex cocompact representation through a more projective approach to affine deformations.

\subsection*{Acknowledgements}
I am deeply grateful to my two doctoral advisors Fran\c{c}ois Fillastre and Andrea Seppi for introducing me to this subject, helping me in my mathematical work and guiding me through writing this article. I would like to thank Thierry Barbot for interesting discussions about affine spacetimes, Martin Bobb and James Farre for introducing me to the projective approach described in Appendix~\ref{sec Projective approach}, and Colin Davalo for his helpful indications concerning Lemma~\ref{lem inclusion and divisibility}. I also thank Suhyoung Choi for noticing and explaining me how some of the results presented in this article are related to his work.

\section{Affine spacetimes}
\label{Sec Affine spacetimes}

In this section we first present the notion of \emph{equiaffine manifold } and the causality theory given by a \emph{cone structure} on a manifold. We then use those in order to introduce the concept of \emph{affine spacetime}, generalising flat Lorentzian spacetimes.

\subsection{Equiaffine manifolds}
\label{subsec Equiaffine manifolds}

In this whole article the setting will be the following. 

Let $d \geq 2$ be an integer. Let $\V^{d+1}$ be the $(d+1)$-dimensional real vector space endowed with an orientation and a linear volume form $vol$. Let $\A^{d+1}$ be the $(d+1)$-dimensional real affine space with $\V^{d+1}$ as its vector space of translations. It is endowed with an orientation, a flat affine connection $D$ and a parallel differential volume form $\dif\vol$, induced by a linear volume form $vol$ on $\V^{d+1}$.

A \emph{real equiaffine structure}, or simply \emph{equiaffine structure}, on a smooth $(d+1)$-dimensional manifold $M$ consists of an atlas of charts with values in the affine space $\A^{d+1}$, such that the transition maps are restrictions of orientation and volume preserving affine transformations of $\A^{d+1}$. Using the language of $(G, X)$-structures, an equiaffine structure on $M$ is a $(\SA(\A^{d+1}),\A^{d+1})$-structure, where $\SA(\A^{d+1})$ is the group of special affine transformations of $\A^{d+1}$, that is the set of volume preserving affine automorphism of $(\A^{d+1}, \dif \vol)$.

A manifold endowed with an equiaffine structure is called an \emph{equiaffine manifold}. It is naturally endowed with an orientation, a flat torsion-free affine connection $\nabla$ (induced by the flat affine connection $D$ on $\A^{d+1}$) and a volume form $\omega$ (induced by the volume form $\dif \vol$ on $\A^{d+1}$) parallel for the connection $\nabla$, i.e. satisfying $\nabla \omega = 0$.

Equivalently, without the language of $(G, X)$-structures, an equiaffine manifold is a triplet $(M, \nabla, \omega)$ where $M$ is an orientable manifold endowed with a flat torsion-free affine connection $\nabla$ and a volume form $\omega$ such that $\nabla \omega = 0$.

If $M$ is an equiaffine manifold with universal cover $\widetilde{M}$, the theory of $(G, X)$-structures tells us that one can fix a pair of \emph{developing map} 
\begin{equation*}
\dev: \widetilde{M} \to \A^{d+1}
\end{equation*}
and \emph{holonomy representation}
\begin{equation*}
 \rho: \pi_1 M \to \SA(\A^{d+1}) \, ,
\end{equation*}
satisfying the following:
\begin{itemize}
\item the developing map $\dev$ is $\rho$-equivariant, in the sense that for all $p \in \widetilde{M}$ and $\gamma \in \pi_1M$,
\begin{equation*}
 \dev(\gamma \cdot p) = \rho(\gamma) \, \dev(p) \, ,
\end{equation*}
\item the developing map $\dev$ is a local diffeomorphism preserving the equiaffine structure, i.e. on each chart of the equiaffine structure of $\widetilde{M}$, $\dev$ is induced by an element of $\SA(\A^{d+1})$.
\end{itemize}
Moreover, given two such pairs $(\dev,\rho)$ and $(\dev',\rho')$ there exists a unique $g \in \SA(\A^{d+1})$ such that
\begin{equation*}
 \dev' = g \, \dev \quad \text{and} \quad \rho' = g \, \rho \, g^{-1} \, .
\end{equation*}

\subsection{Cone structures and causality}
\label{subsec causality}

\begin{definition}[Proper convex cone]
A \emph{cone} in the real vector space $\V^{d+1}$ is a subset $\C \subseteq \V^{d+1}$ such that for all $v \in \C$ and $\lambda>0$, one has $\lambda v \in \C$.

A cone $\C \subset \V^{d+1}$ is \emph{convex} if for all $v,w \in \C$ and $t \in [0,1]$ , one has $ (1-t)v+tw \in \C$. A convex cone $\C \subset \V^{d+1}$ is said to be \emph{proper} if it is non-empty and its closure $\overline{\C}$ does not contain any entire line. 
\end{definition}

\begin{definition}[Cone structure]
A \emph{cone structure $\C$} on a smooth manifold $M$ is a continuous field of cones $( \C_p )_{p \in M}$ such that for all $p \in M$, $\C_p$ is an open proper convex cone in the real vector space $\mathrm{T}_pM$.
\end{definition}

The following notions are classical in Lorentzian geometry. Noticing that they only rely on the existence of a light cone distribution in the tangent bundle and not on the metric, we can naturally extend those to our setting.

\begin{definition}[$\C$-spacelike, $\C$-null and $\C$-timelike vector]
Let $M$ be a smooth manifold endowed with a cone structure $\C$. A non-zero vector $v \in \mathrm{T}_p M$ is said to be:
\begin{itemize}
\item \emph{$\C$-spacelike} if $v \notin \overline{\C_p} \cup(- \overline{\C_p})$,
\item \emph{$\C$-null}, or \emph{$\C$-lightlike}, if $v \in \partial \C_p \cup (-\partial\C_p)$,
\item \emph{$\C$-timelike} if $v \in \C_p \cup (-\C_p)$.
\end{itemize}
A $\C$-null or $\C$-timelike vector $v \in \mathrm{T}_p M$ is said to be \emph{$\C$-future directed} if $v \in \overline{\C_p}$ and \emph{past directed} if $v \in (-\overline{\C_p})$.
\end{definition}

\begin{definition}[$\C$-chronological and $\C$-causal curves]
Let $M$ be a smooth manifold endowed with a cone structure $\C$. A piecewise $C^1$ curve in $M$ is \emph{$\C$-chronological} (respectively \emph{$\C$-causal}) if its velocity vectors are all $\C$-timelike (respectively non-$\C$-spacelike). In both cases, we say that it is \emph{future directed} (respectively \emph{past directed}) if all its velocity vectors are future directed (respectively past directed). 
\end{definition}

\begin{definition}[$\C$-future, $\C$-past and $\C$-diamond]
Let $p$ be a point of a smooth manifold manifold $M$ endowed with a cone structure $\C$. The \emph{$\C$-future of $p$} (respectively the \emph{$\C$-past of $p$}) is the subset $I^+(p)$ (respectively $I^-(p)$) of $M$ formed by the points which are end-points of future directed (respectively past directed) $\C$-chronological curves starting at $p$ .

Replacing the $\C$-chronological curves by $\C$-causal curves, one obtains the definition of the \emph{$\C$-causal future} $J^+(p)$ (respectively \emph{$\C$-causal past} $J^-(p)$) of $p$. 

For two points $p,q \in M$ the (possibly empty) set $I^+(p) \cap I^-(q)$ is called a \emph{$\C$-diamond}, and the set $J^+(p) \cap J^-(q)$ is called a \emph{$\C$-causal diamond}.
\end{definition}

\begin{definition}[Global hyperbolicity and Cauchy surface]
\label{def Cauchy surface}
A cone structure $\C$ on a connected smooth manifold $M$ is \emph{globally hyperbolic} if there exists a $C^1$ embedded hypersurface $S \subset M$ which is:
\begin{itemize}
\item \emph{$\C$-spacelike}, that is every tangent hyperplane $\mathrm{T}_p S \subset \mathrm{T}_p M$ is $\C$-spacelike,
\item a \emph{Cauchy surface}, namely every inextensible future directed $\C$-causal curve in $M$ intersects $S$ at exactly one point.
\end{itemize}
\end{definition}

The following results for Lorentzian spacetimes are due to Geroch in \cite{Geroch_70}. As their proofs only rely on topological and differential arguments, and not on the Lorentzian metric or shape of the cones in the tangent space, these results still hold for smooth manifolds with a proper cone structure \cite{Minguzzi_19}.

\begin{proposition}
\label{prop Geroch Cauchy surfaces}
Let $M$ be a connected smooth manifold admitting a globally hyperbolic cone structure $\C$ with a Cauchy surface $S$. All the Cauchy surfaces of $M$ for the cone structure $\C$ are homeomorphic and $M$ is homeomorphic to $S \times \R$.
\end{proposition}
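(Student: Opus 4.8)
The plan is to adapt Geroch's classical argument \cite{Geroch_70} to the cone setting, using the field $\C$ in place of the timelike cones of a Lorentzian metric; as observed in the excerpt, only differential-topological input is required.

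First I would build a complete, smooth, nowhere-vanishing, $\C$-future directed $\C$-timelike vector field $X$ on $M$. Locally one picks a smooth section of the (open) future-cone bundle and patches these local choices with a partition of unity: since each $\C_p$ is convex, any convex combination of future-directed $\C$-timelike vectors again lies in $\C_p$, and since $\C_p$ is a proper cone it does not contain $0$, so the resulting global section is future-directed, $\C$-timelike and nowhere-vanishing. Multiplying $X$ by a suitable positive smooth function renders it complete without altering these properties, because $\C_p$ is invariant under positive scaling. Write $\vp{\phi_t}_{t \in \R}$ for the induced flow.

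Next I would note that every integral curve of $X$ is an inextensible $\C$-causal curve: its velocity is the $\C$-timelike future-directed field $X$, and an orbit of a complete nowhere-vanishing field has no endpoint (in a flow box orbits are straight segments and cannot converge to a point), hence is inextensible. By the Cauchy property each such orbit therefore meets $S$ in exactly one point. Moreover $X$ is transverse to $S$: as $S$ is $\C$-spacelike, $\mathrm{T}_s S$ meets $\overline{\C_s}$ only at the origin, so it contains no $\C$-timelike vector and $X(s) \notin \mathrm{T}_s S$. Since global hyperbolicity forbids closed $\C$-causal curves (the causality hierarchy, valid for cone structures by \cite{Minguzzi_19}), the map $\Phi : \R \times S \to M$, $(t,s) \mapsto \phi_t(s)$, is a bijection: surjectivity follows by flowing any $p$ to its unique intersection with $S$, and injectivity because $\phi_t(s)=\phi_{t'}(s')$ forces $s'$ to lie on the orbit of $s$, hence $s=s'$ by the Cauchy property and then $t=t'$ as the orbit is not closed. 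Transversality makes $\dif\Phi$ everywhere invertible — it sends $\partial_t$ to $X\vp{\phi_t(s)} = \dif\phi_t\vp{X(s)}$ and $\mathrm{T}_s S$ isomorphically onto $\dif\phi_t\vp{\mathrm{T}_s S}$, which are complementary since $\phi_t$ is a diffeomorphism — so $\Phi$ is a local $C^1$-diffeomorphism and therefore a homeomorphism $M \simeq S \times \R$.

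Finally, for the statement that all Cauchy surfaces are homeomorphic, I would read a second Cauchy surface $S'$ in the coordinates $S \times \R$. Under $\Phi$ the lines $\{s\} \times \R$ are exactly the inextensible $\C$-causal orbits of $X$, so $S'$ meets each of them in a single point and is thus the graph $\{\, (s, h(s)) : s \in S \,\}$ of a function $h : S \to \R$; this $h$ is continuous (indeed $C^1$) because $S'$ is a $C^1$ embedded hypersurface transverse to these lines, so projection onto the $S$ factor restricts to a homeomorphism $S' \simeq S$. I expect the main obstacle to be precisely the purely cone-theoretic handling of causality: constructing the complete $\C$-timelike field and, above all, verifying that its orbits are genuinely inextensible, cross $S$ exactly once, and are never closed, which is where one must invoke the causality theory for cone structures rather than any metric argument.
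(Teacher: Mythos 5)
Your proposal is correct and follows essentially the same route as the paper: both reduce to Geroch's flow argument by producing a nowhere-vanishing $\C$-causal vector field, rescaling it so that its flow is complete, and using that flow to identify $M$ with $S \times \R$. The only differences are cosmetic: the paper constructs the field as the barycentre of $\C_p \cap B_{g_p}$ for an auxiliary Riemannian metric $g$ rather than by a partition of unity, and it delegates to Geroch's original proof the bijectivity, transversality, and graph arguments that you spell out explicitly.
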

\begin{proof}
In order to be able to replicate the proof in {\cite[Property 7]{Geroch_70}}, we just need to produce a non-vanishing $\C$-causal vector field on $M$. For that, fix any Riemannian metric $g$ on $M$ and take $X(p) \in \mathrm{T}_pM$ to be the barycentre with respect to $\vol_{g_p}$ of the intersection $\C_p \cap B_{g_p}$, where $ B_{g_p}$ is the unit ball of $(\mathrm{T}_pM,g_p)$. The vector field $X$ is $\C$-causal because of the convexity of the cone distribution $C$, and non-vanishing because of its properness.
Let $\phi^t_X$ be the flow associated with $X$. Up to rescaling $X$, we can assume that it is always defined for all $t \in \R$. Every curve $(\phi^t_X(p))_{t \in \R}$ is an inextensible $\C$-causal curve and thus intersects $S$. Hence,  the wanted homeomorphism is provided by 
\begin{equation*}
 \function{\Phi}{S \times \R}{M}{(p,t)}{\phi^t_X(p)} \, . \qedhere
\end{equation*} 
\end{proof}

\begin{theorem}[{\cite[Theorem~2.45]{Minguzzi_19}}]
A cone structure $\C$ on a connected smooth manifold $M$ is globally hyperbolic if and only if it admits no closed $\C$-causal curve and every $\C$-causal diamond is compact.
\end{theorem}

\subsection{Affine spacetimes}
\label{subsec affine spacetimes}

Let $\C \subseteq \V^{d+1}$ be an open proper convex cone. The cone $\C$ induces a \emph{parallel cone structure} on the affine space $\A^{d+1}$ (endowed with the flat connection $D$). Indeed, the trivial parallelisation of $\A^{d+1}$ gives a natural identification of every tangent space $\mathrm{T}_p\A^{d+1}$ with $\V^{d+1}$. Then, the cone distribution defined by $\C_p=\C$ for all $p \in \A^{d+1}$ is parallel (i.e. it is preserved by $D$-parallel transport). 

We shall introduce affine spacetimes as manifolds with a cone structure which are locally modelled on $(\A^{d+1},\C)$, for some open proper convex cone $\C \subseteq \V^{d+1}$.

\begin{definition}[Affine spacetime]
An \emph{affine spacetime} $(M,\C)$ is a connected equiaffine manifold $M$ endowed with a cone structure $\C$ parallel for the flat affine connection $\nabla$ on $M$ induced by its equiaffine structure (i.e. the field $(\C_p)_{p\in M}$ is preserved by $\nabla$-parallel transport). 
\end{definition}

\begin{remark}
Requiring a cone structure to be parallel is a natural choice generalising the Lorentzian case. Indeed, the distribution of isotropic cones in the tangent bundle of a Lorentzian manifold is parallel for the Levi-Civita connexion of its metric.
\end{remark}

\begin{definition}[Morphism of affine spacetimes]
\label{def isomorphism of affine spacetimes}
A \emph{morphism of affine spacetimes} between two $(d+1)$-dimensional affine spacetimes $(M,\C)$ and $(M',\C')$ is a smooth map $\varphi: M \to M'$ such that
\begin{itemize}
\item $\varphi$ preserves the equiaffine structure, i.e. in each chart of the equiaffine structure of $M$ the corestriction of $\varphi$ to a chart of the equiaffine structure of $M'$ is induced by an element of $\SA(\A^{d+1})$,
\item $\varphi$ sends the cone distribution $\C$ onto $\C'$: for all $x$ in $M$,
\begin{equation*}
\dif_x \varphi ( \C_x) = \C'_{\varphi(x)} \, .
\end{equation*}
\end{itemize}
A \emph{isomorphism of affine spacetimes} is a morphism of affine spacetimes which is a diffeomorphism (then one easily checks that its inverse is also a morphism of affine spacetimes).
\end{definition}

\begin{definition}[Maximal globally hyperbolic affine spacetime]
\label{def Maximal}
A globally hyperbolic affine spacetime $(M,\C)$ is said to be 
\begin{itemize}
\item \emph{Cauchy compact globally hyperbolic} if it is globally hyperbolic with a compact Cauchy surface,
\item \emph{maximal} if, for every globally hyperbolic affine spacetime $(M',\C')$ and affine spacetime morphism embedding $i:M \lhook\joinrel\to M'$ mapping any Cauchy surface of $(M,\C)$ to a Cauchy surface of $(M',\C')$, $i$ is necessarily surjective (and is thus an affine spacetime isomorphism).
\end{itemize}
\end{definition}

\begin{definition}[Convexity of a Cauchy surface of an affine spacetime]
\label{def Convex Cauchy surface}
Let $S$ be a Cauchy surface $S$ of a globally hyperbolic affine spacetime $(M,\C)$.

Then, $S$ is said to be a \emph{$\C$-future-convex Cauchy surface of M}, or equivalently a \emph{future-convex Cauchy surface of $(M,\C)$}, if in any local affine chart of the equiaffine manifold $M$, the $\C$-spacelike hypersurface $S$ can locally be parametrised as the graph of convex function, and any future $\C$-causal vector on $S$ points toward the convex side of $S$. It is said to be \emph{$C^2$ and locally uniformly future-convex} if, moreover, those local graphs are graphs of $C^2$ functions with positive definite Hessian. 
\end{definition}

\section{Background on affine differential geometry and affine spheres}
\label{sec Background ADG}

This section is composed of a few results and definitions concerning \emph{affine differential geometry} and \emph{affine spheres}. For a more complete and deeper overview on those notions we refer the reader to \cite{Nomizu_Sassaki_95,Loftin_10,LSZH_15}.

Affine differential geometry studies smooth hypersurfaces $\Sigma$ embedded in $(\A^{d+1},\dif \vol)$ through the following method. The datum of a transverse vector field $N: \Sigma \to \V^{d+1}$ decomposes the tangent space at each point $p \in \Sigma$ as the direct sum 
\begin{equation*}
\mathrm{T}_p\A^{d+1} = \mathrm{T}_p\Sigma \oplus \bigl\langle N(p)\bigr\rangle \, .
\end{equation*}
Using that decomposition, one can define \emph{affine invariants} of $(\Sigma,N)$:
\begin{itemize}
\item the \emph{induced affine connection} $\nabla$ and the \emph{induced affine fundamental form} or \emph{Blaschke fundamental form} $h \in \Gamma(\mathrm{T}^*\Sigma \otimes \mathrm{T}^*\Sigma)$, determined by
\begin{equation*}
D_X Y =\nabla_X Y + h(X,Y) N \, ,
\end{equation*}
\item the \emph{affine shape operator} $S \in \Gamma(\mathrm{T}^*\Sigma \otimes \mathrm{T}\Sigma)$ and the \emph{transversal connection form} $\tau \in \Gamma(\mathrm{T}^*\Sigma)$, determined by 
\begin{equation*}
D_X N =S(X) + \tau(X) N \, ,
\end{equation*}
\item the \emph{affine Gauss-Kronecker curvature} $\phi \in C^\infty(\Sigma,\R)$, defined by
\begin{equation*}
\phi = \det S \, ,
\end{equation*}
\item the \emph{induced volume form $\nu \in \Gamma(\bigwedge^{d+1}\mathrm{T}^*\Sigma)$}, defined by 
\begin{equation*}
\nu(X_1, \dots, X_d) = \dif \vol(X_1, \dots, X_d,N) \, ,
\end{equation*}
\item in the case where the Blaschke fundamental form $h$ is non-degenerate, the \emph{volume form $\dif\vol_h \in \Gamma(\bigwedge^{d+1}\mathrm{T}^*\Sigma)$ induced by} $h$ on $\Sigma$.
\end{itemize}

Note that given an open set $U \subseteq \R^d$, a $C^2$ immersion $f: U \to \A^{d+1}$ and a $C^1$ map $N: U \to \V^{d+1}$ transversal to $f$, the invariants above can still be defined on $U$ by pullback. In order to lighten them, all the following definitions and propositions will be stated for smooth hypersurfaces, but once again they can be extended to the case of immersed $C^2$ hypersurfaces with a $C^1$ transversal vector field.

\begin{definition}[Equiaffine and normal affine fields]
A transverse vector field $N$ on $\Sigma$ a smooth hypersurface of $\A^{d+1}$ is said to be \emph{equiaffine} if the transversal connection form $\tau$ vanishes. An equiaffine vector field is said to be an \emph{affine normal field} if moreover $\nu = \dif\vol_h$. 
\end{definition}

 Here are some fundamental facts from the theory of affine differential geometry.

\begin{proposition}
\label{prop facts about aff diff geo}
Let $\Sigma$ be a smooth hypersurface in $\A^{d+1}$.
\begin{itemize}
\item The rank and signature of the affine fundamental form $h$ only depend on $\Sigma$ and not on the choice of the transverse vector field $N$. Thus, $\Sigma$ is said to be \emph{non-degenerate} if $h$ is, in that case $h$ is a genuine pseudo-Riemannian metric. 
\item If $\Sigma$ is locally convex, it is non-degenerate if and only if it is \emph{locally uniformly convex}, meaning that $\Sigma$ can locally be parametrized as the graph of a map with positive definite Hessian matrix. Then $h$ is a Riemannian metric if and only if the transverse vector field $N$ is pointing towards the convex side.
\item Any non-degenerate hypersurface admits a unique (up to sign) affine normal field.
\end{itemize}
\end{proposition}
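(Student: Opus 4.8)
The plan is to derive everything from a single computation: the transformation of the affine invariants under a change of transverse vector field. Any two transverse fields on $\Sigma$ are related by $\tilde N = \lambda N + Z$, where $\lambda$ is a nowhere-vanishing function (both fields being transverse) and $Z$ is a tangent vector field. Substituting this into the two structure equations and using the Gauss formula $D_X Z = \nabla_X Z + h(X,Z)N$ for the tangent field $Z$, I would read off
\begin{equation*}
\tilde h = \frac{1}{\lambda}\, h, \qquad \tilde\tau = \tau + \dif \log |\lambda| + \frac{1}{\lambda}\, h(Z, \cdot),
\end{equation*}
together with $\tilde\nu = \lambda\, \nu$ for the induced volume form (the determinant $\det(X_1, \dots, X_d, Z)$ vanishes since $Z$ is tangent) and $\dif\,\vol_{\tilde h} = |\lambda|^{-d/2}\, \dif\,\vol_h$ for the volume of $h$ (scaling a metric by $1/\lambda$ scales its volume by $|\lambda|^{-d/2}$). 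These four identities contain all the content.

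The first statement is then immediate: since $\lambda$ is nowhere vanishing, $\tilde h = \lambda^{-1} h$ is a pointwise nonzero rescaling of $h$, which preserves the rank and preserves the signature up to the overall sign set by the sign of $\lambda$, i.e.\ by the choice of side. For the second statement I would work in a local affine chart where the locally convex $\Sigma$ is the graph of a convex function $u: U \subseteq \R^d \to \R$, and compute $h$ for the convenient vertical transverse field $N = e_{d+1}$. Writing the immersion as $x \mapsto (x, u(x))$, flatness of $D$ gives $D_{\partial_i}\partial_j = (0, \partial_i \partial_j u)$, whose $N$-component is exactly $\partial_i \partial_j u$, so $h = \Hess u$ for this choice. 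Hence $h$ is non-degenerate if and only if $\Hess u$ is positive definite (convexity forcing $\Hess u \geq 0$), that is, if and only if $\Sigma$ is locally uniformly convex; and for the vertical $N$ pointing to the convex side, $h = \Hess u$ is positive definite, hence Riemannian. The first statement then transports non-degeneracy, and via the sign of $\lambda$ the definiteness, to an arbitrary transverse field.

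For the third statement I would treat uniqueness and existence separately, both using that $h$ is non-degenerate. For uniqueness, if $N$ and $\tilde N$ are both affine normals, then comparing $|\tilde\nu| = \dif\,\vol_{\tilde h}$ with $|\nu| = \dif\,\vol_h$ through the scaling laws forces $|\lambda| = |\lambda|^{-d/2}$, hence $|\lambda| = 1$ and $\lambda = \pm 1$; plugging $\tau = \tilde\tau = 0$ and constant $\lambda$ into the law for $\tilde\tau$ then leaves $\frac{1}{\lambda} h(Z, \cdot) = 0$, so $Z = 0$ by non-degeneracy and $\tilde N = \pm N$. For existence, starting from an arbitrary transverse field I would first observe that the volume-normalisation condition involves only $\lambda$ and not $Z$, so it determines $|\lambda|$ explicitly as a positive function; fixing that $\lambda$, the equiaffine condition $\tilde\tau = 0$ becomes $h(Z, \cdot) = -\lambda\,(\tau + \dif\log|\lambda|)$, which has a unique tangent solution $Z$ precisely because non-degeneracy makes $X \mapsto h(X, \cdot)$ an isomorphism onto $1$-forms. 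The field $\tilde N = \lambda N + Z$ is then the affine normal.

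I expect the main obstacle to be bookkeeping rather than conceptual: getting the transformation law for $\tau$ exactly right, since the logarithmic derivative $\dif\log|\lambda|$ and the tangential correction $\frac{1}{\lambda}h(Z,\cdot)$ are easy to misplace, and, in the existence argument, recognising the crucial decoupling whereby the volume condition pins down $\lambda$ independently of $Z$, so that the two normalisations can be solved in sequence rather than simultaneously. The sign subtlety in the first statement, namely that the signature is only well defined up to the global sign fixed by the choice of side, is also worth stating explicitly.
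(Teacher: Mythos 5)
The paper itself offers no proof of this proposition: it is stated as a list of standard facts, with the reader referred to the affine differential geometry literature (Nomizu--Sasaki, Loftin, Li--Simon--Zhao--Hu). Your proof is correct and is, in substance, the standard textbook argument from exactly those references: derive the transformation laws $\tilde h = \lambda^{-1}h$, $\tilde\nu = \lambda\nu$, $\dif\,\vol_{\tilde h} = |\lambda|^{-d/2}\dif\,\vol_h$ and $\tilde\tau = \tau + \dif\log|\lambda| + \lambda^{-1}h(Z,\cdot)$ under $\tilde N = \lambda N + Z$, get the first claim from pointwise rescaling, the second from the graph computation $h = \Hess u$ for the vertical field, and the third by solving the volume condition for $|\lambda|$ and then the equiaffine condition for $Z$ via non-degeneracy of $h$. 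All four transformation laws check out against the paper's sign conventions ($D_X N = S(X) + \tau(X)N$ with no minus sign), and your observation that the volume condition decouples from $Z$ is precisely what makes the existence argument work. Two of your side remarks are worth emphasising because the paper glosses over them: the signature of $h$ is only well defined up to the interchange $(p,q)\leftrightarrow(q,p)$ governed by the sign of $\lambda$, so the paper's first bullet is, strictly, about rank and unsigned signature; and the normalisation $\nu = \dif\,\vol_h$ must be read as an equality of densities (your $|\tilde\nu| = \dif\,\vol_{\tilde h}$), since reading it as an equality of forms would kill the sign freedom and contradict the stated ``unique up to sign'' --- your use of absolute values is the correct interpretation and is what produces $\lambda = \pm 1$ rather than $\lambda = 1$.
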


\begin{definition}[Blaschke metric]
\label{def Blaschke metric}
Let $\Sigma$ be a smooth locally uniformly convex hypersurface in $\A^{d+1}$. The Riemannian metric $h$ given by the unique affine normal pointing towards the convex side of $\Sigma$ is called the \emph{affine metric} or \emph{Blaschke metric}.
\end{definition}

\begin{definition}[Affine sphere]
A non-degenerate hypersurface $\Sigma$ is called an \emph{affine sphere} if its shape operator with respect to an affine normal field is of the form $S = \lambda \, \Id$, where $\lambda$ is a real constant. It is called \emph{proper} if $\lambda \neq 0$ and \emph{improper} if $\lambda = 0$. If $\Sigma$ is locally convex, then it is called \emph{hyperbolic} if for the affine normal pointing towards the convex side $\lambda > 0$ and \emph{elliptic} if for that normal $\lambda < 0$.
\end{definition}
It can be shown that being a proper affine sphere with shape operator $\lambda \, \Id$ is a condition equivalent to the existence of a point $o \in \A^{d+1}$, called the \emph{centre of the affine sphere}, such that $N(p) = \lambda \, \overrightarrow{op}$ is an affine normal field.

\begin{theorem}[Cheng--Yau \cite{Cheng_Yau_77,Gigena_81,Cheng_Yau_86}]
\label{theo Cheng--Yau V1}
For any open proper convex cone $\C \subseteq \A^{d+1}$, there exists
a unique complete hyperbolic affine sphere $\S \subset \C$ centred at the vertex of $\C$, asymptotic to $\partial \C$ with affine shape operator the identity.
\end{theorem}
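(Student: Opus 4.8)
The plan is to reduce the affine sphere equation to a singular Monge--Ampère Dirichlet problem and then appeal to the Dirichlet theory for the real Monge--Ampère operator. Fixing the vertex $o$ of $\C$ as the origin and using that $\C$ is proper, I would choose an affine hyperplane transverse to $\C$ whose section $\Omega \subset \R^d$ is a bounded convex open set. The classical computation of affine differential geometry (see \cite{Loftin_10,LSZH_15}) then shows that a radial graph over $\C$ is a hyperbolic affine sphere centred at $o$ with shape operator $\Id$ if and only if a convex potential $u : \Omega~\longrightarrow~\R_-$ solves
\begin{equation*}
\det \Hess u = \vp{\frac{-1}{u}}^{d+2}, \qquad u < 0 \text{ in } \Omega, \qquad u = 0 \text{ on } \partial \Omega.
\end{equation*}
The centring at $o$ and asymptoticity to $\partial \C$ are encoded in the boundary condition $u = 0$, while completeness of the Blaschke metric will correspond to the blow-up rate of $u$ near $\partial \Omega$.

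The heart of the existence argument is solving this singular problem by approximation. For each $k$ I would solve, on the fixed domain $\Omega$, the \emph{nondegenerate} problem with constant boundary data $u_k = -1/k$ on $\partial \Omega$; since the right-hand side $\vp{-1/u}^{d+2}$ is positive and nondecreasing in $u$ for $u < 0$, these Dirichlet problems are solvable by the method of continuity and admit smooth strictly convex solutions. The decisive input is a priori estimates independent of $k$: the comparison principle gives monotonicity of the sequence $\bp{u_k}$ and locally uniform bounds, while interior second-derivative estimates of Pogorelov type, combined with Cheng--Yau's interior regularity theory for Monge--Ampère, promote these to locally uniform $C^\infty$ bounds on every compact subset of $\Omega$. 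Passing to the limit yields a smooth convex solution $u$ on $\Omega$, hence the desired hyperbolic affine sphere $\S \subset \C$.

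The main obstacle — the genuinely hard analytic content of the theorem — is twofold and concerns the boundary. First, one must pin down the precise asymptotics of $u$ as one approaches $\partial \Omega$, where the equation degenerates; this requires carefully constructed sub- and super-solutions (barriers) adapted to the local geometry of $\partial \Omega$, and it is exactly these barriers that force $u$ to vanish on $\partial \Omega$ and the sphere to be asymptotic to $\partial \C$. Second, one must verify that the Blaschke metric $h$ is \emph{complete}, which amounts to showing that the intrinsic distance from an interior point to $\partial \Omega$ is infinite; this follows from the same sharp boundary behaviour of $u$. Finally, uniqueness is the easiest step: any two solutions of the singular Dirichlet problem must coincide by the comparison principle for the Monge--Ampère operator, since the monotonicity of $t \mapsto \vp{-1/t}^{d+2}$ prevents their difference from attaining an interior extremum. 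This delivers the unique complete hyperbolic affine sphere $\S$ asymptotic to $\partial \C$ with affine shape operator the identity.
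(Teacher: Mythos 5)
The paper does not prove this statement: it is quoted as a classical theorem with citations to Cheng--Yau and Gigena, and the only trace of its proof in the paper is the coordinate restatement in Theorem~\ref{theo Cheng--Yau V2}, which records precisely the Monge--Amp\`ere reduction you use (note that $\vp{-1/u}^{d+2} = (-u)^{-d-2}$, so your equation is literally the one appearing there, up to working on $\Omega$ rather than the dual section $\Omega^*$). Your sketch is a faithful outline of the argument in the cited references: reduction of the affine-sphere condition to a singular Monge--Amp\`ere Dirichlet problem, approximation by non-degenerate problems, Pogorelov-type interior estimates plus Cheng--Yau interior regularity, barriers at the boundary, and comparison-principle uniqueness using the monotonicity of $t \mapsto (-t)^{-d-2}$.

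Two caveats, neither of which breaks the outline, but which are exactly where the cited papers do real work. First, the section $\Omega$ of an arbitrary proper convex cone need not be smooth or strictly convex, so the approximating Dirichlet problems with boundary data $-1/k$ are not solvable by a naive method of continuity; one must produce generalized (Aleksandrov) solutions and upgrade them by interior regularity, and one also needs a negative supersolution vanishing on $\partial\Omega$ (e.g.\ a small multiple of the solution of $\det \Hess v = 1$) to prevent the increasing limit of the $u_k$ from degenerating to $0$ inside $\Omega$. Second, your uniqueness argument applies to solutions of the singular Dirichlet problem, i.e.\ to affine spheres realised as radial graphs over all of $\Omega$ with potential vanishing on $\partial\Omega$; to obtain uniqueness as stated in Theorem~\ref{theo Cheng--Yau V1} one must additionally check that every complete hyperbolic affine sphere in $\C$, centred at the vertex, asymptotic to $\partial\C$ and with shape operator the identity, is such a radial graph --- transversality of the position vector (which is proportional to the affine normal) makes it a radial graph locally, and completeness together with the asymptotic hypothesis globalises this. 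With those two points made explicit, your proposal is the proof from the literature on which the paper relies.
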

Here, \emph{proper} again means that the cone is non-empty and with closure containing no entire line. The hypersurface is \emph{complete} in the sense that the Riemannian metric on it induced by the ambient Euclidean metric is complete\footnote{In the case of an hyperbolic affine sphere, it is equivalent to completeness of the Blaschke metric \cite[Theorem~3.54]{LSZH_15}, but in a more general setting it is important to keep in mind the differences between the distinct kinds of completeness. A whole discussion on that subject can be found in \cite[Section 3.4]{LSZH_15}.}, and it is \emph{asymptotic to $\partial \C$} in the sense that the distance from $X \in \Sigma$ to $\partial \C$ with respect to an ambient Euclidean metric tends to $0$ as $X$ goes to infinity in $\A^{d+1}$. 

\begin{definition}[Affine conormal field]
\label{def conormal}
Let $\Sigma$ be a smooth hypersurface in $\A^{d+1}$ with an affine normal field $N$. The \emph{affine conormal field} on $\Sigma$ is the field $N^*:\Sigma
 \to (\V^{d+1})^*$ taking value in the vector space $(\V^{d+1})^*$ dual to $\V^{d+1}$, such that for all $p\in \Sigma$, $N^*(p)$ is the unique linear form $\varphi$ satisfying 
\begin{equation*}
\varphi\bp{N(p)} = 1 \quad \text{and} \quad \ker \varphi = \mathrm{T}_p \Sigma \, . 
\end{equation*}
\end{definition}

Note that the vector space $(\V^{d+1})^*$ dual to $\V^{d+1}$ is naturally endowed with a linear volume form $vol^*$ defined in the following way: the volume of a basis $\varphi_1, \dots,\varphi_{d+1}$ of $(\V^{d+1})^*$ is 
\begin{equation*}
 vol^*(\varphi_1, \dots,\varphi_{d+1}) = \frac{1}{vol({e}_{d+1},\dots,{e}_1)} \, ,
\end{equation*}
where ${e}_1,\dots,{e}_{d+1}$ is the basis of $\V^{d+1}$ pre-dual to $\varphi_1, \dots,\varphi_{d+1}$.

\begin{proposition}[{\cite[Proposition~1]{Gigena_81}}]
\label{prop dual of an affine sphere is affine sphere} Let $\Sigma \subset \R^{d+1}$ be a proper affine sphere. The image of the conormal map $N^*$ on $\Sigma$ is a proper affine sphere in $(\V^{d+1})^*$ (seen as an affine space endowed with a parallel volume form induced by $vol^*$) centred at the origin. We shall call it the \emph{affine sphere dual to $\Sigma$}. 

If $\Sigma$ is locally convex, then $N^*(\Sigma)$ is of the same type (elliptic or hyperbolic) as $\Sigma$. 
\end{proposition}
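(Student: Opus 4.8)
The plan is to exploit the two defining relations of the conormal field and differentiate them, reducing everything to a first- and second-order computation along $\Sigma$. First I would normalise the situation: by the characterisation of proper affine spheres recalled just after the definition of affine spheres, one may place the centre $o$ at the origin, so that the affine normal is the position field $N(p)=\lambda\,p$ with $\lambda\neq 0$, and by Proposition~\ref{prop facts about aff diff geo} the affine fundamental form $h$ is a nondegenerate (pseudo-)Riemannian metric. The conormal relations of Definition~\ref{def conormal} then read $\langle N^*,N\rangle=1$ and $\langle N^*,f_*Y\rangle=0$ for every tangent field $Y$; in particular $\langle N^*(p),p\rangle=1/\lambda$.

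Differentiating these two relations in a tangent direction $X$, using $D_XN=\lambda\,f_*X$ together with the Gauss formula $D_X(f_*Y)=f_*(\nabla_XY)+h(X,Y)N$, I obtain
\begin{equation*}
\langle D_XN^*,N\rangle=0,\qquad \langle D_XN^*,f_*Y\rangle=-h(X,Y).
\end{equation*}
Since $h$ is nondegenerate, the second identity shows that $X\mapsto D_XN^*$ is injective, so $N^*$ is an immersion and $N^*(\Sigma)$ is a hypersurface; the first identity shows that $N(p)$, viewed in $\bp{\V^{d+1}}^{**}=\V^{d+1}$, annihilates $\mathrm{T}_{N^*(p)}N^*(\Sigma)$, while $\langle N^*(p),N(p)\rangle=1\neq 0$, so $N$ is a conormal field of the dual hypersurface.

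The key observation is that the position field of the dual surface, namely $N^*$ itself, is an equiaffine transverse field. Indeed $D_XN^*$ is by construction tangent to $N^*(\Sigma)$, and $\langle N^*,N\rangle=1\neq0$ gives transversality; writing the Weingarten equation $D_XN^*=S^*(X)+\tau^*(X)N^*$ then forces $\tau^*=0$ and $S^*=\Id$. Differentiating $\langle D_YN^*,N\rangle=0$ once more and feeding in $D_XN=\lambda\,f_*X$ isolates the normal component of the dual Gauss formula and yields $h^*=\lambda\,h$; in particular $N^*(\Sigma)$ is nondegenerate, and when $\Sigma$ is locally convex (so $h$ is positive definite and $N^*(\Sigma)$ is again locally convex) it is locally uniformly convex.

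To conclude that $N^*(\Sigma)$ is a \emph{proper affine sphere centred at the origin}, I would fix the normalisation of the Blaschke normal. Any constant rescaling $c\,N^*$ of the position field is again equiaffine with shape operator $c\,\Id$, so it remains to choose $c$ so that the induced volume equals the $h^*$-volume. Evaluating the dual determinant on the frame $(f_*X_1,\dots,f_*X_d,N)$ turns the conormal pairings into a block-diagonal matrix and gives
\begin{equation*}
\det{}^*\!\bigl(D_{X_1}N^*,\dots,D_{X_d}N^*,N^*\bigr)=\frac{(-1)^d\det\bigl(h(X_i,X_j)\bigr)}{\det(f_*X_1,\dots,f_*X_d,N)},
\end{equation*}
whose denominator is exactly $\dif\,\vol_h$ because $N$ is the affine normal of $\Sigma$; comparing with $\dif\,\vol_{h^*}$, controlled by $h^*=\lambda\,h$, pins down $c$, and the resulting affine normal $c\,N^*$ is still proportional to the position vector from the origin, so the quoted characterisation applies. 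I expect the main obstacle to be precisely this volume-normalisation step together with the sign bookkeeping: one must establish that the Blaschke affine normal of the dual is the correctly scaled position field rather than merely an equiaffine centroaffine field, and then track, through $h^*=\lambda\,h$ and the convention that the type is read off from the shape operator taken with the convex-side normal, that the sign of the constant — and hence the elliptic/hyperbolic dichotomy — is preserved.
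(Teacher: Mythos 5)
The paper never proves this proposition: it is imported verbatim from \cite[Proposition 1]{Gigena_81}, so there is no in-paper argument to compare yours against line by line. Judged on its own merits, your proof is correct and is essentially the classical centroaffine-duality argument (the one behind Gigena's proof and the treatment in \cite{Nomizu_Sassaki_95}): normalise the centre to the origin so that $N(p)=\lambda p$, differentiate the two conormal relations to obtain $\langle D_XN^*,N\rangle=0$ and $\langle D_XN^*,f_*Y\rangle=-h(X,Y)$, conclude that $N^*$ is an immersion whose own position field is a transverse equiaffine field with shape operator $c\,\Id$ after rescaling by a constant $c$, and with dual fundamental form $h^*=\lambda h$ (your computation of $h^*$ by differentiating $\langle D_YN^*,N\rangle=0$ and inserting $D_XN=\lambda f_*X$ is exactly right, as is the block-diagonal determinant identity). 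The two steps you flag as delicate do close correctly, and it is worth recording why: (i) the existence and uniqueness of the normalising constant follows because the volume condition takes the form $c\,\vert c\vert^{d/2}\,\nu^*=\dif\,\vol_{h^*}$, where $\nu^*$ and $\dif\,\vol_{h^*}$ are proportional nonvanishing $d$-forms and $c\mapsto c\,\vert c\vert^{d/2}$ is a bijection of $\R$; since $cN^*$ is then equiaffine and satisfies $\nu=\dif\,\vol_h$, it \emph{is} an affine normal by the paper's definition and the uniqueness in Proposition~\ref{prop facts about aff diff geo}, and being proportional to the position vector it exhibits the dual as a proper affine sphere centred at $0$; (ii) for the type statement, in the locally convex case the convex-side choice of affine normal for the dual must make $h^*_c=(\lambda/c)\,h$ positive definite, which forces $\mathrm{sgn}(c)=\mathrm{sgn}(\lambda)$, so the dual's shape operator constant has the same sign as $\lambda$ and the elliptic/hyperbolic dichotomy is preserved. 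In short: the paper buys the result by citation, while your argument supplies a correct self-contained proof that matches the standard one in the literature.
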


\section{Background on convex analysis and Legendre--Fenchel transform}
\label{sec Background CG}

This section presents notions and results from \emph{convex analysis}. We refer the reader to the classic literature \cite{Rockafellar_97} for a richer view of the theory.
 
\begin{definition}[Legendre--Fenchel transform]
\label{def Legendre--Fenchel transform}
Let $s: \R^d \to \R \cup \{+\infty\}$ be a lower semi-continuous function not constant and equal to $+\infty$. The \emph{Legendre--Fenchel transform} or \emph{Legendre--Fenchel conjugate} of $s$ is the function $s^*:\R^d \to \R \cup \{+\infty\}$ defined by 
\begin{equation*}
s^*(y) = \sup_{x \in \R^d}\bp{x \cdot y - s(x)} \, ,
\end{equation*}
where $\cdot$ is the usual scalar product on $\R^d$.
\end{definition}

The Legendre--Fenchel transform is a standard and fundamental object in analysis. One easily proves that:
\begin{itemize}
\item it is order reversing, that is, if $s \leq f$ then $f^* \leq s^*$,
\item if $c$ is a constant function $(s + c)^* = s^*-c$ ,
\item the Legendre--Fenchel transform of a function is lower semi-continuous and convex,
\item the \emph{biconjugate} $s^{**}$ of a function is its convex hull, that is the pointwise largest lower semi-continuous convex function bounded above by $s$. Hence,  if $s$ is convex and lower semi-continuous then $s^{**}=s$, and for any function $s$ (not necessarily convex) there are no new higher conjugates after the biconjugate as $s^{***}=s^*$.
\end{itemize}

The \emph{domain} of a convex lower semi-continuous function $s: \R^d \to \R \cup \{+\infty\}$ is the convex domain 
\begin{equation*}
\dom(s) \coloneqq \left\lbrace x \in \R^d \st s(x) < +\infty \right\rbrace .
\end{equation*}

\begin{proposition}[{\cite[Corollary~13.3.3]{Rockafellar_97}}]
\label{prop conjugate are Lipschitz}
If $s$ is a lower semi-continuous convex function on $\R^d$ with non-empty domain $\mathrm{dom}(s)$ included in the Euclidean ball $B_{\mathrm{Euc}}(0,r)$, then $s^*$ has finite values and is $r$-Lipschitz on the whole $\R^d$. 
\end{proposition}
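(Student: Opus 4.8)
The plan is to argue directly from the variational definition $s^*(y) = \sup_{x \in \R^d}\bp{x \cdot y - s(x)}$. The first observation is that $s(x) = +\infty$ off its domain, so each term $x \cdot y - s(x)$ equals $-\infty$ there and the supremum may be restricted to $x \in \dom(s)$, which is nonempty and contained in $B_{\mathrm{Euc}}(0,r)$. Two things must then be shown: that $s^*$ is everywhere finite, and that it is $r$-Lipschitz. I would handle finiteness first, since once $s^*$ is known to take finite values the Lipschitz bound reduces to a one-line estimate. The lower bound $s^*(y) > -\infty$ is immediate: fixing any $x_0 \in \dom(s)$ gives $s^*(y) \geq x_0 \cdot y - s(x_0) > -\infty$.

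The upper bound $s^*(y) < +\infty$ is where the real content lies, and it is the step I expect to be the main obstacle: I need $s$ to be bounded below on its domain in a way compatible with the boundedness of $\dom(s)$. For this I would invoke the standard fact from convex analysis that a proper lower semi-continuous convex function admits a continuous affine minorant, i.e. there exist $c \in \R^d$ and $b \in \R$ with $s(x) \geq c \cdot x + b$ for all $x$; this is precisely where the lower semicontinuity hypothesis is used. Combining this minorant with $\dom(s) \subseteq B_{\mathrm{Euc}}(0,r)$ and the Cauchy--Schwarz inequality, for every $x \in \dom(s)$ one gets
\begin{equation*}
x \cdot y - s(x) \leq x \cdot (y - c) - b \leq \lVert x \rVert \, \lVert y - c \rVert - b \leq r \, \lVert y - c \rVert - b,
\end{equation*}
so taking the supremum over $x$ yields $s^*(y) \leq r \, \lVert y - c \rVert - b < +\infty$. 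This is the only step that is more than routine, as it is what makes the boundedness of $\dom(s)$ interact with the boundedness from below of $s$.

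Finally, with $s^*$ now finite everywhere, the Lipschitz property follows by the usual supremum-of-Lipschitz-functions argument. Fixing $y, y' \in \R^d$, for each $x \in \dom(s)$ the bound $\lVert x \rVert \leq r$ and Cauchy--Schwarz give
\begin{equation*}
x \cdot y - s(x) = \bp{x \cdot y' - s(x)} + x \cdot (y - y') \leq s^*(y') + r \, \lVert y - y' \rVert.
\end{equation*}
Taking the supremum over $x \in \dom(s)$ yields $s^*(y) \leq s^*(y') + r \, \lVert y - y' \rVert$, and exchanging the roles of $y$ and $y'$ gives $\lvert s^*(y) - s^*(y') \rvert \leq r \, \lVert y - y' \rVert$, so that $s^*$ is $r$-Lipschitz on all of $\R^d$, as claimed.
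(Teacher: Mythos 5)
Your proof is correct, and there is nothing in the paper to compare it against: the paper states Proposition~\ref{prop conjugate are Lipschitz} as background material without proof, referring the reader to the classical literature (Rockafellar). Your argument is the standard one and is complete: restrict the supremum defining $s^*$ to $\dom(s)$, get finiteness from an affine minorant of $s$ combined with $\dom(s) \subseteq B_{\mathrm{Euc}}(0,r)$, then obtain the $r$-Lipschitz bound by the supremum-of-$r$-Lipschitz-functions estimate. One small remark on attribution of hypotheses: the affine minorant is not really where lower semicontinuity enters, since every proper convex function on $\R^d$ admits one (take a subgradient at a point of the relative interior of its domain). If you want to use lsc more directly, note instead that $s$, being lower semi-continuous on the compact ball $\overline{B_{\mathrm{Euc}}(0,r)} \supseteq \dom(s)$ and never equal to $-\infty$, attains its infimum there, hence $s \geq m$ on $\dom(s)$ for some finite $m$; this gives $s^*(y) \leq r \, \lVert y \rVert - m$ immediately, slightly shortening the finiteness step.
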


\begin{remark}
\label{rem extension of convex function}
Let $U$ is a bounded open convex subset of $\R^d$ and $s:U \to \R$ be a convex function. The function $\bar s:\R^d \to \R \cup \{+\infty\}$ defined by for all $x \in \R^{d+1}$,
\begin{equation*}
 \bar{s}(x) = \begin{cases}
 s(x) & \text{if} \ x \in U , \\
 \liminf\limits_{{x' \in U \to x}}s(x') & \text{if} \ x \in \partial U , \\
 + \infty & \text{if} \ x \notin \overline U , \\
 \end{cases}
\end{equation*}
is the only convex lower semi-continuous function restricting to $s$ on $U$ and taking value $+\infty$ on $\R^d \setminus\bar{U}$ (its values on $\partial U$ are a priori in $\R \cup \{+\infty\}$).
\end{remark}

If $U$ is a bounded open convex subset of $\R^d$, we shall define the Legendre--Fenchel transform of a convex function $s:U \to \R$ as the transform of its extension $\bar s$ given by Remark~\ref{rem extension of convex function}. Then, $s^*$ is defined by, for all $y \in \R^{d}$, 
\begin{equation*}
s^*(y) \coloneqq \bar{s}^*(y)= \sup_{x \in \overline U}\bp{x \cdot y - \bar s(x)} = \sup_{x \in U}\bp{x \cdot y - s(x)}\, ,
\end{equation*}
and it has finite values on the all $\R^d$ by Proposition~\ref{prop conjugate are Lipschitz}. Moreover, we have $(s^{**})_{\vert U} = \bar{s}_{\vert U} = s$.

\begin{proposition}
\label{prop uniform convergence of conjugates}
Let $U$ be a bounded open convex subset of $\R^d$. A sequence $s_n$ of convex functions $U \to \R$ uniformly converges to some convex function $s: U \to \R$ if and only if $f_n=s_n^*: \R^{d} \to \R$ uniformly converges to some convex function $f:\R^{d} \to \R$ . 
\end{proposition}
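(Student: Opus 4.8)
The plan is to reduce both implications to a single \emph{stability lemma}: if two extended-real convex functions on the same space differ by at most $\varepsilon$ pointwise, then so do their Legendre--Fenchel conjugates. Granting this, each direction of the equivalence follows by applying the lemma to the appropriate sequence, using the biconjugacy identity $(s^{**})_{|U} = s$ recalled after Proposition~\ref{prop conjugate are Lipschitz} to pass back and forth between $s_n$ and $f_n = s_n^*$.

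First I would prove the stability lemma. Let $g,h : \R^d \to \R \cup \{+\infty\}$ satisfy $|g - h| \le \varepsilon$ wherever both are finite, with $g = +\infty$ exactly where $h = +\infty$. Then $g \le h + \varepsilon$ everywhere, so by the order-reversing property of the transform $(h + \varepsilon)^* \le g^*$, and by its behaviour under the addition of a constant $(h+\varepsilon)^* = h^* - \varepsilon$; hence $h^* \le g^* + \varepsilon$. Exchanging the roles of $g$ and $h$ gives $|g^* - h^*| \le \varepsilon$ pointwise. This is the only computation in the argument, and it is immediate from the elementary properties of the conjugate listed after Definition~\ref{def Legendre--Fenchel transform}.

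For the forward implication, I would suppose $s_n \to s$ uniformly on $U$ and write $\tilde{s}_n, \tilde{s}$ for the lower semi-continuous convex extensions to $\R^d$ (equal to $+\infty$ off $\bar{U}$). Uniform convergence on $U$ gives $|\tilde{s}_n - \tilde{s}| \le \varepsilon_n$ on $U$; on $\partial U$ the same bound survives by passing to the lower limits defining the extensions, and off $\bar{U}$ both functions are $+\infty$. The stability lemma then yields $|s_n^* - s^*| \le \varepsilon_n$ on all of $\R^d$, so $f_n = s_n^* \to f := s^*$ uniformly; the limit $f$ is convex as a conjugate, and finite and $r$-Lipschitz by Proposition~\ref{prop conjugate are Lipschitz}, where $r$ is any radius with $U \subseteq B_{\mathrm{Euc}}(0,r)$.

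For the converse, I would suppose $f_n = s_n^* \to f$ uniformly on $\R^d$ with $f$ convex, and apply the stability lemma directly to the sequence $f_n$ and its limit $f$, obtaining $|f_n^* - f^*| \le \varepsilon_n$ pointwise on $\R^d$. On $U$ one has $f_n^* = s_n^{**} = s_n$, so restricting to $U$ and setting $s := (f^*)_{|U}$ gives $\sup_U |s_n - s| \le \varepsilon_n \to 0$; in particular $s$ is finite on $U$ (being within $\varepsilon_n$ of the finite $s_n$) and convex as a restriction of the conjugate $f^*$, so $s_n \to s$ uniformly on $U$ as required. The only point requiring care---and the mildest obstacle in the proof---is the bookkeeping between the two domains: the conjugate of a function on the \emph{bounded} set $U$ lives on all of $\R^d$, while the conjugate of a function on $\R^d$ is supported on $\bar{U}$. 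The identity $(s^{**})_{|U} = s$ together with Proposition~\ref{prop conjugate are Lipschitz} is exactly what reconciles the two sides and guarantees finiteness of all the conjugates involved.
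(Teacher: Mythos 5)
Your proof is correct and follows essentially the same route as the paper: the paper's argument is exactly your stability lemma in condensed form, deducing $s_n-\varepsilon \leq s \leq s_n+\varepsilon \iff f_n-\varepsilon \leq f \leq f_n+\varepsilon$ from the order-reversing property, the constant-shift rule, and the biconjugacy identity $(s^{**})_{\vert U}=s$. Your version merely spells out the bookkeeping with the lower semi-continuous extensions and boundary values that the paper leaves implicit.
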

\begin{proof}
For $s$ a finite valued convex function on $U$ with conjugate $f$, as $(s^{**})_{\vert U}=s$ and $(s_n^{**})_{\vert U}=s_n$, using the order reversing property of the Legendre--Fenchel transform we get the equivalence
\begin{equation*}
s_n-\varepsilon \leq s \leq s_n+ \varepsilon \iff f_n-\varepsilon \leq f \leq f_n+ \varepsilon \, . \qedhere
\end{equation*}
\end{proof}

\begin{definition}[Subgradient and subdifferential]
\label{def subgrad subdif}
Let $s: \R^d \to \R \cup \{+\infty\}$ be a lower semi-continuous convex function and $x_0 \in \R^d$, an element $y \in \R^d$ is a \emph{subgradient} of $s$ at $x_0$ if for all $x\in \R^d$
\begin{equation*}
s(x_0) + (x-x_0) \cdot y \leq s(x).
\end{equation*}
The set of all subgradients of $s$ at $x_0$ is a convex set of $\R^d$ called the \emph{subdifferential} of $s$ at $x_0$ and is denoted by $\partial s(x_0)$. If $x_0$ is in the interior of $\dom(s)$, its subdifferential is non-empty.
\end{definition}

\begin{theorem}[{Fenchel Inequality Theorem \cite[Theorem~23.5]{Rockafellar_97}}]
\label{theo equality for subgradients}
Let $s$ be a lower semi-continuous convex function $s$ on $\R^d$ and $x,y \in \R^d$. The following inequality, known as the \emph{Fenchel Inequality}, holds:
\begin{equation}
\label{eq Fenchel Inequality}
x \cdot y \leq s(x) + s^*(y) \, .
\end{equation} 
Moreover the following conditions are equivalent:
\begin{enumerate}[label=(\roman*)]
\item the equality is achieved,
\item $y\in \partial s(x)$,
\item $x\in \partial s^*(y)$. 
\end{enumerate}
\end{theorem}

One easily notices that if $s_1,s_2:\R^{d} \to \R\cup\{+\infty\}$ are two lower semi-continuous convex functions,
\begin{equation*}
\partial s_1(x) +\partial s_2(x) \subseteq \partial (s_1+s_2)(x) \, .
\end{equation*}
In some cases, that inclusion is actually an equality.

\begin{theorem}[{\cite[Theorem~23.8]{Rockafellar_97}}]
\label{theo equality of sum of differentials}
Let $s_1,s_2: \R^d \to \R \cup \{+\infty\}$ be two lower semi-continuous convex functions. If $\dom(s_1) \cap \dom(s_2)$ has non-empty interior, then for all $x \in \R^d$,
\begin{equation*}
\partial s_1(x) +\partial s_2(x) = \partial (s_1+s_2)(x) \, .
\end{equation*}
\end{theorem}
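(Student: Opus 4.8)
The plan is to prove the two inclusions separately. The inclusion $\partial s_1(x) + \partial s_2(x) \subseteq \partial(s_1 + s_2)(x)$ is the elementary one, already recorded before the statement: adding the two subgradient inequalities for $y_1 \in \partial s_1(x)$ and $y_2 \in \partial s_2(x)$ immediately yields $y_1 + y_2 \in \partial(s_1+s_2)(x)$. All the difficulty lies in the reverse inclusion, which I would establish through a Hahn--Banach separation argument; the hypothesis on the interior of $\dom(s_1) \cap \dom(s_2)$ will intervene at exactly one crucial place.

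First I would normalise the problem. Fix $x_0$ and a subgradient $y \in \partial(s_1+s_2)(x_0)$; since $x_0 \in \dom(s_1) \cap \dom(s_2)$, both $s_1(x_0)$ and $s_2(x_0)$ are finite. Replacing $s_1$ by $x \mapsto s_1(x + x_0) - s_1(x_0) - x \cdot y$ and $s_2$ by $x \mapsto s_2(x + x_0) - s_2(x_0)$ only translates the variable and shifts the two subdifferentials by fixed vectors, so without loss of generality $x_0 = 0$, $s_1(0) = s_2(0) = 0$, and $0 \in \partial(s_1 + s_2)(0)$. This last condition now reads $s_1(x) + s_2(x) \ge 0$ for every $x$, that is $-s_2 \le s_1$ pointwise with equality at the origin. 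The goal reduces to finding a vector $\ell$ with $\ell \in \partial s_1(0)$ and $-\ell \in \partial s_2(0)$: then $y_1 \coloneqq \ell$ and $y_2 \coloneqq -\ell$ decompose $0 = y_1 + y_2$ as required, and undoing the normalisation recovers a decomposition of the original $y$.

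The heart of the proof is to sandwich a linear form between $-s_2$ and $s_1$. In $\R^d \times \R$ I would consider the strict epigraph $C_1 = \left\lbrace (x,t) \st t > s_1(x) \right\rbrace$ and the strict hypograph $C_2 = \left\lbrace (x,t) \st t < -s_2(x) \right\rbrace$. Both are non-empty convex sets, and they are disjoint precisely because $-s_2 \le s_1$; hence the separation theorem yields a hyperplane separating them. The delicate point, and the one I expect to be the main obstacle, is that this separating hyperplane can be chosen \emph{non-vertical}. Indeed, were it of the form $\left\lbrace (x,t) \st w \cdot x = b \right\rbrace$ with $w \neq 0$, then for any $x$ in the interior of $\dom(s_1) \cap \dom(s_2)$ both $C_1$ and $C_2$ would contain points projecting onto $x$ (take $t$ large, respectively very negative), forcing $w \cdot x = b$ on a non-empty open set and hence $w = 0$, a contradiction. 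This is the unique spot where the non-empty interior hypothesis is essential.

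A non-vertical separating hyperplane is the graph of an affine function $a(x) = \ell \cdot x + c$ satisfying $-s_2 \le a \le s_1$ on all of $\R^d$. Evaluating at the origin and using $s_1(0) = -s_2(0) = 0$ forces $c = 0$, so $a$ is the linear form $x \mapsto \ell \cdot x$. The inequality $\ell \cdot x \le s_1(x)$ for all $x$, together with $s_1(0) = 0$, is exactly the statement $\ell \in \partial s_1(0)$; likewise $-\ell \cdot x \le s_2(x)$ gives $-\ell \in \partial s_2(0)$. This produces the desired decomposition and closes the reverse inclusion. Apart from the non-vertical separation step, the remaining arguments are routine bookkeeping with translations and the defining subgradient inequality.
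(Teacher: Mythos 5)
Your proof is correct. Note first that the paper does not prove this statement at all: it is quoted directly as Theorem 23.8 of Rockafellar's \emph{Convex Analysis}, so there is no in-paper argument to compare against. What you have written is a correct, self-contained proof by the classical epigraph-separation route (the standard textbook proof of the Moreau--Rockafellar sum rule): the easy inclusion by adding subgradient inequalities, the normalisation to $x_0=0$, $s_1(0)=s_2(0)=0$, $-s_2 \le s_1$, and then Hahn--Banach separation of the strict epigraph of $s_1$ from the strict hypograph of $-s_2$, with the non-empty-interior hypothesis used exactly once to rule out a vertical separating hyperplane. All the steps check out; two small points you leave implicit are harmless but worth being aware of: (i) when $\partial(s_1+s_2)(x_0)=\emptyset$ (in particular when $x_0\notin\dom(s_1)\cap\dom(s_2)$, where the left-hand side is also empty) the reverse inclusion is vacuous, which is why you may assume a subgradient $y$ exists and hence that $x_0$ lies in both domains; and (ii) the orientation of the separation (with $C_1$ above and $C_2$ below the affine graph) is forced because the two sets are vertically unbounded in opposite directions, which is what turns weak separation into the sandwich $-s_2 \le a \le s_1$. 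For comparison, Rockafellar's own proof of Theorem 23.8 takes a different route, via the conjugacy formula $(s_1+s_2)^* = \mathrm{cl}\bigl(s_1^* \mathbin{\square} s_2^*\bigr)$ (infimal convolution) and its exactness, and it works under the weaker hypothesis $\mathrm{ri}\,\dom(s_1)\cap\mathrm{ri}\,\dom(s_2)\neq\emptyset$; your argument genuinely needs the full-dimensional interior hypothesis as stated in the paper (otherwise the vertical-hyperplane exclusion would have to be run inside the affine hull), but that is precisely the form the paper uses, so nothing is lost.
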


\begin{proposition}[{\cite[Theorems 25.1 and 25.5]{Rockafellar_97}}]
\label{prop C1 convex function}
Let $s: \R^d \to \R\cup\{+\infty\}$ be a lower semi-continuous convex function. It is differentiable at a point $x \in \dom (s)$ if and only if its subdifferential $\partial s(x)$ is a singleton. In that case $\partial s(x) = \left\lbrace \grad s(x) \right\rbrace$ where $\grad$ is the gradient for the usual scalar product $\cdot$ on $\R^d$.

A lower semi-continuous convex function $s: \R^d \to \R \cup \{+\infty\}$ is $C^1$ on a non-empty open set $U \subseteq \dom(s)$ if and only if at every point of $U$ its subdifferential is a singleton. 
\end{proposition}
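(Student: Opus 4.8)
The plan is to route everything through the one-sided directional derivative of the convex function $s$. First I would recall that for a lower semi-continuous convex function and a point $x$ in the interior of $\dom(s)$ (which is the only regime where differentiability is meaningful, since differentiability at $x$ forces $s$ to be finite on a neighbourhood), the difference quotient $t \mapsto \bp{s(x+tv)-s(x)}/t$ is non-decreasing in $t>0$ by convexity, so the directional derivative
\begin{equation*}
s'(x;v) = \lim_{t \to 0^+} \frac{s(x+tv)-s(x)}{t}
\end{equation*}
exists and is finite for every $v \in \R^d$, with $v \mapsto s'(x;v)$ positively homogeneous and subadditive. The key identity I would establish is that $s'(x;\cdot)$ is exactly the support function of the subdifferential, namely $s'(x;v) = \max_{y \in \partial s(x)} y \cdot v$. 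The inequality $y\cdot v \leq s'(x;v)$ for $y \in \partial s(x)$ is immediate from the subgradient inequality of Definition~\ref{def subgrad subdif} evaluated along the ray $x+tv$; the reverse, that the maximum is attained, follows from a Hahn--Banach / supporting-hyperplane argument producing, for each fixed $v$, a subgradient realising $s'(x;v)$.

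Granting this support-function identity, the pointwise statement is essentially formal. If $\partial s(x) = \{y\}$ is a singleton, then $s'(x;v) = y\cdot v$ is linear in $v$, which is precisely Gâteaux differentiability with gradient $y$. Conversely, if $s$ is differentiable at $x$ then $s'(x;v) = \grad s(x)\cdot v$, so the support function of $\partial s(x)$ coincides with a single linear form, forcing $\partial s(x) = \{\grad s(x)\}$. The one genuine analytic point is upgrading Gâteaux to Fréchet differentiability: here I would use finite-dimensionality together with the fact that a finite convex function is locally Lipschitz on the interior of its domain. The functions
\begin{equation*}
g_t(v) = \frac{s(x+tv)-s(x)}{t} - y\cdot v
\end{equation*}
are convex, uniformly Lipschitz for small $t$, and decrease monotonically to $0$ as $t \to 0^+$ on the compact unit sphere; by Dini's theorem this convergence is uniform, which is exactly Fréchet differentiability.

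For the $C^1$ statement I would use the regularity of the set-valued subdifferential map. Assuming $\partial s$ is single-valued on the open set $U \subseteq \dom(s)$, the first part already gives that $s$ is differentiable on $U$ with $\partial s = \{\grad s\}$, so it remains only to show that $\grad s$ is continuous. I would invoke that the graph of $\partial s$ is closed, a direct consequence of passing to the limit in the subgradient inequality, and that $\partial s$ is locally bounded on the interior of $\dom(s)$, again from the local Lipschitz bound on $s$. A single-valued set-valued map with closed graph that is locally bounded is continuous, yielding continuity of $\grad s$ and hence $s \in C^1(U)$. The converse direction is immediate: $C^1$ on $U$ implies differentiability at every point of $U$, which by the first part forces the subdifferential to be a singleton there.

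The hard part, and the only place where more than formal manipulation around the support-function identity is needed, is the two regularity upgrades: the Gâteaux-to-Fréchet passage and the continuity of the gradient. Both are powered by the same two structural facts, namely that a finite convex function is locally Lipschitz on the interior of its domain and that pointwise (here monotone) convergence of uniformly Lipschitz convex functions on a compact set is automatically uniform. I would therefore isolate these two facts as the technical core and treat the rest of the argument as bookkeeping.
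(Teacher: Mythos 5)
The paper offers no proof of this proposition; it is quoted directly from Rockafellar (Theorems 25.1 and 25.5), so the only meaningful comparison is with the cited source, and your argument is essentially a reconstruction of it: the support-function identity $s'(x;v)=\max_{y\in\partial s(x)} y\cdot v$ is Rockafellar's Theorem 23.4, the G\^ateaux-to-Fr\'echet upgrade via local Lipschitzness and uniform convergence on the unit sphere is the substance of the proof of Theorem 25.1, and continuity of the gradient via the closed graph and local boundedness of $\partial s$ is how Theorem 25.5 is obtained. All of these steps are correct as you outline them, including the Dini argument (the difference quotients decrease monotonically to $s'(x;\cdot)$, so the convergence of the continuous functions $g_t$ to $0$ on the unit sphere is monotone) and the Hahn--Banach production of a subgradient realising $s'(x;v)$.

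One edge case should be patched before the proof matches the statement as written. The first assertion is an equivalence for every $x \in \dom(s)$, including points of $\dom(s)$ that are not interior to it, while your argument lives entirely in $\mathrm{int}\bp{\dom(s)}$. Your parenthetical handles only one direction (differentiability at $x$ forces $x$ to be interior); for the equivalence you must also show that a non-interior point cannot have a singleton subdifferential. This is one line: if $x \in \dom(s) \setminus \mathrm{int}\bp{\dom(s)}$ and $y \in \partial s(x)$, pick a non-zero $n$ with $n \cdot (z-x) \leq 0$ for all $z \in \dom(s)$ (a supporting hyperplane of $\overline{\dom(s)}$ at $x$, or any normal to the affine hull when $\dom(s)$ has empty interior); then $y + t n \in \partial s(x)$ for all $t \geq 0$, since the subgradient inequality is unaffected at points where $s = +\infty$. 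Hence $\partial s(x)$ is empty or unbounded, never a singleton, and the equivalence holds vacuously at such points. With this remark your proof is complete; note that in the $C^1$ statement no analogous issue arises, because an open set $U \subseteq \dom(s)$ is automatically contained in $\mathrm{int}\bp{\dom(s)}$.
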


\begin{proposition}
\label{prop strictly convex gives C1 conjugate}
Let $U$ be a bounded open subset of $\R^d$ and $s: U \to \R$ a convex function. Its Legendre--Fenchel transform $f = s^*: \R^n \to \R$ is a $C^1$ function if and only if $s$ is strictly convex.
\end{proposition}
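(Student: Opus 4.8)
The plan is to translate the regularity of $f=s^*$ into a single-valuedness statement about subdifferentials, and then match that single-valuedness with strict convexity of $s$. First I would extend $s$ to the lower semi-continuous convex function on $\R^d$ equal to $+\infty$ off $\overline{U}$, as described after Proposition~\ref{prop conjugate are Lipschitz}, so that $f=s^*$ is its Legendre--Fenchel transform. Since $\dom(s)\subseteq\overline{U}$ is bounded, Proposition~\ref{prop conjugate are Lipschitz} gives that $f$ is finite on all of $\R^d$; hence $\dom(f)=\R^d$ and every $y$ lies in the interior of $\dom(f)$, so that $\partial f(y)\neq\emptyset$. By Proposition~\ref{prop C1 convex function}, proving that $f$ is $C^1$ on $\R^d$ then amounts exactly to showing that $\partial f(y)$ is a singleton for every $y\in\R^d$.

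Next I would identify this subdifferential. By the equivalence (ii)$\Leftrightarrow$(iii) of Proposition~\ref{prop equality for subgradients} (the Fenchel inequality), one has $x\in\partial s^{*}(y)$ if and only if $y\in\partial s(x)$; thus $\partial f(y)=\{x\in\R^d : y\in\partial s(x)\}$ is precisely the set of points at which $y$ is a subgradient of $s$. Consequently, $f$ is $C^1$ if and only if no linear form is a subgradient of $s$ at two distinct points.

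The heart of the argument is then the following elementary lemma: a vector $y$ is a common subgradient of $s$ at two distinct points $x_0,x_1$ if and only if $s$ is affine on the segment $[x_0,x_1]$. For the forward direction, I would add the two subgradient inequalities $s(x_1)\ge s(x_0)+y\cdot(x_1-x_0)$ and $s(x_0)\ge s(x_1)+y\cdot(x_0-x_1)$ to force the equality $s(x_1)-s(x_0)=y\cdot(x_1-x_0)$, and then sandwich $s$ along the segment between the chord (from above, by convexity) and the supporting affine map $x\mapsto s(x_0)+y\cdot(x-x_0)$ (from below, by the subgradient inequality), which collapses to affineness. For the converse, a subgradient at a relative-interior point of the segment---a point of $U$, since $U$ is open---is checked by the same computation to be a subgradient at both endpoints. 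Combining with the previous step, ``no common subgradient at two distinct points'' is equivalent to ``$s$ is affine on no non-degenerate segment'', that is, to the strict convexity of $s$, which yields both implications of the proposition.

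I expect the only real subtlety to lie in the implication ``$s$ strictly convex $\Rightarrow$ $f$ is $C^1$'': the two hypothetical points of $\partial f(y)\subseteq\overline{U}$ need not belong to $U$, so strict convexity on the open set $U$ must be leveraged carefully at the boundary. The cleanest way around this is to observe that $\partial f(y)$ is exactly the set of maximizers of the upper semi-continuous map $x\mapsto x\cdot y-s(x)$ on the compact convex set $\overline{U}$, which is strictly concave as soon as $s$ is strictly convex and hence has a unique maximizer; the residual care is in checking that strict convexity of $s$ persists to the lower semi-continuous extension on $\partial U$, the interior statement being immediate.
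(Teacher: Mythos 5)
Your proposal follows the same route as the paper's proof: reduce the $C^1$ property of $f=s^*$ to single-valuedness of $\partial f$ via Proposition~\ref{prop C1 convex function}, convert $x\in\partial f(y)$ into $y\in\partial s(x)$ via Proposition~\ref{prop equality for subgradients}, and characterise common subgradients at two distinct points by affineness of $s$ on the segment joining them; your treatment of the direction ``$s$ not strictly convex $\Rightarrow f$ not $C^1$'' is correct and is essentially the paper's. The genuine gap is in the other direction, at exactly the point you flag and then defer: your claimed repair, that ``strict convexity of $s$ persists to the lower semi-continuous extension on $\partial U$,'' is false, and it cannot be fixed, because the implication ``$s$ strictly convex on $U$ $\Rightarrow f$ is $C^1$'' fails as stated once $d\geq 2$. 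Take $U$ to be the interior of the triangle $T\subset\R^2$ with vertices $(0,0)$, $(1,0)$, $\bigl(\tfrac12,1\bigr)$, and
\begin{equation*}
s(x_1,x_2)=x_2^2\Bigl(1+\bigl(x_1-\tfrac12\bigr)^2\Bigr).
\end{equation*}
On $U$ one has $x_2>0$ and $\bigl\vert x_1-\tfrac12\bigr\vert<\tfrac12$, so the Hessian of $s$ has determinant $4x_2^2\bigl(1-3(x_1-\tfrac12)^2\bigr)>0$ and positive trace: $s$ is strictly convex on $U$. Yet its lower semi-continuous extension vanishes identically on the bottom edge $[0,1]\times\{0\}$ of $T$ and is non-negative, so every point of that edge is a global minimiser of the extension; by Proposition~\ref{prop equality for subgradients} the whole edge is contained in $\partial f(0)$, and indeed $f(y_1,0)=\max(y_1,0)$, so $f$ is not differentiable at the origin. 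Thus the map $x\mapsto x\cdot y-s(x)$ on $\overline{U}$ is not strictly concave, and the uniqueness of maximisers you invoke breaks down precisely when the maximising face lies in $\partial U$.

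For fairness you should know that the paper's own proof has the same defect: it applies strict convexity of $s$ to the segment $[x,x']\subseteq\partial f(y)$, whose points may lie on $\partial U$, where only the extension is defined; so the statement is really being proved under the stronger hypothesis that the \emph{extension} is strictly convex on $\overline{U}$ (equivalently, affine on no non-degenerate segment of $\overline{U}$), under which both your argument and the paper's close up correctly. That stronger reading is also how the proposition is legitimately used later in the paper: for the functions $s+t\omega$ of Lemma~\ref{lem s + t omega is C1}, condition~\ref{cdt empty subdif on dOmega} of Definition~\ref{def gauge support function} forces $\partial(s+t\omega)(y)=\emptyset$ at every boundary point $y\in\partial\Omega^*$, whence (this is Lemma~\ref{lem DspK =DK}) the subdifferentials of $(s+t\omega)^*$ never meet $\partial\Omega^*$; then two distinct subgradients would span a segment inside the open set on which $s+t\omega$ is affine, contradicting strict convexity there. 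So the correct way to complete your proof is not to propagate strict convexity to the boundary (impossible), but either to strengthen the hypothesis to the extension, or to add an assumption, as the paper does in its applications, guaranteeing that no boundary point of $\overline{U}$ carries a subgradient.
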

\begin{proof}
\noindent\textbullet\ Assume that $s$ is strictly convex. Let $y \in \R^d$, for all $x \in \partial f(y)$, using Proposition~\ref{theo equality for subgradients} we have 
\begin{equation*}
s(x) = x \cdot y-f(y) \, .
\end{equation*}
Hence, if $x, x' \in \partial f(y)$, for all $t \in [0,1]$ we have $(1-t) x+t x' \in \partial f(y)$ and 
\begin{equation*}
s\bp{(1 - t) x + t x'}=(1 - t) x \cdot y + t x' \cdot y- f(y) = (1 - t) s(x)+t s(x') \, ,
\end{equation*}
but as $s$ is strictly convex that means that $x = x'$ and hence $\partial f(y) = \{x\}$. Thus, $f$ is $C^1$ by Proposition~\ref{prop C1 convex function}.

\noindent\textbullet\ Assume that $s$ is not strictly convex. Let $x,x'$ be two distinct point of $U$ such that for all $t \in [0,1]$
\begin{equation*}
s\bp{(1 - t) x + t x'} = (1 - t) s(x)+t s(x') \, .
\end{equation*}
Let $y$ be a subgradient of $s$ at $(x+x')/2$. We claim that $y$ is then also a subgradient of $s$ at $x$ and $x'$, meaning, by Proposition~\ref{theo equality for subgradients}, that $\left\lbrace x,x'\right\rbrace \subset \partial s^*(y)$ and thus that $ s^*(y)$ is not a singleton, so that $f=s^*$ is not $C^1$ by Proposition~\ref{prop C1 convex function}. 

Let us now prove our claim, by definition of a subgradient, $y$ satisfies 
\begin{equation*}
s \vp{\frac{x+x'}{2}} + \frac{x-x'}{2} \cdot y \leq s(x) \quad \text{and} \quad s \vp{\frac{x'+x}{2}} + \frac{x'-x}{2} \cdot y \leq s(x') \, .
\end{equation*}
The sum of two inequalities is
\begin{equation*}
 2 s \vp{\frac{x+x'}{2}} \leq s(x)+s(x') \, .
\end{equation*}
which happens to be an equality, by assumption. Hence, both inequalities must have been equalities:
\begin{equation*}
s \vp{\frac{x+x'}{2}} + \frac{x-x'}{2} \cdot y = s(x) \quad \text{and} \quad s \vp{\frac{x'+x}{2}} + \frac{x'-x}{2} \cdot y = s(x') \, .
\end{equation*}
Then, as $y$ is a subgradient of $s$ at $(x+x')/2$, by the Fenchel Inequality Theorem (Theorem~\ref{theo equality for subgradients}), we have
\begin{equation*}
s(x)= s\vp{\frac{x+x'}{2}} + \frac{x-x'}{2} \cdot y = s^*(y) + \frac{x+x'}{2} \cdot y + \frac{x-x'}{2} \cdot y = s^*(y) + x \cdot y \, .
\end{equation*}
By the Fenchel Inequality Theorem (Theorem~\ref{theo equality for subgradients}), that means that $y \in \partial s(x)$. As $x$ and $x'$ have symmetric roles, we also get that $y \in \partial s(x')$, proving our claim.
\end{proof}

\section{\texorpdfstring{$\C$-convex domains in the affine spacetime $(\A^{d+1},\C)$}{C-convex domains in the affine spacetime (A,C) }}
\label{sec About C-convex domains}

This section is devoted to generalisations of notions and tools from convex geometry in the Minkowski space to the model affine spacetimes $(\A^{d+1},\C)$ (introduced in subsection~\ref{subsec affine spacetimes}).

In this whole section, we let $\C \subset \V^{d+1}$ be an open proper convex cone with vertex at the origin. We arbitrarily choose a base point $O \in \A^{d+1}$, and a direct unit-volume basis $(e_1, \dots, e_{d+1})$ of $(\V^{d+1},vol)$ such that $e_{d+1} \in \C$. That gives a parametrisation of both $\A^{d+1}$ and $\V^{d+1}$ as $\R^{d+1}$ such that the linear volume form $vol$ on $\V^{d+1} \simeq \R^{d+1}$ is the determinant map and the cone $\C \subset \V^{d+1}\simeq \R^{d+1}$ is of the form
 \begin{equation}
 \label{eq expression of C}
 \C = \left\lbrace t \, (x,1) \st x \in \Omega, \ t>0 \right\rbrace ,
 \end{equation}
where $\Omega$ is a bounded open convex domain of $\R^d$ containing $0$.

We are thus working inside the affine spacetime $(\R^{d+1},\C)$, where the cone distribution is given at any point $X \in \R^{d+1}$ by $\C_X = \C \subset \R^{d+1} = \mathrm{T}_X\R^{d+1}$.

In an attempt to keep a coherent and consistent notation that allows the reader to easily identify in which spaces each element lies, we shall use the following notations:
\begin{itemize}
\item an element of $\R^d$ will be denoted by $x$ and an element of $\R^{d+1}$ by $X$,
\item we shall use usual scalar product $\cdot$ on $\R^d$ and $\R^{d+1}$ for duality identifications, and dual elements will be denoted by $y \in \R^d = (\R^d)^*$ and $Y \in \R^{d+1} = (\R^{d+1})^*$.
\end{itemize}

\subsection{\texorpdfstring{$\C$-convex and $\C$-regular domains}{C-convex and C-regular domains}}

Following the work of Nie and Seppi \cite{Nie_Seppi_22}, we generalise standard notions from Lorentzian Minkowski geometry to the affine spacetime $(\R^{d+1},\C)$.

\begin{definition}[Supporting hyperplane and Gauss map]
\label{def sup hyperplane gauss map}
Let $K$ be a convex domain of $\R^{d+1}$, that is a non-empty open convex subset of $\R^{d+1}$. 

An affine hyperplane $H$ of $\R^{d+1}$ is said to be a \emph{supporting hyperplane of $K$ at $X$} if $H \cap K = \emptyset$ and $X \in H \cap \partial K$.

The \emph{Gauss map} of $K$ is the set-valued map on $\partial K$ mapping a point $X$ to the set of linear hyperplanes directing a supporting hyperplane of $K$ at $X$. If $K$ has a $\C^1$ boundary, it is a true map (i.e. it has only singleton values).
\end{definition}

\begin{definition}[$\C$-spacelike and $\C$-null hyperplanes]
\label{def C-sp C-n hyperplanes}
A linear hyperplane $H$ of $\R^{d+1}$ is said to be \emph{$\C$-spacelike} if $H \cap \overline{\C} =\{0\}$, and \emph{$\C$-null} if $H \cap \overline{\C}$ is a half-line (see Figure~\ref{Hyperplanes figure}). An affine hyperplane of $\R^{d+1}$ is said to be respectively \emph{$\C$-spacelike} or \emph{$\C$-null} if it is directed respectively by a $\C$-spacelike or $\C$-null linear hyperplane. 

The \emph{future side} of a linear $\C$-spacelike or $\C$-null hyperplane is the open half-space it bounds which contains $\C$. Similarly, the \emph{future side} of an affine $\C$-null or $\C$-spacelike affine hyperplane $H$ is the open half-space it bounds which contains the future cone $p+\C$ of any point $p \in H$.

We denote respectively by $\mathcal{H}_{sp}(\C)$ and $\mathcal{H}_n(\C)$ the sets of $\C$-spacelike and $\C$-null linear hyperplanes of $\V^{d+1}$.
\end{definition}

\begin{figure}[ht]
\centering
\includegraphics{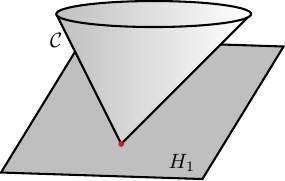}
\quad
\includegraphics{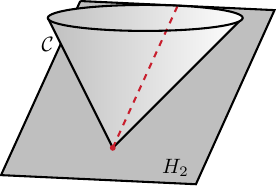}
\quad
\includegraphics{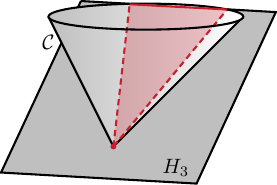}
\caption{Illustrations of a $\C$-spacelike hyperplane $H_1$, a $\C$-null hyperplane $H_2$, and a hyperplane $H_3$ which is neither $\C$-spacelike nor $\C$-null.}
\label{Hyperplanes figure}
\end{figure}

\begin{definition}[$\C$-convex domain]
\label{def C-convex}
A \emph{$\C$-convex} domain is a convex domain $K \subset \A^{d+1}$ such that $K+\C = K$ and the set $\mathcal{H}(K)$ of directions of supporting hyperplanes of $K$ satisfies 
 \begin{equation*}
 \mathcal{H}_{sp}(\C) \subseteq \mathcal{H}(K) \subseteq \mathcal{H}_{sp}(\C) \cup\mathcal{H}_n(\C)\, ,
 \end{equation*}
where $\mathcal{H}_{sp}(\C)$ and $\mathcal{H}_n(\C)$ are respectively the set of directions of $\C$-spacelike and $\C$-null affine hyperplanes.

The \emph{$\C$-spacelike boundary} of a $\C$-convex domain $K$, is the subset of $\partial K$ defined by
\begin{equation*}
 \partial_{sp}K \coloneqq\left \lbrace X \in \partial K \st \text{$K$ admits a $\C$-spacelike supporting hyperplane at $X$} \right\rbrace .
\end{equation*}
A $\C$-convex domain $K$ is called \emph{$\C$-spacelike} if all its supporting hyperplanes are $\C$-spacelike, i.e. if it satisfies $\partial K = \partial_{sp}K$. 
\end{definition}

\begin{figure}[ht]
 \centering
 \includegraphics{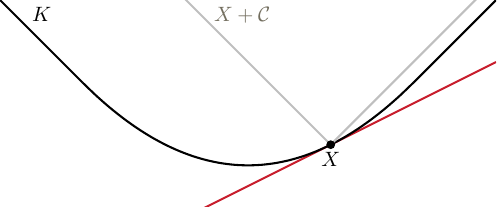}
 \caption{A $\C$-convex domain $K$. }
 \label{fig C-convex}
\end{figure}

It is well-known that any open convex subset of $\R^{d+1}$ is the interior of the intersections of all half-spaces that contain it. Hence, a $\C$-convex domain is the interior of the intersection of the future sides of all its supporting hyperplanes (which are either $\C$-null or $\C$-spacelike). Then, let us introduce the following definition.

\begin{definition}[$\C$-regular domain]
\label{def C-regular}
A \emph{$\C$-regular domain} is a $\C$-convex domain which can be expressed as the interior of the intersection of the future sides of only $\C$-null hyperplanes.
\end{definition}

\begin{remark}
A trivial example of a $\C$-regular domain is any translation $\ X+\C \subset \R^{d+1}$ of the cone $\C$. It is the intersection of the $\C$-future sides of all the $\C$-null affine hyperplanes containing the point $X\in \R^{d+1}$. Its $\C$-spacelike boundary is its vertex: $\partial_{sp} (X+\C) = \left\lbrace X \right\rbrace$.
\end{remark}

\begin{definition}[$C^2_+$ $\C$-convex domain]
\label{def C2+}
A $\C$-convex domain $K$ is called $C^2_+$ if its $\C$-spacelike boundary $\partial_{sp}K$ is a $C^2$ hypersurface and the restriction of its Gauss map to $\partial_{sp} K$ is a $C^1$ diffeomorphism onto $\mathcal{H}_{sp}(\C)$ (for the differentiable structure of the Grassmannian $\mathrm{Gr}_d(\R^{d+1})$, which is the space of linear hyperplanes of $\R^{d+1}$).
\end{definition}

\subsection{Support functions and duality}
\label{subsec support functions}
We shall now use the convex geometry tools developed in \cite{Fillastre_Veronelli_16, Bonsante_Fillastre_17, Barbot_Fillastre_20, Nie_Seppi_22} in this more general setting. We recall that, using our parametrisation of $\V^{d+1}$ and $\A^{d+1}$ as $\R^{d+1}$, we identify the dual vector space $(\R^{d+1})^*$ with $\R^{d+1}$ through the usual scalar product $\cdot$ on $\R^{d+1}$.

The open cone $\C^* \subset \R^{d+1}$ \emph{polar} to $\C$ is the open proper convex cone defined by
\begin{equation*}
\C^* \coloneqq \mathrm{int} \left\lbrace Y \in \R^{d+1} \st X \cdot Y \leq 0, \forall X \in \C\right\rbrace ,
\end{equation*}
where $\mathrm{int}$ denotes the interior of a set. Notice that $\C^*$ is equivalently defined by
\begin{equation*}
 \C^* = \left\lbrace Y \in \R^{d+1} \st X \cdot Y<0, \forall X \in \overline{\C} \setminus \{0\} \right\rbrace .
\end{equation*}
The identity \eqref{eq expression of C} allows to see $\C^*$ as
\begin{equation*}
 \C^* = \left\lbrace t(y,-1) \st y \in \Omega^*, t>0 \right\rbrace , 
\end{equation*} 
where $\Omega^*$ is the bounded open convex domain of $\R^d$ containing $0$ defined by
\begin{equation*}
\Omega^* \coloneqq \left\lbrace y \in \R^d \st x \cdot y < 1 , \forall x \in \overline{\Omega} \right\rbrace .
\end{equation*}

\begin{remark}
\label{Rem correspondance C* support plane}
Let $Y \in \R^{d+1}$. Notice that $Y \in \C^*$ (respectively $Y \in \partial \C^*$) if and only if the linear hyperplane $\{ X \in \R^{d+1} \ \vert \ X \cdot Y =0 \}$ is $\C$-spacelike (resp. $\C$-null) and its $\C$-future is $\{ X \in \R^{d+1} \ \vert \ X \cdot Y \leq 0 \}$. Thus, the future side any $\C$-spacelike (resp. $\C$-null) $H$ can always be expressed as $I^+(H) =\{ X \in \R^{d+1} \ \vert \ X \cdot Y \leq a \}$ where $a \in \R$ and $Y \in \C^*$ (resp. $Y \in \partial \C^*$) directs $H$.
\end{remark} 

\begin{remark}
\label{rem asymptotic cone}
Using Remark~\ref{Rem correspondance C* support plane} and a classical fact from convex analysis \cite[Corollary~14.2.1]{Rockafellar_97}, $\C$-convex domains can be be characterised in the following way. A $\C$-convex domain is a convex domain $K \subset \R^{d+1}$ having \emph{asymptotic cone} (or \emph{recession cone}) equal to $\overline{\C}$, the closure of $\C$, i.e. such that
\begin{equation*}
 \left\lbrace V \in \R^{d+1} \st \forall X \in K, \ X+ \R_+ V \subseteq K \right\rbrace = \overline{\C} \, .
\end{equation*}
\end{remark}

The \emph{total support function} of a $\C$-convex domain $K$ is the function $\tilde{s}_K$ on $\C^*$ defined by
\begin{equation*}
\function{\widetilde{s}_K}{\C^*}{\R}{Y}{\sup \left\lbrace X\cdot Y \st X \in K\right\rbrace} \, .
\end{equation*}
Using Remark~\ref{Rem correspondance C* support plane} and the definition of a $\C$-convex domain (Definition~\ref{def C-convex}), one easily sees that it is well-defined (i.e. the supremum is always finite). It is also clearly sublinear and $1$-homogeneous, hence convex. 

The \emph{$\Omega^*$-support function} of a $\C$-convex domain $K$, is the function $s_K:\Omega^* \to \R$ defined by $s_K (y)=\tilde{s}_K(y,-1)$. As total support function of $K$ is $1$-homogeneous, for all $Y = (y,-\mu)$ in $\C^*$, one has
\begin{equation*}
\tilde{s}_K(Y)=\tilde{s}_K(y,-\mu)=\mu \, s_K\vp{\frac{y}{\mu}} \, .
\end{equation*}
Hence, one can can fully recover the total support function of a $\C$-convex from its $\Omega^*$-support function. We shall then mostly use $\Omega^*$-support functions. In order to lighten notations, we shall simply write that $s_K$ is the \emph{support function of $K$}.

\begin{proposition}
\label{prop K is epigraph}
For every $\C$-convex domain $K$ of $\R^{d+1}$, the hypersurface $\partial K \subset \R^{d+1}=\R^d \times \R$ is the graph of $f=s^*$, the Legendre--Fenchel transform of $s$, the support function of $K$. 
\begin{equation*}
\partial K = \graph(f) \coloneqq \left\lbrace \bp{x,f(x)} \st x \in \R^{d} \right\rbrace.
\end{equation*}
The domain $K$ is then its epigraph: 
\begin{equation*}
K = \epi(f) \coloneqq \left\lbrace (x,\lambda)\in \R^d \times \R \st \lambda > f(x) \right\rbrace .
\end{equation*}

Conversely, for every convex function $s:\Omega^* \to \R$, the convex domain $K(s) = \epi(s^*)$ is a $\C$-convex domain of $\R^{d+1}$.
\end{proposition}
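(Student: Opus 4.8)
The plan is to establish the two inclusions $K\subseteq\epi(f)$ and $\epi(f)\subseteq K$ directly from the support function, and then to read off the boundary statement from the continuity of $f=s^*$. The geometric input I would set up first is a dictionary between $\C$-spacelike half-spaces and affine epigraphs. A linear hyperplane is $\C$-spacelike exactly when it is the kernel of some $Y=(y,-1)$ with $y\in\Omega^*$, and for such $Y$ the future side of the affine hyperplane $\left\lbrace X \st Y\cdot X=c\right\rbrace$ is, writing $X=(x,\lambda)\in\R^d\times\R$, the open set $\left\lbrace (x,\lambda) \st \lambda>y\cdot x-c\right\rbrace$, i.e. the open epigraph of the affine map $x\mapsto y\cdot x-c$ of slope $y\in\Omega^*$. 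Thus the defining property of a $\C$-convex domain yields a representation
\begin{equation*}
K=\bigcap_{(y,c)\in A}\left\lbrace (x,\lambda) \st \lambda>y\cdot x-c\right\rbrace,\qquad A\subseteq\Omega^*\times\R .
\end{equation*}
I would also record at the outset that $s=s_K$ is finite on all of $\Omega^*$ (this is where $\C$-convexity is used, and it is already part of the setup), so that by Proposition~\ref{prop conjugate are Lipschitz}, $\Omega^*$ being bounded, its conjugate $f=s^*$ is finite and Lipschitz, hence continuous, on all of $\R^d$.

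For the inclusion $K\subseteq\epi(f)$ I would argue as follows. If $(x,\lambda)\in K$, then for every $y\in\Omega^*$ the definition of the support function gives $y\cdot x-\lambda\le s(y)$, that is $\lambda\ge y\cdot x-s(y)$; taking the supremum over $y\in\Omega^*$ yields $\lambda\ge\sup_{y\in\Omega^*}\bp{y\cdot x-s(y)}=s^*(x)=f(x)$, where the first equality holds because $0\in\Omega^*$ is an interior point of the domain of $s$, so the conjugate is realised as a supremum over $\Omega^*$ itself. Hence $K$ is contained in the closed epigraph $\left\lbrace (x,\lambda) \st \lambda\ge f(x)\right\rbrace=\overline{\epi(f)}$, and since $K$ is open while $f$ is continuous, $K\subseteq\mathrm{int}\,\overline{\epi(f)}=\epi(f)$. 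For the reverse inclusion $\epi(f)\subseteq K$, I would use the representation above: for each $(y,c)\in A$ the inclusion $K\subseteq\left\lbrace \lambda>y\cdot x-c\right\rbrace$ forces $s(y)=\sup_{(x,\lambda)\in K}\bp{y\cdot x-\lambda}\le c$. Then for any $(x,\lambda)\in\epi(f)$ and any $(y,c)\in A$,
\begin{equation*}
\lambda>f(x)=s^*(x)\ge y\cdot x-s(y)\ge y\cdot x-c ,
\end{equation*}
the middle inequality being the definition of the Legendre--Fenchel conjugate. Thus $(x,\lambda)$ lies in every defining half-space of $K$, so $(x,\lambda)\in K$, proving $\epi(f)\subseteq K$.

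Combining the two inclusions gives $K=\epi(f)$, and since $f$ is continuous the topological boundary of its epigraph is its graph, so $\partial K=\graph(f)$, as claimed. The argument is essentially formal once the half-space/epigraph dictionary is in place, and I expect the only genuinely delicate points to be bookkeeping rather than conceptual: getting the orientation of the ``future side'' correct so that future sides of $\C$-spacelike hyperplanes really are \emph{upper} epigraphs with slopes in $\Omega^*$, invoking the finiteness of $s$ on $\Omega^*$ (which encodes the $\C$-convexity hypothesis and is what makes $f$ finite), and the passage between the closed epigraph $\overline{\epi(f)}$ and the open one $\epi(f)$, handled cleanly using that $K$ is open together with the continuity of $f$. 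Notably, the reverse inclusion via the Fenchel inequality sidesteps any question of whether the suprema defining $s$ or $s^*$ are attained, which is the one place where one might otherwise fear a boundary subtlety along $\partial\Omega^*$.
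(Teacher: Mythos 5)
Your proof is correct, but it is organised differently from the paper's. The paper first establishes that $\partial K$ is the graph of some convex function $f:\R^d\longrightarrow\R$ (justified in one line by the fact that the supporting half-spaces of $K$ are $\C$-spacelike, hence non-vertical), then computes directly from the definition of the support function that $s=f^*$ on $\Omega^*$, and concludes $f=f^{**}=s^*$ by biconjugation. You instead fix the candidate function $f=s^*$ from the start and prove the set equality $K=\epi(s^*)$ by double inclusion: one inclusion from the definition of the support function together with the fact that the conjugate is realised as a supremum over $\Omega^*$, the other from the defining representation of $K$ as an intersection of future sides of $\C$-spacelike hyperplanes combined with Fenchel's inequality; the identity $\partial K=\graph(s^*)$ then falls out from the continuity of $s^*$. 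Both arguments rest on the same dictionary between $\C$-spacelike half-spaces and upper epigraphs of affine functions with slopes in $\Omega^*$. What your route buys is that the statement ``$\partial K$ is an entire graph over $\R^d$'' becomes a corollary rather than an input (the paper asserts it with minimal justification, and making it rigorous requires essentially the finiteness argument you run anyway), and you are more careful about the passage between closed and open epigraphs; what the paper's route buys is brevity and the clean intermediate identity $s=f^*$, i.e.\ the support function is itself the conjugate of the boundary graph function, which is the other half of the duality. One small imprecision on your side: the equality $\sup_{y\in\Omega^*}\bp{y\cdot x-s(y)}=s^*(x)$ is not really a consequence of $0$ being an interior point of $\dom(s)$; the correct reason is that $\Omega^*$ is the interior of $\dom(s)$ and the lower semi-continuous extension of $s$ to $\partial\Omega^*$ takes liminf values of $s$ from inside, so enlarging the supremum to $\overline{\Omega^*}$ (and a fortiori to $\R^d$, where the extension is $+\infty$) does not increase it. This is standard and does not affect correctness.
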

\begin{proof}
Let $K$ be a $\C$-convex domain of $\R^{d+1}$ with support function $s$. For all $y\in \Omega^*$,
\begin{equation*}
s(y)=\tilde{s}(y,-1)= \sup_{(x,\lambda)\in K} (x,\lambda)\cdot(y,-1)= \sup_{(x,\lambda)\in K} (x\cdot y - \lambda) \, .
\end{equation*}
Since the supporting half-spaces of $K$ are $\C$-spacelike and hence not vertical in our coordinates, we can see $\partial K$ as a graph: $\partial K = \graph(f)$ of some convex function $f:\R^d \to \R$. Hence, the previous equation becomes
\begin{equation*}
s(y)= \sup_{x\in \R^d} \bp{x\cdot y - f(x)} = f^*(y) \, ,
\end{equation*}
and as $f$ is convex that means that $f=f^{**}=s^*$, $\partial K = \graph(f) = \graph(s^*)$, and $K=\epi(f)$.

For the converse, let $s:\Omega^* \to \R$ be a convex function. By Proposition~\ref{prop conjugate are Lipschitz}, $s^*$ is finite over the whole $\R^d$. One easily checks that $K(s) = \epi(s^*)$ satisfies $K(s)+\C =K(s) $, and thus that any supporting hyperplane of $K(s)$ is $\C$-spacelike or $\C$-null. Moreover, by the Fenchel Inequality Theorem (Theorem~\ref{theo equality for subgradients}), for all $y \in \Omega^*$, the hyperplane 
\begin{equation*}
 H_{y} = \left\lbrace (x,\lambda) \in \R^{d+1} \st x\cdot y_0 -\lambda = s(y_0)\right\rbrace
\end{equation*}
is a support plane of $K(s) = \epi(s^*)$ at $X=(x,s^*(x))$ where $x \in \partial s(y)$. Using Remark~\ref{Rem correspondance C* support plane}, that implies that $K(s)$ is a $\C$-convex domain.
\end{proof}

The \emph{tube domain} $\Omega^*\times \R$, in which the graphs of support function live, generalises the setting of \emph{co-Minkowski geometry} \cite{Fillastre_Seppi_19,Barbot_Fillastre_20}, also known as \emph{‘‘half-pipe'' geometry} \cite{Danciger_13}. The duality with that tube domain has already been used and studied in the work of Choi \cite{Choi_17,Choi_25}, Nie and Seppi \cite{Nie_Seppi_22,Nie_Seppi_23}, or Bobb and Farre \cite{Bobb_Farre_24}. Figure~\ref{Duality figure} illustrates that duality between $\C$-convex domains and convex graphs in the tube domain $\Omega^*\times \R$. It shows points 
\begin{equation*}
 X_0 = \bp{x_0,s^*(x_0)} \in \R^{d+1} \quad \text{and} \quad Y_0 = \bp{y_0,s(y_0)} \in \Omega^*\times \R \, ,
\end{equation*}
with $x_0 \in \partial s(y_0)$. By the Fenchel Inequality Theorem (Theorem~\ref{theo equality for subgradients}), the affine hyperplanes 
\begin{equation*}
 H_{Y_0} = \left\lbrace (x,\lambda) \in \R^{d+1} \st x\cdot y_0 -\lambda = s(y_0)\right\rbrace \quad \text{and} \quad H_{X_0} = \left\lbrace (y,\nu) \in \Omega^*\times \R \st x_0 \cdot y - \nu = s^*(x_0) \right\rbrace ,
\end{equation*}
are supporting hyperplanes of respectively $\graph(s^*)$ and $\graph(s)$.

\begin{figure}[ht]
\centering
\includegraphics{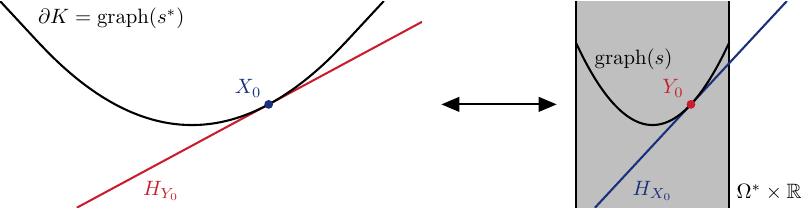}
\caption{Convex duality for $\C$-convex domains.}
\label{Duality figure}
\end{figure}

The following proposition, concerning sum of support functions, is a classical convex geometry fact \cite[Theorem~1.7.5]{Schneider_93}. 

\begin{proposition}
\label{prop support function and sum}
Let $K$ and $K'$ be two $\C$-convex domains of $\R^{d+1}$, with respective support function $s$ and $s'$. Then, their \emph{Minkowski sum}
\begin{equation*}
 K+K'\coloneqq \left\lbrace X+X' \st (X,X')\in K\times K' \right\rbrace
\end{equation*}
is a $\C$-convex domain with support function $s+s'$.
\end{proposition}
\begin{proof}
It is clear that $K+K'$ is a convex domain and that $K+K'+\C = K+K'$. Thus, to prove that it is a $\C$-convex domain we have to show that $\sup \{ X_0\cdot Y \, \vert \, X_0 \in K+K' \}$ is finite for all $Y $ in $\C^*$, and infinite for all $Y \in \R^{d+1} \setminus \overline{\C^*}$. That is a direct consequence from the fact that for all $Y \in \R^{d+1}$,
\begin{align}
\nonumber \sup \left\lbrace X_0\cdot Y \st X_0 \in K+K'\right\rbrace & = \sup \left\lbrace (X+X')\cdot Y \st X_0 \in K, \ X' \in K'\right\rbrace \\
\label{eq sum sup function} & = \sup \left\lbrace X\cdot Y \st X \in K\right\rbrace + \sup \left\lbrace X'\cdot Y \st X \in K'\right\rbrace ,
\end{align}
and the fact that both $K$ and $K'$ are $\C$-convex.

Moreover, equation~\eqref{eq sum sup function} also implies that the support function of $K+K'$ is the sum of the support functions of $K$ and $K'$.
\end{proof}

\subsection{Gauss map}

Let $K$ be a $\C$-convex domain. The \emph{$\C$-spacelike Gauss map} of $K$, denoted by $\G_K$, is the restriction of it Gauss map (Definition~\ref{def sup hyperplane gauss map}) to its $\C$-spacelike boundary (Definiton~\ref{def C-convex}). Notice that the duality described in Remark~\ref{Rem correspondance C* support plane} gives a smooth diffeomorphism between $\Omega^*$ and $\mathcal{H}_{sp}(\C)$, and a homeomorphism between $\partial \Omega^*$ and $\mathcal{H}_n(\C)$; by $y \mapsto \{ X \in \R^{d+1} \ \vert \ X \cdot (y,-1) =0 \}$. Using those identifications, Proposition~\ref{prop K is epigraph}, and the notion of subdifferential (Definition~\ref{def subgrad subdif}), $\G_K$ is explicitly expressed as the set-valued map 
\begin{equation}
\label{eq gauss map def}
\function{\G_{K}}{\partial_{sp} K}{\mathcal{P}(\Omega^*)}{X}{\partial s^*\bp{\pi(X)}} \, ,
\end{equation}
where $\mathcal{P}(\Omega^*)$ is the power set of $\Omega^*$, $s$ is the support function of $K$, and $\pi$ is the projection 
\begin{equation*}
 \function{\pi}{\R^{d+1}}{\R^{d}}{X=(x,\lambda)}{x} \, .
\end{equation*}

If $s^*$ is $C^1$ (or equivalently if $s$ is strictly convex, by Proposition~\ref{prop strictly convex gives C1 conjugate}), $\G_K$ is the true map
\begin{equation*}
\label{eq expression of the Gauss map}
\function{\G_{K}}{\partial_{sp} K}{\Omega^*}{X}{\grad s^*\bp{\pi(X)}} \, .
\end{equation*}
Then, by the Fenchel Inequality Theorem (Theorem~\ref{theo equality for subgradients}), $\G_K$ is invertible if and only if $s$ has a gradient at each point of $\Omega^*$. In that case, $s$ is also $C^1$ (Proposition~\ref{prop C1 convex function}) and the inverse of the Gauss map is expressed as 
\begin{equation}
\label{eq expression of the inverse of the Gauss map}
\function{\G_{K}^{-1}}{\Omega^*}{\partial_{sp} K}{y}{\bp{\grad s(y) , \grad s(y) \cdot y -s (y)}} \, ,
\end{equation}
giving a parametrisation of $\partial_{sp}K$ by $\Omega^*$. Then, the Global Inversion Theorem implies the following.

\begin{proposition}
\label{prop C2+ support function has positive Hessian}
A $\C$-convex domain is $C^2_+$ (Definition~\ref{def C2+}) if and only if its support function is $C^2$ with positive definite Hessian everywhere. We shall call such functions \emph{$C^2_+$ support functions}. 
\end{proposition}

\subsection{Cheng--Yau affine sphere}

In our setting, using our parametrisation of $\A^{d+1}$ as $\R^{d+1}$, Theorem~\ref{theo Cheng--Yau V1} can now be stated more precisely.

\begin{theorem}[Cheng--Yau \cite{Cheng_Yau_77,Gigena_81,Cheng_Yau_86}]
\label{theo Cheng--Yau V2}
Let $\C \subset \R^{d+1}$ be an open proper convex cone. Then, there is a unique complete hyperbolic affine sphere $\S \subset \C \subset \R^{d+1}$ asymptotic to $\partial \C$ and with shape operator the identity. In a parametrisation of $\R^{d+1}$ such that $\C = \{ t (x,1) \ \vert \ x \in \Omega, t>0 \}$ with $\Omega$ a bounded open convex domain of $\R^d$ containing $0$, it is the graph of the Legendre--Fenchel transform of the unique convex function $\omega_\C$ satisfying the Monge--Amp\`ere equation 
\begin{equation*}
\begin{cases}
\det \Hess \omega= (-\omega)^{-d-2} & \text{in} \ \Omega^* \, ,\\
\omega_{\vert \partial\Omega^*} =0 \, .
\end {cases}
\end{equation*}
Moreover, $\omega_\C$ satisfies $\lim_{y \to y_0} \left\vert \grad \omega_\C(y) \right\vert = +\infty$ for all $y_0 \in \partial \Omega^*$.

Moreover, the affine sphere $\S^*$ polar to $\S$ (the opposite of the dual affine sphere described in Proposition~\ref{prop dual of an affine sphere is affine sphere}) is the unique complete hyperbolic affine sphere with shape operator the identity and asymptotic to $\partial \C^*$. It is given by the image of $-N^*_{\S}$, the opposite of the conormal map on the affine sphere (Definition~\ref{def conormal}): 
\begin{equation*}
\Sigma_{(\C^*)} = \S^* = \left\lbrace -\frac{1}{\omega_\C(y)} (y,-1) \st y \in \Omega^*\right\rbrace .
\end{equation*} 
\end{theorem}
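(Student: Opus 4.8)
The plan is to extract everything from the abstract Cheng--Yau theorem (Theorem~\ref{theo Cheng--Yau V1}), which already furnishes a unique complete hyperbolic affine sphere $\S \subset \C$ asymptotic to $\partial\C$ with affine shape operator $\Id$, and to translate its geometry into the coordinates fixed by \eqref{eq expression of C}. Since $\S$ is a proper affine sphere with $S=\Id$ centred at the vertex of $\C$, the characterisation following the definition of affine sphere gives that its affine normal is the position vector, $N(X)=X$. As $\S$ is locally uniformly convex, contained in $\C$, and transverse to the vertical direction $(0,\dots,0,1)\in\C$, it is the graph of a convex function $f\colon\R^d\to\R$; then $\epi(f)$ is a $\C$-convex domain and, by Proposition~\ref{prop K is epigraph}, $\S=\graph(f)$ with $f=\omega_\C^*$, where I set $\omega_\C$ to be its support function on $\Omega^*$.

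First I would compute the conormal of $\S$. At $X=(x,f(x))\in\S$ with $y=\grad f(x)\in\Omega^*$, the tangent hyperplane is directed by $\ker(y,-1)$, and the Fenchel equality (Proposition~\ref{prop equality for subgradients}) gives $(y,-1)\cdot X = x\cdot y-f(x)=\omega_\C(y)$. Normalising so that $N^*(X)\bp{N(X)}=N^*(X)(X)=1$ forces $N^*(X)=\frac{1}{\omega_\C(y)}(y,-1)$, which in particular shows $\omega_\C<0$ on $\Omega^*$. By Gigena's proposition (Proposition~\ref{prop dual of an affine sphere is affine sphere}) together with the uniqueness part of Theorem~\ref{theo Cheng--Yau V1}, the polar sphere $\S^*=-N^*(\S)$ is exactly the complete hyperbolic affine sphere with shape operator $\Id$ asymptotic to $\partial\C^*$; this yields both the identification of $\Sigma_{(\C^*)}$ and the stated parametrisation $G(y)=-\frac{1}{\omega_\C(y)}(y,-1)$.

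The core of the proof is to show that the affine-normal condition on this parametrisation is equivalent to the Monge--Ampère equation. Using that the position vector $G$ is the affine normal of $\S^*$, I would differentiate $G$ and check directly that $D_{G_i}G=G_i$, so that the transversal connection form vanishes and $S=\Id$, then read off the Blaschke form from $\partial_i G_j=\nabla_{G_i}G_j+h_{ij}G$. A short computation gives $h_{ij}=-\omega_\C^{-1}\,\partial_i\partial_j\omega_\C$, hence $\det h=(-\omega_\C)^{-d}\det\Hess\omega_\C$, while the induced volume satisfies $\bigl|\det(G_1,\dots,G_d,G)\bigr|=(-\omega_\C)^{-(d+1)}$. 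Imposing the affine-normal normalisation $\nu=\dif\,\vol_h$, that is $\det(G_1,\dots,G_d,G)^2=\det h$, then collapses precisely to $\det\Hess\omega_\C=(-\omega_\C)^{-d-2}$. Uniqueness of $\omega_\C$ follows from the uniqueness of $\S$ in Theorem~\ref{theo Cheng--Yau V1}, or equivalently from the maximum principle for this equation as in Cheng--Yau.

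It remains to handle the boundary behaviour. The condition $\omega_\C|_{\partial\Omega^*}=0$ expresses that, as the supporting hyperplanes of $\S$ degenerate to the $\C$-null hyperplanes bounding $\C$, the support function tends to that of the cone, which vanishes on $\partial\Omega^*$; this is exactly where asymptoticity of $\S$ to $\partial\C$ is used. For the gradient blow-up, observe that $\grad\omega_\C(y)$ is the $\R^d$-projection of the point $X\in\S$ whose tangent hyperplane is directed by $\ker(y,-1)$; as $y\to y_0\in\partial\Omega^*$ this support direction becomes $\C$-null, so $X$ escapes to infinity along $\S$ by asymptoticity, forcing $|\grad\omega_\C(y)|\to+\infty$. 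I expect the main obstacle to be the Blaschke-form computation of the third paragraph, where the exponent $-d-2$ must be produced exactly from the volume normalisation; the boundary-gradient statement is the second delicate point, as it requires combining completeness of the affine sphere with its asymptotic behaviour near the $\C$-null directions.
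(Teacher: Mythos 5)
You should first note that the paper contains no proof of this statement: Theorem~\ref{theo Cheng--Yau V2} is quoted from Cheng--Yau and Gigena as a coordinate reformulation of Theorem~\ref{theo Cheng--Yau V1}, so your attempt is really a reconstruction of the cited literature. Much of that reconstruction is sound: the graph/support-function identification, the conormal computation $N^*_{\S}(X)=\frac{1}{\omega_\C(y)}(y,-1)$, and the computations you announce for $G(y)=-\frac{1}{\omega_\C(y)}(y,-1)$ are all correct (one does find $h_{ij}=(-\omega_\C)^{-1}\partial_i\partial_j\omega_\C$, $\bigl\vert\det(\partial_1G,\dots,\partial_dG,G)\bigr\vert=(-\omega_\C)^{-(d+1)}$, and the normalisation $\nu=\dif\,\vol_h$ does collapse to $\det\Hess\omega_\C=(-\omega_\C)^{-d-2}$). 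The genuine gap is the step on which everything else rests: your Monge--Amp\`ere derivation is carried out on $\S^*$, and it presupposes that the position vector is the affine normal of $\S^*$, i.e.\ that the dual sphere has shape operator exactly $\Id$; your justification of this, namely Proposition~\ref{prop dual of an affine sphere is affine sphere} ``together with the uniqueness part of Theorem~\ref{theo Cheng--Yau V1}'', cannot deliver it.

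Here is why it fails. Proposition~\ref{prop dual of an affine sphere is affine sphere}, as stated in the paper, only says that $N^*_{\S}(\S)$ is a proper hyperbolic affine sphere centred at the origin; it says nothing about the constant $\lambda'$ in $S=\lambda'\,\Id$. The uniqueness in Theorem~\ref{theo Cheng--Yau V1}, applied to $\C^*$, only discriminates among spheres already known to have $\lambda'=1$, so it cannot be invoked to identify $\S^*$ with $\Sigma_{(\C^*)}$ before $\lambda'$ is pinned down. The ambiguity is real: for every $c>0$ the rescaled hypersurface $c\,\S^*$ is again a complete hyperbolic affine sphere centred at the origin and asymptotic to $\partial\C^*$, but its shape operator is $c^{-(2d+2)/(d+2)}\,\Id$, so centre, type, completeness and asymptotics alone can never force $\lambda'=1$. (Moreover, the completeness of $\S^*$ and its asymptoticity to $\partial\C^*$, both needed even to invoke uniqueness, are asserted rather than checked.) Since the Monge--Amp\`ere equation is exactly the statement $\lambda'=1$ in your framework, your core derivation is conditional on the very claim it was meant to establish. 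The fix is cheap and stays entirely within your approach: run the identical computation on $\S$ itself, parametrised by the inverse Gauss map \eqref{eq expression of the inverse of the Gauss map}, $X(y)=\bp{\grad\omega_\C(y),\; y\cdot\grad\omega_\C(y)-\omega_\C(y)}$, where Theorem~\ref{theo Cheng--Yau V1} does guarantee that the affine normal is the position vector (the sphere is centred at the vertex with shape operator $\Id$). Using $(y,-1)\cdot X(y)=\omega_\C(y)$ and $(y,-1)\cdot\partial_iX=0$, one gets $h_{ij}=(-\omega_\C)^{-1}\partial_i\partial_j\omega_\C$ and $\det\bp{\partial_1X,\dots,\partial_dX,X}=(-\omega_\C)\det\Hess\omega_\C$, and the condition $\nu=\dif\,\vol_h$ gives exactly $\det\Hess\omega_\C=(-\omega_\C)^{-d-2}$, with no reference to the dual sphere. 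With the equation in hand, your computation on $G$ then becomes a proof, rather than a use, of the fact that the position vector is the affine normal of $\S^*$, and, once completeness and asymptotics are verified, the polar statement follows.
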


\begin{figure}[ht]
\centering
\includegraphics{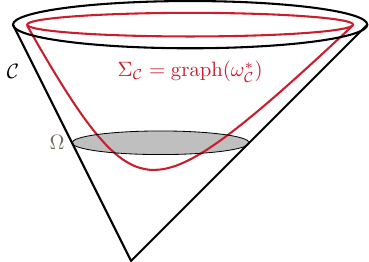}
\caption{The affine sphere given by the theorem of Cheng and Yau.}
\label{Affine sphere figure}
\end{figure}

\begin{corollary}
\label{cor affine sphere invariance}
The affine sphere $\S=\graph(\omega_\C^*)$ from Theorem~\ref{theo Cheng--Yau V2} is preserved by $\mathrm{Aut}_{\SL}(\C)$, the subgroup of elements of $\SL(\R^{d+1})$ preserving the cone $\C$.
\end{corollary}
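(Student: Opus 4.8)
The plan is to show that the affine sphere $\S$ associated with $\C$ is canonically determined by $\C$, so that any automorphism of $\C$ must preserve it. The key observation is that the defining properties of $\S$ in Theorem~\ref{theo Cheng--Yau V2} --- being a complete hyperbolic affine sphere, centred at the vertex of $\C$, asymptotic to $\partial\C$, with affine shape operator equal to the identity --- are all intrinsic to the equiaffine structure of $\A^{d+1}$ and to the cone $\C$ itself. Since $\S$ is the \emph{unique} hypersurface satisfying these properties, it suffices to check that each of these properties is preserved under the action of $g \in \mathrm{Aut}_{\SA}(\C)$, and then to invoke uniqueness.

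First I would fix $g \in \mathrm{Aut}_{\SA}(\C)$, so $g \in \SA\bp{\A^{d+1}}$ and $g(\C) = \C$. The crucial point is that the affine invariants introduced in subsection on affine differential geometry --- the induced connection $\nabla$, the Blaschke fundamental form $h$, the affine normal field $N$, and the shape operator $S$ --- are all invariant under the action of $\SA\bp{\A^{d+1}}$, since they are defined purely in terms of the flat connection $D$ and the volume form, both of which are preserved by special affine transformations. Concretely, for a smooth hypersurface $\Sigma$ with affine normal $N$, the image $g(\Sigma)$ is again a smooth hypersurface whose affine normal is $\dif g \circ N \circ g^{-1}$, and whose shape operator at $g(p)$ equals the pushforward of the shape operator of $\Sigma$ at $p$. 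In particular, if $\Sigma$ is an affine sphere with shape operator $\Id$, then so is $g(\Sigma)$, and the centre is sent to the image of the centre under $g$.

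Next I would track each defining property through $g$. Since $g(\C) = \C$ and $g$ fixes the vertex of $\C$ (being a linear map, as automorphisms of the cone preserve the vertex at the origin), the image $g(\S)$ is a hypersurface contained in $\C$, centred at the vertex of $\C$. Asymptoticity to $\partial\C$ is preserved because $g$ maps $\partial\C$ to $\partial\C$ and acts as a homeomorphism of the closed cone; completeness of the induced metric follows since $g$ is an equiaffine transformation and hence an isometry for the Blaschke metric. Combined with the shape-operator computation above, $g(\S)$ is a complete hyperbolic affine sphere centred at the vertex of $\C$, asymptotic to $\partial\C$, with shape operator $\Id$. By the uniqueness clause in Theorem~\ref{theo Cheng--Yau V2}, we conclude $g(\S) = \S$.

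I do not expect a serious obstacle here, as the argument is essentially a uniqueness-plus-naturality bookkeeping. The only point requiring mild care is verifying that an element of $\mathrm{Aut}_{\SA}(\C)$ genuinely fixes the vertex and hence is linear: this follows because the vertex is the unique point of $\overline{\C}$ through which every supporting hyperplane passes, a property characterised purely by the cone $\C$, so it must be preserved by any affine automorphism of $\C$. Once this is in place, invariance of the affine-differential-geometric invariants under $\SA\bp{\A^{d+1}}$ --- which is standard and can be cited from the background subsection --- does all the work, and uniqueness closes the argument.
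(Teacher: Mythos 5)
Your proof is correct and follows essentially the same route as the paper: the paper's own argument is a one-line appeal to the uniqueness clause of Theorem~\ref{theo Cheng--Yau V2}, noting that an element of $\mathrm{Aut}_{\SA}(\C)$ preserves the cone and the volume form. You simply make explicit the bookkeeping the paper leaves implicit (equiaffine invariance of the Blaschke data and shape operator, and the fact that an affine automorphism of $\C$ fixes the vertex), which is a sound and complete way to fill in that one line.
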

\begin{proof}
The action of any element of $\mathrm{Aut}_{\SL}(\C)$ on $\R^{d+1}$ preserves the cone $\C$ and the volume, so it necessarily preserves the unique affine sphere $\S$ given by Theorem~\ref{theo Cheng--Yau V2}.
\end{proof}

\begin{remark}[The Minkowski case]
\label{rem the Minkowski case}
In the case where $\Omega$ is the Euclidean unit ball $\B^d$ in $\R^d$, then $\C$ is the Minkowski future light cone given by the inequalities $x_1^2 + \dots + x_d^2 - x_{d+1}^2 < 0$ and $x_{d+1}>0$, and $\Omega^* = \B^d$ is the unit ball given by $y_1^2 + \dots + y_d^2 < 1$. The support function of the affine sphere is $\omega_\C (y) = -\sqrt{1 - y_1^2 - \dots - y_d^2}$ and the affine sphere $\S$ is the hyperboloid $\H^d \subset \R^{d+1}$ given by the equation $x_1^2 + \dots + x_d^2 - x_{d+1}^2 = -1$ and $x_{d+1}>0$. The polar affine sphere $\S^*$ is also a hyperboloid given by the equation $y_1^2 + \dots + y_d^2 - y_{d+1}^2 = -1$ and $y_{d+1}<0$.
\end{remark}

We define $\hat{s}:\S^* \to \R$, the $\S^*$-support function of a $\C$-convex domain $K$, by $\hat{s} = \tilde{s}_{\vert \S}$. 
Let us introduce the radial identification on the dual affine spheres:
\begin{equation*}
\function{R^*}{\S^*}{\Omega^*}{(y,\mu)}{-\frac{y}{\mu}} \quad \text{with inverse} \ \function{(R^*)^{-1}}{\Omega^*}{\S^*}{y}{ -\frac{1}{\omega_\C(y)} (y,-1)} \, .
\end{equation*}
Notice that because $\tilde{s}$ is 1-homogeneous and because of the expression of $\S^*$ given by Theorem~\ref{theo Cheng--Yau V2}, it also satisfies $ \hat{s}=-\frac{s}{\omega_\C}\circ R^*$. Thus, the affine sphere $\S$ has the constant function $-1$ for $\S^*$-support function.

\begin{remark}
\label{rem smooth diffeos}
Notice that the Gauss map of the affine sphere gives a favourite smooth diffeomorphism
\begin{equation*}
\G_{\S}: \S \longrightarrow \Omega^* \, .
\end{equation*}
Moreover the radial identifications in $\R^{d+1}$ and $(\R^{d+1})^*$ give two favourite smooth diffeomorphisms
\begin{equation*}
R: \S\longrightarrow\Omega \quad \text{and} \quad R^*: \S^*\longrightarrow\Omega^* \, .
\end{equation*}
Thus, the followings are all smoothly diffeomorphic 
\begin{equation*}
\Omega \approx \Omega^* \approx \S \approx \S^* \, .
\end{equation*}
Using $N^*_{\S}$ the affine conormal map on the affine sphere (Definition~\ref{def conormal}), everything can be summarised through the following commutative diagram.

\begin{equation*}
\begin{tikzcd}[column sep = large, row sep = large]
& \Omega^* \\
\S \arrow[ur, "\G_{\S}"] \arrow[rr, "- N^*_{\S}"] \arrow[dr, swap, "R"] && \S^*\arrow[ul, swap, "R^*"] \arrow[dl, "\G_{\S^*}"] \\
& \Omega
\end{tikzcd}
\end{equation*}

\end{remark}

\subsection{\texorpdfstring{Boundary functions, $g$-convex domains and Cauchy developments of $\C$-spacelike surfaces}{Boundary functions, g-convex domains and Cauchy developments of C-spacelike surfaces}}
\label{subsec Boundary functions and g-convex domains}

Recall Remark~\ref{rem extension of convex function}: every convex function $s: \Omega^* \to \R$, i.e. every support function of a $\C$-convex domain, extends uniquely to a convex lower semi-continuous function $\bar s:\overline{\Omega^*} \to \R \cup \{+\infty\}$, let us introduce the following objects.

\begin{definition}[$\Omega^*$-boundary function and $g$-convex domain]
\label{def boundary function and g-convex domain}
A function $g:\partial \Omega^* \to \R \cup \{+\infty\}$ is an \emph{$\Omega^*$-boundary function} if there exist a convex function $s: \Omega^* \to \R$ extending lower semi-continuously to $g$ on the boundary $\partial \Omega^*$. 

Given a $\Omega^*$-boundary function $g$, a \emph{$g$-convex domain} of $\R^{d+1}$ is a convex domain $K$ which can be expressed as the epigraph $\epi(s^*)$ of the Legendre--Fenchel transform of a lower semi-continuous convex function $s_K : \Omega^* \to \R\cup\{+\infty\}$ extending lower semi-continuously to $g$ on $\partial \Omega^*$. In that case, $s_K$ is called the \emph{support function} of $K$.
\end{definition}

\begin{remark}
A $g$-convex domain is not necessarily $\C$-convex, as its support function might take infinite values inside the domain $\Omega^*$. Nevertheless, if $g$ is finite, then every $g$-convex domain is a $\C$-convex domain.
\end{remark}

\begin{definition}[Convex envelope of an $\Omega^*$-boundary function]
\label{def convex envelope}
Let $g:\partial \Omega^* \to \R \cup \{+\infty\}$ be an $\Omega^*$-boundary function. The \emph{convex envelope} of $g$ is the pointwise largest convex lower semi-continuous function $\bar s_g : \overline{\Omega^*} \to \R \cup \{+\infty\}$ with boundary values on $\partial \Omega^*$ given by $g$. It can be expressed in the following way: for all $y \in \overline{\Omega^*}$,
\begin{equation*}
\bar s_g (y) = \mathrm{Env(g)} \coloneqq \sup \left\lbrace a(y) \st a \ \text{is affine and} \ a\leq g \ \text{on} \ \partial \Omega^*\right\rbrace .
\end{equation*}
In order to keep consistent notation with respect to Remark~\ref{rem extension of convex function}, we shall denote by $s_g$ the restriction of $\bar s_g (y) = \mathrm{Env(g)}$ to $\Omega^*$. 
\end{definition}

\begin{remark}
\label{rem convex env}
Let $a: \R^{d} \to \R$ be an affine function. Using the compactness of $\overline{\Omega^*}$ and the lower semi-continuity of $g$, one easily shows that, $a < \bar s_g = \mathrm{Env(g)}$ on $\overline{\Omega^*}$ if and only if $a < g$ on $\partial \Omega^*$.
\end{remark}

Because of the order reversing property of the Legendre--Fenchel transform, the fact that the convex envelope of an $\Omega^*$-boundary function is the pointwise largest support function of a $g$-convex domain clearly implies the following.

\begin{proposition}
\label{prop Dg is maximal}
Let $g:\partial \Omega^* \to \R \cup \{+\infty\}$ be an $\Omega^*$-boundary function and consider its convex envelope $\bar s_g : \overline{\Omega^*} \to \R \cup \{+\infty\}$. 
\begin{itemize}
 \item If $g$ (and thus $s_g$) is not constant and equal to $+\infty$, by Proposition~\ref{prop conjugate are Lipschitz}, the Legendre--Fenchel transform $s_g^*:\R^{d} \to \R$ is finite. Then, the convex domain $D_g$ with support function $s_g$, i.e. the epigraph of $s_g^*$, is the maximal $g$-convex domain for inclusion, in the sense that every $g$-convex domain is included in $D_g$.
 \item If $g \equiv +\infty$, then the $g$-convex domain $D_g\coloneqq\R^{d+1}$ is the maximal $g$-convex domain for inclusion.
\end{itemize}

\begin{remark}
Again, the maximal $g$-convex domain $D_g$ is not necessarily $\C$-convex. Nevertheless, if $g$ is finitely valued, then $D_g$ is actually a $\C$-convex domain.
\end{remark}

\end{proposition}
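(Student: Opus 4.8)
The plan is to read off both cases from the order-reversing property of the Legendre--Fenchel transform, exactly as the sentence preceding the statement advertises; the entire content is the chain $s_K\le s_g\Rightarrow s_g^*\le s_K^*\Rightarrow K\subseteq D_g$, so the only real work is to set the three functions up correctly.

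I would first dispose of the case $g\equiv+\infty$: the convex envelope is then $s_g\equiv+\infty$, its conjugate is $s_g^*\equiv-\infty$, and $\epi(s_g^*)=\R^{d+1}$, which trivially contains every $g$-convex domain. For the main case, let $K$ be an arbitrary $g$-convex domain with support function $s_K$, extended lower semi-continuously to $\overline{\Omega^*}$ with boundary values $g$ as in Definition~\ref{def boundary function and g-convex domain}. By the defining property of the convex envelope (Definition~\ref{def convex envelope}), $s_g$ is the pointwise largest convex lower semi-continuous function on $\overline{\Omega^*}$ realising these boundary values, so $s_K\le s_g$ on $\Omega^*$. The order-reversing property recorded after Definition~\ref{def Legendre--Fenchel transform} turns this into $s_g^*\le s_K^*$ on $\R^d$; and since $g\not\equiv+\infty$ makes $\dom(s_g)$ a non-empty subset of the bounded set $\overline{\Omega^*}$, Proposition~\ref{prop conjugate are Lipschitz} guarantees that $s_g^*$ is finite, so that $D_g=\epi(s_g^*)$ really is the epigraph of a finite function. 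Proposition~\ref{prop K is epigraph} gives $K=\epi(s_K^*)$, and the pointwise inequality $s_g^*\le s_K^*$ forces $\epi(s_K^*)\subseteq\epi(s_g^*)$, i.e.\ $K\subseteq D_g$. As $K$ was arbitrary, $D_g$ is maximal.

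The one step deserving care — and the nearest thing to an obstacle — is that Proposition~\ref{prop K is epigraph} was proved for $\C$-convex domains, whose support functions are finite on $\Omega^*$, whereas a general $g$-convex domain may have $s_K=+\infty$ at interior points of $\Omega^*$. I would handle this by re-running that proposition's computation with values in $\R\cup\{+\infty\}$: the supporting hyperplanes of $K$ are still non-vertical, so $\partial K$ is the graph of a convex $f:\R^d\to\R\cup\{+\infty\}$ and $K=\epi(f)$; then $s_K=f^*$, and lower semi-continuity and convexity of $f$ give $f=f^{**}=s_K^*$ by the biconjugate theorem, which is precisely $K=\epi(s_K^*)$. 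Everything else is formal, matching the author's ``clearly implies''.
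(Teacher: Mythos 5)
Your core chain --- $s_K\le s_g$ by maximality of the convex envelope, $s_g^*\le s_K^*$ by order reversal, hence inclusion of the epigraphs --- is exactly the paper's justification: the paper gives no proof beyond the sentence preceding the statement, and your write-up (including the trivial $g\equiv+\infty$ case and the appeal to Proposition~\ref{prop conjugate are Lipschitz} for finiteness of $s_g^*$) is a faithful expansion of it.

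The one step I would push back on is your repair of Proposition~\ref{prop K is epigraph}: you assert that the supporting hyperplanes of an arbitrary $g$-convex domain are still non-vertical, so that $\partial K$ is an entire graph and $K=\epi(s_K^*)$. Under the literal Definition~\ref{def boundary function and g-convex domain} this is not justified: any open convex domain whose $\Omega^*$-support function extends lower semi-continuously to $g$ qualifies, and such a domain may be bounded (a Euclidean ball, say), in which case it has vertical supporting hyperplanes and $K\subsetneq\epi(s_K^*)$. Fortunately you never need the equality: for any open convex $K$ and any $X=(x,\lambda)\in K$, openness gives $(x,\lambda-\varepsilon)\in K$ for some $\varepsilon>0$, whence $x\cdot y-s_K(y)\le\lambda-\varepsilon$ for all $y\in\Omega^*$, so $s_K^*(x)\le\lambda-\varepsilon<\lambda$ and $X\in\epi(s_K^*)$; thus $K\subseteq\epi(s_K^*)$ always holds, and combined with $s_g^*\le s_K^*$ this gives $K\subseteq D_g$. (Your equality, and the non-verticality claim, are valid if ``$g$-convex domain'' is read the way the paper evidently intends --- a domain equal to the intersection of the future sides of its $\C$-spacelike and $\C$-null supporting hyperplanes --- which is also the reading required for the Remark attached to the statement to be true; so this is an ambiguity inherited from the paper's definition rather than a real obstruction, but as written your step proves more than is true in the stated generality, while the weaker inclusion is all the argument requires.)
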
 

\begin{proposition}
\label{prop Dg as intersection}
Let $g:\partial \Omega^* \to \R~\cup~\{+\infty\}$ be an $\Omega^*$-boundary function. Then, the maximal $g$-convex domain $D_g$ given by Proposition~\ref{prop Dg is maximal} is the following intersection of $\C$-future sides of $\C$-null affine hyperplanes
\begin{equation*}
D_g = \bigcap_{y \in  \overline{\Omega^*}} \left\lbrace X \in \R^{d+1} \st X \cdot (y,-1) < \bar s_g(y) \right\rbrace =\bigcap_{y \in \partial \Omega^*} \left\lbrace X \in \R^{d+1} \st X \cdot (y,-1) < g(y) \right\rbrace .
\end{equation*}
\end{proposition}
\begin{proof}
Note that
\begin{equation*}
 D_g = \epi (s_g^*) = \left\lbrace (x,\lambda) \in \R^{d+1} \st \lambda > s_g^*(x)\right\rbrace = \left\lbrace (x,\lambda) \in \R^{d+1} \st \lambda > \sup_{y \in \overline{\Omega^*}}\bp{x\cdot y -\bar s_g(y)}\right\rbrace \,. 
\end{equation*}
As $\overline{\Omega^*}$ is compact and $\bar s_g:\overline{\Omega^*} \to \R$ is lower semi-continous, for all $x \in \R^{d}$, the supremum $\sup_{y \in \overline{\Omega^*}}(x\cdot y -\bar s_g(y)) $ is attained at a point of $\overline{\Omega^*}$. Hence, the last identity is equivalent to 
\begin{equation*}
 D_g = \left\lbrace (x,\lambda) \in \R^{d+1} \st x\cdot y - \lambda <\bar s_g(y), \ \forall y \in \overline{\Omega^*}\right\rbrace \, = \bigcap_{y \in  \overline{\Omega^*}} \left\lbrace X \in \R^{d+1} \st X \cdot (y,-1) < \bar s_g(y) \right\rbrace .
\end{equation*}
For all $X \in \R^{d+1}$, $y \to X \cdot (y,-1)$ is an affine function on $\R^{d+1}$. Thus, using Remark~\ref{rem convex env}, the previous identity becomes
\begin{equation*}
 D_g  = \bigcap_{y \in \partial \Omega^*} \left\lbrace X \in \R^{d+1} \st X \cdot (y,-1) < g(y) \right\rbrace . \qedhere
\end{equation*}
\end{proof}

Let us now highlight how maximal $g$-convex domains given by Proposition~\ref{prop Dg is maximal}, which at first seem to be objects purely related to convex geometry, actually have nice causality properties in the affine spacetime $(\R^{d+1},\C)$.

\begin{definition}[Cauchy development]
\label{def Cauchy dev}
The \emph{Cauchy development} of a $\C$-spacelike $C^1$ hypersurface $S$ in $(\R^{d+1}, \C)$ is the set of points $X$ of $\R^{d+1}$ such that every inextensible future directed $\C$-causal curves through $X$ meet $S$. We shall denote it by $D(S)$.
\end{definition}

\begin{proposition}
\label{prop maximal g domain = Cauchy development}
Let $K$ be a $\C$-spacelike $\C$-convex domain (Definition~\ref{def C-convex}) with $C^1$ boundary and support function $s: \Omega \to \R$ extending lower semi-continuously to an $\Omega^*$-boundary function $g:\partial \Omega^* \to \R\cup\{+\infty\}$. Its boundary $\partial K = \partial_{sp} K$ has Cauchy development $D(\partial K)$ equal to the maximal $g$-convex domain $D_g$.
\end{proposition}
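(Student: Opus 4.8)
The plan is to prove the equality $D(\partial K)=D_g$ by two inclusions, using the description of $K$ as the epigraph of $f\coloneqq s_K^*$ (Proposition~\ref{prop K is epigraph}), the description of $D_g$ as the intersection of the $\C$-future sides of the $\C$-null hyperplanes $\left\lbrace X\cdot(y,-1)<g(y)\right\rbrace$, $y\in\partial\Omega^*$ (Proposition~\ref{prop Dg as intersection}), and the inclusion $K\subseteq D_g$ (which follows from $s_K\le s_g$ for $s_g$ the convex envelope of $g$, together with the order-reversing property of the Legendre--Fenchel transform). I first record two elementary facts: that $D_g$ is future complete, i.e. $D_g+\overline{\C}\subseteq D_g$ (since $(y,-1)\in\overline{\C^*}$ for every $y\in\partial\Omega^*$), and that the recession cone of $\overline K=\epi(f)$ is exactly $\overline{\C}$ (because $f=s_K^*$ is finite by Proposition~\ref{prop conjugate are Lipschitz}, with recession function the support function of $\overline{\Omega^*}$).

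For the inclusion $D_g\subseteq D(\partial K)$, write $S=\partial K=\graph(f)$, which splits $\R^{d+1}$ into $K=\epi(f)$, the surface $S$, and the region $I^-(S)$ strictly below the graph. Points of $S$ lie in $D(\partial K)$ trivially. For $X\in K$, the past-inextensible portion from $X$ of any inextensible $\C$-causal curve stays in $X-\overline{\C}$, and as long as it remains above $S$ it stays in the convex set $\overline K\cap(X-\overline{\C})$; this set has recession cone $\overline{\C}\cap(-\overline{\C})=\{0\}$ since $\C$ is proper, hence is compact, so inextensibility forces the curve to leave $\overline K$ and thus to cross $S$. Therefore $X\in D(\partial K)$, the future portion being irrelevant since the curve already meets $S$.

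The delicate case is $X\in I^-(S)\cap D_g$, where I must show every future-directed inextensible $\C$-causal curve meets $S$; the main obstacle is to rule out a curve escaping to infinity while staying strictly below the graph. I plan to reduce this to the behaviour of null generators: for $u\in\overline{\Omega}$ consider the future null ray $r\mapsto X+r(u,1)$ and the convex function $\phi_u(t)=f(X_x+tu)-t$, so that the ray meets $S$ exactly when $\phi_u$ drops below $X_\lambda$. Using $f=s_K^*$ one computes that $\phi_u$ is non-increasing with limit $\sup\left\lbrace X_x\cdot y-g(y)\st y\in\partial\Omega^*,\ u\cdot y=1\right\rbrace$, which the hypothesis $X\in D_g$ forces to be $<X_\lambda$; hence every null generator crosses $S$ at a finite time $t(u)$, and $t\colon\overline{\Omega}\to\R_{\ge 0}$ is continuous, hence bounded by some $T$ on the compact $\overline{\Omega}$. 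Since any future $\C$-causal curve $\gamma(r)=(c(r),h(r))$ from $X$ satisfies $c(r)\in X_x+(h(r)-X_\lambda)\overline{\Omega}$, remaining below $S$ at parameter $r$ forces $h(r)-X_\lambda<t(u(r))\le T$ for the direction $u(r)=(c(r)-X_x)/(h(r)-X_\lambda)\in\overline{\Omega}$, bounding $h$ and contradicting inextensibility. This reduction, together with the continuity and boundedness of $t(\cdot)$ via convex analysis, is the technical heart of the argument.

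For the reverse inclusion $D(\partial K)\subseteq D_g$, I argue contrapositively and crucially use that $K$ is $\C$-spacelike (Definition~\ref{def C-convex and C-regular domains}). If $X\notin\overline{D_g}$ there is $y_0\in\partial\Omega^*$ with $X\cdot(y_0,-1)>g(y_0)$; choosing a contact direction $w_0\in\partial\Omega$ with $w_0\cdot y_0=1$, the null line $L=X+\R(w_0,1)$ lies in the hyperplane $\left\lbrace Z\cdot(y_0,-1)=X\cdot(y_0,-1)\right\rbrace$, disjoint from $S\subseteq\overline{D_g}\subseteq\left\lbrace Z\cdot(y_0,-1)\le g(y_0)\right\rbrace$; thus $L$ is an inextensible $\C$-causal curve through $X$ missing $S$, so $X\notin D(\partial K)$. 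If instead $X\in\partial D_g$, the same line $L$ lies in the $\C$-null hyperplane $H_{y_0}=\left\lbrace Z\cdot(y_0,-1)=g(y_0)\right\rbrace$; were $L$ to meet $S$, then $H_{y_0}$ would be a $\C$-null supporting hyperplane of $K$, contradicting $\partial K=\partial_{sp}K$. Hence $L\cap S=\emptyset$ and again $X\notin D(\partial K)$. Combining, $D(\partial K)\subseteq\overline{D_g}$ while $D(\partial K)\cap\partial D_g=\emptyset$, so $D(\partial K)\subseteq D_g$, which together with the first inclusion yields the claimed equality.
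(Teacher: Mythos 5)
Your proof is correct, and while it shares the paper's skeleton (two inclusions, the description of $D_g$ from Proposition~\ref{prop Dg as intersection}, and Legendre--Fenchel duality), the execution genuinely differs in both halves. For $D_g\subseteq D(\partial K)$ the paper works purely on the support-function side: for each causal line $X+t(x',1)$ it shows entry into $K$ for $t$ large using compactness of $A=\{y\in\Omega^*: a_X(y)\ge s_K(y)\}$, and then passes from lines to arbitrary inextensible causal curves by a one-line separation remark. You instead work on the graph side with $\phi_u(t)=f(X_x+tu)-t$, and, beyond showing each causal ray crosses, you extract a \emph{uniform} crossing-time bound $T$ over $\overline{\Omega}$ and trap any non-crossing curve in a compact region; this makes the lines-to-curves step explicit where the paper is terse, and your separate treatment of $X\in K$ by recession-cone compactness corresponds to the paper's case $A=\emptyset$. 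One detail to spell out when writing it up: continuity (upper semicontinuity already suffices for the bound) of $u\mapsto t(u)$ follows from joint continuity of $(u,t)\mapsto f(X_x+tu)-t$ together with the fact that, since $\lim_{t\to\infty}\phi_u(t)<X_\lambda$, convexity and monotonicity force $\phi_u$ to take the value $X_\lambda$ exactly once --- without that strict limit inequality the first hitting time need not be upper semicontinuous; also, for $u\in\Omega$ the contact set $\{y\in\partial\Omega^*: u\cdot y=1\}$ is empty and the limit is $-\infty$ (those rays are timelike, not null, so the word ``null generator'' is a misnomer there). For $D(\partial K)\subseteq D_g$ the paper argues directly --- null lines through $X$ meet $\partial K$, hence null hyperplanes through $X$ meet $D_g$, hence $a_X<g$ on $\partial\Omega^*$ --- whereas you argue contrapositively with the dichotomy $X\notin\overline{D_g}$ versus $X\in\partial D_g$; your boundary case makes explicit that the spacelike hypothesis $\partial K=\partial_{sp}K$ (with $C^1$ boundary) is what prevents the null line from meeting $\partial K$, a point the paper's strict inequality $a_X(y)<g(y)$ also requires but leaves implicit. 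Net effect: same underlying geometry, but your version is more self-contained on the causal-curve topology and more transparent about where spacelikeness is used.
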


\begin{proof}
By hypothesis, the boundary $\partial K=\partial_{sp} K$ is the entire graph of the $C^1$ convex function $s^*:\R^{d} \to \R$.

\noindent\textbullet\ First let us show that $D(\partial K)\subseteq D_g$. Let $X=(x,\lambda) \in D(\partial K)$, its $\C$-future $I^+(X) =X +\C$ is the $\C$-regular domain having the affine function $a_X(y) = x \cdot y - \lambda$ as its support function. As $X$ is in the Cauchy development of $\partial K$, every $\C$-null line through $X$ intersects $\partial K$. Thus, every $\C$-null hyperplane through $X$ intersects $\partial K$. 

As $K$ is $g$-convex, it is contained in the maximal $g$-convex domain $D_g$, and thus every $\C$-null hyperplane through $X$ also intersects $D_g$. By Proposition~\ref{prop Dg as intersection}, $D_g$ is the following intersection of $\C$-future sides of $\C$-null hyperplane,
\begin{equation*}
D_g = \bigcap_{y \in \partial \Omega^*} \left\lbrace X' \in \R^{d+1} \st X' \cdot (y,-1) < g(y) \right\rbrace ,
\end{equation*}
Thus, every $\C$-null hyperplane through $X$ directed by a $y \in \partial \Omega^*$ is in the future of the following hyperplane:
\begin{equation*}
 \left\lbrace X' \in \R^{d+1} \st X' \cdot (y,-1) = g(y) \right\rbrace .
\end{equation*}
That means that for all $y \in \partial \Omega^*$, $X\cdot(y,-1)<g(y)$ (see Figure~\ref{proof Cauchy dev figure}), i.e. $X \in D_g$ by Proposition~\ref{prop Dg as intersection}.

\noindent\textbullet\ Now, let us show the reverse inclusion $D_g \subseteq D(\partial K)$. Let $X \in D_g$, its support function $a_X$ satisfies that for all $y \in \partial \Omega^*$, $a_X(y)<g(y)$. As the support function $s$ of $K$ continuously extends to $g$ on $\partial \Omega^*$, we claim that every $\C$-causal line through $X$ intersects $\partial K$ (see Figure~\ref{proof Cauchy dev figure}). Then, the graph $\partial K = \graph(s^*)$ separates $J^+(X) \cup J^-(X) = (X + \overline{\C}) \cup (X - \overline{\C})$ in two distinct connected components, ensuring that every inextensible future directed $\C$-causal curves through $X$ meet $\partial K$. That is precisely $X \in D(\partial K)$.

Now, let us prove our claim. Let $L$ be a $\C$-causal line through $X$, it is directed by a vector $V=(x',1) \in \overline{\Omega}\times\left\lbrace1\right\rbrace$ and expressed as
\begin{equation*}
 L = \left\lbrace X_t \coloneqq X + tV \st t \in \R \right\rbrace .
\end{equation*}
The $\C$-convex domain $K$ has a horizontal supporting hyperplane, thus for $t$ small enough we have that $X_t \notin K$. Hence, in order to prove that $L$ intersects $\partial K$, we just have to show that for $t$ large enough we have $X_t \in K$. As the convex $K$ domain can be expressed as 
\begin{align*}
 K & = \epi(s^*) = \left\lbrace (x,\lambda) \in \R^{d+1} \st \sup_{y \in \Omega^*} \bp{x \cdot y- s(y) } <\lambda \right\rbrace \\
 & = \left\lbrace X \in \R^{d+1} \st \sup_{y \in \Omega^*} \bp{X \cdot (y,-1) - s(y) } <0 \right\rbrace ,
\end{align*}
we shall prove that for $t$ large enough, we have that for all $y\in \Omega^*$,
\begin{equation*}
 X_t\cdot (y,-1) = a_X(y) + t(x'\cdot y-1) < s(y) -1 \, . 
\end{equation*}
Notice that because $s$ extends lower semi-continuously to $g >a_X$ on $\partial \Omega^*$, the subset $A \coloneqq \{ y \in \Omega^* \ \vert \ a_X(y)\geq s(y) -1\}$ is closed and bounded, thus compact. If $A$ is empty we have $a_X < s$ on $\Omega^*$ and thus $X=X_0 \in K$. Otherwise, as $A$ is compact and non-empty, we can set $m \coloneqq \min \{ x' \cdot y -1 \ \vert \ y \in A \subset \Omega^* \} < 0$ (by definition of $\Omega^*)$ and $M \coloneqq \min \{ s(y) - a_X(y) \ \vert \ y \in A \} \in \R$. For $t >0$ and such that $M+tm < -1$, we have that for all $y \in \Omega^*$, if $y \in A$, then 
\begin{equation*}
X_t\cdot (y,-1) = a_X(y) + t(x'\cdot y-1)\leq s(y)+ M+ tm < s(y)-1 \, ,
\end{equation*}
otherwise $y \notin A$, and then
\begin{equation*}
X_t\cdot (y,-1) = a_X(y) + t(x'\cdot y-1)\leq a_X(y) < s(y) -1 \, ,
\end{equation*}
concluding the proof.
\end{proof}

\begin{figure}[ht]
\centering
\includegraphics{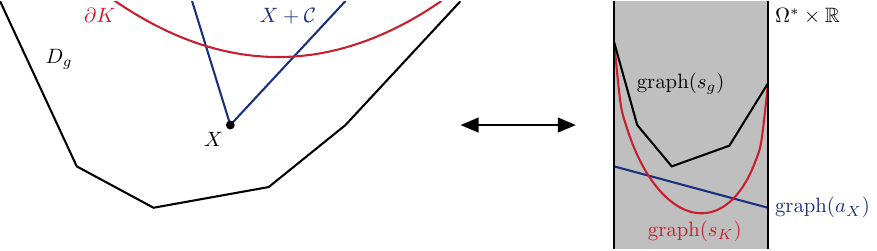}
\caption{Section view of situation in the proof of Proposition~\ref{prop maximal domain = Cauchy development}.}
\label{proof Cauchy dev figure}
\end{figure}

\section{Affine deformations of divisible convex cones}
\label{sec Affine deformations of divisible convex cones}

In the following section, we properly introduce and describe the groups $\Gamma_\tau$ from Theorem~\ref{Theorem Intro domain}. The existence of $D_\tau$, the unique maximal $\Gamma_\tau$-invariant $C$-convex domain from that same theorem, follows from the work of Choi \cite{Choi_25} (see Remark~\ref{rem link with Choi}). Still, we proceed to construct that domain $D_\tau$ following the same approach as Nie and Seppi \cite{Nie_Seppi_23}, as details of this construction will later be useful in our study of the affine spacetime $(D_\tau / \Gamma_\tau,\C)$.

\subsection{Convex projective structures on manifolds and divisible convex sets}
\label{subsec Projective structures on manifolds and divisible convex sets}

A \emph{real projective structure}, or simply \emph{projective structure}, on a smooth manifold $M$ of dimension $d$, consists of an atlas of charts with values in the $d$-dimensional projective space $\P(\R^{d+1})$, such that the transition maps are restrictions of projective transformations. In the language of $(G, X)$-structures, a projective structure is a $(\PGL (\R^{d+1}), \P(\R^{d+1}))$-structure. A manifold is said to admit a \emph{convex projective structure} if it can be realised as the quotient $U/G$ of a domain $U$, included and convex in some affine chart of $\P(\R^{d+1})$, by the properly discontinuous action of a discrete subgroup $G$ of $\PGL (\R^{d+1})$. If $U$ is a \emph{proper convex domain}, that is if it is bounded in some affine chart, the projective structure is called \emph{properly convex}.

Following the nomenclature introduced by Benoist \cite{Benoist_04,Benoist_08}, an open proper convex domain $U$ of $\P(\R^{d+1})$ is said to be \emph{divisible} by a discrete torsion free subgroup $G < \PGL (\R^{d+1})$ if $G$ acts properly discontinuously and cocompactly on $U$.

Throughout all this section, we let $\C$ be an open proper convex cone of $\R^{d+1}$ \emph{divisible} by a discrete torsion-free subgroup $\Gamma < \SL (\R^{d+1})$. That is, $\Gamma$ preserves $\C$ and $\P(\C)$ is divisible by $G = \varphi(\Gamma)$ which is the image of $\Gamma$ by the natural projection morphism
\begin{equation*}
\varphi: \SL (\R^{d+1}) \longrightarrow \PGL (\R^{d+1}) \, .
\end{equation*}
Notice that $\Gamma$ and $G = \varphi(\Gamma)$ are isomorphic. Indeed if $d$ is even, it is clear, because $\varphi$ is an isomorphism. When $d$ is odd, $\varphi$ is a $2:1$ surjective morphism with kernel $\ker(\varphi) = \left\lbrace \pm \Id \right\rbrace$, but if $\gamma \in \Gamma$ preserves $\C$ then $-\gamma$ does not, so $-\gamma \notin \Gamma$, meaning that the restriction $\varphi_{\vert \Gamma}$ is an isomorphism onto its image.

Like in Section~\ref{sec About C-convex domains}, we choose a parametrisation such that $\C = \{ t (x,1) \ \vert \ x \in \Omega, t>0 \}$ where $\Omega$ is a bounded open convex domain of $\R^d$ containing $0$. The subgroup $\Gamma$ acts on different spaces by different manners:
\begin{itemize}
\item $\Gamma$ acts on elements $X \in \R^{d+1}$ by $\gamma X$, that action preserves the cone $\C$ and the unique affine sphere $\S$ given by Theorem~\ref{theo Cheng--Yau V2} (Corollary~\ref{cor affine sphere invariance}),

\item $\Gamma$ acts projectively on $\Omega$, i.e. by identifying both $\Gamma$ with $\varphi(\Gamma) < \PGL(\R^{d+1})$ and $\Omega$ with the projective domain $\P(\C)$. Using the affine parametrisation, the affine sphere and the radial identification, that action can be written as $\gamma x = R(\gamma R^{-1}(x))$, where $R$ is the radial identification between $\S$ and $\Omega$.

\item $\Gamma$ acts dually on elements $Y \in \R^{d+1}$ by $\gamma \star Y \coloneqq \gamma^{-\top} Y$, where $\gamma^{-\top}$ is the inverse of the transpose of $\gamma$. That action preserves the cone $\C^*$, and the unique affine sphere $\S^*$, moreover the conormal map (Definition~\ref{def conormal}) is equivariant in the sense that for all $X \in \S$ and $\gamma \in \Gamma$,
\begin{equation*}
N^*_{\S}(\gamma X) = \gamma \star N^*_{\S}(X) \, .
\end{equation*}

\item $\Gamma$ acts dually projectively on $\Omega^*$, that action being expressed as $\gamma*y \coloneqq R^*(\gamma \star (R^*)^{-1}(y))$, where $R^*$ is the radial identification between $\S^*$ and $\Omega^*$.
\end{itemize}

The divisibility of $\C$ by $\Gamma$ tells us that $\Omega / \Gamma$ is a compact manifold and admitting a properly convex projective structure, as it is naturally smoothly diffeomorphic to $\P(\C) / \Gamma$. The smooth diffeomorphisms introduced in Remark~\ref{rem smooth diffeos} are all $\Gamma$-equivariant by definition of the actions. Thus, we get the following.

\begin{proposition}
\label{prop dual convex is divisible}
Let $\C = \left\lbrace t (x,1) \st x \in \Omega, t>0 \right\rbrace$ be an open proper convex cone, where $\Omega$ is a bounded open convex domain of $\R^d$ containing $0$. If $\C$ is divisible by a discrete torsion-free subgroup $\Gamma < \SL(\R^{d+1})$ then its polar cone $\C^*$ is also divisible by $\Gamma$ (acting dually). Moreover $\Omega / \Gamma$, $\Omega^* / \Gamma$, $\S / \Gamma$ and $\S^* / \Gamma$ are all smoothly diffeomorphic and compact.
\end{proposition}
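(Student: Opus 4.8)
The plan is to leverage the commutative diagram of smooth diffeomorphisms collected in Remark~\ref{rem smooth diffeos} and to observe that every arrow in it is $\Gamma$-equivariant for the four actions introduced just above the statement. Once this is established, the whole proposition follows by transporting the divisibility hypothesis on $\Omega$ (equivalently, on $\P(\C)$) to the three remaining spaces $\Omega^*$, $\S$ and $\S^*$, since proper discontinuity, freeness and cocompactness of a group action are invariant under equivariant diffeomorphism.

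First I would check the equivariance of each of the maps $\G_\S$, $R$, $R^*$, $\G_{\S^*}$ and $-N^*_\S$. The equivariance of $R$ is immediate from the very definition of the projective action $\gamma x = R\bp{\gamma R^{-1}(x)}$ on $\Omega$, and likewise the equivariance of $R^*$ is built into the definition of the dual projective action $\gamma * y = R^*\bp{\gamma \star \bp{R^*}^{-1}(y)}$ on $\Omega^*$. The equivariance of $-N^*_\S$ is exactly the equivariance of the conormal map, $N^*_\S(\gamma X) = \gamma \star N^*_\S(X)$, recorded above. Since the diagram commutes, one has $\G_\S = R^* \circ \bp{-N^*_\S}$ and $\G_{\S^*} \circ \bp{-N^*_\S} = R$, so $\G_\S$ and $\G_{\S^*}$ are equivariant as well, being composites of equivariant diffeomorphisms. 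Next, each such equivariant diffeomorphism intertwines the two actions, hence descends to a smooth diffeomorphism between the associated quotients and transfers proper discontinuity, freeness (which holds since $\Gamma$ is torsion-free and the action properly discontinuous) and cocompactness from one side to the other. Starting from the divisibility of $\C$, which says precisely that $\Gamma$ acts properly discontinuously and cocompactly on $\Omega \approx \P(\C)$, I would transport these properties along the diagram to obtain that the same holds on $\S$, $\S^*$ and $\Omega^*$. The descended maps then give $\Omega/\Gamma \approx \Omega^*/\Gamma \approx \S/\Gamma \approx \S^*/\Gamma$, and all four are compact because $\Omega/\Gamma$ is, by cocompactness. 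Finally, the transported action on $\Omega^*$, identified with $\P(\C^*)$, being properly discontinuous and cocompact is, by definition, the divisibility of $\C^*$ by $\Gamma$ acting dually.

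I do not expect any serious obstacle, since the four actions were set up precisely so that the maps of the diagram become equivariant; the argument thus reduces to that bookkeeping together with the elementary invariance of proper discontinuity and cocompactness under equivariant homeomorphism. The only point deserving a word of care is to confirm that the action on $\Omega^*$ entering the definition of divisibility of $\C^*$ genuinely coincides with the dual projective action $\gamma * y$ obtained by transporting the dual linear action $\gamma \star Y = \gamma^{-\top} Y$ along $R^*$; this is immediate once one unwinds the identification $\P(\C^*) \approx \Omega^*$.
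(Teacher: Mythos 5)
Your proposal is correct and follows essentially the same route as the paper: the paper's justification is precisely that the smooth diffeomorphisms of Remark~\ref{rem smooth diffeos} are all $\Gamma$-equivariant by definition of the four actions, so divisibility (proper discontinuity and cocompactness) transports from $\Omega \simeq \P(\C)$ to $\S$, $\S^*$ and $\Omega^* \simeq \P(\C^*)$, and the quotients are diffeomorphic and compact. Your write-up merely makes explicit the bookkeeping (equivariance of $R$, $R^*$, $-N^*_\S$, hence of $\G_\S$ and $\G_{\S^*}$ by commutativity of the diagram) that the paper leaves implicit.
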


Finally, here is a useful lemma concerning divisible convex sets.

\begin{lemma}
\label{lem inclusion and divisibility}
Let $A$ and $B$ be two open proper convex subsets of $\P(\R^{d+1})$ such that $A \subseteq B$. If there exists a discrete subgroup $\Gamma< \PGL(\R^{d+1})$ preserving both $A$ and $B$ and such that either $A$ or $B$ is divisible by $\Gamma$, then $A=B$.
\end{lemma}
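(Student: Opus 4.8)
The plan is to split the argument according to which of the two domains is divisible, after setting up a common framework. The starting observation is that a discrete subgroup of $\PGL\bp{\V^{d+1}}$ preserving a proper convex domain always acts properly discontinuously on it; this is a standard fact in the theory of divisible convex sets, coming from the properness of the action of the full automorphism group with respect to the Hilbert metric. Hence $\Gamma$ acts properly discontinuously on both $A$ and $B$, so both quotients are Hausdorff, and $B/\Gamma$ is connected as the image of the connected set $B$. Since $A$ is $\Gamma$-invariant and $A \subseteq B$, the inclusion descends to a map $\bar\iota: A/\Gamma \to B/\Gamma$, and I would first record that $\bar\iota$ is injective (two points of $A$ lying in the same $\Gamma$-orbit in $B$ already lie in the same orbit in $A$, because $\Gamma$ preserves $A$) and open (the two quotient projections are open and the inclusion $A \hookrightarrow B$ is an open map).

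Suppose $A$ is divisible. Then $A/\Gamma$ is compact, so $\bar\iota\bp{A/\Gamma}$ is a compact, hence closed, subset of $B/\Gamma$; being also open and non-empty in the connected space $B/\Gamma$, it must be everything. Thus every $\Gamma$-orbit in $B$ meets $A$; since $\Gamma$ preserves $A$, this forces $B \subseteq A$, and therefore $A = B$.

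Suppose instead that $B$ is divisible. Here the purely topological argument breaks down, since an open subset of a compact connected space need not be the whole space, so I would bring in the geometry of the boundary. Fix a base point $o \in A$. The key input is the standard fact for divisible convex sets that the orbit of any interior point accumulates on the entire boundary, that is $\overline{\Gamma o} \supseteq \partial B$; this follows from cocompactness, as every boundary point is a limit of $\Gamma$-translates of $o$ remaining at bounded Hilbert distance from a sequence tending to that boundary point. Since $\Gamma o \subseteq A$ while $\partial B \cap A = \emptyset$, each such accumulation point lies in $\overline A \setminus A = \partial A$, and hence $\partial B \subseteq \partial A$.

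To conclude, I would work in an affine chart in which $\overline B$ is a compact convex body. There $\overline A$ is convex and contains $\partial B$, so it contains $\mathrm{conv}\bp{\partial B} = \overline B$; combined with $\overline A \subseteq \overline B$ this gives $\overline A = \overline B$, and taking interiors of the closures of these open convex sets yields $A = B$. The main obstacle is precisely the case where $B$ is divisible: establishing that the $\Gamma$-orbit of an interior point of $A$ reaches the full boundary $\partial B$, which is exactly where the cocompactness hypothesis and the Hilbert-metric geometry of divisible convex sets enter.
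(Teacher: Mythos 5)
Your first case ($A$ divisible) is correct, and it is a slightly more elementary variant of the paper's own argument: the paper shows that the induced local homeomorphism $A/\Gamma \to B/\Gamma$ is a covering map (local homeomorphism with compact source), lifts it to universal covers and concludes; you instead run an open-and-closed argument in the connected Hausdorff quotient $B/\Gamma$ and pull surjectivity back to $B \subseteq A$. Both versions need the same preliminary fact that $\Gamma$ acts properly discontinuously on $B$; you get it from the Hilbert metric, the paper from the Blaschke metric of the Cheng--Yau affine sphere. No problem there.

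The second case ($B$ divisible) has a genuine gap: the ``standard fact'' you invoke, that $\overline{\Gamma o} \supseteq \partial B$ for an interior point $o$ of a divisible convex set, is false in general. It does hold when $B$ is strictly convex (equivalently, by Benoist \cite{Benoist_04}, when $\Gamma$ is Gromov hyperbolic), because then bounded Hilbert distance from a sequence $x_n \to \xi \in \partial B$ forces convergence to $\xi$; but divisible convex sets need not be strictly convex, and when $\partial B$ contains segments the translates $\gamma_n o$ can slide along the flat and accumulate elsewhere. Concretely, let $B \subset \P\bp{\R^{3}}$ be the open triangle $\left\lbrace [x_1 : x_2 : x_3] \mid x_i > 0 \right\rbrace$ and let $\Gamma \cong \mathbb{Z}^2$ be generated by the projective classes of $\mathrm{diag}\bp{e, e^{-1}, 1}$ and $\mathrm{diag}\bp{1, e, e^{-1}}$. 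In the chart $[x_1 : x_2 : 1] \mapsto (\log x_1, \log x_2)$ this action becomes a cocompact lattice of translations of $\R^2$, so $B$ is divisible by $\Gamma$; yet the orbit of $o = [1:1:1]$ lies in $\left\lbrace [e^m : e^n : 1] \mid (m,n) \in \mathbb{Z}^2 \right\rbrace$, and since $n-m$ is an integer, the closure of the orbit meets the open edge $\left\lbrace [1:u:0] \mid u>0 \right\rbrace$ exactly in the countable set $\left\lbrace [1 : e^k : 0] \mid k \in \mathbb{Z} \right\rbrace$ (similarly for the other two edges). So $\partial B \not\subseteq \overline{\Gamma o}$, and your derivation of $\partial B \subseteq \partial A$ collapses. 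Rescuing this line of attack would require a genuinely non-trivial substitute (for instance Vey's theorem, that a divisible convex set is recovered as the convex hull of the limit set), which is exactly the kind of input one must prove or cite, not assume.

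The paper avoids your problematic case entirely by a duality reduction you should adopt: if the larger set $B$ is divisible, pass to polar duals. Polarity reverses the inclusion, giving $B^* \subseteq A^*$, both duals are invariant under the dual action of $\Gamma$, and $B^*$ is divisible by Proposition~\ref{prop dual convex is divisible}; one is thus reduced to the case where the \emph{smaller} set is divisible. Grafting this reduction onto your correct first case yields a complete proof.
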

\begin{proof}
First, notice that we can assume $B$, the smallest convex subset, to be the one divisible by $\Gamma$. Otherwise, if $A$ is divisible by $\Gamma$, the dual convex subsets verify the reverse inclusion $B^*\subseteq A^*$, are both preserved by the dual action of $\Gamma$, and, by Proposition~\ref{prop dual convex is divisible}, the bigest convex subset $A^*$ is divisible by $\Gamma$.

Then, as $B$ is divisible by $\Gamma$, by \cite[Proposition~3]{Vey_70}, for all $p \in A \subseteq B$, the convex hull of the orbit $\Gamma\cdot p$ is the whole $B$. Thus, as $\Gamma$ preserves the convex set $A$, we have $B\subseteq A$.
\end{proof}

\subsection{Affine deformation and cocycle}
\label{subsec Affine deformation and cocycle}
We recall that $\Gamma < \SL(\R^{d+1})$ is a torsion-free subgroup dividing the proper convex cone $\C \subset\R^{d+1}$.

An \emph{affine deformation} of $\Gamma$ is a subgroup $\Gamma_\tau \subset \SL (\R^{d+1}) \ltimes \R^{d+1}$ obtained by adding translation parts to elements of $\Gamma$. Thus, an affine deformation of $\Gamma$ is determined by a translation function $\tau: \Gamma \to \R^{d+1}$. The condition that $\Gamma_\tau$ is a subgroup forces $\tau$ to satisfy the following cocycle relation: for all $\alpha,\beta \in \Gamma$
\begin{equation}
\label{eq cocycle condition}
\tau(\alpha \beta) = \tau(\alpha) + \alpha \tau(\beta) \, .
\end{equation}
Two affine deformations $\Gamma_\tau$ and $\Gamma_\tau'$ are said to be equivalent if there exists $X \in \R^{d+1}$ such that for all $\alpha \in \Gamma$, $(\alpha,\tau(\alpha))$ is conjugate to $(\alpha,\tau'(\alpha))$ through the translation by $X$. That is, if there exists $X \in \R^{d+1}$ such that $\tau - \tau'$ satisfies that for all $\alpha \in \Gamma$,
\begin{equation}
\label{eq coboundary condition}
(\tau - \tau')(\alpha)= \tau(\alpha) - \tau'(\alpha) = (I - \alpha)X \, ,
\end{equation}
where $I$ is the identity in $\SL (\R^{d+1})$.

Equation \eqref{eq cocycle condition} characterises the vector space of \emph{cocycles of $\Gamma$ in $\R^{d+1}$}
\begin{equation*}
\mathrm{Z}^1(\Gamma, \R^{d+1}) \coloneqq \left\lbrace \tau: \Gamma \to \R^{d+1} \st \forall \alpha,\beta \in \Gamma, \, \tau(\alpha \beta) = \tau(\alpha) + \alpha \tau(\beta)\right\rbrace ,
\end{equation*}
while equation \eqref{eq coboundary condition} characterises the vector space of \emph{coboundaries of $\Gamma$ in $\R^{d+1}$}
\begin{equation*}
\mathrm{B}^1(\Gamma, \R^{d+1}) \coloneqq \left\lbrace \tau_X: \Gamma \to \R^{d+1} \st X \in \R^{d+1} \right\rbrace ,
\end{equation*}
where $\tau_X(\alpha) \coloneqq(I - \alpha)X$.

Therefore the space of equivalence classes of affine deformations of $\Gamma$ is described by the first group of the \emph{$\R^{d+1}$-valued group cohomology of $\Gamma$}, that is the vector space
\begin{equation*}
\mathrm{H}^1(\Gamma, \R^{d+1}) \coloneqq \mathrm{Z}^1(\Gamma, \R^{d+1}) / \mathrm{B}^1(\Gamma, \R^{d+1}) \, .
\end{equation*}

\begin{remark}
The existence of non-trivial affine deformations of a group $\Gamma < \SL(\R^{d+1})$ dividing an open proper convex cone is not obvious. When $d=2$, Nie and Seppi \cite{Nie_Seppi_23} have proved that if $\Gamma$ is isomorphic to the fundamental group of the closed connected surface of genus $g \geq 2$, then $\dim \mathrm{H}^1(\Gamma, \R^{d+1}) = 6g-6$. In higher dimension, it is already well-known \cite[Theorem~3.4]{Barbot_Fillastre_20} that closed hyperbolic manifolds admitting a totally geodesic hypersurface have holonomy group $\Gamma < \SO_0(d,1)$ with a non-trivial deformation group $\mathrm{H}^1(\Gamma, \R^{d+1})$. In Appendix~\ref{sec Examples of affine deformations}, we extend those examples and produce, in any dimension, groups dividing an open proper convex domain, which is not projectively equivalent to an ellipse (i.e. giving the Klein ball model of the hyperbolic space), and admitting non-trivial affine deformations.
\end{remark}

\begin{remark}
\label{rem necessary condition for affine deformation}
Notice that a subgroup $G < \SL(\R^{d+1}) \ltimes \R^{d+1}$ is not always an affine deformation of its linear part. A necessary and sufficient condition for that is that any two distinct elements of $G$ have distinct linear parts. Equivalently, that is the case if and only if $G$ does not contain any non-trivial translation, i.e. element of the form $(I,\tau)$, where $I$ is the identity in $\SL (\R^{d+1})$ and $\tau \neq 0$.
\end{remark}
\subsection{Invariant convex domains}
\label{subsec Invariant convex domains}

Throughout the rest of this section we let $\tau \in \mathrm{Z}^1(\Gamma, \R^{d+1})$ be a cocycle of the torsion-free subgroup $\Gamma$ dividing the cone $\C$, and $\Gamma_\tau$ the associated affine deformation. From now on, in order to lighten the notation, we shall denote $\tau_\gamma \coloneqq \tau(\gamma)$.

\begin{definition}[$\tau$-convex domain and $\tau$-support functions]
\label{def tau-convex domain and tau-support functions}
A \emph{$\tau$-convex domain} is a $\C$-convex domain $K \subset \R^{d+1}$ invariant under the action of $\Gamma_\tau$. The \emph{$\tau$-support function} of a $\tau$-convex domain is its $\Omega^*$-support function as a $\C$-convex domain.
\end{definition}

\begin{proposition}
Let $K$ be a $\C$-convex domain with total support function $\tilde{s}$. If $\sigma \in \SL (\R^{d+1}) \ltimes \R^{d+1}$ has linear part $\gamma$ and translation part $\tau$, the total support function of $\sigma K$, that we denote by $\sigma \cdot \tilde{s}$, satisfies 
\begin{equation*}
(\sigma \cdot \tilde{s})(Y) = \tilde{s}(\gamma^{-1} \star Y) + \tau \cdot Y.
\end{equation*}
Thus, $K$ is $\tau$-convex if and only if $\tilde{s}$ is $\tau$-equivariant, in the sense that for all $Y \in \C^*$ and $\gamma \in \Gamma$, it satisfies 
\begin{equation}
\label{eq action on C* support function}
\tilde{s}(Y) = \tilde{s}(\gamma^{-1} \star Y) + \tau_\gamma \cdot Y \, .
\end{equation}
\end{proposition}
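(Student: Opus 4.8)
The plan is to compute the total support function of $\sigma K$ directly from its definition, and then to deduce the equivariance characterisation from the fact that a $\C$-convex domain is uniquely determined by its support function.

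First I would unwind the definition. Under the scalar product identification of $\bp{\R^{d+1}}^*$ with $\R^{d+1}$, the total support function is $\tilde{s}_K(Y) = \sup_{X \in K} X \cdot Y$. Since $\sigma$ acts by $\sigma X = \gamma X + \tau$, we have $\sigma K = \left\lbrace \gamma X + \tau \st X \in K \right\rbrace$, and therefore
\begin{equation*}
(\sigma \cdot \tilde{s})(Y) = \sup_{X \in K}(\gamma X + \tau) \cdot Y = \sup_{X \in K}(\gamma X \cdot Y) + \tau \cdot Y,
\end{equation*}
where the constant $\tau \cdot Y$ factors out of the supremum. The key algebraic step is to move $\gamma$ across the scalar product via $\gamma X \cdot Y = X \cdot \gamma^\top Y$, and to recognise that, by the definition $\gamma \star Y = \gamma^{-\top} Y$ of the dual action, one has $\gamma^{-1} \star Y = \gamma^\top Y$. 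Hence $\gamma X \cdot Y = X \cdot (\gamma^{-1} \star Y)$, the supremum becomes $\tilde{s}_K(\gamma^{-1} \star Y)$, and the announced formula follows. I would also check that the right-hand side is well defined: since $\gamma$ preserves $\C$, the dual action preserves $\C^*$, so $\gamma^{-1} \star Y \in \C^*$ whenever $Y \in \C^*$, and the supremum is finite because $\sigma$ preserves the parallel cone structure $\C$ and thus sends the $\C$-convex domain $K$ to a $\C$-convex domain.

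For the equivalence, recall from Proposition~\ref{prop K is epigraph} that a $\C$-convex domain is recovered as $\epi(s^*)$ from its support function $s$; in particular $K \mapsto \tilde{s}_K$ is injective on $\C$-convex domains. Thus $K$ is $\tau$-convex, that is $\sigma_\gamma K = K$ for $\sigma_\gamma = \bp{\gamma, \tau_\gamma}$ and every $\gamma \in \Gamma$, if and only if the corresponding support functions coincide, $\sigma_\gamma \cdot \tilde{s} = \tilde{s}$. Substituting $\tau = \tau_\gamma$ into the formula just established gives precisely the equivariance relation $\tilde{s}(Y) = \tilde{s}(\gamma^{-1} \star Y) + \tau_\gamma \cdot Y$ for all $Y \in \C^*$. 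The only subtle point in the whole argument is notational: keeping the transpose and inverse-transpose conventions of the dual action straight, so that the linear part $\gamma$ is correctly converted into $\gamma^{-1} \star$ and not $\gamma \star$; there is no genuine analytic difficulty.
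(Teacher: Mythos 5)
Your proposal is correct and follows essentially the same route as the paper: the direct computation $\sup_{X \in \sigma K} X \cdot Y = \sup_{X \in K} \gamma X \cdot Y + \tau \cdot Y = \tilde{s}\bp{\gamma^{-1} \star Y} + \tau \cdot Y$, using $\gamma X \cdot Y = X \cdot \gamma^\top Y$ and $\gamma^{-1} \star Y = \gamma^\top Y$. The paper leaves the ‘‘$\tau$-convex iff $\tau$-equivariant'' equivalence implicit, whereas you justify it via the injectivity of $K \mapsto \tilde{s}_K$ from Proposition~\ref{prop K is epigraph}; that is a harmless and accurate addition, not a different method.
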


\begin{proof}
For all $Y \in \C^*$ and $\sigma =(\gamma, \tau) \in \SL (\R^{d+1}) \ltimes \R^{d+1}$,
\begin{align*}
(\sigma \cdot \tilde{s})(Y) & = \sup_{X\in \sigma K}X \cdot Y = \sup_{X\in K}\sigma X \cdot Y = \sup_{X\in K}\gamma X \cdot Y + \tau \cdot Y  \\
& = \sup_{X\in K} X \cdot \gamma^\top Y + \tau \cdot Y = \tilde{s}(\gamma^{-1} \star Y) + \tau \cdot Y \, . \qedhere
\end{align*}
\end{proof}

Using the radial identification $R^*$ (Remark~\ref{rem smooth diffeos}) and the $\Gamma$-invariance of the affine sphere $\S^*$, we get the following.
\begin{corollary}
Let $K$ be a $\C$-convex domain with support function $s$. If $\sigma \in \SL (\R^{d+1}) \ltimes \R^{d+1}$ has linear part $\gamma$ and translation part $\tau$, the $\Omega^*$-support function of $\sigma K$, that we denote by $\sigma \cdot s$, satisfies 
\begin{equation*}
(\sigma \cdot s)(y) = \frac{\omega_\C(y)}{\omega_\C(\gamma^{-1}*y)}s(\gamma^{-1} * y)+ \tau \cdot (y,-1) \, .
\end{equation*}
Thus, $K$ is $\tau$-convex if and only if its support function $s$ is $\tau$-equivariant, in the sense that for all $y \in \Omega^*$ and $\gamma \in \Gamma$, it satisfies
\begin{equation}
\label{eq action on Omega* support function}
s(y) = \frac{\omega_\C(y)}{\omega_\C(\gamma^{-1}*y)}s(\gamma^{-1} * y)+ \tau_\gamma \cdot (y,-1) \, .
\end{equation}
\end{corollary}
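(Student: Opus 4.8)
The plan is to deduce this corollary from the preceding Proposition by restricting the total support function $\tilde{s}$ to the affine slice $\left\lbrace (y,-1) \st y \in \Omega^* \right\rbrace$ and by converting the dual linear action $\star$ on $\C^*$ into the dual projective action $*$ on $\Omega^*$. Since $s(y) = \tilde{s}(y,-1)$ by definition, evaluating the identity $(\sigma \cdot \tilde{s})(Y) = \tilde{s}(\gamma^{-1}\star Y) + \tau \cdot Y$ at $Y = (y,-1)$ gives at once
\begin{equation*}
(\sigma \cdot s)(y) = \tilde{s}\bp{\gamma^{-1}\star (y,-1)} + \tau \cdot (y,-1),
\end{equation*}
so the whole task reduces to rewriting the term $\tilde{s}\bp{\gamma^{-1}\star(y,-1)}$.

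The key step, and the one that needs the most care, is to put $\gamma^{-1}\star(y,-1)$ into radial form. From the inverse radial identification one reads off $(y,-1) = -\omega_\C(y)\,\bp{R^*}^{-1}(y)$. Since the dual action $\star$ is linear and preserves the affine sphere $\S^*$ (Proposition~\ref{prop dual convex is divisible}), and since $\gamma^{-1}*y = R^*\bp{\gamma^{-1}\star \bp{R^*}^{-1}(y)}$ by the very definition of the dual projective action, I would first record the equivariance
\begin{equation*}
\gamma^{-1}\star \bp{R^*}^{-1}(y) = \bp{R^*}^{-1}(\gamma^{-1}*y) = -\frac{1}{\omega_\C(\gamma^{-1}*y)}\,(\gamma^{-1}*y,-1).
\end{equation*}
Multiplying by the scalar $-\omega_\C(y)$ and using linearity of $\star$ then yields
\begin{equation*}
\gamma^{-1}\star (y,-1) = \frac{\omega_\C(y)}{\omega_\C(\gamma^{-1}*y)}\,(\gamma^{-1}*y,-1).
\end{equation*}

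Next I would invoke the positive $1$-homogeneity of $\tilde{s}$. The scalar $\omega_\C(y)/\omega_\C(\gamma^{-1}*y)$ is positive, because $\omega_\C$ is strictly negative on $\Omega^*$ (Theorem~\ref{theo Cheng--Yau V2}), so homogeneity legitimately applies and produces $\tilde{s}\bp{\gamma^{-1}\star(y,-1)} = \frac{\omega_\C(y)}{\omega_\C(\gamma^{-1}*y)}\,s(\gamma^{-1}*y)$. Substituting this into the displayed expression for $(\sigma \cdot s)(y)$ gives exactly the claimed transformation formula.

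Finally, for the stated equivalence I would specialise $\sigma = (\gamma,\tau_\gamma)$ for each $\gamma \in \Gamma$. Because a $\C$-convex domain is determined by its support function (Proposition~\ref{prop K is epigraph} together with the converse construction $K(s)$), the invariance $(\gamma,\tau_\gamma) K = K$ is equivalent to $(\gamma,\tau_\gamma) \cdot s = s$; equating the transformation formula to $s(y)$ is then precisely the $\tau$-equivariance relation~\eqref{eq action on Omega* support function}. The only genuine subtlety throughout is the bookkeeping of signs and the positivity of the homogeneity factor; all the remaining manipulations are direct substitutions.
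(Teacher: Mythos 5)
Your proof is correct and takes essentially the same approach as the paper: the paper's entire justification is the one-line remark that the corollary follows from the preceding proposition ``using the radial identification $R^*$ and the $\Gamma$-invariance of the affine sphere $\S^*$,'' which is precisely the computation you spell out (evaluating at $Y=(y,-1)$, rewriting $\gamma^{-1}\star(y,-1)$ in radial form via the invariance of $\S^*$, and applying positive $1$-homogeneity of $\tilde{s}$). The only quibble is a citation: the invariance of $\S^*$ under the dual action is recorded in the list of actions at the start of Section~\ref{sec Affine deformations of divisible convex cones} (and follows from Corollary~\ref{cor affine sphere invariance} applied to $\C^*$), rather than being the content of Proposition~\ref{prop dual convex is divisible}.
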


\begin{proposition}
\label{prop equivariance of the Gauss map}
Let $K$ be a $\tau$-convex domain. Its Gauss map is an equivariant set valued map: with respect to the action of $\Gamma_\tau$ on $\partial K$ and the action of $\Gamma$ on $\Omega^*$, i.e. for all $X \in \R^{d+1}$ and $\gamma \in \Gamma$, we have the set equality
\begin{equation*}
\G_K(\gamma X + \tau_\gamma) = \gamma * \G_K(X) \coloneqq \left\lbrace \gamma * y \st y \in \G_K(X)\right\rbrace .
\end{equation*}
If moreover $K$ has a strictly convex support function, its Gauss map is a true equivariant map: with respect to the action of $\Gamma_\tau$ on $\partial K$ and the action of $\Gamma$ on $\Omega^*$, i.e. for all $X \in \R^{d+1}$ and $\gamma \in \Gamma$,
\begin{equation*}
\G_K(\gamma X + \tau_\gamma) = \gamma * \G_K(X) \, .
\end{equation*}
\end{proposition}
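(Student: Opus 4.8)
The plan is to prove the statement geometrically, using the original definition of the Gauss map as the assignment, to each boundary point, of the directions of its supporting hyperplanes, rather than its analytic expression $X \mapsto \partial s^*\bp{\pi(X)}$. This turns the equivariance into a direct consequence of the $\Gamma_\tau$-invariance of $K$.

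First I would fix $\gamma \in \Gamma$ and set $\sigma = \bp{\gamma, \tau_\gamma} \in \Gamma_\tau$, so that $\sigma X = \gamma X + \tau_\gamma$. Since $K$ is $\tau$-convex, $\sigma$ preserves $K$; being an affine bijection, it therefore maps $\partial K$ onto $\partial K$. As its linear part $\gamma$ preserves $\C$, it preserves the set $\mathcal{H}_{sp}(\C)$ of $\C$-spacelike hyperplanes, so spacelike supporting hyperplanes are sent to spacelike supporting hyperplanes, and $\sigma$ maps $\partial_{sp} K$ onto $\partial_{sp} K$. In particular $X \in \partial_{sp} K$ implies $\sigma X \in \partial_{sp} K$, so that both sides of the claimed identity are well defined.

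Next I would track supporting hyperplanes under $\sigma$. Because $\sigma$ is an affine bijection with $\sigma K = K$, an affine hyperplane $H$ supports $K$ at $X$ if and only if $\sigma H$ supports $K$ at $\sigma X$. The linear direction of $\sigma H$ is the $\gamma$-image of the linear direction of $H$, and crucially it depends only on the linear part $\gamma$, not on the translation $\tau_\gamma$; hence the set of directions of supporting hyperplanes at $\sigma X$ equals the $\gamma$-image of the corresponding set at $X$. It then remains to match this linear action with the dual projective action $\gamma *$: under the identification $y \in \Omega^* \mapsto \left\lbrace Z \st Z \cdot (y,-1) = 0 \right\rbrace$, applying $\gamma$ to the hyperplane with conormal $(y,-1) \in \C^*$ produces the hyperplane with conormal $\gamma^{-\top}(y,-1) = \gamma \star (y,-1)$; rescaling this covector of $\C^*$ to have last coordinate $-1$ is exactly the map $R^* \circ (\gamma \star) \circ (R^*)^{-1}$, whose $\Omega^*$-value is $\gamma * y$. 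Assembling the three steps gives the set equality $\G_K(\gamma X + \tau_\gamma) = \gamma * \G_K(X)$.

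For the second assertion, if $s$ is strictly convex then $s^*$ is $C^1$ by Proposition~\ref{prop strictly convex gives C1 conjugate}, so $\G_K(X) = \partial s^*\bp{\pi(X)}$ is a singleton for every $X$ and $\G_K$ is single-valued; since $\gamma *$ sends singletons to singletons, the set equality upgrades to the stated equality of genuine maps. I expect the only delicate point to be the final part of the third step: keeping the polar cone $\C^*$, the conormal identification, and the transpose-inverse mutually consistent, so that the linear action of $\gamma$ on hyperplane directions coincides with the previously defined dual projective action $\gamma *$ on $\Omega^*$. Everything else is a formal consequence of the invariance $\sigma K = K$.
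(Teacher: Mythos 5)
Your proof is correct. Note that the paper actually states Proposition~\ref{prop equivariance of the Gauss map} with no proof at all, treating it as an immediate consequence of the $\Gamma_\tau$-invariance of $K$; your argument supplies exactly the omitted verification, and it is the intended one: $\sigma=(\gamma,\tau_\gamma)$ carries supporting hyperplanes at $X$ bijectively to supporting hyperplanes at $\sigma X$, the induced action on hyperplane directions depends only on the linear part $\gamma$ (which preserves $\mathcal{H}_{sp}(\C)$, so $\partial_{sp}K$ is preserved), and the conormal computation $\gamma H=\left\lbrace W \st W\cdot\gamma^{-\top}(y,-1)=0\right\rbrace$ together with positive rescaling to last coordinate $-1$ identifies this action with the dual projective action $\gamma\,*$ on $\Omega^*$ exactly as the paper defines it via $R^*\circ(\gamma\star)\circ\bp{R^*}^{-1}$. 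The only point worth flagging is cosmetic: the statement's ``for all $X\in\R^{d+1}$'' must be read as $X\in\partial_{sp}K$, which your first step handles, and your upgrade to a genuine map via Proposition~\ref{prop strictly convex gives C1 conjugate} and Proposition~\ref{prop C1 convex function} is exactly right.
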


\begin{proposition}
\label{prop free and properly discontinuous action on C1 tau-invariant hypersurfaces}
Let $K$ be a $\tau$-convex domain with a strictly convex support function $s$. Then, its boundary $\partial K$ is $C^1$ and the action of $\Gamma_\tau$ on its $\C$-spacelike boundary $\partial_{sp} K$ is free and properly discontinuous. If moreover $s$ is $C^1$, then $\partial_{sp} K / \Gamma_\tau$ is homeomorphic to $\Omega / \Gamma$.
\end{proposition}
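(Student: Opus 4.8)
The plan is to transfer all the relevant dynamical information from $\partial_{sp} K$ to the well-understood projective action of $\Gamma$ on $\Omega^*$ via the equivariant Gauss map, relying on the duality of Proposition~\ref{prop dual convex is divisible}.

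First I would settle the regularity of the boundary. Since the support function $s$ is strictly convex, Proposition~\ref{prop strictly convex gives C1 conjugate} shows that its Legendre--Fenchel transform $f = s^*$ is $C^1$ on all of $\R^d$; by Proposition~\ref{prop K is epigraph}, $\partial K$ is exactly the graph of $f$, hence a $C^1$ hypersurface. Strict convexity of $s$ also makes the subdifferential $\partial s^*$ single-valued, so that the Gauss map $\G_K : \partial_{sp} K \longrightarrow \Omega^*$, $X \mapsto \grad s^*\bp{\pi(X)}$, is a genuine continuous map, and it is $\Gamma_\tau$-equivariant by the second part of Proposition~\ref{prop equivariance of the Gauss map}: $\G_K(\gamma X + \tau_\gamma) = \gamma * \G_K(X)$.

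Next I would deduce freeness and proper discontinuity on $\partial_{sp} K$ from the corresponding properties on $\Omega^*$. Since $\Gamma$ divides $\C^*$ (Proposition~\ref{prop dual convex is divisible}) and is torsion-free, its action on $\Omega^* \cong \P(\C^*)$ is properly discontinuous, cocompact, and \emph{free} --- the last point because a properly discontinuous action of a discrete group has finite stabilisers, which are trivial when the group is torsion-free. For freeness on $\partial_{sp} K$: if $(\gamma,\tau_\gamma) \in \Gamma_\tau$ fixes $X$, applying $\G_K$ and using equivariance gives $\gamma * \G_K(X) = \G_K(X)$, so $\gamma$ fixes a point of $\Omega^*$, forcing $\gamma = I$ and hence $(\gamma,\tau_\gamma) = (I,0)$. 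For proper discontinuity, given a compact $C \subseteq \partial_{sp} K$, its image $\G_K(C)$ is compact in $\Omega^*$; any $\gamma$ with $(\gamma X + \tau_\gamma) \in C$ for some $X \in C$ satisfies $\gamma * \G_K(C) \cap \G_K(C) \neq \emptyset$, and there are only finitely many such $\gamma$ by proper discontinuity on $\Omega^*$. Since distinct elements of $\Gamma_\tau$ have distinct linear parts (Remark~\ref{rem necessary condition for affine deformation}), the associated set of elements of $\Gamma_\tau$ is finite, which is exactly proper discontinuity.

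Finally, under the extra hypothesis that $s$ is $C^1$, I would upgrade $\G_K$ to a homeomorphism. As recalled before Proposition~\ref{prop C2+ support function has positive Hessian}, differentiability of $s$ is precisely the condition making $\G_K$ invertible, with continuous inverse given by \eqref{eq expression of the inverse of the Gauss map}; thus $\G_K : \partial_{sp} K \longrightarrow \Omega^*$ is an equivariant homeomorphism, descending to a homeomorphism $\partial_{sp} K / \Gamma_\tau \cong \Omega^* / \Gamma$, which composed with the diffeomorphism $\Omega^*/\Gamma \cong \Omega/\Gamma$ of Proposition~\ref{prop dual convex is divisible} gives the claim. The main subtlety is that when $s$ is merely strictly convex (not $C^1$) the Gauss map need not be injective, so freeness and proper discontinuity cannot be read off a homeomorphism; the argument must instead push the $\Gamma_\tau$-dynamics forward through the single-valued but possibly non-injective map $\G_K$ onto the genuinely well-behaved $\Gamma$-action on $\Omega^*$.
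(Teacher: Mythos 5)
Your proposal is correct and follows essentially the same route as the paper: establish $C^1$ regularity of $\partial K$ via Proposition~\ref{prop strictly convex gives C1 conjugate}, transfer freeness and proper discontinuity through the continuous equivariant Gauss map to the free and properly discontinuous action of $\Gamma$ on $\Omega^*$, and upgrade $\G_K$ to an equivariant homeomorphism when $s$ is $C^1$ to identify the quotients. You merely spell out the transfer argument (fixed points and finiteness of returning elements) that the paper compresses into a single sentence.
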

\begin{proof}
By Proposition~\ref{prop strictly convex gives C1 conjugate}, the strict convexity of $s$ implies that its Legendre--Fenchel transform $s^*$ is $C^1$, hence $\partial K = \graph(s^*)$ is $C^1$. As the Gauss map $\G_K:\partial_{sp} K \to \Omega^*$ is continuous, surjective by definition of a $\C$-convex domain (Definition~\ref{def C-convex}), and equivariant by Proposition~\ref{prop equivariance of the Gauss map}, and the action of $\Gamma$ on $\Omega^*$ is free and properly discontinuous, then the action of $\Gamma_\tau$ on $\partial_{sp} K$ is necessarily free and properly discontinuous. If moreover $s$ is $C^1$, the Gauss map is an equivariant homeomorphism with inverse map given by expression \eqref{eq expression of the inverse of the Gauss map}. The Gauss map thus descends to a homeomorphism from $\partial_{sp} K / \Gamma_\tau $ to $\Omega^* / \Gamma \simeq \Omega / \Gamma$ (by Proposition~\ref{prop dual convex is divisible}).
\end{proof}

\subsection{\texorpdfstring{Extension of $\tau$-support functions to the boundary}{Extension of tau-support functions to the boundary}}

The following propositions can be found in \cite{Nie_Seppi_23}, for the reader's convenience we do not only state these propositions but also explicitly present their proofs, in any dimension and with notation coherent with the rest of this article.

\begin{proposition}[{\cite[Lemma 5.1]{Nie_Seppi_23}}]
\label{prop exists smooth tau-equivariant function}
There exists a smooth $\tau$-equivariant function on $\Omega^*$, i.e. a smooth function satisfying \eqref{eq action on Omega* support function} on $\Omega^*$.
\end{proposition}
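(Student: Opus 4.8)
The plan is to transport the functional equation \eqref{eq action on Omega* support function} to the $\Gamma$-invariant dual affine sphere $\S^*$, where the multiplicative weight factor disappears and $\tau$-equivariance becomes a purely additive cocycle condition, and then to solve that condition by a partition-of-unity averaging over $\Gamma$. Recall from the corollary preceding Proposition~\ref{prop equivariance of the Gauss map} that \eqref{eq action on Omega* support function} for a function $s$ on $\Omega^*$ is equivalent, via the relation $\overline{s}=-\frac{s}{\omega_\C}\circ R^*$, to the total-support equation \eqref{eq action on C* support function} for $\tilde{s}$ on $\C^*$; restricting the latter to the $\star$-invariant sphere $\S^*$ and writing $\overline{s}=\tilde{s}_{\vert \S^*}$, the condition reads
\begin{equation*}
\overline{s}(Y) = \overline{s}(\gamma^{-1} \star Y) + \tau_\gamma \cdot Y, \qquad Y \in \S^*, \ \gamma \in \Gamma.
\end{equation*}
Since $R^*$ and $\omega_\C$ are smooth and $\omega_\C$ is non-vanishing on $\Omega^*$ (it vanishes only on $\partial\Omega^*$ by Theorem~\ref{theo Cheng--Yau V2}), producing a smooth $\overline{s}$ on $\S^*$ satisfying this will yield, by the inverse transport $s(y)=-\omega_\C(y)\,\overline{s}\bp{(R^*)^{-1}(y)}$, a smooth $\tau$-equivariant function on $\Omega^*$ as required; note we need not make $s$ convex, only smooth.

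Next I would read the displayed equation as a coboundary problem. Setting $c_\gamma(Y) \coloneqq \tau_\gamma \cdot Y$, the cocycle relation \eqref{eq cocycle condition} for $\tau$ translates, using $\tau_\beta \cdot (\alpha^{-1}\star Y)=(\alpha\tau_\beta)\cdot Y$, into the $1$-cocycle identity $c_{\alpha\beta}(Y)=c_\alpha(Y)+c_\beta(\alpha^{-1}\star Y)$ for the $\Gamma$-action on $C^\infty(\S^*)$. By Proposition~\ref{prop dual convex is divisible} the group $\Gamma$ acts smoothly, freely, properly discontinuously and cocompactly on $\S^*$, so there exists a compactly supported $\phi \in C^\infty(\S^*)$ with $\phi \geq 0$ and $\sum_{\gamma \in \Gamma}\phi(\gamma^{-1}\star Y)=1$ for every $Y$ (take a bump supported in a relatively compact set whose $\Gamma$-translates cover $\S^*$ and normalise by the locally finite sum of its translates). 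I would then \emph{define}
\begin{equation*}
\overline{s}(Y) \coloneqq \sum_{\alpha \in \Gamma}\phi(\alpha^{-1}\star Y)\,\tau_\alpha \cdot Y,
\end{equation*}
which is smooth because near each point only finitely many terms are non-zero.

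A short bookkeeping computation, reindexing the sum in $\overline{s}(\gamma^{-1}\star Y)$ by $\beta=\gamma\alpha$ and using the cocycle identity together with $\sum_\beta \phi(\beta^{-1}\star Y)=1$, then gives exactly $\overline{s}(Y)-\overline{s}(\gamma^{-1}\star Y)=\tau_\gamma\cdot Y$; this is the heart of the argument and the one step where the cocycle relation and the partition of unity must be combined carefully. Transporting back via $R^*$ as above produces the desired smooth $\tau$-equivariant function on $\Omega^*$. The main conceptual obstacle is the first reduction: one must notice that restricting to the $\Gamma$-invariant affine sphere $\S^*$ trivialises the multiplicative factor $\tfrac{\omega_\C(y)}{\omega_\C(\gamma^{-1}*y)}$ in \eqref{eq action on Omega* support function}, turning a twisted functional equation into the standard statement that $C^\infty(\S^*)$ is an acyclic $\Gamma$-module for a cocompact proper action, after which the averaging is routine.
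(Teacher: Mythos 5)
Your proposal is correct, and both of its delicate steps do go through. The transport is a purely formal computation that never uses convexity: for $Y=(R^*)^{-1}(y)=-\tfrac{1}{\omega_\C(y)}(y,-1)$ one has $\gamma^{-1}\star Y=(R^*)^{-1}(\gamma^{-1}*y)$ and $\tau_\gamma\cdot Y=-\tfrac{1}{\omega_\C(y)}\,\tau_\gamma\cdot(y,-1)$, so multiplying your additive identity by $-\omega_\C(y)$ yields exactly \eqref{eq action on Omega* support function} for $s(y)\coloneqq-\omega_\C(y)\,\overline{s}\bp{(R^*)^{-1}(y)}$. The averaging step also closes up: writing $\beta=\gamma\alpha$ and applying your cocycle identity to the factorisation $\beta=\gamma\,(\gamma^{-1}\beta)$ gives $\tau_{\gamma^{-1}\beta}\cdot(\gamma^{-1}\star Y)=\tau_\beta\cdot Y-\tau_\gamma\cdot Y$, whence $\overline{s}(\gamma^{-1}\star Y)=\overline{s}(Y)-\tau_\gamma\cdot Y$ after summing against $\sum_\beta\phi(\beta^{-1}\star Y)=1$.

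Your route is genuinely different in its packaging from the paper's, although the engine is the same. The paper never leaves $\Omega^*$: it covers the compact quotient $\Omega^*/\Gamma$ by finitely many evenly covered charts $U_i$, defines on each tube $\pi_\Gamma^{-1}(U_i)\subset\Omega^*$ a smooth solution of the \emph{twisted} equation \eqref{eq action on Omega* support function} by transporting an arbitrary $u\in C^\infty(\Omega^*/\Gamma)$ equivariantly from sheet to sheet (carrying the weight $\omega_\C(y)/\omega_\C(\gamma^{-1}*y)$ throughout), and then glues these local solutions with the lifted $\Gamma$-invariant partition of unity subordinate to the charts. You instead kill the weight first by passing to the invariant hypersurface $\S^*$, and your partition of unity is indexed by the group (the translates of a single bump) rather than by charts. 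Both proofs ultimately rest on the same two facts: the solution set of the equivariance equation is an affine subspace, hence stable under combinations with $\Gamma$-invariant partition-of-unity coefficients, and properness plus cocompactness of the action supply such coefficients; so neither argument is more elementary. What your version buys is cleaner bookkeeping and a conceptual reading: the statement is exposed as the vanishing of the class of the cocycle $\gamma\mapsto\tau_\gamma\cdot(\,\cdot\,)$ in $\mathrm{H}^1\bp{\Gamma,C^\infty(\S^*)}$, an argument valid verbatim for any proper cocompact action. What the paper's version buys is an output that lives directly on $\Omega^*$ in the form needed immediately afterwards (Proposition~\ref{prop exists tau-support function extending to boundary} adds $\pm T\omega_\C$ to such a function) and, incidentally, a whole affine family of solutions parametrised by $u$; transporting your $\overline{s}$ back costs only the one line above.
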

\begin{proof}
As $\Omega^*/\Gamma$ is compact and $\pi_\Gamma:\Omega^* \to \Omega^*/\Gamma$ is its universal cover, we can set $(U_i)_{0 \leq i \leq n}$ to be a finite open cover of $\Omega^*/\Gamma$ such that for all $i$, 
\begin{equation*}
V_i:=\pi_\Gamma^{-1}(U_i) =\bigsqcup_{\gamma \in \Gamma} U_{i,\gamma} \, ,
\end{equation*}
where the union is disjoint, each $\pi_{\Gamma \vert U_{i,\gamma}}:U_{i,\gamma} \to U_i$ is a homeomorphism, and for all $\gamma,\gamma' \in \Gamma$, $U_{i,\gamma \gamma'} = \gamma * U_{i,\gamma'}$. We set $(\psi_i)_{0 \leq i \leq n}$ to be a partition of unity on $\Omega^*/\Gamma$ subordinate to $(U_i)_ {i}$. It lifts to a partition of unity $(\Psi_i)_i$ subordinate to $(V_i)_i$ defined by $\Psi_i = \psi_i \circ \pi_\Gamma$. Let $u \in C^\infty(\Omega^*/\Gamma)$. We define functions $v_i \in C^\infty(V_i)$ by
\begin{equation*}
v_{i \vert U_{i,\gamma}}(y) = \frac{\omega_\C(y)}{\omega_\C(\gamma^{-1}*y)}u \bp{\pi_\Gamma(y)} + \tau_\gamma \cdot (y,-1) \, .
\end{equation*}
Thus, $v_i: V_i \to \R$ satisfies for all $y$ in $V_i$ and $\gamma \in \Gamma$,
\begin{equation*}
v_i(y) = \frac{\omega_\C(y)}{\omega_\C(\gamma^{-1}*y)}v_i(\gamma*y) + \tau_\gamma \cdot (y,-1) \, ,
\end{equation*}
which is precisely the equivariance condition \eqref{eq action on Omega* support function}. That implies that the function $v:=\sum_i v_i\Psi_i$ is $\tau$-equivariant.
\end{proof}

\begin{proposition}[{\cite[Proposition~5.2]{Nie_Seppi_23}}]
\label{prop exists tau-support function extending to boundary}
There exists a smooth $\tau$-support function (Definition~\ref{def tau-convex domain and tau-support functions}) on $\Omega^*$ which extends to a continuous function on $\overline{\Omega^*}$.
\end{proposition}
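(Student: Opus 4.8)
The plan is to start from the smooth $\tau$-equivariant function $v$ produced in Proposition~\ref{prop exists smooth tau-equivariant function} and to correct it, without destroying $\tau$-equivariance, so that it becomes (i) convex, hence a genuine support function, and (ii) continuous up to $\partial \Omega^*$. The correction will be a large multiple of $\omega_\C$: recall from Theorem~\ref{theo Cheng--Yau V2} and Corollary~\ref{cor affine sphere invariance} that $\omega_\C$ is the $\Omega^*$-support function of the $\Gamma$-invariant affine sphere $\S$, that it is smooth and strictly convex with positive definite Hessian (Proposition~\ref{prop C2+ support function has positive Hessian}), and that it vanishes on $\partial \Omega^*$. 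Since $\omega_\C$ satisfies \eqref{eq action on Omega* support function} with $\tau \equiv 0$, the function $s_\lambda := v + \lambda\, \omega_\C$ is $\tau$-equivariant for every $\lambda \in \R$. It then remains to choose $\lambda$ so that $s_\lambda$ is convex, and to control the boundary behaviour.

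For the convexity I would pass to the $\S^*$-support function, where the equivariance is additive and easiest to exploit. Restricting the total support function relation \eqref{eq action on C* support function} to the $\Gamma$-invariant affine sphere $\S^*$ gives $\overline{s}(Y) = \overline{s}(\gamma^{-1} \star Y) + \tau_\gamma \cdot Y$ for $Y \in \S^*$. Using the structure equations of a hyperbolic affine sphere with shape operator the identity, there is a natural second-order operator $L(\phi) = \nabla^2 \phi - \phi\, h$, built from the Blaschke metric $h$ and its connection, with two features (both checked on the hyperboloid model of Remark~\ref{rem the Minkowski case}): first, $L$ is $\Gamma$-invariant, because $\Gamma$ acts on $\S^*$ by Blaschke isometries; second, the restriction to $\S^*$ of any linear form on $\R^{d+1}$ lies in $\ker L$, while positive definiteness of $L(\overline{s})$ is exactly local uniform convexity of the associated domain (equivalently $\Hess s$ positive definite, Proposition~\ref{prop C2+ support function has positive Hessian}). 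Because the cocycle enters $\overline{s}$ only through the linear forms $Y \mapsto \tau_\gamma \cdot Y$, the tensor $L(\overline{v})$ is $\Gamma$-invariant, hence descends to the compact quotient $\S^*/\Gamma$ (Proposition~\ref{prop dual convex is divisible}) and is bounded below, say $L(\overline{v}) \geq -C\, h$. Since $\overline{\omega_\C} \equiv -1$, we have $\overline{s_\lambda} = \overline{v} - \lambda$ and $L(\overline{s_\lambda}) = L(\overline{v}) + \lambda\, h \geq (\lambda - C)\, h$, which is positive definite for $\lambda > C$. Thus $s_\lambda$ is a strictly convex $C^2_+$ $\tau$-support function.

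For the continuous extension, I would translate back through $s_\lambda(y) = -\omega_\C(y)\, \overline{s_\lambda}\bigl((R^*)^{-1}(y)\bigr) = -\omega_\C(y)\, \overline{v}\bigl((R^*)^{-1}(y)\bigr) + \lambda\, \omega_\C(y)$. As $\omega_\C \to 0$ on $\partial \Omega^*$, continuity amounts to showing that $-\omega_\C(y)\, \overline{v}\bigl((R^*)^{-1}(y)\bigr)$ admits a finite limit depending continuously on the boundary point $y_0$. Writing a point $Y = (R^*)^{-1}(y)$ going to infinity on $\S^*$ as $Y = \gamma \star Y_0$ with $Y_0$ in a fixed compact fundamental region, the additive equivariance together with the cocycle identity $\tau_{\gamma^{-1}} = -\gamma^{-1} \tau_\gamma$ gives $\overline{v}(Y) = \overline{v}(Y_0) - \tau_{\gamma^{-1}} \cdot Y_0$ with $\overline{v}(Y_0)$ bounded, so the quantity is governed by $\omega_\C(y)\, \tau_{\gamma^{-1}} \cdot Y_0$. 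Here $|\omega_\C(y)|$ decays of order $\|\gamma\|^{-1}$ while $|\tau_\gamma|$ grows at most of order $\|\gamma\|$, so the product stays bounded and the boundary function is finite.

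The main obstacle is precisely this last point: producing a finite limit that is moreover continuous in $y_0$. It requires a uniform comparison between the decay of $\omega_\C$ near $\partial \Omega^*$ and the growth of the cocycle $\tau_\gamma$, which I expect to extract from the cocompactness of the $\Gamma$-action on $\S^*$ together with the contracting, north–south type dynamics of $\Gamma$ near the boundary. A useful simplification is that any two $\tau$-support functions differ by a $0$-equivariant function of the form $\omega_\C \cdot u$ with $u$ a bounded $\Gamma$-invariant function, hence by a term vanishing on $\partial \Omega^*$; consequently the boundary function is intrinsic to $\tau$, and it is enough to establish its finiteness and continuity for the single convex representative $s_\lambda$ constructed above.
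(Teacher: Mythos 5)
Your convexity argument is essentially sound, and it is in fact an intrinsic reformulation of what the paper does: the paper checks that the Hessian of the $1$-homogeneous extension $\tilde{s}_-$ is positive semi-definite on a compact set $\widetilde{K}$ with $\C^* = \bigcup_\gamma \gamma\star\widetilde{K}$, and propagates this by equivariance, since the equivariance relation only changes the function by linear terms, which are Hessian-free. Your operator $L(\phi)=\nabla^2\phi - \phi\,h$ encodes exactly this (one can check, for a general affine sphere with $N(p)=p$ and Euler's relation, that $\Hess \tilde{s}$ restricted to $\mathrm{T}\S^*$ equals $\nabla^2\overline{s}-\overline{s}\,h$, so your claims about $L$ hold beyond the hyperboloid model, though you should verify this in general rather than only in the Minkowski case). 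However, the second half of the statement --- the continuous extension to $\overline{\Omega^*}$ --- is not proved, and you say so yourself: your orbit-growth sketch ($|\omega_\C|$ decaying like $\|\gamma\|^{-1}$ against $|\tau_\gamma|$ growing like $\|\gamma\|$) would at best give \emph{boundedness} of $-\omega_\C\,\overline{v}\circ\bp{R^*}^{-1}$ near the boundary, not the existence of a limit, let alone its continuity in the boundary point; moreover the growth estimate on the cocycle is itself unjustified (for a general cocycle, $\tau_{\gamma_1\cdots\gamma_n}=\tau_{\gamma_1}+\gamma_1\tau_{\gamma_2}+\dots$, so word length enters and $|\tau_\gamma|=O(\|\gamma\|)$ is not automatic). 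Since the continuous boundary extension is the actual content of the proposition, this is a genuine gap.

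The missing idea --- which is the paper's route and which your own computation hands you for free --- is a \emph{sandwich} between a convex and a concave equivariant function, avoiding all dynamics. For $\lambda > C$ your estimate gives not only $L\bp{\overline{v}-\lambda} = L(\overline{v})+\lambda h > 0$, i.e.\ $s_-\coloneqq v+\lambda\omega_\C$ convex, but equally $L\bp{\overline{v}+\lambda} = L(\overline{v})-\lambda h \leq (C-\lambda)h<0$, i.e.\ $s_+\coloneqq v-\lambda\omega_\C$ concave; both are $\tau$-equivariant since $\omega_\C$ is $0$-equivariant. Now a convex function on $\Omega^*$ extends lower semi-continuously to $\overline{\Omega^*}$ with values in $\R\cup\{+\infty\}$, and a concave one extends upper semi-continuously with values in $\R\cup\{-\infty\}$. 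Since $s_-\leq s_+$ and $s_+-s_-=-2\lambda\omega_\C\to 0$ on $\partial\Omega^*$, the two boundary extensions coincide; being simultaneously upper and lower semi-continuous, the common boundary value is finite and continuous, so $s_-$ is the desired smooth $\tau$-support function extending continuously to $\overline{\Omega^*}$. This two-sided trick is exactly what replaces the uniform comparison between the decay of $\omega_\C$ and the growth of $\tau_\gamma$ that you correctly identified as the obstacle.
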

\begin{proof}
Let $v$ be a smooth $\tau$-equivariant function on $\Omega^*$ given by Proposition~\ref{prop exists smooth tau-equivariant function}, let $K$ be a precompact open subset of $\Omega^*$ sufficiently large so that $\Omega^* = \bigcup_{\gamma\in\Gamma}\gamma*K$. As $\omega_\C$ is smooth and strictly convex we can set $T>0$ such that 
\begin{equation*}
s_+:= v - T\omega_\C \quad \text{and} \quad s_-:= v + T\omega_\C
\end{equation*}
are respectively locally uniformly concave (i.e. with everywhere negative definite Hessian) and locally uniformly convex (i.e. with everywhere positive definite Hessian) on $K$. As $\omega_\C$ is $\Gamma$-equivariant, they are also $\tau$-equivariant, and we claim that implies that they are respectively concave and convex on the whole $\Omega^*$. Then that means that $s_+$ extends to $g_+$, an upper semi-continuous function with values in $\R\cup\{-\infty\}$ on $\partial \Omega^*$, and that $s_-$ extends to $g_-$, a lower semi-continuous function with values in $\R\cup\{+\infty\}$ on the boundary $\partial \Omega^*$. As $\omega_\C$ extends to the zero function on $\partial \Omega^*$, we have that $g_-=g_+$ is a continuous function with finite values on $\partial \Omega^*$.

Now, let us prove our claim. We have that $\tilde{s}_-$, the 1-homogeneous extension of $s_-$ to $\C^*$, has a semi-definite positive Hessian on $\widetilde{K}:= \{ t(y,-1) \ \vert \ y\in K, t>0\}$. As for all $Y \in \gamma\star\widetilde{K}$ with $\gamma \in \Gamma$,
\begin{equation*}
\tilde{s}_-(Y) = \tilde{s}(\gamma^{-1} \star Y) +\tau_\gamma \cdot Y = \tilde{s}(\gamma^T Y) + \tau_\gamma \cdot Y \, ,
\end{equation*}
we get that $\tilde{s}_-$ has a semi-definite positive Hessian on $\gamma \star \widetilde{K}$. Hence, as $\C^* = \bigcup_{\gamma\in\Gamma}\gamma \star \widetilde{K}$, we see that $\tilde{s}_-$ is convex, and so is $s_-$. The exact same can be done to show the concavity of $s_+$.
\end{proof}

\begin{proposition}
\label{prop existence of boundary function gtau}
There exists a unique continuous function, denoted by $g_\tau$, on the boundary $\partial \Omega^*$ such that any $\tau$-support function (or more generally any $\tau$-equivariant continuous function on $\Omega^*$) continuously extends on $\overline{\Omega^*}$ to a function coinciding with $g_\tau$ on $\partial \Omega^*$, i.e. every $\tau$-convex domain is $g_\tau$-convex (Definition~\ref{def boundary function and g-convex domain}).
\end{proposition}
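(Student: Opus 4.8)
The plan is to produce $g_\tau$ as the boundary trace of the single $\tau$-support function we already control, and then to show that \emph{any} $\tau$-equivariant continuous function must share that same trace, because the difference of two such functions decays to $0$ near $\partial\Omega^*$. First I would invoke Proposition~\ref{prop exists tau-support function extending to boundary} to fix a smooth $\tau$-support function $s_0$ on $\Omega^*$ extending continuously to $\overline{\Omega^*}$, and I would define $g_\tau \coloneqq (s_0)_{\vert \partial \Omega^*}$, which is continuous by construction. Uniqueness is then immediate: a continuous function on $\Omega^*$ admits at most one continuous extension to $\overline{\Omega^*}$, so the boundary datum $g_\tau$ is forced, and it suffices to establish the extension property for an arbitrary $\tau$-equivariant continuous function.

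The heart of the argument is a cancellation observation. Let $s$ be any $\tau$-equivariant continuous function on $\Omega^*$, and let $\tilde{s}, \tilde{s}_0$ denote the associated $1$-homogeneous total functions on $\C^*$. The equivariance relation~\eqref{eq action on C* support function} is affine in the function, with the \emph{same} cocycle term $\tau_\gamma \cdot Y$ for $\tilde{s}$ and $\tilde{s}_0$; hence their difference satisfies the homogeneous relation $(\tilde{s}-\tilde{s}_0)(Y) = (\tilde{s}-\tilde{s}_0)(\gamma^{-1}\star Y)$ for every $\gamma \in \Gamma$, i.e.\ it is $\Gamma$-invariant for the dual action $\star$. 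Restricting to the $\Gamma$-invariant affine sphere $\S^*$, the continuous function $\overline{s} - \overline{s}_0$ is $\Gamma$-invariant, hence descends to the compact quotient $\S^*/\Gamma$ (Proposition~\ref{prop dual convex is divisible}) and is therefore bounded, say by a constant $C$.

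Finally I would transfer this uniform bound back to $\Omega^*$ through the identity $\overline{s} = -\tfrac{s}{\omega_\C}\circ R^*$ recorded in subsection~\ref{subsec support functions and affine sphere}, which gives $s(y) = -\omega_\C(y)\,\bigl(\overline{s}\circ \bp{R^*}^{-1}\bigr)(y)$, and likewise for $s_0$. Subtracting yields $\lvert s(y) - s_0(y)\rvert \leq C\,\lvert \omega_\C(y)\rvert$, and since $\omega_\C$ extends to $0$ on $\partial\Omega^*$ (Theorem~\ref{theo Cheng--Yau V2}), the difference $s - s_0$ tends to $0$ at the boundary. Consequently $s$ extends continuously to $\overline{\Omega^*}$ with the same boundary values as $s_0$, namely $g_\tau$. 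In particular, applying this to any $\tau$-support function shows, via Definition~\ref{def boundary function and g-convex domain}, that every $\tau$-convex domain is $g_\tau$-convex.

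The only delicate point I anticipate is the algebra of the second paragraph: one must confirm that the cocycle terms genuinely cancel so that $\tilde{s}-\tilde{s}_0$ is $\Gamma$-invariant rather than merely $\tau$-equivariant with a modified cocycle. Once this is checked, everything else reduces to the two structural inputs already available, namely the compactness of $\S^*/\Gamma$ and the boundary vanishing of $\omega_\C$, neither of which requires any convexity of $s$; this is precisely why the statement holds in the stated generality for arbitrary $\tau$-equivariant continuous functions.
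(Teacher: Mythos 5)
Your proposal is correct and follows essentially the same route as the paper: fix one $\tau$-support function $s_0$ with continuous boundary extension (Proposition~\ref{prop exists tau-support function extending to boundary}), show that the difference with any other $\tau$-equivariant function becomes $\Gamma$-invariant after the cocycle terms cancel, bound it using compactness of the quotient, and conclude via the vanishing of $\omega_\C$ on $\partial\Omega^*$. The only cosmetic difference is that you realise the invariant quantity as $\overline{s}-\overline{s}_0$ on $\S^*/\Gamma$, whereas the paper works directly with $\frac{s-s'}{\omega_\C}$ on $\Omega^*/\Gamma$; under the radial identification $R^*$ these are the same function, so the two arguments coincide.
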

\begin{proof}
Let $s$ be a $\tau$-support function extending to $g$ on $\partial \Omega^*$ given by Proposition~\ref{prop exists tau-support function extending to boundary}. Let $s'$ be any other $\tau$-support function. Then $(s-s')/{\omega_\C}$ is $\Gamma$-invariant, it is the lift of a continuous function on the compact quotient $\Omega^*/\Gamma$. Thus, it reaches a minimum $m$ and a maximum $M$, and we have
\begin{equation*}
m \omega_\C \leq s-s' \leq M \omega_\C \, .
\end{equation*}
As $\omega_\C$ tends to zero on the boundary $\partial \Omega^*$ (Theorem~\ref{theo Cheng--Yau V2}), we get that $s'$ continuously extends on $\overline{\Omega^*}$ to a function coinciding with $g$ on $\partial \Omega^*$.
\end{proof}

\begin{remark}
We have shown that every $\tau$-convex domain is $g_\tau$-convex, but the converse is not true, a $g_\tau$-convex domain has a priori no reason to be invariant under the action of $\Gamma_\tau$.
\end{remark}

\subsection{\texorpdfstring{Maximal $\tau$-convex domain and Cauchy developments of $\Gamma_\tau$-invariant $\C$-spacelike surfaces}{Maximal tau-convex domain and Cauchy developments of Gamma tau-invariant C-spacelike surfaces}}
\label{subsec Maximal domains}

In this subsection, we prove the existence of the maximal invariant domain from Theorem~\ref{Theorem Intro domain} and show that it is the Cauchy development of the boundary of any $\C$-spacelike $\tau$-convex domains. 

Using the boundary map $g_\tau$ given by Proposition~\ref{prop existence of boundary function gtau}, we can use the results from subsection~\ref{subsec Boundary functions and g-convex domains}. Let $s_\tau$ be the restriction to $\Omega^*$ of the \emph{convex envelope} of $g_\tau$ (Definition~\ref{def convex envelope}). It can be expressed in the following way: for all $y \in \Omega^*$,
\begin{equation*}
s_\tau (y)= \sup \left\lbrace a(y) \st a \ \text{is affine and} \ a\leq g_\tau \ \text{on} \ \partial \Omega^*\right\rbrace .
\end{equation*}
Then, one can easily check that $s_\tau$ is $\tau$-equivariant, i.e. satisfies \eqref{eq action on C* support function}, and thus that $D_\tau \coloneqq K(s_\tau)= \epi(s_\tau^*)$ is a $\tau$-convex domain. Proposition~\ref{prop Dg is maximal} and \ref{prop Dg as intersection} give the following.

\begin{proposition}
\label{prop Dtau is maximal}
The $\C$-convex domain $D_\tau$ with support function $s_\tau$ is the \emph{maximal $\tau$-convex domain} for inclusion. That is, every $\tau$-convex domain is included in $D_\tau$.
\end{proposition}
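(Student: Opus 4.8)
The plan is to obtain the statement as an immediate consequence of the general results on $g$-convex domains from subsection~\ref{subsec Boundary functions and g-convex domains}, exploiting that the boundary data of every $\tau$-support function are rigidly fixed to be $g_\tau$. Concretely, I would identify $D_\tau$ with the maximal $g_\tau$-convex domain and then use that $\tau$-convexity is strictly stronger than $g_\tau$-convexity.

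First I would verify that $D_\tau = K(s_\tau)$ is genuinely a $\tau$-convex domain, so that the claimed maximality is meaningful. Since $s_\tau$ is the convex envelope of $g_\tau$ (Definition~\ref{def convex envelope}), a direct check on its expression as a supremum of affine functions, together with the way $g_\tau$ transforms under $\Gamma$, shows that $s_\tau$ satisfies the equivariance relation~\eqref{eq action on Omega* support function}; hence $K(s_\tau)$ is invariant under $\Gamma_\tau$. Moreover $g_\tau$ is finite and continuous, so Proposition~\ref{prop Dg as intersection} applies and realises $D_\tau$ as the $\C$-regular intersection of the $\C$-futures of the $\C$-null hyperplanes prescribed by $g_\tau$; in particular $D_\tau$ is a bona fide $\C$-convex domain, and therefore a $\tau$-convex domain.

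For the maximality itself, I would take an arbitrary $\tau$-convex domain $K$ with $\tau$-support function $s_K$. By Proposition~\ref{prop existence of boundary function gtau}, $s_K$ extends continuously to $\overline{\Omega^*}$ with boundary values exactly $g_\tau$; that is, $K$ is a $g_\tau$-convex domain in the sense of Definition~\ref{def boundary function and g-convex domain}. Applying Proposition~\ref{prop Dg is maximal} to the finite, not-identically-$+\infty$ boundary function $g_\tau$, whose convex envelope is precisely $s_\tau$, the corresponding maximal $g_\tau$-convex domain is $K(s_\tau) = D_\tau$, and it contains every $g_\tau$-convex domain. In particular $K \subseteq D_\tau$, which is the assertion.

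The only point requiring care is conceptual rather than computational: the maximality in Proposition~\ref{prop Dg is maximal} is stated over the a priori larger family of all $g_\tau$-convex domains, whereas our claim concerns only the $\Gamma_\tau$-invariant ones. Since the inclusion of classes runs the right way --- every $\tau$-convex domain is $g_\tau$-convex by Proposition~\ref{prop existence of boundary function gtau}, though not conversely --- maximality over the larger family at once yields maximality over the smaller one. No new estimate is required; the entire content sits in the previously established rigidity of the boundary behaviour of $\tau$-support functions.
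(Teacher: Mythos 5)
Your proposal is correct and takes essentially the same route as the paper: one checks from the supremum-of-affine-functions expression that the convex envelope $s_\tau$ of $g_\tau$ is $\tau$-equivariant, so that $D_\tau = K(s_\tau)$ is itself $\tau$-convex, and then maximality follows by combining Proposition~\ref{prop existence of boundary function gtau} (every $\tau$-convex domain is $g_\tau$-convex) with Proposition~\ref{prop Dg is maximal} (the domain with support function $s_\tau$ is maximal among $g_\tau$-convex domains). Your closing remark that maximality over the larger class of $g_\tau$-convex domains restricts to the $\Gamma_\tau$-invariant subclass is exactly the (implicit) logic of the paper's argument.
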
 

\begin{figure}[ht]
\centering
\includegraphics{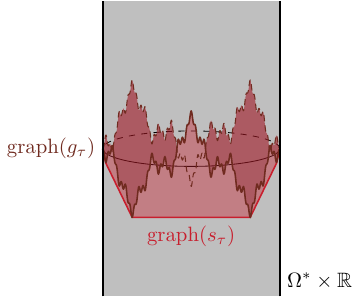}
\caption{The unique $\tau$-equivariant boundary function $g_\tau$, and its convex envelope $s_\tau.$}
\label{boundary function figure}
\end{figure}

\begin{proposition}
\label{prop Dtau as intersection}
The maximal $\tau$-convex domain $D_\tau$ is $\C$-regular. It is the intersection of the $\C$-future sides of the $\C$-null affine hyperplanes given by the boundary function $g_\tau: \partial \Omega^* \to \R$ associated with $\tau$:
\begin{equation*}
D_\tau = \bigcap_{y \in \partial \Omega^*} \left\lbrace X \in \R^{d+1} \st X \cdot (y,-1) < g_\tau(y) \right\rbrace .
\end{equation*}
\end{proposition}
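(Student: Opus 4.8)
The plan is to observe that this proposition is nothing more than the specialisation of the general Proposition~\ref{prop Dg as intersection} to the particular boundary datum $g = g_\tau$, so that essentially no new argument is needed beyond checking that the hypotheses of that proposition are met and that the domain $D_\tau$ built here agrees with the maximal $g_\tau$-convex domain $D_{g_\tau}$ studied there.

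First I would recall that, by Proposition~\ref{prop existence of boundary function gtau}, the function $g_\tau : \partial\Omega^* \longrightarrow \R$ is continuous and finite-valued. In particular it is an $\Omega^*$-boundary function in the sense of Definition~\ref{def boundary function and g-convex domain}: indeed the smooth $\tau$-support function furnished by Proposition~\ref{prop exists tau-support function extending to boundary} is a convex function on $\Omega^*$ that extends lower semi-continuously to $g_\tau$ on $\partial\Omega^*$. Next, by the very construction adopted just above the statement, $s_\tau$ is the convex envelope of $g_\tau$ (Definition~\ref{def convex envelope}), and $D_\tau \coloneqq K(s_\tau)$ is by definition the epigraph of the Legendre--Fenchel transform $s_\tau^*$. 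This is exactly the maximal $g_\tau$-convex domain $D_{g_\tau}$ produced by Proposition~\ref{prop Dg is maximal}, so we have the identification $D_\tau = D_{g_\tau}$.

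Finally I would simply invoke Proposition~\ref{prop Dg as intersection} with $g = g_\tau$: it asserts at once that $D_{g_\tau}$ is $\C$-regular and that it coincides with the intersection of the $\C$-futures of the family of $\C$-null affine hyperplanes prescribed by $g_\tau$, which is precisely the displayed formula for $D_\tau$. There is therefore no genuine obstacle here; the only point deserving a word is the identification $D_\tau = D_{g_\tau}$, that is, that the convex envelope $s_\tau$ of $g_\tau$ really is the support function of the maximal $g_\tau$-convex domain. This is immediate from the order-reversing property of the Legendre--Fenchel transform already used in the proof of Proposition~\ref{prop Dg is maximal}, the point being that $s_\tau$ is the \emph{largest} convex lower semi-continuous function with boundary values $g_\tau$, and the Legendre--Fenchel duality turns this maximality of support functions into the maximality of $D_\tau$ for inclusion. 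In short, the substance of the statement was already established in the general framework of subsection~\ref{subsec Boundary functions and g-convex domains}, and here it is merely instantiated at the specific, $\Gamma$-equivariant boundary function $g_\tau$.
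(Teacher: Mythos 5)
Your proposal is correct and matches the paper's own treatment: the paper gives no separate proof of this proposition, stating it directly as a consequence of Propositions~\ref{prop Dg is maximal} and~\ref{prop Dg as intersection} applied to the boundary function $g_\tau$, exactly as you do. Your extra verification that $g_\tau$ is a genuine $\Omega^*$-boundary function and that $D_\tau$ coincides with the maximal $g_\tau$-convex domain $D_{g_\tau}$ is precisely the (implicit) content of the paragraph preceding the statement in the paper.
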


\begin{remark}
\label{rem link with Choi}
In the work of Choi \cite{Choi_25}, the existence of $\tau$-convex domains is given by Theorem~4.3.1. Indeed, we are in a setting where the affine subgroup $\Gamma_\tau < \mathrm{GL}(\R^{d+1}) \ltimes\R^{d+1}$ has linear part in $\SL(\R^{d+1})$ and thus automatically verifies Choi's \emph{‘‘uniform middle eigenvalue condition''}. Letting $\Omega_{\Sph^{d+1}}\subset \Sph^{d+1}$ be the extension at infinity of $\C$ in $\Sph^{d+1}$, the double cover of $\P(\R^{d+1})$, Theorem~4.3.1. states the existence of a convex domain $K \subset \R^{d+1}$ such that $(\Gamma_\tau,K,\Omega_{\Sph^{d+1}})$ is a \emph{‘‘properly convex triple''} and $\Gamma_\tau$ acts \emph{‘‘asymptotically nicely''} on $K$. The definitions of a ‘‘properly convex triple'' and an ‘‘asymptotically nice'' affine action \cite[Section 4.1]{Choi_25} imply that the convex domain $K$ verifies $K+\C = K$, that it has supporting hyperplanes directed by every elements of $\Omega^*$ and that it is $\Gamma_\tau$-invariant, i.e. that $K$ is $\tau$-convex.

The existence and uniqueness of the maximal $\tau$-convex domain $D_\tau$ then comes from Theorem~4.3.7 in \cite{Choi_25}. It states that there is a unique family family $(H_y)_{y\in \partial\Omega^*}$ of affine $\C$-null hyperplane (given by the boundary function $g_\tau$ in our approach) such that for every properly convex affine triple $(\Gamma_\tau,K,\Omega_{\Sph^{d+1}})$, the domain $K$ has all $(H_y)_{y\in \partial\Omega^*}$ as supporting hyperplanes ‘‘at infinity''. Then, the intersection of the $\C$-future of that family of $\C$-null affine hyperplanes gives a unique maximal $\tau$-convex domain $D_\tau$ (Proposition~\ref{prop Dtau as intersection}).
\end{remark}

Using Proposition~\ref{prop maximal g domain = Cauchy development}, we also have the following results about the Cauchy development (Definition~\ref{def Cauchy dev}) of the boundary of $\C$-spacelike (Definition~\ref{def C-convex}) $\tau$-convex domains . 

\begin{proposition}
\label{prop maximal domain = Cauchy development}
Let $K$ be a $\C$-spacelike $\tau$-convex domain with $C^1$ boundary. Its boundary $\partial K = \partial_{sp} K$ has Cauchy development $D(\partial K)$ equal to the maximal $\tau$-convex domain $D_\tau$.
\end{proposition}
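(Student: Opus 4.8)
The plan is to obtain this statement as a direct specialisation of Proposition~\ref{prop maximal g domain = Cauchy development}, which already computes the Cauchy development of the spacelike boundary of a $\C$-spacelike $\C$-convex domain with $C^1$ boundary in terms of the maximal $g$-convex domain attached to its boundary function. The only work is to identify the correct boundary function and to recognise that the resulting maximal $g$-convex domain is exactly $D_\tau$.

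First I would record that $K$, being a $\C$-spacelike $\C$-convex domain with $C^1$ boundary, already satisfies the hypotheses of Proposition~\ref{prop maximal g domain = Cauchy development}, provided one exhibits the boundary function to which its support function extends. Since $K$ is $\tau$-convex (Definition~\ref{def tau-convex domain and tau-support functions}), its support function $s_K$ is a $\tau$-support function, hence a $\tau$-equivariant continuous function on $\Omega^*$. By Proposition~\ref{prop existence of boundary function gtau}, every such function extends continuously to $\overline{\Omega^*}$ with boundary values given by the unique function $g_\tau$ on $\partial\Omega^*$. In particular $s_K$ extends lower semi-continuously to $g_\tau$, so that $K$ is a $g_\tau$-convex domain in the sense of Definition~\ref{def boundary function and g-convex domain}.

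Applying Proposition~\ref{prop maximal g domain = Cauchy development} with $g = g_\tau$ then yields $D(\partial K) = D_{g_\tau}$, the maximal $g_\tau$-convex domain. It remains to identify $D_{g_\tau}$ with $D_\tau$: by Proposition~\ref{prop Dg is maximal}, $D_{g_\tau}$ is the convex domain whose support function is the convex envelope $s_{g_\tau}$ of $g_\tau$, but by construction $s_\tau$ is precisely this convex envelope (Definition~\ref{def convex envelope}), so $D_{g_\tau} = K(s_\tau) = D_\tau$, which is the desired conclusion.

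Because all the genuine analytic content is carried out inside Proposition~\ref{prop maximal g domain = Cauchy development}, there is no real obstacle here; the single point deserving care is the uniqueness built into Proposition~\ref{prop existence of boundary function gtau}. It is precisely this uniqueness that forces the support function of \emph{every} $\C$-spacelike $\tau$-convex domain $K$ to extend to the \emph{same} boundary function $g_\tau$, and hence makes the Cauchy development $D(\partial K)$ independent of the choice of $K$ and equal to $D_\tau$.
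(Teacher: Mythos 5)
Your proposal is correct and is exactly the argument the paper intends: the paper states this proposition as an immediate consequence of Proposition~\ref{prop maximal g domain = Cauchy development}, using Proposition~\ref{prop existence of boundary function gtau} to identify the boundary function of any $\tau$-convex domain with $g_\tau$, and the construction $D_\tau = K(s_\tau)$ with $s_\tau$ the convex envelope of $g_\tau$ to identify $D_{g_\tau}$ with $D_\tau$. Your write-up simply makes explicit the chain of identifications that the paper leaves implicit.
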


\begin{corollary}
\label{cor C2+ implies Cauchy surface}
Let $K$ be a $\C$-spacelike $C^2_+$ $\tau$-convex domain. Its boundary $\partial K = \partial_{sp} K$ is a $\Gamma_\tau$-invariant $C^2$ locally uniformly future-convex Cauchy surface of the affine spacetime $(D_\tau,\C)$. 
\end{corollary}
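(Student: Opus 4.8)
The plan is to verify separately the three asserted properties—$\Gamma_\tau$-invariance, local uniform convexity, and the Cauchy surface property—assembling the results already established. First, $\Gamma_\tau$-invariance of $\partial K = \partial_{sp} K$ is immediate: by Definition~\ref{def tau-convex domain and tau-support functions} a $\tau$-convex domain $K$ is $\Gamma_\tau$-invariant, hence so is its boundary. For local uniform convexity, I would start from Proposition~\ref{prop C2+ support function has positive Hessian}, which tells us that the support function $s$ of the $C^2_+$ domain $K$ is $C^2$ with everywhere positive definite Hessian. By Proposition~\ref{prop K is epigraph} we have $\partial K = \graph(f)$ with $f = s^*$, and the standard Legendre duality for strictly convex $C^2$ functions shows that $f$ is itself $C^2$ with positive definite Hessian (its Hessian at a point being the inverse of that of $s$ at the dual point). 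Thus $\partial K$ is, in any affine chart, the graph of a $C^2$ convex function with positive definite Hessian, i.e. locally uniformly convex in the sense of Definition~\ref{def Convex Cauchy surface}; since $K = \epi(f)$ satisfies $K + \C \subseteq K$, the convex side of $\partial K$ is its $\C$-future, so every future-directed $\C$-causal vector points toward the convex side.

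The heart of the matter is the Cauchy surface property in $(D_\tau,\C)$. The key input is Proposition~\ref{prop maximal domain = Cauchy development}, giving $D(\partial K) = D_\tau$. From this I would first deduce the containment $\partial K \subset D_\tau$: every point of $\partial K$ lies trivially in its own Cauchy development, so $\partial K \subseteq D(\partial K) = D_\tau$, and $D_\tau$ is open. Together with the fact that $\partial K = \partial_{sp} K$ is a $C^1$ embedded $\C$-spacelike hypersurface (from the $C^2_+$ and $\C$-spacelike hypotheses), it then remains to show that every inextensible $\C$-causal curve of $D_\tau$ meets $\partial K$ exactly once.

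For the ``at most once'' statement, I would consider a future-directed $\C$-causal curve $r \mapsto (x(r),\lambda(r))$ and the function $\phi(r) = \lambda(r) - f(x(r))$, whose sign detects the two sides of $\partial K = \graph(f)$. Writing the velocity as $t(u,1)$ with $t>0$ and $u \in \overline{\Omega}$, and noting that $\grad f(x) \in \Omega^*$ since the image of the Gauss map of the $\C$-convex domain $K$ is $\Omega^*$, the definition of the polar domain $\Omega^*$ gives $\grad f(x)\cdot u < 1$, whence $\phi'(r) = t\bp{1 - \grad f(x)\cdot u} > 0$. So $\phi$ is strictly increasing and vanishes at most once (the past-directed case being symmetric). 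For the ``at least once'' statement, I would extend the curve to an inextensible $\C$-causal curve $\bar c$ in $\R^{d+1}$; since $D(\partial K) = D_\tau$ and the curve passes through a point of $D_\tau$, $\bar c$ meets $\partial K$ at some $P$, which lies in $D_\tau$ by the containment above. Finally, using that $D_\tau + \overline{\C} \subseteq D_\tau$ (a direct consequence of the description of $D_\tau$ as an intersection of $\C$-futures of $\C$-null hyperplanes in Proposition~\ref{prop Dtau as intersection}), the intersection $\bar c \cap D_\tau$ is a connected future-final segment, which by inextensibility coincides with the original curve; hence $P$ lies on it. The main obstacle I anticipate is precisely this last bookkeeping step: carefully relating inextensibility in the open domain $D_\tau$ to inextensibility in $\R^{d+1}$, and ensuring the intersection point produced by the Cauchy development lies on the given curve rather than on its extension—this is exactly where the future-completeness $D_\tau + \overline{\C} \subseteq D_\tau$ is essential.
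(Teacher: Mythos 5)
Your proof is correct. The paper offers no argument at all for this corollary---it is stated as an immediate consequence of Proposition~\ref{prop maximal domain = Cauchy development}---so your write-up follows the same route but supplies the details the paper leaves implicit. The most valuable of these is the one you single out yourself: Definition~\ref{def Cauchy dev} (Cauchy development) quantifies over inextensible $\C$-causal curves of the ambient $\R^{d+1}$ and only asks that they meet $\partial K$, whereas Definition~\ref{def Cauchy surface} (Cauchy surface of $(D_\tau,\C)$) concerns inextensible causal curves of the open submanifold $D_\tau$ and demands exactly one intersection point; bridging the two requires both your monotonicity argument for uniqueness and the future-completeness $D_\tau + \overline{\C} \subseteq D_\tau$ (from Proposition~\ref{prop Dtau as intersection}) to ensure that the intersection point furnished by the Cauchy development lies on the original curve rather than on its extension to $\R^{d+1}$. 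One small remark on the uniqueness step: the fact that $\grad f(x) \in \Omega^*$ for every $x$ is exactly the $\C$-spacelike hypothesis $\partial K = \partial_{sp}K$ (all supporting hyperplanes are spacelike), not the surjectivity of the Gauss map onto $\Omega^*$ that is part of $\C$-convexity; it is the inclusion of the Gauss image in $\Omega^*$, not its surjectivity, that makes $\phi$ strictly increasing. The invariance and local-uniform-convexity parts, via Proposition~\ref{prop C2+ support function has positive Hessian} and Legendre duality, are also correct.
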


\subsection{The two maximal domains}
\label{subsec The two maximal domains}

Notice that, up to now, both the group $\Gamma$ and the cone $\C$ it divides were fixed, but if $\Gamma$ divides the cone $\C$ it also divides its opposite $-\C$. The two affine spacetime structures $(\R^{d+1},\C)$ and $(\R^{d+1},-\C)$ are related: the notions of $\C$-null and $\C$-spacelike hyperplane are the same as the notions of $(-\C)$-null and $(-\C)$-spacelike hyperplane, but time is ``reversed'' as the notion of $\C$-future coincides with the notion of $(-\C)$-past. Thus, to $\Gamma_\tau$ an affine deformation of $\Gamma$ we can associate two domains $D^{\C}_{\tau}$ and $D^{-\C}_{\tau}$.

Notice that using the $\C$-duality, $(-\C)$-convex domains are given by concave support functions on $\Omega^*$. In Proposition~\ref{prop existence of boundary function gtau} we have seen that both convex and concave $\tau$-equivariant functions on $\Omega^*$ have the same continuous extension $g_\tau$ on $\partial \Omega^*$. Thus, the two domains come from considering:
\begin{itemize}
\item$s_\tau^-: \Omega^* \to \R$ (that we previously only denoted as $s_\tau$) the pointwise largest convex function on $\Omega^*$ with graph under the graph of $g_\tau$ on $\partial \Omega^*$, i.e.
\begin{equation*}
s_\tau^- (y)= \sup \left\lbrace a(y) \st a \ \text{is affine and} \ a\leq g_\tau \ \text{on} \ \partial \Omega^*\right\rbrace ,
\end{equation*}
\item$s_\tau^+: \Omega^* \to \R$ the pointwise smallest concave function on $\Omega^*$ with graph over the graph of $g_\tau$ on $\partial \Omega^*$, i.e.
\begin{equation*}
s_\tau^+ (y)= \inf \left\lbrace a(y) \st a \ \text{is affine and} \ a\geq g_\tau \ \text{on 
} \ \partial \Omega^*\right\rbrace .
\end{equation*}
\end{itemize}
Then, the graphs of $s_\tau^-$ and $s_\tau^+$ bound $\mathrm{CH}^*(\tau)$, the convex hull of $\graph(g_\tau)$ inside the dual space $(\R^{d+1})^*$:
\begin{equation*}
\mathrm{CH}^*(\tau) = \left\lbrace (y,\mu) \in \R^{d+1} \st y \in \Omega^*, s_\tau^-(y) \leq \mu \leq s_\tau^+(y)\right\rbrace .
\end{equation*}

Consequently, by Proposition~\ref{prop Dtau as intersection}, the two domains $D^{\C}_{\tau}$ and $D^{-\C}_{\tau}$ can be expressed as, respectively, the intersection of the $\C$-future, and the intersection of the $(-\C)$-future (i.e. the $\C$-past), of the same family of $\C$-null affine hyperplanes given by the boundary function $g_\tau: \partial \Omega^* \to \R$ associated with $\tau$. That implies that $D^{\C}_{\tau}$ and $D^{-\C}_{\tau}$ are disjoint. One can also easily check that $D^{\C}_\tau = - D^{-\C}_{-\tau}$.

\section{\texorpdfstring{Cosmological time on $\C$-convex domains}{Cosmological time on C-convex domains}}
\label{sec Cosmological time on C-convex domains}

In this section, we introduce the \emph{cosmological time} function on $\C$-convex domains and prove that it satisfies the regularity and concavity conditions from Theorem~\ref{Theorem Intro cosmological time}.  That cosmological time will be a central tool for proving point~\ref{theo11} of Theorem~\ref{Theorem Intro domain} in Section~\ref{sec Quotient of the maximal domain}.

All the tools from this section can be used in an affine spacetime $(\A^{d+1},\C)$, where $\C$ is an open proper convex cone (not necessarily divisible). Thus, we work in the same setting as in Section~\ref{sec About C-convex domains}: the ambient affine spacetime is $(\R^{d+1},\C)$ where $\C = \{ t (x,1) \ \vert \ x \in \Omega, t>0 \}$ with $\Omega$ a bounded open convex domain of $\R^d$ containing $0$.

\begin{definition}[Gauge support function]
\label{def gauge support function}
A \emph{gauge support function} is a convex function $\omega: \Omega^* \to \R$ satisfying the following conditions:
\begin{enumerate}[label=(GS\arabic*)]
 \item \label{cdt C2+} $\omega$ is $C^2_+$, i.e. it is $C^2$ with positive definite Hessian on the whole $\Omega^*$,
 \item \label{cdt 0 on dOmega} $\omega$ extends continuously to $\overline{\Omega^*}$ and $\omega_{\vert \partial \Omega^*} = 0$,
 \item \label{cdt empty subdif on dOmega} for all $y \in \partial \Omega^*$, we have $\lim_{y' \to y} \vert \grad \omega_\C(y') \vert = +\infty$ and thus $\partial\omega(y)=\emptyset$.
\end{enumerate}
\end{definition}

For the rest of this section, we let $\omega: \Omega^* \to \R$ be a gauge support function. Let $\Sigma = \graph(\omega^*)$ be the boundary of the $\C$-convex domain $K(\omega)$ with support function $\omega$. One can check that conditions~\ref{cdt C2+} and~\ref{cdt 0 on dOmega} imply that $\Sigma$ is included in $\C$ and asymptotic to $\partial \C$. Moreover, condition~\ref{cdt empty subdif on dOmega} tells us that $\Sigma$ is $\C$-spacelike, i.e. all its supporting hyperplanes are $\C$-spacelike. Also note that because of conditions~\ref{cdt C2+} and~\ref{cdt 0 on dOmega}, $\omega<0$ on $\Omega^*$.

A nice candidate for the gauge support function $\omega$ is $\omega_\C$, the support function of $\S$ the Cheng--Yau affine sphere given by Theorem~\ref{theo Cheng--Yau V2}. Because $\S$ is invariant by all the elements of $\SL(\R^{d+1})$ preserving $\C$ (Corollary~\ref{cor affine sphere invariance}), it will give rise to tools behaving well with quotients. Hence, we shall make this choice in subsection~\ref{subsec Invariance of the cosmological time given by the CHeng--Yau affine spheres} and later in Section~\ref{sec Quotient of the maximal domain} for our study of the quotients $D_\tau/\Gamma_\tau$ through their cosmological time.

\subsection{\texorpdfstring{Normal projection, cosmological time and foliation of $\C$-convex domains}{Normal projection, cosmological time and foliation of C-convex domains}}
\label{subsec Normal projection, cosmological time and foliation of C-convex domains}

\begin{lemma}
\label{lem s + t omega is C1}
Let $s : \Omega^*\to \R$ be a convex function. Then, for all $t>0$, the convex function $(s+t\omega)^*$ is $C^1$.
\end{lemma}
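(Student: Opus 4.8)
The plan is to reduce the statement to Proposition~\ref{prop strictly convex gives C1 conjugate}, which characterises the $C^1$ regularity of a Legendre--Fenchel conjugate on a bounded open convex domain purely in terms of strict convexity of the primal function. Since $\Omega^*$ is a bounded open convex subset of $\R^d$ and, as recalled in subsection~\ref{subsec support functions and affine sphere}, the support function $s:\Omega^*~\longrightarrow~\R$ of a $\C$-convex domain is a finite-valued convex function, while $\omega:\Omega^*~\longrightarrow~\R$ is finite-valued by Definition~\ref{def gauge support function}, the function $s+t\omega$ is a finite-valued convex function on the bounded open convex domain $\Omega^*$. Proposition~\ref{prop strictly convex gives C1 conjugate} therefore applies verbatim to $s+t\omega$, giving the equivalence: $(s+t\omega)^*$ is $C^1$ if and only if $s+t\omega$ is strictly convex on $\Omega^*$.

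It thus remains to establish strict convexity of $s+t\omega$ for every $t>0$. Here $s$ is convex but possibly not strictly so, whereas $\omega$ is $C^2_+$ by condition~\ref{cdt C2+}, i.e.\ it has everywhere positive definite Hessian, and hence is strictly convex; for $t>0$ the function $t\omega$ is then also strictly convex. The key elementary observation is that the sum of a convex function and a strictly convex function is strictly convex: for distinct $y,y'\in\Omega^*$ and $\lambda\in(0,1)$, combining the (non-strict) convexity inequality for $s$ with the strict one for $t\omega$ yields a strict inequality for $s+t\omega$. This gives the strict convexity of $s+t\omega$ on $\Omega^*$, and the lemma follows from the equivalence above.

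There is no genuine obstacle in this argument; the only point deserving a moment's attention is confirming that the hypotheses of Proposition~\ref{prop strictly convex gives C1 conjugate} are genuinely met, namely that $s+t\omega$ is finite and convex on the bounded open convex domain $\Omega^*$, which is immediate from the standing setup and from Definition~\ref{def gauge support function}. I expect the entire proof to occupy only a few lines, its content being essentially the remark that adding the strictly convex gauge term $t\omega$ upgrades the mere convexity of an arbitrary support function to strict convexity, which is exactly the property dualising into $C^1$ regularity of the conjugate.
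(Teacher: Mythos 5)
Your proof is correct and is essentially identical to the paper's: both apply Proposition~\ref{prop strictly convex gives C1 conjugate} after observing that $s+t\omega$ is strictly convex, since $s$ is convex and $\omega$ (being $C^2_+$ by condition~\ref{cdt C2+}) is strictly convex. The paper states this in one line; your write-up merely spells out the hypothesis-checking and the elementary sum argument in more detail.
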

\begin{proof}
As $s$ is convex and $\omega$ is strictly convex, for all $t>0$ the function $s+t\omega$ is strictly convex. Hence, by Proposition~\ref{prop strictly convex gives C1 conjugate}, the convex function $(s+t\omega)^*$ is $C^1$.
\end{proof}

\begin{definition}[$\C$-spacelike graph]
The $\C$-\emph{spacelike graph graph} of a convex function $f:\R^{d} \to \R$ is the set
\begin{equation*}
\graph_{sp} (f) \coloneqq \left\lbrace \bp{x,f(x)} \ \vert \ \partial f (x) \subseteq \Omega^*\right\rbrace .
\end{equation*}
\end{definition}

\begin{lemma}
\label{lem DspK =DK}
Let $s : \Omega^*\to \R$ be a convex function. Then, for all $t>0$,
\begin{equation*}
\graph_{sp} (s+t\omega)^* = \graph (s+t\omega)^* \, .
\end{equation*}
\end{lemma}
\begin{proof}
Let $t>0$. It suffices to prove that for all $x \in \R^d$, $\partial(s+t\omega)^*(x) \subseteq \Omega^*$. Let us proceed by contradiction and assume that we can set $x \in \R^d$ and $y \in \partial(s+t\omega)^*(x) \cap \partial \Omega^*$. Then, by the Fenchel Inequality Theorem (Theorem~\ref{theo equality for subgradients}), $x \in \partial (\bar s+t\omega)(y)$ where $\bar s : \overline{\Omega^*}\to\R$ is the convex lower semi-continuous extension of $s$ given by Remark~\ref{rem convex env} and $\omega$ is extended continuously to $0$ on $\partial \Omega^*$.

Condition~\ref{cdt empty subdif on dOmega} from Definition~\ref{def gauge support function} is satisfied by $\omega$, so $ \partial (t\omega)(y) = \emptyset$. Thus, by Theorem~\ref{theo equality of sum of differentials},
\begin{equation*}
\partial (s+t\omega)(y) = \partial s(y)+ \partial (t\omega)(y) = \partial s(y)+\emptyset = \emptyset \, .
\end{equation*}
That is a contradiction as $x \in \partial (s+t\omega)(y)$. 
\end{proof}

Note that the $\C$-convex domain with support function $\omega$ is the Minkowski sum
\begin{equation*}
 K(\omega) = \Sigma + \C \, .
\end{equation*}
Thus, if $K$ is a $\C$-convex domain with support function $s$, by Proposition~\ref{prop support function and sum}, for all $t \geq 0$, the $\C$-convex domain with support function $s+ t\omega$ is 
\begin{equation*}
 K(s+t\omega) = K +t(\Sigma + \C) = K+ \C +t \Sigma = K+ t \Sigma \, .
\end{equation*}
Hence, Proposition~\ref{prop K is epigraph}, Lemma~\ref{lem s + t omega is C1} and Lemma~\ref{lem DspK =DK} imply that for all $t > 0$, $K +t \Sigma$ is a $\C$-spacelike $\C$-convex domain with $C^1$ boundary 
\begin{equation*}
 \partial(K+t\Sigma) = \graph(s+t\omega)^* \, .
\end{equation*}

\begin{lemma}
\label{lem C1 foliation of K}
Let $K$ be a $\C$-convex domain with support function $s$. Then, the $C^1$ $\C$-spacelike hypersurfaces $(\partial(K+t\Sigma))_{t>0}$ foliate $K$.
\end{lemma}
\begin{proof}
\noindent\textbullet\ First, let us prove that the hypersurfaces $(\partial(K+t\Sigma))_{t>0} = (\graph(s+t\omega)^*)_{t>0}$ are disjoint. It suffices to notice that if $x\in\R^n$ and $y = \grad (s+t\omega)^*(x) \in \Omega^*$ (by Lemma~\ref{lem s + t omega is C1} and Lemma~\ref{lem DspK =DK}), then for all $0<t<t'$, as $\omega(y) < 0$, by Fenchel Inequality Theorem (Theorem~\ref{theo equality for subgradients}),
\begin{align*}
(s+t\omega)^*(x) & = x \cdot y-(s+t\omega)(y) \\
& < x \cdot y-(s+t'\omega)(y) \\
& < \sup_{y'\in \Omega^*}\bp{x \cdot y' -(s+t'\omega)(y')} = (s+t'\omega)^*(x) \, .
\end{align*}

\noindent\textbullet\ Now, let us show that those leaves foliate the whole $K$, that is $K = \bigcup_{t>0} \partial (K+t\Sigma)$. As $\Sigma \subset \C$ and $K$ is a $\C$-convex domain, the inclusion $\bigcup_{t>0} \partial (K+t\Sigma) \subseteq K$ is clear. Let us then prove the reverse inclusion. Let $X=(x,\lambda)\in K, t>0$ and $y \in \partial s^*(x)$. By the Fenchel Inequality Theorem (Theorem~\ref{theo equality for subgradients}),
\begin{equation}
\label{eq Fenchel equality s*}
s^*(x) = x \cdot y-s(y) \, .
\end{equation}
Thus, for all $t \geq 0$, using the Fenchel Inequality \eqref{eq Fenchel Inequality} and then equality \eqref{eq Fenchel equality s*}, 
\begin{equation*}
(s+t\omega)^*(x) \geq x \cdot y-s(y)-t\omega(y) = s^*(x)-t\omega(y) \, .
\end{equation*}
Hence, as $\omega(y) < 0$, we have that $\lim_{t \to +\infty}(s+t\omega)^*(x) = +\infty$. Thus, as $(s+t\omega)^*(x)$ is continuous in $t$ (by Proposition~\ref{prop uniform convergence of conjugates}) and $s^*(x) < \lambda$ (because $X = (x,\lambda) \in K$), by the intermediate value theorem there is some $t>0$ such that $\lambda = (s+t \omega)^*(x)$, i.e. such that $X \in \graph(s+t \omega)^* = \partial (K+t \Sigma)$. Thus, $K \subseteq \bigcup_{t>0} \partial (K+t\Sigma)$.
\end{proof}

\begin{lemma}
\label{lem link between graphs and normal}
Let $K$ be a $\C$-convex domain with support function $s$. For all $t>0$,
\begin{equation*}
\partial (K+t\Sigma)=\left\lbrace X + t \G_{\Sigma}^{-1}(y) \st X \in \partial_{sp} K, \, y \in \G_K(X) \right\rbrace,
\end{equation*}
where $\G_K$, the $\C$-spacelike Gauss map of the $\C$-convex domain $K$, is the set-valued map defined by \eqref{eq gauss map def}, and, as $\omega$ is $C^2_+$, the inverse $\G_{\Sigma}^{-1} : \Omega^* \to \partial_{sp}K(\omega)=\Sigma$ of the Gauss map of $\partial_{sp}K(\omega)$, is a $C^1$ diffeomorphism expressed by \eqref{eq expression of the inverse of the Gauss map}.
\end{lemma}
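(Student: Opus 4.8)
The plan is to establish the set equality by a double inclusion, after translating the geometric picture into Legendre--Fenchel duality. The guiding idea is that replacing the support function $s$ by $s+t\omega$ translates, on the level of subdifferentials, the Gauss-map preimage by $t\grad\omega(y)$ in each direction $y$, and that this translation is precisely the normal displacement $t\,\G_\Sigma^{-1}(y)$ read off the affine sphere $\Sigma=\partial_{sp}K(\omega)$. Concretely, for $X=(x,s^*(x))\in\partial_{sp}K$ and $y\in\G_K(X)=\partial s^*(x)$ (expression~\eqref{eq gauss map def}), the formula~\eqref{eq expression of the inverse of the Gauss map} applied to the support function $\omega$ gives $\G_\Sigma^{-1}(y)=\bp{\grad\omega(y),\ \grad\omega(y)\cdot y-\omega(y)}$, so that
\[
X+t\,\G_\Sigma^{-1}(y)=\bp{x+t\grad\omega(y),\ s^*(x)+t\bp{\grad\omega(y)\cdot y-\omega(y)}},
\]
where we use that $\Sigma$ is centred at the origin to treat the point $\G_\Sigma^{-1}(y)\in\R^{d+1}$ as a vector.

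For the inclusion from right to left, I would start from the Fenchel equivalence $y\in\partial s^*(x)\iff x\in\partial s(y)$ of Proposition~\ref{prop equality for subgradients}. Since $\omega$ is $C^2_+$, it is differentiable on $\Omega^*$ with $\partial(t\omega)(y)=\{t\grad\omega(y)\}$, and the additivity of subdifferentials (Theorem~\ref{theo equality of sum of differentials}, whose hypothesis holds because $\dom(s)\cap\dom(\omega)$ has interior $\Omega^*$) yields $x':=x+t\grad\omega(y)\in\partial s(y)+\partial(t\omega)(y)=\partial(s+t\omega)(y)$. Reading Proposition~\ref{prop equality for subgradients} once more gives the Fenchel equality $(s+t\omega)^*(x')=x'\cdot y-(s+t\omega)(y)$; expanding this and substituting $s^*(x)=x\cdot y-s(y)$ recovers exactly the second coordinate displayed above. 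Hence $X+t\,\G_\Sigma^{-1}(y)$ lies on $\graph(s+t\omega)^*=\partial K(s+t\omega)$ (Proposition~\ref{prop K is epigraph}).

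For the reverse inclusion, I would take $Z=(x',(s+t\omega)^*(x'))\in\partial K(s+t\omega)$. By Lemma~\ref{lem s + t omega is C1} the conjugate $(s+t\omega)^*$ is $C^1$, so $y:=\grad(s+t\omega)^*(x')$ is its unique subgradient, and Lemma~\ref{lem DspK =DK} guarantees $y\in\Omega^*$. Thus $x'\in\partial(s+t\omega)(y)$, and a second application of Theorem~\ref{theo equality of sum of differentials} produces $x\in\partial s(y)$ with $x'=x+t\grad\omega(y)$. Then $X:=(x,s^*(x))$ admits the spacelike subgradient $y\in\Omega^*$, so $X\in\partial_{sp}K$ with $y\in\G_K(X)$, and the same Fenchel computation as before shows $Z=X+t\,\G_\Sigma^{-1}(y)$.

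I expect the only real subtlety to be the two invocations of the additivity of subdifferentials: both require a common interior point of the domains of $s$ and $\omega$, which is automatic here since both functions have domain contained in $\overline{\Omega^*}$ with interior $\Omega^*$. Everything else is bookkeeping with the Fenchel equality and the explicit Gauss-map formulas; in particular, the crucial cancellation making the second coordinate match relies only on $x\in\partial s(y)$ holding simultaneously with $x'\in\partial(s+t\omega)(y)$, which is exactly what the subdifferential additivity delivers.
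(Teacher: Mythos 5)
Your proof is correct and follows essentially the same route as the paper's: both inclusions are run through Legendre--Fenchel duality with the explicit formulas \eqref{eq gauss map def} and \eqref{eq expression of the inverse of the Gauss map}, and the key step of the reverse inclusion is exactly the paper's, namely invoking Theorem~\ref{theo equality of sum of differentials} (whose hypothesis holds since both domains contain $\Omega^*$) to split $x'\in\partial(s+t\omega)(y)$ as $x'=x+t\grad\omega(y)$ with $x\in\partial s(y)$. The only cosmetic difference is in the easy inclusion, where the paper adds the two supporting-hyperplane inequalities (with equality at $y$) directly instead of phrasing the same fact as the trivial containment $\partial s(y)+\partial(t\omega)(y)\subseteq\partial(s+t\omega)(y)$ followed by the Fenchel equality.
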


\begin{proof}
First, let us show that 
\begin{equation*}
\partial (K+t\Sigma) \supseteq\left\lbrace X + t \G_{\Sigma}^{-1}(y) \st X \in \partial_{sp} K, y \in \G_K(X) \right\rbrace .
\end{equation*}
Let $X =(x,s(x))\in \partial_{sp} K$, let $y \in \G_K(X)$ and let $t>0$. We have that $y$ directs a supporting hyperplane to $K$ at $X$, thus $y \in \partial s^*(x)$. By the Fenchel Inequality Theorem (Theorem~\ref{theo equality for subgradients}), we also have $x \in \partial s(y)$ and $s^*(x)=x\cdot y -s(y)$. Then, using the expression of $\G_{\Sigma}^{-1}$ given by \eqref{eq expression of the inverse of the Gauss map}, we get
\begin{equation*}
X + t \G_{\Sigma}^{-1}(y) = \bp{x+t \grad \omega(y), (x+t \grad \omega(y)) \cdot y - (s+t \omega )(y)} \, .
\end{equation*}
As $x \in \partial s(y)$, by Theorem~\ref{theo equality of sum of differentials}, $x_t\coloneqq x+t \grad \omega(y) \in \partial(s+t \omega)(y)$. Hence, by Theorem~\ref{theo equality for subgradients}, we have 
$(s+t \omega )^*(x_t)= x_t \cdot y - (s+t \omega )(y)$
and 
\begin{equation*}
X + t \G_{\Sigma}^{-1}(y) = \bp{x_t, x_t \cdot y - (s+t \omega )(y)} \in \graph(s+t\omega)^* = \partial (K+t\Sigma) \, .
\end{equation*}

Now, let us show that
\begin{equation*}
 \partial (K+t\Sigma) \subseteq \left\lbrace X + t \G_{\Sigma}^{-1}(y) \st X \in \partial_{sp} K, y \in \G_K(X) \right\rbrace .
\end{equation*}
Let $X_t =(x_t,(s+t\omega)^*(x_t)) \in \graph(s+t\omega)^* = \partial (K+t\Sigma)$. Set $y = \grad (s+t\omega)^*(x_t) \in \Omega^*$. By the Fenchel Inequality Theorem (Theorem~\ref{theo equality for subgradients}), we also have $x_t \in \partial (s+t\omega)(y)$ and $(s+t\omega)^*(x_t)=x_t\cdot y -(s+t\omega)(y)$. Thus, using the expression of $\G_{\Sigma}^{-1}$ given by \eqref{eq expression of the inverse of the Gauss map}, we get
\begin{equation*}
X \coloneqq X_t - t \G_{\Sigma}^{-1}(y) = \bp{x_t-t \grad \omega(y), (x_t-t \grad \omega(y)) \cdot y - s(y)} \, .
\end{equation*}
As $x_t \in \partial (s + t \omega)(y)$, by Theorem~\ref{theo equality of sum of differentials}, $x \coloneqq x_t - t \grad \omega (y) \in \partial s(y)$. Hence, by Theorem~\ref{theo equality for subgradients}, we have $s^*(x) = x \cdot y - s(y)$ and 
\begin{equation*}
 X = X_t - t \G_{\Sigma}^{-1}(y)= \bp{x, x \cdot y - s(y)} \in \graph_{sp}(s^*)= \partial_{sp}K \, . \qedhere
\end{equation*}
\end{proof}

Lemmas~\ref{lem C1 foliation of K} and~\ref{lem link between graphs and normal} together give the following.

\begin{proposition}
\label{prop decomposition}
Let $K$ be a $\C$-convex domain with support function $s$. Then, any point $X \in K$ is uniquely decomposed as
\begin{equation*}
X = P + t \G_{\Sigma}^{-1}(y) \, ,
\end{equation*}
where $t>0$, $P\in \partial_{sp} K$ and $y \in \G_K(P)\subseteq \Omega^*$ (see Figure~\ref{Normal projection figure}). Moreover, one has 
\begin{equation*}
y =\G_{K+t\Sigma}(X) =\grad (s+t\omega)^*\bp{\pi (X)}.
\end{equation*}
\end{proposition}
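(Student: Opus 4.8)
The plan is to read this proposition off directly from the two preceding lemmas, the only genuine content beyond them being the \emph{uniqueness} of the decomposition. By Lemma~\ref{lem C1 foliation of K}, the family $\left\lbrace \partial K(s+t\omega) \st t>0\right\rbrace$ is a $C^1$ foliation of $K$, so a fixed point $X \in K$ lies on exactly one leaf, which already determines the parameter $t>0$ uniquely. By Lemma~\ref{lem link between graphs and normal}, membership $X \in \partial K(s+t\omega)$ is \emph{exactly} the statement that $X = P + t\,\G_{\Sigma}^{-1}(y)$ for some $P \in \partial_{sp}K$ and some $y \in \G_K(P) \subseteq \Omega^*$ (here $P$ plays the role of the boundary variable called $X$ in that lemma). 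This gives existence of the decomposition for free; what remains is to show that $P$ and $y$ are uniquely determined by $X$.

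First I would fix $X \in K$, extract the unique $t>0$ from the foliation, and set $x \coloneqq \pi(X)$. Since $\omega$ is strictly convex the function $s+t\omega$ is strictly convex, so by Lemma~\ref{lem s + t omega is C1} (through Proposition~\ref{prop strictly convex gives C1 conjugate}) its conjugate $(s+t\omega)^*$ is $C^1$; hence the leaf $\partial K(s+t\omega) = \graph (s+t\omega)^*$ is a $C^1$ $\C$-spacelike hypersurface carrying a single supporting hyperplane at each of its points. I would then set $y \coloneqq \grad (s+t\omega)^*(x)$, which by Proposition~\ref{prop equality for subgradients} is the unique direction of a supporting hyperplane of $K(s+t\omega)$ at $X$, i.e. the value $\G_{s+t\omega}(X)$. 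This is precisely the ``moreover'' clause of the statement.

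For uniqueness, I would feed an arbitrary decomposition $X = P' + t\,\G_{\Sigma}^{-1}(y')$, with $P' \in \partial_{sp}K$ and $y' \in \G_K(P')$, into the computation carried out in the ``$\supseteq$'' inclusion of the proof of Lemma~\ref{lem link between graphs and normal}. That computation shows $X \cdot (z,-1) \leq (s+t\omega)(z)$ for all $z \in \Omega^*$, with equality at $z = y'$; thus $y'$ directs a supporting hyperplane of $K(s+t\omega)$ at $X$. By the single-supporting-hyperplane property noted above, this forces $y' = y$, and then $P' = X - t\,\G_{\Sigma}^{-1}(y') = X - t\,\G_{\Sigma}^{-1}(y) = P$ is determined as well.

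The one delicate point — the main obstacle — is exactly this uniqueness of the supporting hyperplane of the leaf at $X$, which is where the strict convexity of $\omega$ (hence the $C^1$ regularity of the conjugate supplied by Lemma~\ref{lem s + t omega is C1}) is indispensable: without it, several distinct values of $y'$ could a priori realise the same point $X$, so the splitting $X = P + t\,\G_{\Sigma}^{-1}(y)$ would fail to be unique even once $t$ is fixed. Everything else is bookkeeping that simply fuses the foliation of Lemma~\ref{lem C1 foliation of K} with the normal-graph description of Lemma~\ref{lem link between graphs and normal}.
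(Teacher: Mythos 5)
Your proof is correct and follows essentially the same route as the paper, whose entire proof consists of the observation that Lemmas~\ref{lem C1 foliation of K} and~\ref{lem link between graphs and normal} together give the result. Your write-up merely makes explicit the uniqueness bookkeeping the paper leaves implicit, namely that the $C^1$ regularity of $(s+t\omega)^*$ supplied by Lemma~\ref{lem s + t omega is C1} forces any admissible direction $y'$ in a decomposition to equal $\grad (s+t\omega)^*\bp{\pi(X)}$, after which $P$ is determined as well.
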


\begin{figure}[ht]
\centering
\includegraphics{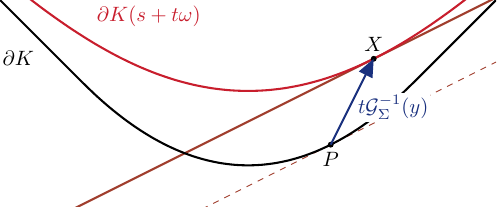}
\caption{Normal projection onto the $\C$-spacelike boundary of a $\C$-convex.}
\label{Normal projection figure}
\end{figure}

\begin{definition}[Normal projection, cosmological time and projecting normal]
\label{def normal projection, cosmological time and projecting normal}
Let $K$ be a $\C$-convex domain. In the decomposition of some element $X \in K$ from Proposition~\ref{prop decomposition},
\begin{itemize}
\item we shall denote $P\in \partial_{sp} K$ by $P_K(X)$ and call it the \emph{normal projection} of $X$ onto $\partial_{sp}K$,
\item we shall denote $t\in \R_+^*$ by $\mathcal{T}_K(X)$ and call it the \emph{cosmological time of $K$} at $X$, 
\item we shall denote $y\in\Omega^*$ by $y_K(X)$ and call it the \emph{projecting normal} from $X$ to $\partial_{sp}K$.
\end{itemize}
\end{definition}

\begin{proposition}
\label{prop TK, PK and yK are continuous}
Let $K$ be a $\C$-convex domain with support function $s$. The cosmological time $\mathcal{T}_K: K \to \R^*_+$, normal projection $P_K: K \to \partial_{sp}K$ and projecting normal $y_K: K \to \Omega^*$ are all continuous.
\end{proposition}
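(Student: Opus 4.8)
The plan is to reduce the whole statement to the continuity of the cosmological time $\mathcal{T}_K$, since the three maps are tied together by Proposition~\ref{prop decomposition}: writing $t = \mathcal{T}_K(X)$, $y = y_K(X)$, $P = P_K(X)$, we have $y = \grad(s+t\omega)^*(\pi(X))$ and $P = X - t\,\G_\Sigma^{-1}(y)$. As $\G_\Sigma^{-1}$ is a $C^1$ (hence continuous) diffeomorphism and $\pi$ is continuous, once $\mathcal{T}_K$ and $y_K$ are known to be continuous, the continuity of $P_K$ follows at once from $P_K(X) = X - \mathcal{T}_K(X)\,\G_\Sigma^{-1}\!\bp{y_K(X)}$. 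So I would first handle $\mathcal{T}_K$, then $y_K$. The key object is the auxiliary function $\Phi(x,t) = (s+t\omega)^*(x)$ for $x \in \R^d$ and $t \geq 0$: by Proposition~\ref{prop K is epigraph} and the foliation Lemma~\ref{lem C1 foliation of K}, $\mathcal{T}_K(x,\lambda)$ is exactly the unique $t>0$ solving $\Phi(x,t) = \lambda$ on $K = \epi(s^*)$. I would record two properties of $\Phi$: \textbf{(a)} strict monotonicity in $t$, i.e. $\Phi(x,t) < \Phi(x,t')$ for $0<t<t'$, which is precisely the computation in the proof of Lemma~\ref{lem C1 foliation of K} (using $\omega<0$ on $\Omega^*$, from condition~\ref{cdt 0 on dOmega}); and \textbf{(b)} joint continuity on $\R^d \times [0,\infty)$, obtained because $\omega$ is bounded on $\Omega^*$ (continuous on the compact $\overline{\Omega^*}$), so $s+t\omega$ converges uniformly on $\Omega^*$ as $t\to t_0$, whence Proposition~\ref{prop uniform convergence of conjugates} gives $\Phi(\cdot,t)\to\Phi(\cdot,t_0)$ uniformly on $\R^d$, and combining with continuity of the finite convex function $\Phi(\cdot,t_0)$ yields joint continuity.

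I would then prove $\mathcal{T}_K$ continuous by a subsequence argument. Take $X_n = (x_n,\lambda_n)\to X=(x,\lambda)$ in $K$ and set $t_n = \mathcal{T}_K(X_n)$, so $\Phi(x_n,t_n)=\lambda_n$. The sequence $(t_n)$ is bounded above: fixing any $y_0\in\Omega^*$, the definition of the conjugate gives $\lambda_n = \Phi(x_n,t_n) \geq x_n\cdot y_0 - s(y_0) + t_n\,|\omega(y_0)|$, and since $\omega(y_0)<0$ while $(x_n),(\lambda_n)$ converge, $t_n\to\infty$ would force $\lambda_n\to\infty$, a contradiction. It is bounded away from $0$: if $t_{n_k}\to 0$, joint continuity gives $\lambda_{n_k}=\Phi(x_{n_k},t_{n_k})\to\Phi(x,0)=s^*(x)$, contradicting $\lambda = \lim\lambda_{n_k} > s^*(x)$ (as $X\in\epi(s^*)$). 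Hence $(t_n)$ lies in a compact subset of $\R^*_+$; for any subsequential limit $t_{n_k}\to t^*$, joint continuity gives $\Phi(x,t^*)=\lambda=\Phi\bp{x,\mathcal{T}_K(X)}$, and strict monotonicity forces $t^*=\mathcal{T}_K(X)$. Thus $t_n\to\mathcal{T}_K(X)$, proving continuity of $\mathcal{T}_K$.

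For $y_K$, I would use $y_K(X_n)=\grad(s+t_n\omega)^*(x_n)$ with $t_n=\mathcal{T}_K(X_n)\to t=\mathcal{T}_K(X)$ now known. The functions $f_n=(s+t_n\omega)^*$ are $C^1$ by Lemma~\ref{lem s + t omega is C1} and converge uniformly on $\R^d$ to $f=(s+t\omega)^*$ (again by Proposition~\ref{prop uniform convergence of conjugates}), with $f$ differentiable at $x$. I would then invoke the convergence of gradients of converging convex functions, which I can establish in two lines: the subgradients $y_K(X_n)\in\partial f_n(x_n)$ are bounded (from $f_n(z)\geq f_n(x_n)+y_K(X_n)\cdot(z-x_n)$ and uniform convergence near $x$), and any subsequential limit $y^*$ passes to the limit in this inequality to give $f(z)\geq f(x)+y^*\cdot(z-x)$ for all $z$, so $y^*\in\partial f(x)=\{\grad f(x)\}$ by Proposition~\ref{prop C1 convex function}; hence $y_K(X_n)\to y_K(X)$. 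With $\mathcal{T}_K$ and $y_K$ continuous, $P_K$ is continuous by the formula above.

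The hard part is the continuity of $\mathcal{T}_K$: the cosmological time is only defined implicitly as the solution of $\Phi(x,t)=\lambda$, so one must combine the joint continuity of $\Phi$ with its strict monotonicity in $t$ (for uniqueness of subsequential limits) and rule out escape of $t_n$ to $0$ or $+\infty$. Everything else, including the gradient argument for $y_K$ and the purely algebraic passage to $P_K$, is comparatively routine once these quantitative convex-analytic facts about $\Phi$ are in place.
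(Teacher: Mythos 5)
Your proof is correct, and its skeleton (first $\mathcal{T}_K$, then $y_K$, then $P_K$ via $P_K = \Id_K - \mathcal{T}_K \cdot \bp{\G_\Sigma^{-1}\circ y_K}$) matches the paper's, but both key steps are executed by genuinely different arguments. For the continuity of $\mathcal{T}_K$, the paper gives a one-line topological proof: using the foliation of Lemma~\ref{lem C1 foliation of K}, the preimage of an open interval is
\begin{equation*}
\mathcal{T}_K^{-1}\bp{]a,b[} = \bigcup_{t \in ]a,b[}\partial K(s+t\omega) = \epi(s+a\omega)^* \setminus \overline{\epi}\,(s+b\omega)^*,
\end{equation*}
an open set minus a closed set, hence open --- no subsequences, no joint continuity of $\Phi(x,t)=(s+t\omega)^*(x)$, and no monotonicity estimates are needed beyond what the foliation already encodes. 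Your analytic route (joint continuity of $\Phi$ via uniform convergence of conjugates, strict monotonicity in $t$, boundedness of $t_n$ above and away from $0$, uniqueness of subsequential limits) proves the same thing correctly but at greater length; note also that your boundedness-above estimate, while fine, could be skipped since the same compactness is forced by your monotonicity and the openness of $K$. For the continuity of $y_K$, the paper simply cites \cite[Theorem 24.5]{Rockafellar_97} (convergence of gradients under pointwise convergence of convex functions), whereas you reprove that fact from scratch by passing to the limit in the subgradient inequality and invoking Proposition~\ref{prop C1 convex function}; your version is self-contained, and in fact the boundedness step is immediate here since $y_K(X_n) \in \Omega^*$ which is bounded. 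The trade-off is clear: the paper's proof is shorter and leans on the structure already established (foliation plus a standard reference), while yours is more elementary and avoids the external citation, at the cost of setting up quantitative convex-analytic machinery that the paper's preimage trick makes unnecessary.
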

\begin{proof}
First, let us prove that the cosmological time is continuous. The closed epigraph of a continuous function $f: \R^d \to \R$ is the closed set
\begin{equation*}
\overline{\epi}(f) \coloneqq \left\lbrace (x,\lambda) \in \R^d \times \R \st \lambda \geq f(x) \right\rbrace .
\end{equation*}
For all $I = ]a,b[\subset \R_+^*$, as the functions $(s+a\omega)^*$ and $(s+b\omega)^*$ are continuous, we have that
\begin{equation*}
\mathcal{T}_K^{-1} (I) = \bigcup_{t \in I}\partial K(s + t\omega) = \epi(s+a\omega)^* \setminus \overline{\epi} (s+b\omega)^*
\end{equation*}
is open, so $\mathcal{T}_K$ is continuous.

Now, let us prove that $y_K$ is continuous. Let $(X_n)_{n\in\N}$ be a sequence of points of $K$ converging to $X$. For all $n\in\N$, $y_K(X_n) =\grad (s+\mathcal{T}_K(X_n)\omega)^*(\pi (X))$. By continuity of $\mathcal{T}_K$, $\mathcal{T}_K(X_n)$ converges to $\mathcal{T}_K(X)$, and thus, $(s+\mathcal{T}_K(X_n)\omega)^*$ converges pointwise to $(s+\mathcal{T}_K(X)\omega)^*$ by Proposition~\ref{prop uniform convergence of conjugates}. Hence, by \cite[Theorem~24.5]{Rockafellar_97}, $y_K(X_n)$ converges to $y_K(X)=\grad (s+\mathcal{T}_K(X)\omega)^*(\pi (X))$. 

Finally, as $P_K = \Id_K - \mathcal{T}_K \times (\G_{\Sigma}^{-1} \circ y_K)$, the continuities of $\mathcal{T}_K$, $\G_{\Sigma}^{-1}$ and $y_K$ imply the continuity of $P_K$. 
\end{proof}

We shall denote the level sets of the cosmological time by 
\begin{equation*}
S_K^t \coloneqq \mathcal{T}_K^{-1}(t) = \partial( K +t \Sigma)= \graph(s+t\omega)^* \, .
\end{equation*}

\begin{figure}[ht]
\centering
\includegraphics{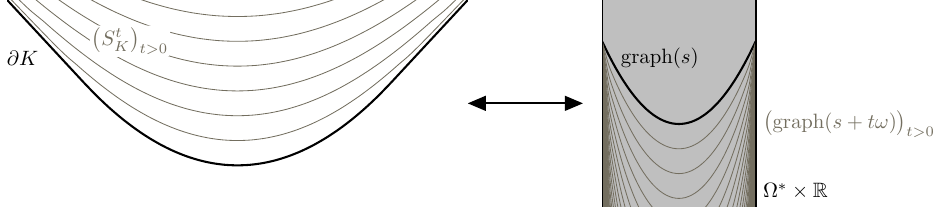}
\caption{Foliation of a $\C$-convex by level hypersurfaces of its cosmological time.}
\label{Cosmological foliation figure}
\end{figure}

\begin{proposition}
\label{prop TK is C1}
Let $K$ be a $\C$-convex domain with support function $s$. The cosmological time $\mathcal{T}_K$ on $K$ is $C^1$ and its gradient at any point $X \in K$ is expressed as 
\begin{equation*}
\grad \mathcal{T}_K (X) = \frac{\bp{y_K(X),-1}}{\omega\bp{y_K(X)}} \, ,
\end{equation*}
where $y_K(X)$ is the projecting normal from $X$ to $\partial_{sp}K$.
\end{proposition}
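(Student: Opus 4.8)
The plan is to characterise $\mathcal{T}_K$ implicitly and apply the implicit function theorem. By Proposition~\ref{prop decomposition} and the definitions that follow it, a point $X=(x,\lambda)\in K$ lies on the level set $S_K^{\mathcal{T}_K(X)}=\graph(s+\mathcal{T}_K(X)\omega)^*$, which is to say
\begin{equation*}
\lambda=\bp{s+\mathcal{T}_K(X)\omega}^*(x).
\end{equation*}
I would therefore introduce the auxiliary map
\begin{equation*}
\function{H}{\R^d\times\R\times\R^*_+}{\R}{(x,\lambda,t)}{(s+t\omega)^*(x)-\lambda},
\end{equation*}
which is well-defined since $\dom(s+t\omega)\subseteq\Omega^*$ is bounded, so the conjugate is finite everywhere (Proposition~\ref{prop conjugate are Lipschitz}). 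The foliation from Lemma~\ref{lem C1 foliation of K} guarantees that $\mathcal{T}_K(x,\lambda)$ is precisely the unique $t>0$ solving $H(x,\lambda,t)=0$.

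The heart of the argument is to show that $H$ is $C^1$ with $\partial_t H\neq0$. Write $y^*(x,t)\coloneqq\grad(s+t\omega)^*(x)$, which is well-defined because $s+t\omega$ is strictly convex, whence $(s+t\omega)^*$ is $C^1$ by Proposition~\ref{prop strictly convex gives C1 conjugate}; by the Fenchel equality (Proposition~\ref{prop equality for subgradients}) this $y^*(x,t)$ is exactly the unique maximiser in $(s+t\omega)^*(x)=\sup_{y\in\Omega^*}\bp{x\cdot y-s(y)-t\omega(y)}$. Then $\partial_\lambda H=-1$ and $\partial_x H=y^*(x,t)$, while for the $t$-derivative the envelope theorem gives $\partial_t H=-\omega\bp{y^*(x,t)}$; concretely, testing the supremum at the fixed maximiser $y^*=y^*(x,t)$ yields $(s+t'\omega)^*(x)\ge(s+t\omega)^*(x)-(t'-t)\omega(y^*)$ for all $t'$, so $-\omega(y^*)$ lies in the (one-dimensional) subdifferential in $t$, and uniqueness of the maximiser makes it a singleton. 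Since $\omega<0$ on $\Omega^*$, this derivative is strictly positive, so the non-degeneracy hypothesis of the implicit function theorem holds.

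The main obstacle is the joint continuity of these partial derivatives, i.e. the continuity of $(x,t)\mapsto y^*(x,t)$; everything else is then formal. I would establish this exactly as in the proof of Proposition~\ref{prop TK, PK and yK are continuous}: if $(x_n,t_n)\to(x,t)$ with $t>0$, then $s+t_n\omega$ converges to $s+t\omega$ uniformly on compact subsets of $\Omega^*$, so by Proposition~\ref{prop uniform convergence of conjugates} the conjugates $(s+t_n\omega)^*$ converge pointwise to $(s+t\omega)^*$; as all of these convex functions are differentiable, \cite[Theorem~24.5]{Rockafellar_97} gives $y^*(x_n,t_n)=\grad(s+t_n\omega)^*(x_n)\to\grad(s+t\omega)^*(x)=y^*(x,t)$. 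Hence $\partial_x H$ and $\partial_t H$ are continuous, $H$ is $C^1$, and the implicit function theorem shows that $\mathcal{T}_K$ is $C^1$.

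Finally, the gradient is obtained by implicit differentiation of $H\bp{x,\lambda,\mathcal{T}_K(x,\lambda)}=0$. Solving $\partial_x H+\partial_t H\,\partial_x\mathcal{T}_K=0$ and $\partial_\lambda H+\partial_t H\,\partial_\lambda\mathcal{T}_K=0$ gives
\begin{equation*}
\partial_x\mathcal{T}_K=\frac{y^*(x,t)}{\omega\bp{y^*(x,t)}}\quad\text{and}\quad\partial_\lambda\mathcal{T}_K=\frac{-1}{\omega\bp{y^*(x,t)}},
\end{equation*}
which, since $y_K(X)=y^*\bp{\pi(X),\mathcal{T}_K(X)}$ by Proposition~\ref{prop decomposition}, assemble into the announced expression $\grad\mathcal{T}_K(X)=\dfrac{\bp{y_K(X),-1}}{\omega\bp{y_K(X)}}$.
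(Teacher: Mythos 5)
Your proposal takes a genuinely different route from the paper's. The paper fixes $X$, sets $P = P_K(X)$, $y = y_K(X)$, $N = \G_{\Sigma}^{-1}(y)$, and squeezes $g(V) = \mathcal{T}_K(P+V)$ between the two smooth time functions $f$ and $h$ induced on $I^+(P)$ by the foliations by parallel hyperplanes $\bp{P + t\,H_y}_{t>0}$ and by the homothets $\bp{P + t\,\Sigma}_{t>0}$: since $f \le g \le h$ with equality and equal differentials along the ray $\bp{P+tN}_{t>0}$, the cosmological time is differentiable there with the common differential (an argument borrowed from \cite[Proposition 4.3]{Bonsante_05}), and the continuity of $y_K$ from Proposition~\ref{prop TK, PK and yK are continuous} then upgrades differentiability to $C^1$. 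You instead characterise $\mathcal{T}_K$ as the implicit solution of $H(x,\lambda,t)=(s+t\omega)^*(x)-\lambda=0$, which is legitimate by Lemma~\ref{lem C1 foliation of K} and Proposition~\ref{prop decomposition}, and run the implicit function theorem. Your reduction of everything to the joint continuity of $y^*(x,t)=\grad(s+t\omega)^*(x)$, proved via \cite[Theorem 24.5]{Rockafellar_97} exactly as the paper proves Proposition~\ref{prop TK, PK and yK are continuous}, is correct, the sign condition $\partial_t H = -\omega(y^*)>0$ is correct, and the implicit differentiation assembles into the right formula. The paper's squeeze needs no convex-analytic machinery in the $t$-variable, only calculus of smooth functions; yours is more mechanical and produces the gradient by rote, at the cost of one delicate step, discussed next.

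That step is the envelope-theorem claim: you show correctly that $-\omega\bp{y^*(x,t)}$ is a subgradient of the convex function $\varphi_x(t)=(s+t\omega)^*(x)$, but the inference ``uniqueness of the maximiser makes the subdifferential a singleton'' is not valid in general. For a supremum of affine functions of $t$ indexed by a \emph{non-compact} set (here the open set $\Omega^*$), the subdifferential $\partial\varphi_x(t)$ can strictly contain the set of slopes of active maximisers even when the maximiser is unique, because maximising sequences for nearby values of $t$ may drift off in the index set and create a kink that no maximiser witnesses: taking the affine functions $t \mapsto -\tfrac{1}{n} + a_n(t-t_0)$ with $a_n \uparrow 1$, together with the zero function, gives value function $\max(0,t-t_0)$, whose unique maximiser at $t_0$ has slope $0$ while $\partial\varphi(t_0)=[0,1]$. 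The gap is repairable with material you already have: your Fenchel-inequality computation shows that $t \mapsto -\omega\bp{y^*(x,t)}$ is a \emph{selection} of $\partial\varphi_x(t)$ for all $t>0$, and your Rockafellar step shows this selection is continuous in $t$; a convex function of one real variable admitting a continuous selection of subgradients on an interval is differentiable there (by monotonicity of the subdifferential, any selection must jump across a kink), so $\varphi_x'(t)=-\omega\bp{y^*(x,t)}$ and hence $\partial_t H$ exists and is jointly continuous. (Alternatively, compactify the index set to $\overline{\Omega^*}$, where the objective extends upper semicontinuously, $\omega$ vanishes on $\partial\Omega^*$, and the unique maximiser is interior, so that maximisers of nearby problems converge and Danskin's argument applies.) With this repair written in, your proof is complete and correct.
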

\begin{proof}
Let $X \in K$. In order to lighten the notation, we denote by $P \coloneqq P_K(X) \in \partial_{sp}K$ the normal projection of $X$ onto $\partial_{sp}K$, by $y \coloneqq y_K(X)$ the projecting normal and $N = \G_{\Sigma}^{-1}(y)$. The cone $I^+(P) = P + \C$, is foliated by its intersections with:
\begin{itemize}
\item[-] the level surfaces of the cosmological time $(S^t_K)_{t>0}$,
\item[-] the hyperplanes directed by $y$, i.e. $(P + t \, H_y)_{t>0}$, where $H_y = \{ V \in \C \ \vert \ V \cdot (y,-1) = \omega(y) \}$,
\item[-] the $\C$-spacelike hypersufaces $(P + t \, \Sigma)_{t>0}$.
\end{itemize}
For any vector $V \in \C$, we set $g(V) \coloneqq \mathcal{T}_K(P+V)$, it is the unique $t>0$ such that $P+V \in S^t_K$. Similarly we define $f(V)$ to be the unique $t>0$ such that $V \in t H_y$, it can explicitly be expressed as
\begin{equation*}
\function{f}{\C}{\R}{V}{\frac{V \cdot (y,-1)}{\omega(y)}} \, ,
\end{equation*}
and we set $h(V)$ to be the unique $t>0$ such that $V \in t \Sigma$. 

Inspired by the proof of Bonsante \cite[Proposition~4.3]{Bonsante_05} in the Lorentzian case, we claim that those satisfy the hypothesis of the following fact: 

 Let $U$ be an non-empty open subset of $\R^{d+1}$ and $f,g,h: U \to \R$ be maps such that:
\begin{itemize}
\item $f \leq g \leq h$,
\item $f$ and $h$ are $C^1$,
\item there exist a point $X_0 \in U$ such that $f(X_0)=g(X_0)=h(X_0)$ and $\dif_{X_0} f = \dif_{X_0} h$.
\end{itemize}
Then, $g$ is differentiable at $X$ and $\dif_{X_0} f = \dif_{X_0} g = \dif_{X_0} h$.

Let us prove our claim.

\begin{itemize}
\item Notice that for all $t>0$, $S^t_K = \graph(s+t \omega)^*$ and $P + t \,\Sigma = \graph(a_p+t \omega)^*$, where $a_P(y) = P\cdot (y,-1)$ is the support function of the future cone $I^+(P) = P + \C$. Notice that because $P \in \partial K(s)$, we have $a_P \leq s$ and thus, using the order reversing property of the Legendre--Fenchel transform, $(s+t \omega)^* \leq (a_p+t \omega)^*$ are convex maps with graphs having the common supporting hyperplane $P+t \, H_y$ at $P + t N$ (see Figure~\ref{fig f<g<h}). Hence, by definition of $f,g$ and $h$, we have $f \leq g \leq h$.
\item Because of the smoothness of hyperplanes and affine spheres, the maps $f$ and $g$ are clearly $1$-homogeneous smooth maps. 
\item Finally for all $t>0$, $f(tN)=g(tN)=h(tN)=t$ and 
\begin{equation}
\label{eq Grad expression}
\grad f (tN) = \grad h(tN) = \frac{(y,-1)}{\omega(y)} \, .
\end{equation}
\end{itemize}

Thus, for all $t>0$, $g$ is differentiable at $tN$ with gradient expressed by equation \eqref{eq Grad expression}. Hence,  as for all $V \in \C$, $\mathcal{T}_K(P+V)=g(V)$ and $X = P + \mathcal{T}_K(X) N$, $\mathcal{T}_K$ is differentiable at $X$ and its gradient at any point $X \in K$ is expressed as 
\begin{equation*}
\grad \mathcal{T}_K (X) = \frac{(y ,-1)}{\omega(y)} = \frac{\bp{y_K(X),-1}}{\omega\bp{y_K(X)}} \, .
\end{equation*}
Then, the continuity of the projecting normal $y_K$ (Proposition~\ref{prop TK, PK and yK are continuous}) implies that the cosmological time $\mathcal{T}_K$ is $C^1$.
\end{proof}

\begin{figure}[ht]
 \centering
 \includegraphics{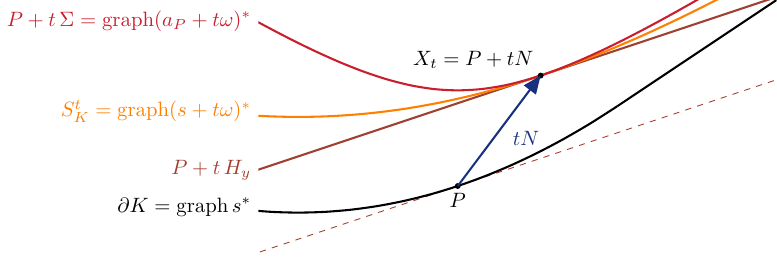}
 \caption{Situation in the proof of Proposition~\ref{prop TK is C1}.}
 \label{fig f<g<h}
\end{figure}

\begin{proposition}
\label{prop level set of cosmo time is Cauchy surf}
Let $K$ be a $\C$-convex domain with support function $s$. Every level set of the cosmological time $\mathcal{T}_K$ on $K$ is a future-convex Cauchy surface of the affine spacetime $(K,\C)$. 
\end{proposition}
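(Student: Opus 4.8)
The plan is to establish three properties of the level set $S_K^t = \mathcal{T}_K^{-1}(t) = \graph\bp{(s+t\omega)^*}$: that it is a $C^1$ $\C$-spacelike hypersurface, that it is convex in the sense of Definition~\ref{def Convex Cauchy surface}, and that it is a Cauchy surface of $(K,\C)$. The first point is already in hand: by Lemmas~\ref{lem s + t omega is C1} and~\ref{lem DspK =DK} the function $(s+t\omega)^*$ is $C^1$ and $\partial_{sp}K(s+t\omega)=\partial K(s+t\omega)$, while Lemma~\ref{lem C1 foliation of K} already records that the leaves of the foliation, of which $S_K^t$ is one, are $C^1$ and $\C$-spacelike. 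The convexity is equally immediate: $S_K^t$ is globally the graph of the convex function $(s+t\omega)^*\colon\R^d~\longrightarrow~\R$, its convex side is the epigraph $K(s+t\omega)$, and every future-directed $\C$-causal vector, being of the form $t_0(p,1)$ with $p\in\overline\Omega$ and $t_0>0$, has positive last coordinate and hence points into that epigraph; this is exactly what Definition~\ref{def Convex Cauchy surface} requires.

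For the Cauchy property I would first establish that every future-directed inextensible $\C$-causal curve meets $S_K^t$ at \emph{most} once. This follows from strict monotonicity of $\mathcal{T}_K$ along $\C$-causal curves: using the gradient formula of Proposition~\ref{prop TK is C1}, for a future-directed $\C$-causal vector $V=t_0(p,1)$ at a point $X\in K$ one computes
\begin{equation*}
\dif_X\mathcal{T}_K(V)=\grad\mathcal{T}_K(X)\cdot V=\frac{t_0\bp{y_K(X)\cdot p-1}}{\omega\bp{y_K(X)}}>0,
\end{equation*}
since $y_K(X)\in\Omega^*$ and $p\in\overline\Omega$ force $y_K(X)\cdot p<1$, while $\omega\bp{y_K(X)}<0$. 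Thus $\mathcal{T}_K$ is strictly increasing along any future-directed $\C$-causal curve, so the value $t$ is attained at most once.

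The heart of the proof is to show that each such curve meets $S_K^t$ at \emph{least} once, and here the plan is to avoid a direct asymptotic analysis of causal curves running off to infinity and instead to reuse the Cauchy-development machinery of subsection~\ref{subsec Boundary functions and g-convex domains}. The domain $K(s+t\omega)$ is a $\C$-spacelike $\C$-convex domain with $C^1$ boundary $S_K^t$, and since $\omega$ vanishes on $\partial\Omega^*$ (condition~\ref{cdt 0 on dOmega}) its support function $s+t\omega$ extends lower semi-continuously to the \emph{same} $\Omega^*$-boundary function $g$ as $s$. Proposition~\ref{prop maximal g domain = Cauchy development} then identifies the Cauchy development of $S_K^t$ in $\bp{\R^{d+1},\C}$ with the maximal $g$-convex domain, $D\bp{S_K^t}=D_g$, while $K$ is itself $g$-convex and so $K\subseteq D_g$ by Proposition~\ref{prop Dg is maximal}. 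Hence every $X\in K$ lies in $D\bp{S_K^t}$, i.e. every inextensible $\C$-causal curve of $\bp{\R^{d+1},\C}$ through $X$ meets $S_K^t$.

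The only remaining—and, I expect, most delicate—step is the bookkeeping needed to transfer this conclusion from $\bp{\R^{d+1},\C}$ to $(K,\C)$, since inextensibility in $K$ is weaker than inextensibility in $\R^{d+1}$. A $\C$-convex domain is its own $\C$-future, so $X+\overline{\C}\subseteq K$ for every $X\in K$; consequently a future-directed $\C$-causal curve in $K$ never leaves $K$ towards the future, and an inextensible (in $K$) such curve $\gamma$ extends to an inextensible $\C$-causal curve $\overline\gamma$ of $\R^{d+1}$ only by prolonging it to the past, below $\partial K$. By the previous paragraph $\overline\gamma$ meets $S_K^t$; since $S_K^t$ lies in the interior $K$ whereas the past prolongation $\overline\gamma\setminus\gamma$ lies outside $K$, the intersection point necessarily lies on $\gamma$ itself. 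Combined with the at-most-once statement, this shows that $\gamma$ meets $S_K^t$ in exactly one point, so $S_K^t$ is a convex Cauchy surface of $(K,\C)$.
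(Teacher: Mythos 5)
Your proposal is correct and follows essentially the same route as the paper's proof: identify the level set $S_K^t$ with $\partial K(s+t\omega)$, observe via condition~\ref{cdt 0 on dOmega} that $s+t\omega$ and $s$ share the same lower semi-continuous boundary extension $g$, and then invoke the Cauchy-development machinery of Proposition~\ref{prop maximal g domain = Cauchy development} together with the maximality of $D_g$ (Proposition~\ref{prop Dg is maximal}), which is exactly the argument the paper says to repeat. The additional details you supply — uniqueness of the intersection point via strict monotonicity of $\mathcal{T}_K$ along future-directed $\C$-causal curves (using the gradient formula of Proposition~\ref{prop TK is C1}), and the transfer from curves inextensible in $\R^{d+1}$ to curves inextensible in $K$ using $X+\overline{\C}\subseteq K$ — are points the paper leaves implicit, and you handle them correctly.
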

\begin{proof}
Let $t>0$. The $C^1$ hypersurface $S^t_K=\left\lbrace \mathcal{T}_K =t \right\rbrace$ is the boundary of the $\C$-spacelike $\C$-convex domain $K+t\Sigma$, having support function $s+t\omega$. Because of condition~\ref{cdt 0 on dOmega} from Definition~\ref{def gauge support function}, the lower semi-continuous convex extensions (see Remark~\ref{rem extension of convex function}) of $s$ and $s+t\omega$ to $\overline{\Omega^*}$ coincide on $\partial \Omega^*$. Thus, by Proposition~\ref{prop maximal g domain = Cauchy development}, $\partial (K+t\Sigma)$ is a Cauchy surface of $(K,\C)$. The $\C$-future-convexity of the Cauchy surface $\partial (K+t\Sigma)$, in the sense of Definition~\ref{def Convex Cauchy surface}, is clear.
\end{proof}

\subsection{\texorpdfstring{Length of $\C$-causal curves in the affine spacetime $(\A^{d+1},\C)$}{Length of C-causal curves in the affine spacetime (A,C) }}
\label{subseclength of causal curves}

This subsection discusses the cosmological time through the framework of pseudo-Finsler geometry described in \cite{Papadopoulos_Yamada_19, Buro_23} and based on the work of Busemann \cite{Busemann_67}. In that context we proceed to give a geometric interpretation of cosmological time, justifying its name \cite{Andersson_Galloway_Howard_98}.

In the model affine spacetime $(\R^{d+1},\C)$, using our fixed gauge support functions $\omega$ satisfying properties~\ref{cdt C2+},~\ref{cdt 0 on dOmega} and~\ref{cdt empty subdif on dOmega}, from Definition~\ref{def gauge support function}, we can define a \emph{pseudo-Finsler} norm of a $\C$-timelike vector as the \emph{gauge} of $\Sigma$: a vector $V \in \C $ has norm $\lambda >0$ if and only if $V \in \lambda \Sigma$. As $\Sigma$ is asymptotic to $\C$, the norm of a $\C$-null vector is set to $0$, so that the norm is continuous on $\overline{\C}$.

\begin{remark}
Here \(K(\omega)\), the $\C$-convex domain bounded by $\Sigma$, is an example of a Valinor convex domain introduced by Buro in order to introduce pseudo-Finsler metrics on manifolds \cite[Section 3.2]{Buro_23}.
\end{remark}

Because $\Sigma = \graph(\omega^*)\subseteq \C$ is the graph of a strictly convex $C^1$ function asymptotic to $\partial\C$, we can parametrise $\Sigma$ radially, i.e. set $w: \Omega \to \R$ to be the convex function such that 
\begin{equation*}
\Sigma = \left\lbrace -\frac{1}{w(x)}(x,1) \st x \in \Omega \right\rbrace .
\end{equation*}
Then, a direct computation shows that the pseudo-Finsler norm of vectors $V = (v,\nu) \in \C \subset \R^d \times \R $ is expressed as the positive 1-homogeneous function
\begin{equation*}
F(V) \coloneqq -\nu \, w\vp{\frac{v}{\nu}} \, .
\end{equation*}

\begin{remark} 
When using $\omega =\omega_\C$ the support function of the Cheng--Yau affine sphere $\S$ given by Theorem~\ref{theo Cheng--Yau V2}, notice that Remark~\ref{rem the Minkowski case} implies that in the Minkowski case it precisely coincides with the Lorentzian norm of a causal vector. In the general case, by Gigena's work \cite{Gigena_81}, the radial parametrisation of an hyperbolic affine sphere is
\begin{equation*}
\S = \left\lbrace -\frac{1}{w_\C(x)}(x,1) \st x \in \Omega \right\rbrace,
\end{equation*}
where $w_\C = \omega_{(\C^*)}$ is the unique solution to the Monge--Amp\`ere equation 
\begin{equation*}
\begin{cases}
\det \Hess w= (-w)^{-d-2} & \text{in} \ \Omega \, ,\\
w_{\vert \partial\Omega} =0 \, .
\end {cases}
\end{equation*} 
\end{remark}

\begin{definition}[Length of a $\C$-causal curve and $\C$-distance]
The \emph{length of a $\C$-causal curve} $\gamma: I \to \R^{d+1}$ is defined as 
\begin{equation*}
l_F(\gamma) \coloneqq \int_I F\bp{\gamma'(t)} \dif t \geq 0 \, .
\end{equation*}
The homogeneity of the function $F$ ensures that it does not depend on the parametrisation of the path $\gamma$. 

Given $X_0 \in \R^{d+1}$ and $X_1 \in J^+(X_0) = X_0 + \C$, the \emph{$\C$-distance between $X_0$ and $X_1$} is 
\begin{equation*}
\rho(X_0,X_1) \coloneqq \sup \left\lbrace l_F(\gamma) \st \gamma \ \text{is a future directed $\C$-causal curve from} \ X_0 \ \text{to} \ X_1 \right\rbrace .
\end{equation*}
\end{definition}

\begin{proposition}
Let $X_0 \in \R^{d+1}$ and $X_1 \in J^+(X_0) = X_0 + \C$. The $\C$-distance between $X_0$ and $X_1$ is attained for the straight line from $X_0$ to $X_1$, i.e. 
\begin{equation*}
\rho(X_0,X_1) = F(X_1 - X_0) \, .
\end{equation*}
\end{proposition}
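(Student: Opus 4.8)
The plan is to reduce the statement to a single convexity property of the pseudo-Finsler norm $F$, namely that $F$, extended continuously by $0$ on $\partial \C$, is positively $1$-homogeneous and \emph{concave} on the closed cone $\overline{\C}$. This is the pseudo-Finsler incarnation of the \emph{reverse triangle inequality} (the ``time inequality'' mentioned in the introduction), and once it is available the proposition follows from Jensen's inequality together with the homogeneity of $F$.

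First I would establish the concavity of $F$. Recalling that $\Sigma$ is parametrised radially by the convex function $w:\Omega~\longrightarrow~\R$ and that $F(v,\nu) = -\nu\,w(v/\nu)$ for $V = (v,\nu) \in \C$ (so $\nu>0$), I observe that $(v,\nu) \mapsto \nu\,w(v/\nu)$ is precisely the \emph{perspective function} of the convex map $w$, which is jointly convex on the half-space $\{\nu>0\}$; hence $F$ is concave on the open cone $\C$. Since $F$ is continuous on $\overline{\C}$ with $F \equiv 0$ on $\partial \C$, this concavity extends to $\overline{\C}$. Combined with $1$-homogeneity it yields the superadditivity $F(V+W) \geq F(V)+F(W)$ for $V,W \in \overline{\C}$, by writing $F(V+W) = 2\,F\bp{(V+W)/2} \geq F(V)+F(W)$.

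The lower bound is then immediate: the straight segment $\gamma_0(t) = X_0 + t(X_1-X_0)$, $t \in [0,1]$, is a future directed $\C$-causal (indeed $\C$-chronological) curve from $X_0$ to $X_1$ with constant velocity $X_1 - X_0 \in \C$, so by $1$-homogeneity $l_F(\gamma_0) = F(X_1-X_0)$, whence $\rho(X_0,X_1) \geq F(X_1-X_0)$. For the upper bound, let $\gamma$ be any future directed $\C$-causal curve from $X_0$ to $X_1$; reparametrising it on $[0,1]$ (which leaves $l_F$ unchanged by homogeneity), its velocity satisfies $\gamma'(t) \in \overline{\C}$ almost everywhere and $\int_0^1 \gamma'(t)\,\dif t = X_1 - X_0$. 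Applying Jensen's inequality to the concave function $F$ on the convex cone $\overline{\C}$, with respect to the probability measure $\dif t$ on $[0,1]$, gives
\[
l_F(\gamma) = \int_0^1 F\bp{\gamma'(t)}\,\dif t \leq F\Bp{\int_0^1 \gamma'(t)\,\dif t} = F(X_1-X_0).
\]
Taking the supremum over $\gamma$ yields $\rho(X_0,X_1) \leq F(X_1-X_0)$, and combined with the lower bound this proves the equality.

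The main obstacle is the concavity of $F$; the rest is a short computation. One may either invoke the perspective-function argument above, or, more in the spirit of Busemann's timelike spaces, prove the superadditivity directly from the convexity of the Valinor domain $K(\omega)$ bounded by $\Sigma$, its boundary being asymptotic to $\partial \C$. A minor technical point to verify is that, since $\gamma$ is future directed $\C$-causal, the increments $\gamma(t')-\gamma(t) = \int_t^{t'} \gamma'$ lie in the closed convex cone $\overline{\C}$, so that $F$ is defined along the curve and Jensen's inequality genuinely applies; the continuity of $F$ up to $\partial \C$ accounts for the possibly $\C$-null portions of the velocity.
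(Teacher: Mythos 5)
Your proof is correct, and it takes a route that differs from the paper's in organisation though not in underlying substance. Both arguments come down to Jensen's inequality and the convexity of the radial function $w$; the difference is where Jensen is applied. The paper reparametrises a competing causal curve $\gamma(t)=\bp{x(t),\lambda(t)}$ so that the last coordinate $\lambda$ is affine in $t$ (possible because every non-zero vector of $\overline{\C}$ has positive last coordinate), writes $l_F(\gamma)=-(\lambda_1-\lambda_0)\int_0^1 w\vp{\frac{x'(t)}{\lambda_1-\lambda_0}}\dif t$, applies Jensen to the convex function $w$ on $\Omega$, and then multiplies by $-(\lambda_1-\lambda_0)\leq 0$ to flip the inequality. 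You instead upgrade the convexity of $w$ to concavity of $F$ itself, by recognising $-F$ as the perspective function of $w$, and apply Jensen directly to $F$ on $\overline{\C}$, with no reparametrisation beyond rescaling the parameter interval. The two are equivalent at bottom — the perspective-function convexity computation is exactly the computation by which the paper proves the time inequality (Proposition~\ref{prop time inequality}) — but your packaging has a genuine advantage: concavity together with $1$-homogeneity yields superadditivity of $F$, so the time inequality falls out of the same lemma, whereas the paper proves it separately by a two-point convexity estimate. Your technical caveats are also handled correctly: concavity on the open cone $\C$ extends to $\overline{\C}$ by continuity of $F$ (which vanishes on $\partial\C$), and Jensen genuinely applies because the velocities lie in the convex set $\overline{\C}$ while the mean velocity $X_1-X_0$ lies in $\C$, where the concave function $F$ admits a supporting affine function.
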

\begin{proof}
Let $\gamma: \left[0,1\right] \to \R^{d+1}$ be a future directed $\C$-causal curve from $\gamma(0) = X_0$ to $\gamma(1) = X_1$. We decompose it as $\gamma(t)=(x(t),\lambda(t))$, then up to reparametrisation we can assume that $\lambda(t) = (1-t)\lambda_0 + t \lambda_1$. Then 
\begin{equation*}
l_F(\gamma) = \int_{0}^{1} F\bp{\gamma'(t)} \dif t = -\int_{0}^{1} \lambda'(t) w \vp{\frac{x'(t)}{\lambda'(t)}} \dif t = -(\lambda_1 - \lambda_0) \int_{0}^{1} w \vp{\frac{x'(t)}{\lambda_1 - \lambda_0}}\dif t \, .
\end{equation*}
As $w$ is convex, the Jensen inequality gives
\begin{equation*}
\int_{0}^{1} w \vp{\frac{x'(t)}{\lambda_1 - \lambda_0}}\dif t \geq w \vp{\int_{0}^{1} \frac{x'(t)}{\lambda_1 - \lambda_0}\dif t} = w \vp{\frac{x_1 - x_0}{\lambda_1 - \lambda_0}} \, .
\end{equation*}
Because the path is future directed, $\lambda_1 - \lambda_0 \geq 0$, and thus
\begin{equation*}
l_F(\gamma) \leq -(\lambda_1 - \lambda_0) w\vp{\frac{x_1 - x_0}{\lambda_1 - \lambda_0}} = F(X_1 - X_0) \, ,
\end{equation*}
where $F(X_1 - X_0)$ the length of the straight line from $X_0$ to $X_1$. 
\end{proof}

\begin{proposition}[Time inequality]
\label{prop time inequality}
If $X_1,X_2,X_3 \in \R^{d+1}$ are causally ordered, i.e. such that $X_2 \in J^+(X_1)$ and $X_3 \in J^+(X_2) \subseteq J^+(X_3)$, we have the \emph{time inequality}
\begin{equation*}
\rho(X_1,X_2) + \rho(X_2,X_3) \leq \rho(X_1,X_3) \, .
\end{equation*}
\end{proposition}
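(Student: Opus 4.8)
The plan is to read the claimed inequality as a \emph{reverse triangle inequality} and to exploit that the supremum defining the $\C$-distance is actually attained along straight segments, so that a broken path through $X_2$ serves as an admissible competitor for $\rho(X_1,X_3)$.

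First I would record, from the preceding proposition, that for causally related points the distance is realised by the straight line: $\rho(X_1,X_2) = F(X_2 - X_1)$, with the optimum attained by the segment $\sigma_{12}(t) = (1-t)X_1 + t X_2$, and likewise $\rho(X_2,X_3) = F(X_3 - X_2)$ attained by $\sigma_{23}$. Both segments are future-directed $\C$-causal curves, since their constant velocities $X_2 - X_1$ and $X_3 - X_2$ lie in $\overline{\C}$ by the hypotheses $X_2 \in J^+(X_1)$ and $X_3 \in J^+(X_2)$.

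Next I would concatenate them into the broken path $\sigma = \sigma_{12} \ast \sigma_{23}$ from $X_1$ to $X_3$. This $\sigma$ is piecewise $C^1$ and future-directed $\C$-causal, hence an admissible curve in the supremum defining $\rho(X_1,X_3)$; and because $l_F$ is the integral of $F(\sigma')$ over the parameter interval, it is additive under concatenation, so
\begin{equation*}
l_F(\sigma) = l_F(\sigma_{12}) + l_F(\sigma_{23}) = \rho(X_1,X_2) + \rho(X_2,X_3).
\end{equation*}
Since $\rho(X_1,X_3)$ is the supremum of $l_F$ over \emph{all} future-directed $\C$-causal curves from $X_1$ to $X_3$, it dominates $l_F(\sigma)$, which is exactly the asserted inequality.

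There is no serious obstacle here: the only points needing care are that the concatenation remains an admissible (piecewise $C^1$, future-directed, $\C$-causal) competitor and that $l_F$ splits additively at the breakpoint, both of which are immediate from the definitions. Conceptually, the statement is equivalent to the \emph{superadditivity} $F(A) + F(B) \le F(A+B)$ of the pseudo-Finsler gauge on $\overline{\C}$ (take $A = X_2 - X_1$ and $B = X_3 - X_2$); this in turn reflects that $F(v,\nu) = -\nu\, w(v/\nu)$ is the perspective transform of the concave function $-w$, hence concave, and a concave positively $1$-homogeneous function on a convex cone is automatically superadditive. Either route yields the time inequality.
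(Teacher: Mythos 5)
Your proposal is correct, but your main argument is not the route the paper takes. The paper's proof is a two-line computation: writing $X_i = (x_i,\lambda_i)$, it applies convexity of $w$ at two points with weights $\frac{\lambda_2-\lambda_1}{\lambda_3-\lambda_1}$ and $\frac{\lambda_3-\lambda_2}{\lambda_3-\lambda_1}$, then multiplies the resulting Jensen inequality by $-(\lambda_3-\lambda_1)\leq 0$; this is exactly the superadditivity of the perspective function $F$ that you sketch in your closing paragraph, so your ``conceptual'' remark is in fact the paper's entire proof. Your primary argument --- concatenating the straight segments $\sigma_{12}$ and $\sigma_{23}$, using additivity of $l_F$ under concatenation and the fact that the broken path is an admissible competitor in the supremum defining $\rho(X_1,X_3)$ --- is the standard synthetic proof of the reverse triangle inequality in Lorentzian-type geometry, and it is valid here: both segments are future directed $\C$-causal since $X_2-X_1,\,X_3-X_2 \in \overline{\C}$, the concatenation is piecewise $C^1$ and hence admissible under the paper's definition of causal curves, and $X_3 \in J^+(X_1)$ because $\overline{\C}$ is a convex cone, so $\rho(X_1,X_3)$ is defined. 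Comparing what each buys: your route is more robust, since it does not even require the preceding proposition that straight lines realise $\rho$ (an $\varepsilon$-argument with near-maximizing curves yields the same conclusion) and it applies to any length functional defined as a supremum over causal paths; the paper's route is shorter and self-contained, making the convexity mechanism explicit rather than routing it through the previously established attainment result.
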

\begin{proof}
Set $X_i = (x_i, \lambda_i)$, for all $i \in \left\lbrace 1,2,3 \right\rbrace$ As $w$ is convex we have 
\begin{equation*}
\frac{\lambda_2 - \lambda_1}{\lambda_3 - \lambda_1} w\vp{\frac{x_2 - x_1}{\lambda_2 - \lambda_1}} + \frac{\lambda_3 - \lambda_2}{\lambda_3 - \lambda_1} w\vp{\frac{x_3 - x_2}{\lambda_3 - \lambda_2}} \geq w \vp{\frac{x_3 - x_1}{\lambda_3 - \lambda_1}} \, .
\end{equation*}
Multiplying by $-(\lambda_3 - \lambda_1) \leq 0$, we get the wanted result.
\end{proof}

\begin{remark}
This time inequality is one of the axioms describing a \emph{timelike space} in the sense of Busemann \cite{Busemann_67}. One can easily check that a $\C$-convex domains endowed with the causal structure induced by $\C$ and the time function $\rho$ satisfies all the other axioms.
\end{remark}

\begin{proposition}
\label{prop cosmological time and length causal curve}
Let $K$ a $\C$-convex domain. The cosmological time defined in Definition~\ref{def normal projection, cosmological time and projecting normal} satisfies
\begin{align*}
\mathcal{T}_K(X) & = \sup \left\lbrace \rho(X_0,X) \st X_0 \in J^-(X) \cap \partial_{sp}K \right\rbrace \\
& = \sup \left\lbrace l_F(\gamma) \st \gamma \ \text{is a future directed $\C$-causal curve from a point of $\partial_{sp}K$ to $X$} \right\rbrace .
\end{align*}
\end{proposition}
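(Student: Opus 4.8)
The plan is to prove the two displayed equalities by first reducing the second to the first, and then establishing the first by a two-sided estimate organised around the canonical decomposition of Proposition~\ref{prop decomposition}.

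First I would dispatch the second equality. By definition $\rho(X_0,X)$ is the supremum of $l_F(\gamma)$ over future directed $\C$-causal curves $\gamma$ running from $X_0$ to $X$, so taking a further supremum over $X_0\in J^-(X)\cap\partial_{sp}K$ merely reorganises a double supremum into a single supremum over all future directed $\C$-causal curves from a point of $\partial_{sp}K$ to $X$; curves starting at points $X_0\in\partial_{sp}K$ with $X\notin J^+(X_0)$ do not exist and contribute nothing. Hence the two right-hand sides coincide, and it remains to identify the first one with $\mathcal{T}_K(X)$. Throughout, I use the line-realisation result preceding this proposition, namely $\rho(X_0,X)=F(X-X_0)$, so that the first supremum equals $\sup\left\lbrace F(X-X_0)\st X_0\in\partial_{sp}K\cap J^-(X)\right\rbrace$.

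For the inequality $\mathcal{T}_K(X)\le\sup$, I would exhibit a maximiser. Writing the decomposition of Proposition~\ref{prop decomposition} as $X = P + t\,\G_{\Sigma}^{-1}(y)$ with $P=P_K(X)\in\partial_{sp}K$, $t=\mathcal{T}_K(X)>0$ and $y=y_K(X)\in\Omega^*$, I observe that $\G_{\Sigma}^{-1}(y)\in\Sigma$, hence it has unit gauge $F\bp{\G_{\Sigma}^{-1}(y)}=1$. Since $X - P = t\,\G_{\Sigma}^{-1}(y)\in\C$, the point $P$ lies in $\partial_{sp}K\cap J^-(X)$, and by $1$-homogeneity of $F$ one gets $\rho(P,X)=F(X-P)=t\,F\bp{\G_{\Sigma}^{-1}(y)}=t=\mathcal{T}_K(X)$. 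This already yields $\mathcal{T}_K(X)\le\sup$.

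The reverse and harder inequality requires $F(X-X_0)\le t$ for \emph{every} $X_0\in\partial_{sp}K\cap J^-(X)$. The key geometric input is a sublevel description of the gauge: because $\Sigma=\partial K(\omega)$ is asymptotic to $\partial\C$ and its homothets $\{t\Sigma\}_{t>0}$ foliate $\C$, one has, via \eqref{eq K bar}, that for $V\in\overline{\C}$ the condition $F(V)\le t$ is equivalent to $V\notin\mathrm{int}\,K(t\omega)$, i.e. to the existence of some $y\in\Omega^*$ with $V\cdot(y,-1)\ge t\,\omega(y)$. Verifying this correspondence carefully is the step I expect to be the \textbf{main obstacle}, since one must check that the epigraph $\overline{K(t\omega)}$ meets $\C$ precisely in $\{F\ge t\}$ rather than in the complementary region, and the sign here is easy to invert. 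Granting it, I take $y=y_K(X)$, the normal supporting $S_K^t=\partial K(s+t\omega)$ at $X$, so that $X\cdot(y,-1)=(s+t\omega)(y)$, while $X_0\cdot(y,-1)\le s(y)$ because $X_0\in\partial_{sp}K=\partial K(s)$. Subtracting gives $(X-X_0)\cdot(y,-1)\ge t\,\omega(y)$; combined with $X-X_0\in\overline{\C}$ (from $X_0\in J^-(X)$), the sublevel description forces $F(X-X_0)\le t$. Putting the two inequalities together closes the argument.
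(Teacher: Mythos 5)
Your proposal is correct and takes essentially the same route as the paper: the lower bound is the identical observation that $\rho(P_K(X),X)=F\bigl(X-P_K(X)\bigr)=\mathcal{T}_K(X)$, and your upper bound is precisely the dual, support-function rendering of the paper's argument, which foliates the past cone $X-\C$ by the homothets $X-t'\Sigma$ and uses the tangency of $X-t\Sigma$ with $\partial_{sp}K$ at $P_K(X)$ in the direction $y=y_K(X)$. In particular, your chain $X\cdot(y,-1)=(s+t\omega)(y)$, $X_0\cdot(y,-1)\le s(y)$, hence $(X-X_0)\cdot(y,-1)\ge t\,\omega(y)$, together with the sublevel description $\{F\le t\}\cap\overline{\C}=\overline{\C}\setminus K(t\omega)$ (whose sign does go the way you want, since $K(t\omega)=\bigcup_{\mu>t}\mu\Sigma$), makes rigorous exactly the step the paper dismisses as ``it is clear''.
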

\begin{proof}
First, notice that $T \coloneqq \mathcal{T}_K(X) = F(X - P) = \rho(P,X)$ where $P \coloneqq P_K(X) \in J^-(X) \cap \partial_{sp}K$ is the normal projection of $X$ onto $\partial_{sp}K$. Then notice that $X - T \, \Sigma$ and $\partial_{sp}K$ touch at $P$ and have a common $\C$-spacelike hyperplane directed by $y \coloneqq y_K(X)$ the projecting normal from $X$ to $\partial_{sp}K$. Then, using the foliation of $(X - \C)$ by the $-\C$-convex hypersurfaces $(X - t \, \Sigma)_{t>0}$, as $K$ is $\C$-convex, it is clear that any element $X_0 \in (X - \C) \cap \partial_{sp}K$ belongs to some $X - T_0 \cdot \Sigma$ with $T_0 \leq \mathcal{T}_K(X)$, i.e. $\rho(X_0,X) \leq \mathcal{T}_K(X)$ with equality for $X_0 = P$ (see Figure~\ref{fig proof length}). 
\end{proof}

\begin{figure}[ht]
\centering
\includegraphics{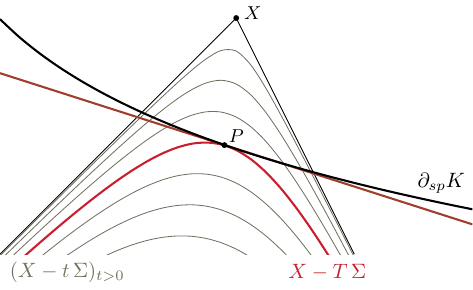}
\caption{Situation in the proof of Proposition~\ref{prop cosmological time and length causal curve}.}
\label{fig proof length}
\end{figure}

\begin{remark}
This new characterisation of cosmological time highlights its similarity with the Lorentzian one \cite{Andersson_Galloway_Howard_98}. When using $\omega =\omega_\C$, the support function of the Cheng--Yau affine sphere given by Theorem~\ref{theo Cheng--Yau V2}, as our gauge support function, Remark~\ref{rem the Minkowski case} implies that in the Minkowski case it precisely coincides with the Lorentzian cosmological time. 
\end{remark}

\begin{proposition}
\label{prop TK is concave}
Let $K$ be a $\C$-convex domain. The cosmological time $\mathcal{T}_K$ on $K$ is concave.
\end{proposition}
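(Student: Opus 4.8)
The plan is to derive the concavity of $\mathcal{T}_K$ from two ingredients: the concavity of the pseudo-Finsler norm $F$ on the closed cone $\overline{\C}$, and the fact — recorded in the proof of Proposition~\ref{prop cosmological time and length causal curve} — that the cosmological time is realised by the straight segment to the normal projection, namely $\mathcal{T}_K(X) = F\bp{X - P_K(X)}$ with $P_K(X) \in \partial_{sp}K$. The concavity of $F$ follows at once from the time inequality of Proposition~\ref{prop time inequality}: applied to $X_1$, $X_2 = X_1 + V_1$, $X_3 = X_1 + V_1 + V_2$ it reads $F(V_1) + F(V_2) \leq F(V_1 + V_2)$, so $F$ is superadditive on $\overline{\C}$; being moreover positively $1$-homogeneous, for $V_1, V_2 \in \overline{\C}$ and $\lambda \in [0,1]$ one gets $F\bp{(1-\lambda)V_1 + \lambda V_2} \geq F\bp{(1-\lambda)V_1} + F\bp{\lambda V_2} = (1-\lambda)F(V_1) + \lambda F(V_2)$, which is concavity.

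First I would fix $X, X' \in K$ and $\lambda \in [0,1]$, write $X_\lambda = (1-\lambda)X + \lambda X'$, and set $P = P_K(X)$, $P' = P_K(X')$, so that $\mathcal{T}_K(X) = F(X-P)$ and $\mathcal{T}_K(X') = F(X'-P')$. Let $P_\lambda = (1-\lambda)P + \lambda P'$. Since $K$ is convex, its closure is convex and $P_\lambda \in \overline{K}$; since $X-P$ and $X'-P'$ lie in the open convex cone $\C$, so does their convex combination $X_\lambda - P_\lambda$. The concavity of $F$ then yields
\begin{equation*}
F(X_\lambda - P_\lambda) \geq (1-\lambda)F(X-P) + \lambda F(X'-P') = (1-\lambda)\mathcal{T}_K(X) + \lambda\mathcal{T}_K(X').
\end{equation*}
It therefore remains to establish the single inequality $\mathcal{T}_K(X_\lambda) \geq F(X_\lambda - P_\lambda)$.

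To prove this last inequality I would use the characterisation of $\mathcal{T}_K$ as a supremum of $F$-lengths of future directed $\C$-causal curves issued from $\partial_{sp}K$ (Proposition~\ref{prop cosmological time and length causal curve}). Writing $P = (p, s^*(p))$ and $P' = (p', s^*(p'))$ with $p,p' \in \R^d$, the point $P_\lambda$ sits over $p_\lambda = (1-\lambda)p + \lambda p'$. Using $\partial K = \graph(s^*)$ (Proposition~\ref{prop K is epigraph}), either $P_\lambda$ lies strictly above the graph, hence in the interior $K$, or it lies on $\partial K$, in which case $s^*$ is affine along $[p,p']$ and $\partial s^*(p_\lambda) \subseteq \partial s^*(p) \cap \partial s^*(p') \subseteq \Omega^*$ shows $P_\lambda \in \partial_{sp}K$. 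In the latter case the inequality is immediate, since $X_\lambda - P_\lambda \in \C$ makes $P_\lambda$ a point of $J^-(X_\lambda) \cap \partial_{sp}K$. In the former case, concatenating any future directed $\C$-causal curve from $\partial_{sp}K$ to $P_\lambda$ with the straight segment $[P_\lambda, X_\lambda]$, of $F$-length $F(X_\lambda - P_\lambda)$, gives $\mathcal{T}_K(X_\lambda) \geq \mathcal{T}_K(P_\lambda) + F(X_\lambda - P_\lambda) \geq F(X_\lambda - P_\lambda)$, the last step because $\mathcal{T}_K > 0$ on $K$. Combined with the displayed inequality this is exactly $\mathcal{T}_K(X_\lambda) \geq (1-\lambda)\mathcal{T}_K(X) + \lambda\mathcal{T}_K(X')$.

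The main obstacle is precisely the inequality $\mathcal{T}_K(X_\lambda) \geq F(X_\lambda - P_\lambda)$: the delicate point is that $P_\lambda$ is the convex combination of the two normal projections and is in general \emph{not} the normal projection of $X_\lambda$, so one cannot invoke the equality $\mathcal{T}_K = F(\,\cdot\, - P_K(\cdot))$ directly. Controlling that $P_\lambda$ stays in $K \cup \partial_{sp}K$ and running the concatenation (reverse-triangle) argument is where the real content lies; everything else is a formal consequence of the superadditivity of $F$.
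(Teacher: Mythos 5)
Your overall strategy is essentially the paper's own: both proofs reduce concavity to superadditivity of $F$ on $\overline{\C}$ (i.e.\ the time inequality of Proposition~\ref{prop time inequality}) applied to the vectors $X-P$ and $X'-P'$, together with the lower bound $\mathcal{T}_K(X_\lambda)\geq F(X_\lambda-P_\lambda)$ extracted from the supremum characterisation of Proposition~\ref{prop cosmological time and length causal curve}, and your interior case (concatenation of causal curves) is correct. The gap is in your boundary case: the inclusion $\partial s^*(p_\lambda)\subseteq\partial s^*(p)\cap\partial s^*(p')\subseteq\Omega^*$ is unjustified, and it is false in general, as is the conclusion $P_\lambda\in\partial_{sp}K$. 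The reason is that $\partial_{sp}K$ is by definition the set of boundary points \emph{admitting} a $\C$-spacelike supporting hyperplane; such a point may simultaneously admit $\C$-null supporting hyperplanes, so $P,P'\in\partial_{sp}K$ (even the fact that they are normal projections) does not force $\partial s^*(p)$ and $\partial s^*(p')$, let alone their intersection, to lie in $\Omega^*$. A concrete counterexample already occurs in the Minkowski case of Remark~\ref{rem the Minkowski case} with $d=2$: on $\Omega^*$ the unit disc, take $s(y_1,y_2)=\max\bp{0,\,y_1-1+\vert y_2\vert}$. One computes $\partial s^*\bp{(1,1)}=\left\lbrace y\in\overline{\Omega^*} \;:\; y_1+y_2\geq 1,\ y_2\geq 0\right\rbrace$ and $\partial s^*\bp{(1,-1)}=\left\lbrace y\in\overline{\Omega^*} \;:\; y_1-y_2\geq 1,\ y_2\leq 0\right\rbrace$; both meet $\Omega^*$, so the boundary points $P$, $P'$ over $p=(1,1)$, $p'=(1,-1)$ lie in $\partial_{sp}K$ and are normal projections of points of $K$ (Proposition~\ref{prop decomposition}); moreover $s^*$ is affine on $[p,p']$ since $(1,0)$ is a common subgradient; yet $\partial s^*\bp{(1,0)}=\partial s^*(p)\cap\partial s^*(p')=\left\lbrace(1,0)\right\rbrace\subset\partial\Omega^*$, so the midpoint of $[P,P']$ admits only a $\C$-null supporting hyperplane and does \emph{not} belong to $\partial_{sp}K$, and Proposition~\ref{prop cosmological time and length causal curve} cannot be invoked there.

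The repair is short and in fact removes the need for any case distinction: for $\varepsilon\in(0,1)$ the point $Q_\varepsilon\coloneqq P_\lambda+\varepsilon(X_\lambda-P_\lambda)$ lies in $K$ (half-open segment from a point of $\overline{K}$ to a point of the open convex set $K$), so your concatenation argument applies to $Q_\varepsilon$ and gives $\mathcal{T}_K(X_\lambda)\geq F(X_\lambda-Q_\varepsilon)=(1-\varepsilon)F(X_\lambda-P_\lambda)$; letting $\varepsilon\to 0$ yields $\mathcal{T}_K(X_\lambda)\geq F(X_\lambda-P_\lambda)$. Alternatively, one can argue directly with support functions: for every $y\in\Omega^*$ one has $P_\lambda\cdot(y,-1)\leq s(y)$ and $(X_\lambda-P_\lambda)\cdot(y,-1)\leq F(X_\lambda-P_\lambda)\,\omega(y)$, and evaluating the sum at $y=y_K(X_\lambda)$, where $X_\lambda\cdot(y,-1)=s(y)+\mathcal{T}_K(X_\lambda)\,\omega(y)$, gives the inequality since $\omega<0$. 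Finally, note that you correctly located the delicate point: the paper's own proof applies Proposition~\ref{prop cosmological time and length causal curve} to $\frac{1}{2}(P_1+P_2)$, which is only known to lie in $\overline{K}$ and not in $\partial_{sp}K$, so strictly speaking it relies on the same mild extension of that proposition that either of the two arguments above supplies.
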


\begin{proof}
Let $X_1,X_2 \in K$ be two points, we denote by $P_i \coloneqq P_K(X_i) \in \partial_{sp}K$ their normal projections. Then we have for $i \in \{1,2\}$,
\begin{equation*}
\mathcal{T}_K(X_i) = \rho(P_i, X_i) = F(X_i - P_i) \, .
\end{equation*}
Starting from
\begin{align*}
\frac{\mathcal{T}_K(X_1) + \mathcal{T}_K(X_2)}{2} & = \frac{F(X_1 - P_1) + F(X_2 -P_2)}{2} \\
& = \frac{F(X_1 - P_1) + F\bp{(X_1 + X_2 - P_2)-X_1}}{2} \\
& = \frac{\rho(P_1, X_1) + \rho(X_1,X_1 + X_2 - P_2)}{2} \, ,
\end{align*}
and using the time inequality (Proposition~\ref{prop time inequality}),
\begin{equation*}
\rho(P_1,X_1) + \rho(X_1,X_1 + X_2 - P_2) \leq \rho(P_1,X_1 + X_2 - P_2) = \rho(P_1 + P_2 ,X_1 + X_2) \, ,
\end{equation*}
we get 
\begin{equation*}
\frac{\mathcal{T}_K(X_1) + \mathcal{T}_K(X_2)}{2} \leq \frac{\rho(P_1 + P_2 ,X_1 + X_2)}{2} = \rho\vp{\frac{P_1 + P_2}{2} ,\frac{X_1 + X_2}{2}} \, . 
\end{equation*}
As $K$ is convex and $P_1,P_2 \in \partial_{sp}K$, we have that $(P_1 + P_2)/2 \in \overline{K}$ and thus, by Proposition~\ref{prop cosmological time and length causal curve},
\begin{equation*}
\frac{\mathcal{T}_K(X_1) + \mathcal{T}_K(X_2)}{2} \leq \rho\vp{\frac{P_1 + P_2}{2} ,\frac{X_1 + X_2}{2}} \leq \mathcal{T}_K \vp{\frac{X_1 + X_2}{2}} \, .\qedhere
\end{equation*}
\end{proof}

\subsection{Invariance of the cosmological time given by the Cheng--Yau affine spheres}
\label{subsec Invariance of the cosmological time given by the CHeng--Yau affine spheres}

In this subsection, we shall prove Theorem~\ref{Theorem Intro cosmological time}. We shall consider the cosmological time on $\C$-convex domains induced by the gauge support function $\omega_\C$, the support function of the unique Cheng--Yau affine sphere $\S$ associated with $\C$ (see Theorem~\ref{theo Cheng--Yau V2}). We shall call it the \emph{affine sphere cosmological time}.

Let us recall Corollary~\ref{cor affine sphere invariance}: the affine sphere $\S=\graph(\omega_\C^*)$ is preserved by $\mathrm{Aut}_{\SL}(\C)$, the subgroup of elements of $\SL(\R^{d+1})$ preserving the cone $\C$. Hence, the affine sphere cosmological time satisfies the following invariance property.

\begin{proposition}
\label{prop cosmological time equivariance}
Let $K$ be a $\C$-convex domain and $\mathcal{T}_K$, $P_K$ and $y_K$ be be the cosmological time, normal projection and projecting normal on $K$ (Definition~\ref{def normal projection, cosmological time and projecting normal}) induced by the gauge support function $\omega_\C$ (Theorem~\ref{theo Cheng--Yau V2}). If $K$ is preserved by an affine transformation $(\gamma,\tau) \in \SL(\R^{d+1})\ltimes\R^{d+1}$, then $\gamma \in \mathrm{Aut}_{\SL}(\C)$, the subgroup of elements of $\SL(\R^{d+1})$ preserving $\C$, and for all $X$ in $K$ we have the following
\begin{align*}
& \mathcal{T}_K(\gamma X + \tau) = \mathcal{T}_K(X) \, , \\
& P_K(\gamma X + \tau) = \gamma P_K(X) + \tau \, ,\\
& y_K(\gamma X + \tau) = \gamma *y_K(X) \, ,
\end{align*}
where the projective dual action of an element $\gamma \in \mathrm{Aut}_{\SL}(\C)$ on $\Omega^*$, denoted by $*$, is described at the beginning of Section~\ref{sec Affine deformations of divisible convex cones}.
\end{proposition}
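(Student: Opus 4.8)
The plan is to reduce everything to the uniqueness of the decomposition $X = P + t\,\G_{\Sigma}^{-1}(y)$ from Proposition~\ref{prop decomposition}: since $\mathcal{T}_K$, $P_K$ and $y_K$ are precisely the three data read off that unique decomposition, it suffices to check that the affine map $\sigma = (\gamma,\tau)$ sends the decomposition of $X$ to a valid decomposition of $\sigma X$, and then invoke uniqueness. Before that I would first settle the claim $\gamma \in \mathrm{Aut}_{\SL}(\C)$.

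For the membership $\gamma \in \mathrm{Aut}_{\SL}(\C)$, I would use that $K$, being a $\C$-convex domain, is the intersection of the future sides of $\C$-spacelike half-spaces $\left\lbrace X : X\cdot(y,-1) < c_y \right\rbrace$ with $y$ ranging over all of $\Omega^*$ (by the surjectivity of its Gauss map, Definition~\ref{def C-convex and C-regular domains}). Its recession cone is therefore $\bigcap_{y\in\Omega^*}\left\lbrace V : V\cdot(y,-1)\le 0 \right\rbrace$, which, since the $(y,-1)$ sweep out a cross-section of $\C^*$, is exactly the bipolar $\overline{\C}$ of $\C$. An affine transformation preserving $K$ preserves its recession cone through its linear part, so $\gamma\,\overline{\C} = \overline{\C}$; hence $\gamma$ preserves $\C = \mathrm{int}(\overline{\C})$ and, being special linear, lies in $\mathrm{Aut}_{\SL}(\C)$.

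The heart of the argument is the equivariance $\gamma\,\G_{\Sigma}^{-1}(y) = \G_{\Sigma}^{-1}(\gamma * y)$, where $\Sigma = \S$ is the Cheng--Yau affine sphere. By Corollary~\ref{cor affine sphere invariance}, $\S$ is preserved by $\gamma\in\mathrm{Aut}_{\SL}(\C)$, so the affine normal (an equiaffine invariant) transforms by $N(\gamma X)=\gamma N(X)$ and the conormal by $N^*_{\S}(\gamma X) = \gamma\star N^*_{\S}(X)$. Reading the commutative diagram of Remark~\ref{rem smooth diffeos} as $\G_{\S} = R^*\circ(-N^*_{\S})$, and recalling $\gamma * y = R^*\bp{\gamma\star (R^*)^{-1}(y)}$, I would compute
\[
\G_{\S}(\gamma X) = R^*\bp{-N^*_{\S}(\gamma X)} = R^*\bp{\gamma\star(-N^*_{\S}(X))} = \gamma * \G_{\S}(X),
\]
and invert this to obtain the desired identity for $\G_{\Sigma}^{-1}=\G_{\S}^{-1}$.

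Finally I would assemble the pieces. Writing $X = P_K(X) + \mathcal{T}_K(X)\,\G_{\Sigma}^{-1}\bp{y_K(X)}$ and applying $\sigma=(\gamma,\tau)$,
\[
\sigma X = \bp{\gamma P_K(X) + \tau} + \mathcal{T}_K(X)\,\gamma\,\G_{\Sigma}^{-1}\bp{y_K(X)} = \bp{\gamma P_K(X) + \tau} + \mathcal{T}_K(X)\,\G_{\Sigma}^{-1}\bp{\gamma * y_K(X)}.
\]
Since $\sigma$ preserves $K$ and $\gamma$ preserves the class of $\C$-spacelike supporting hyperplanes, the point $\gamma P_K(X)+\tau$ lies in $\partial_{sp}K$; moreover $\mathcal{T}_K(X)>0$ and $\gamma * y_K(X)\in\Omega^*$. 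Thus the right-hand side is a decomposition of $\sigma X$ of exactly the form guaranteed by Proposition~\ref{prop decomposition}, and by uniqueness it must be the decomposition, which yields the three stated identities. I expect the only genuinely delicate point to be justifying the conormal equivariance for an arbitrary $\gamma\in\mathrm{Aut}_{\SL}(\C)$ rather than only for elements of $\Gamma$; this is resolved once one observes that every such $\gamma$ globally preserves the Cheng--Yau sphere, so the equiaffine transformation rule for the affine normal and conormal applies verbatim.
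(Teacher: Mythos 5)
Your overall route is the same as the paper's: apply $(\gamma,\tau)$ to the unique decomposition of Proposition~\ref{prop decomposition}, use the $\mathrm{Aut}_{\SL}(\C)$-equivariance of $\G_{\S}^{-1}$, and conclude by uniqueness. Your two embellishments are both correct: the recession-cone argument for $\gamma \in \mathrm{Aut}_{\SL}(\C)$ is a valid variant of the paper's supporting-hyperplane/duality argument, and your derivation of $\G_{\S}(\gamma X) = \gamma * \G_{\S}(X)$ via the conormal map and the diagram of Remark~\ref{rem smooth diffeos} fills in what the paper dismisses as ``clearly'' equivariant.

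There is, however, one gap, and it sits exactly where the paper's proof does its real work. The uniqueness in Proposition~\ref{prop decomposition} is uniqueness among triples $(P,t,y)$ with $t>0$, $P \in \partial_{sp}K$ \emph{and} $y \in \G_K(P)$, i.e.\ $y$ must direct a supporting hyperplane of $K$ at the point $P$ itself (this is the condition coming from Lemma~\ref{lem link between graphs and normal}; the ``$y \in \G_K(X)$'' in the statement should be read as $\G_K(P)$, since $\G_K$ is only defined on $\partial_{sp}K$). Without that compatibility, uniqueness is false: for \emph{every} point $P' \in \partial_{sp}K$ lying in the past of a given $X \in K$, the vector $X - P' \in \C$ can be written as $t'\,\G_{\S}^{-1}(y')$ with $t'>0$ and $y' \in \Omega^*$, because $\S$ is a radial graph asymptotic to $\partial\C$; so there are in general many decompositions satisfying only the conditions on your checklist ($t>0$, $P' \in \partial_{sp}K$, $y' \in \Omega^*$). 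You verify that $\gamma P_K(X)+\tau \in \partial_{sp}K$ and that $\gamma * y_K(X) \in \Omega^*$, but never that $\gamma * y_K(X) \in \G_K\bp{\gamma P_K(X)+\tau}$, so the appeal to uniqueness is not yet justified. The repair is one line, and it is precisely the sentence the paper devotes to this point: since $(\gamma,\tau)$ preserves $K$, it maps the supporting hyperplane of $K$ at $P_K(X)$ directed by $\bp{y_K(X),-1}$ to a supporting hyperplane of $K$ at $\gamma P_K(X)+\tau$, whose direction $\gamma^{-\top}\bp{y_K(X),-1}$ is a positive multiple of $\bp{\gamma * y_K(X),-1}$; hence $\gamma * y_K(X)$ does lie in $\G_K\bp{\gamma P_K(X)+\tau}$, and uniqueness then yields the three identities.
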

\begin{proof}
Let $(\gamma,\tau) \in \SL(\R^{d+1})\ltimes\R^{d+1}$ be an affine transformation preserving the $\C$-convex domain $K$. As $(\gamma,\tau)$ preserves $K$, its linear part $\gamma$ preserves its set of supporting hyperplane. By definition of a $\C$-convex domain (Definition~\ref{def C-convex}), that implies that the dual action of $\gamma$ preserves the cone $\C^*$ and hence, by duality, that $\gamma$ preserves $\C$, i.e. $\gamma \in \mathrm{Aut}_{\SL}(\C)$.

Let $X \in K$. Acting by $(\gamma,\tau)$ on the equality
\begin{equation*}
X = P_K(X) + \mathcal{T}_K(X) \cdot \G_{\S}^{-1}\bp{y_K(X)}
\end{equation*}
that defines the cosmological time, normal projection and projecting normal of $X$ (Definition~\ref{def normal projection, cosmological time and projecting normal}), we get 
\begin{equation*}
\gamma X + \tau = \bp{\gamma P_K(X) + \tau} + \mathcal{T}_K(X) \cdot \gamma \, \G_{\S}^{-1}\bp{y_K(X)} \, .
\end{equation*}
As the affine sphere $\S$ is invariant under the action of $\mathrm{Aut}_{\SL}(\C)$ (Corollary~\ref{cor affine sphere invariance}), its Gauss map is clearly $\mathrm{Aut}_{\SL}(\C)$-equivariant, and so is its inverse. Thus, 
using that equivariance of $\G_{\S}^{-1}$, the previous equality becomes
\begin{equation}
\label{eq projection equation composed by gamma}
\gamma X + \tau = \bp{\gamma P_K(X) + \tau} + \mathcal{T}_K(X) \cdot \G_{\S}^{-1}\bp{\gamma *y_K(X)} \, .
\end{equation}
As $K$ is preserved by the affine transformation $(\gamma,\tau)$ and $\gamma \in \mathrm{Aut}_{\SL}(\C)$, its $\C$-spacelike boundary $\partial_{sp}K$ is also preserved by $(\gamma,\tau)$. Thus, $\gamma P_K(X) + \tau \in \partial_{sp}K$ and $\gamma * y_K(X)$ directs a supporting hyperplane to $K$ at that point. Hence, equality \eqref{eq projection equation composed by gamma} characterises the cosmological time, normal projection and projecting normal of $\gamma X + \tau$ (Definition~\ref{def normal projection, cosmological time and projecting normal}), giving the desired expressions.
\end{proof}

We can now prove the following theorem, implying Theorem~\ref{Theorem Intro cosmological time}.

\begin{theorem}
Let $K$ be a $\C$-convex domain. The affine sphere cosmological time $\mathcal{T}_K: K \to \R^*_+$ is a concave $C^1$ function whose level sets are $C^1$ $\C$-future-convex Cauchy surfaces of $(K,\C)$ foliating $K$.
Moreover, $\mathcal{T}_K$ is invariant by any transformation of $\SA(\R^{d+1})$ preserving $K$.
\end{theorem}
\begin{proof}
This is a consequence of Lemma~\ref{lem C1 foliation of K} and Propositions~\ref{prop TK is C1}, \ref{prop level set of cosmo time is Cauchy surf}, \ref{prop TK is concave} and \ref{prop cosmological time equivariance}.
\end{proof}

\section{Quotient of the maximal domain}
\label{sec Quotient of the maximal domain}

In this section, we first study the action of $\Gamma_\tau$, an affine deformation of a group $\Gamma$ dividing an open proper convex cone $\C$, on the maximal $\tau$-convex domain $D_\tau$ from Theorem~\ref{Theorem Intro domain} constructed in Section~\ref{sec Affine deformations of divisible convex cones}. We then focus on the affine spacetime $(D_\tau / \Gamma_\tau, \C)$, using the affine sphere cosmological time introduced in Section~\ref{sec Cosmological time on C-convex domains}. That allows us to produce proofs for point~\ref{theo11} and the GHCC part of point~\ref{theo12} of Theorem~\ref{Theorem Intro domain}.

\subsection{\texorpdfstring{Action of $\Gamma_\tau$ on $D_\tau$}{Action of Gamma-tau on D-tau}}
\label{subsec Action on the maximal domain}
In this subsection we shall prove point~\ref{theo11} from Theorem~\ref{Theorem Intro domain}.
We come back to the case where $\C$ happens to be divisible by a torsion-free discrete subgroup $\Gamma < \SL(\R^{d+1})$ and let $\Gamma_\tau$ be an affine deformation of $\Gamma$. We can use the tools introduced in subsection~\ref{subsec Normal projection, cosmological time and foliation of C-convex domains} on the maximal $\tau$-convex domain $D_\tau$ constructed in Section~\ref{sec Affine deformations of divisible convex cones}.

From now on we shall use the support function $\omega_\C$ of the affine sphere $\S$ as a gauge support function. It provides a normal projection onto $\partial_{sp}D_\tau$ denoted by $P_\tau$, an affine sphere cosmological time on $D_\tau$ denoted by $\mathcal{T}_\tau$ with $t$-level hypersurface denoted by $S_\tau^t$, and a projecting normal denoted by $y_\tau$. In order to lighten notations we shall simply write that $\mathcal{T}_\tau$ is the cosmological time on $D_\tau$.

\begin{proposition}
\label{prop tau cosmological time equivariance}
On $D_\tau$ the maximal $\tau$-convex domain:
\begin{itemize}
\item the cosmological time $\mathcal{T}_\tau: D_\tau \to \R_+^*$ is $\Gamma_\tau$-invariant, 
\item the normal projection $P_\tau: D_\tau \to \partial_{sp} D_\tau$ is $\Gamma_\tau$-equivariant,
\item the projecting normal $y_\tau: D_\tau \to \Omega^*$ is equivariant with respect to the action of $\Gamma_\tau$ on $D_\tau$ and the action of $\Gamma$ on $\Omega^*$.
\end{itemize}
\end{proposition}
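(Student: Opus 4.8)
The plan is to recognise this statement as an immediate specialisation of Proposition~\ref{prop cosmological time equivariance} to the particular $\C$-convex domain $D_\tau$, once we observe that the two hypotheses of that proposition are met. First I would recall that, by Proposition~\ref{prop Dtau is maximal}, the maximal $\tau$-convex domain $D_\tau = K(s_\tau)$ is genuinely a $\C$-convex domain: since the boundary function $g_\tau$ is finite-valued and continuous (Proposition~\ref{prop existence of boundary function gtau}), its convex envelope $s_\tau$ is finite on $\Omega^*$, so $D_\tau$ is a $\C$-convex domain in the sense of Definition~\ref{def C-convex and C-regular domains}, and by construction it is invariant under the affine deformation $\Gamma_\tau$. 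In particular, for every $\gamma \in \Gamma$ the corresponding affine transformation $(\gamma, \tau_\gamma) \in \SL\bp{\R^{d+1}} \ltimes \R^{d+1}$ preserves $D_\tau$.

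The key point is that Proposition~\ref{prop cosmological time equivariance} is stated for an \emph{arbitrary} affine transformation preserving a $\C$-convex domain, under the single standing assumption that the gauge support function is $\omega_\C$, the support function of the Cheng--Yau affine sphere $\S$. Since we have fixed $\omega = \omega_\C$ throughout this section, I would simply apply that proposition with $K = D_\tau$ to each element $(\gamma, \tau_\gamma)$ of $\Gamma_\tau$. This yields directly, for all $X \in D_\tau$ and all $\gamma \in \Gamma$,
\begin{align*}
\mathcal{T}_\tau(\gamma X + \tau_\gamma) &= \mathcal{T}_\tau(X), \\
P_\tau(\gamma X + \tau_\gamma) &= \gamma P_\tau(X) + \tau_\gamma, \\
y_\tau(\gamma X + \tau_\gamma) &= \gamma * y_\tau(X).
\end{align*}
These three identities are precisely the three assertions of the proposition: the first expresses the $\Gamma_\tau$-invariance of $\mathcal{T}_\tau$, the second the $\Gamma_\tau$-equivariance of the normal projection $P_\tau$ onto the initial singularity $\partial_{sp} D_\tau$, and the third the equivariance of the projecting normal $y_\tau$ with respect to the action of $\Gamma_\tau$ on $D_\tau$ and the dual projective action of $\Gamma$ on $\Omega^*$.

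I expect there to be essentially no obstacle: the substantive work was already carried out in Proposition~\ref{prop cosmological time equivariance}, whose proof rested on the $\mathrm{Aut}_{\SL}(\C)$-invariance of the Cheng--Yau affine sphere (Corollary~\ref{cor affine sphere invariance}) and the induced equivariance of its Gauss map. The only verification required here is that $D_\tau$ satisfies the hypotheses of that proposition, namely that it is a $\C$-convex domain preserved by each element of $\Gamma_\tau$, which is exactly its defining property from Section~\ref{sec Affine deformations of divisible convex cones}. Hence the statement follows at once, and the proof amounts to invoking Proposition~\ref{prop cosmological time equivariance} on $D_\tau$.
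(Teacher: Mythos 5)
Your proposal is correct and is exactly the paper's own proof: the paper likewise deduces the three identities by applying Proposition~\ref{prop cosmological time equivariance} with $K = D_\tau$ to each element $(\gamma,\tau_\gamma) \in \Gamma_\tau$, using that the gauge support function is $\omega_\C$. Your preliminary check that $D_\tau$ is a $\Gamma_\tau$-invariant $\C$-convex domain is a harmless (and sound) elaboration of what the paper leaves implicit.
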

\begin{proof}
This a direct consequence from Proposition~\ref{prop cosmological time equivariance}.
\end{proof}

\begin{proposition}
\label{prop cosmological foliation of Dtau}
The maximal $\tau$-convex domain $D_\tau$ is foliated by the level hypersurfaces of its cosmological time, which are the $\C$-spacelike $C^1$ embedded $\Gamma_\tau$-invariant hypersurfaces $\{\partial K(s_\tau + t \omega_\C)\}_{t>0}$.
\end{proposition}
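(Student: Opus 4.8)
The plan is to obtain this proposition as a direct synthesis of two results already proved: the general foliation statement Lemma~\ref{lem C1 foliation of K}, applied to the $\C$-convex domain $D_\tau$, and the invariance statement of Proposition~\ref{prop tau cosmological time equivariance}. The starting observation is that $D_\tau = K(s_\tau)$ is genuinely a $\C$-convex domain (and not merely a $g_\tau$-convex domain possibly taking infinite values): the boundary function $g_\tau$ of Proposition~\ref{prop existence of boundary function gtau} is continuous and hence finitely valued on the compact set $\partial \Omega^*$, so its convex envelope $s_\tau$ is a finite-valued support function on $\Omega^*$. This is the hypothesis that licenses the use of the cosmological-time machinery of Section~\ref{sec Cosmological time on C-convex domains}, and it is the point I would verify first, even though it is recorded already in Proposition~\ref{prop Dtau is maximal}.

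Next, taking the gauge support function to be $\omega = \omega_\C$ --- which is admissible since $\omega_\C$ satisfies conditions~\ref{cdt C2+}, \ref{cdt 0 on dOmega} and~\ref{cdt empty subdif on dOmega} by Theorem~\ref{theo Cheng--Yau V2} --- I would apply Lemma~\ref{lem C1 foliation of K} verbatim with $K = D_\tau$ and $s = s_\tau$. This yields at once that the family $\{\partial K(s_\tau + t\omega_\C)\}_{t>0} = \{\graph(s_\tau + t\omega_\C)^*\}_{t>0}$ is a $C^1$ foliation of $D_\tau$ by $\C$-spacelike hypersurfaces; each leaf is a $C^1$ embedded hypersurface because it is the graph of the $C^1$ convex function $(s_\tau + t\omega_\C)^*$ (Lemma~\ref{lem s + t omega is C1} and Lemma~\ref{lem DspK =DK}). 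By the very definition of the cosmological time on $D_\tau$, these leaves are exactly its level hypersurfaces $S_\tau^t = \mathcal{T}_\tau^{-1}(t) = \partial K(s_\tau + t\omega_\C)$.

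Finally, for the $\Gamma_\tau$-invariance of each leaf I would invoke Proposition~\ref{prop tau cosmological time equivariance}: the $\Gamma_\tau$-invariance of $\mathcal{T}_\tau$ means that for every $t>0$ and every $\bp{\gamma, \tau_\gamma} \in \Gamma_\tau$ one has $\mathcal{T}_\tau(\gamma X + \tau_\gamma) = \mathcal{T}_\tau(X)$, so the level set $S_\tau^t$ is preserved by $\Gamma_\tau$. Thus there is no substantial obstacle here: the two genuinely non-trivial inputs --- the $C^1$ regularity of the foliation and the invariance of the cosmological time --- are already established, and the argument amounts to assembling them. The only point demanding care is consistency of the chosen gauge: Proposition~\ref{prop tau cosmological time equivariance} (hence the invariance of the leaves) rests on the $\mathrm{Aut}_{\SL}(\C)$-invariance of the Cheng--Yau affine sphere $\S = \graph(\omega_\C^*)$, so one must indeed use $\omega_\C$ rather than an arbitrary gauge support function.
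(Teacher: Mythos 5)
Your proposal is correct and follows essentially the same route as the paper's own proof: the foliation comes from Lemma~\ref{lem C1 foliation of K} applied to $D_\tau$ with gauge $\omega_\C$, and the $\Gamma_\tau$-invariance of the leaves comes from the equivariance of $\mathcal{T}_\tau$ in Proposition~\ref{prop tau cosmological time equivariance} (equivalently, from the $\tau$-equivariance of each $s_\tau + t\omega_\C$). Your additional checks — that $s_\tau$ is finite-valued and that $\omega_\C$ is an admissible gauge — are sound preliminaries the paper leaves implicit.
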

\begin{proof}
The foliation is simply a consequence of Lemma~\ref{lem C1 foliation of K}, the $\tau$-invariance comes from the equivariance of $\mathcal{T}_\tau$ (Proposition~\ref{prop tau cosmological time equivariance}), or equivalently from the $\tau$-equivariance of any function $s_\tau + t \omega_\C$ with $t>0$.
\end{proof}

\begin{proposition}
\label{prop projection Ptau}
The projection $\Pi_\tau$ defined by 
\begin{equation*}
\function{\Pi_\tau}{D_\tau}{\partial (D_\tau + \S) = S_\tau^1}{X}{P_{\tau}(X) + \G_{\S}^{-1}\bp{y_{\tau}(X)}}
\end{equation*}
is continuous and $\Gamma_\tau$-equivariant, in the sense that for all $(\gamma, \tau_\gamma) \in \Gamma_\tau$,
\begin{equation*}
\Pi_\tau(\gamma X + \tau_\gamma) =\gamma \Pi_\tau(X) + \tau_\gamma \, .
\end{equation*} 
\end{proposition}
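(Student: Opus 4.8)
The plan is to establish the three claims in turn — that $\Pi_\tau$ genuinely lands in $S_\tau^1$, that it is continuous, and that it is $\Gamma_\tau$-equivariant — each by assembling facts already proved, with no new estimate required. First I would check well-definedness. By Proposition~\ref{prop decomposition} applied to the domain $D_\tau$, every $X \in D_\tau$ decomposes as $X = P_\tau(X) + \mathcal{T}_\tau(X)\,\G_\S^{-1}\bp{y_\tau(X)}$ with $P_\tau(X) \in \partial_{sp}D_\tau$ and $y_\tau(X) \in \G_{D_\tau}\bp{P_\tau(X)}$. Setting $t = 1$, $s = s_\tau$ and $\omega = \omega_\C$ in Lemma~\ref{lem link between graphs and normal} yields
\begin{equation*}
S_\tau^1 = \partial K(s_\tau + \omega_\C) = \left\lbrace P + \G_\S^{-1}(y) \st P \in \partial_{sp}D_\tau,\; y \in \G_{D_\tau}(P) \right\rbrace,
\end{equation*}
and since $\bp{P_\tau(X), y_\tau(X)}$ is exactly such a pair, $\Pi_\tau(X) = P_\tau(X) + \G_\S^{-1}\bp{y_\tau(X)}$ indeed lies in $S_\tau^1$.

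For continuity, I would simply observe that $\Pi_\tau = P_\tau + \G_\S^{-1} \circ y_\tau$ is a sum and composition of continuous maps: the normal projection $P_\tau$ and the projecting normal $y_\tau$ are continuous by Proposition~\ref{prop TK, PK and yK are continuous}, while $\G_\S^{-1}$ is the $C^1$ inverse Gauss map of the $C^2_+$ affine sphere $\S$, explicitly given by \eqref{eq expression of the inverse of the Gauss map}.

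The equivariance is the only step needing a little care, and the subtle point is the behaviour of $\G_\S^{-1}$ under $\Gamma$. Fix $(\gamma, \tau_\gamma) \in \Gamma_\tau$ and $X \in D_\tau$. Since $D_\tau$ is preserved by $(\gamma, \tau_\gamma)$, Proposition~\ref{prop cosmological time equivariance} gives $\gamma \in \mathrm{Aut}_{\SL}(\C)$, and Proposition~\ref{prop tau cosmological time equivariance} gives $P_\tau(\gamma X + \tau_\gamma) = \gamma P_\tau(X) + \tau_\gamma$ together with $y_\tau(\gamma X + \tau_\gamma) = \gamma * y_\tau(X)$. Because $\S$ is $\mathrm{Aut}_{\SL}(\C)$-invariant (Corollary~\ref{cor affine sphere invariance}), its Gauss map — and hence its inverse — is $\mathrm{Aut}_{\SL}(\C)$-equivariant, so $\G_\S^{-1}(\gamma * y) = \gamma\,\G_\S^{-1}(y)$. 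Substituting these into the definition of $\Pi_\tau$,
\begin{equation*}
\Pi_\tau(\gamma X + \tau_\gamma) = \gamma P_\tau(X) + \tau_\gamma + \gamma\,\G_\S^{-1}\bp{y_\tau(X)} = \gamma\bp{P_\tau(X) + \G_\S^{-1}\bp{y_\tau(X)}} + \tau_\gamma = \gamma\,\Pi_\tau(X) + \tau_\gamma,
\end{equation*}
which is the desired relation. I do not anticipate a genuine obstacle: the statement is essentially a repackaging of Propositions~\ref{prop TK, PK and yK are continuous}, \ref{prop cosmological time equivariance} and~\ref{prop tau cosmological time equivariance} together with Lemma~\ref{lem link between graphs and normal}, the only thing to get right being the matching of the dual action in the identity $\G_\S^{-1}(\gamma * y) = \gamma\,\G_\S^{-1}(y)$.
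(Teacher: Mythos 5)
Your proof is correct and follows essentially the same route as the paper, whose own proof is a one-line appeal to Propositions~\ref{prop TK, PK and yK are continuous} and~\ref{prop tau cosmological time equivariance}. The details you supply — well-definedness via Lemma~\ref{lem link between graphs and normal} with $t=1$, and the identity $\G_\S^{-1}(\gamma * y) = \gamma\,\G_\S^{-1}(y)$ coming from Corollary~\ref{cor affine sphere invariance} — are exactly the points the paper leaves implicit (the latter is spelled out in its proof of Proposition~\ref{prop cosmological time equivariance}).
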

\begin{proof}
The continuity and equivariance are consequences of, respectively, Propositions~\ref{prop TK, PK and yK are continuous} and~\ref{prop tau cosmological time equivariance}.
\end{proof}

\begin{corollary}
\label{cor definition of Theta}
There is a homeomorphism
\begin{equation*}
\function{\Theta}{D_\tau}{S_\tau^1 \times \R_+^*}{X}{\bp{\Pi_\tau(X),\mathcal{T}_{\tau}(X)}}
\end{equation*}
satisfying that for all $(\gamma,\tau_\gamma) \in \Gamma_\tau$
\begin{equation}
\label{prop property of Theta}
\Theta(\gamma X + \tau_\gamma) = \bp{\gamma \Pi_\tau(X) + \tau_\gamma,\mathcal{T}_{\tau}(X)} \, .
\end{equation}
\end{corollary}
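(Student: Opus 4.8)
The plan is to recognise $\Theta$ as a continuous bijection with an explicit continuous inverse, hence a homeomorphism, and then to read the equivariance \eqref{prop property of Theta} directly off the transformation rules of its two components. Continuity of $\Theta$ is immediate, since its components $\Pi_\tau$ and $\mathcal{T}_\tau$ are continuous by Propositions~\ref{prop projection Ptau} and~\ref{prop TK, PK and yK are continuous}.

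The heart of the argument is the unique decomposition of Proposition~\ref{prop decomposition}: every $X \in D_\tau$ writes uniquely as $X = P_\tau(X) + \mathcal{T}_\tau(X)\,\G_{\S}^{-1}\bp{y_\tau(X)}$, with $P_\tau(X) \in \partial_{sp}D_\tau$ and $y_\tau(X)$ directing a supporting hyperplane of $D_\tau$ at $P_\tau(X)$. Guided by this, I would introduce the candidate inverse
\begin{equation*}
\function{\Psi}{S_\tau^1 \times \R_+^*}{D_\tau}{(Q,t)}{P_\tau(Q) + t\,\G_{\S}^{-1}\bp{y_\tau(Q)}},
\end{equation*}
which is continuous because $P_\tau$ and $y_\tau$ are continuous (Proposition~\ref{prop TK, PK and yK are continuous}) and $\G_{\S}^{-1}$ is a $C^1$ diffeomorphism.

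It then remains to verify $\Psi \circ \Theta = \Id_{D_\tau}$ and $\Theta \circ \Psi = \Id$. For the first, given $X$ I set $Q = \Pi_\tau(X) = P_\tau(X) + \G_{\S}^{-1}\bp{y_\tau(X)}$ and $t = \mathcal{T}_\tau(X)$; since $Q$ lies on $S_\tau^1$ it has cosmological time $1$, so the uniqueness clause forces $P_\tau(Q) = P_\tau(X)$ and $y_\tau(Q) = y_\tau(X)$, whence $\Psi(Q,t) = X$. For the second, given $(Q,t)$ the point $X = \Psi(Q,t)$ is already written in the form of Proposition~\ref{prop decomposition} --- its boundary term $P_\tau(Q)$ and normal $y_\tau(Q)$ being compatible precisely because $\mathcal{T}_\tau(Q) = 1$ --- so uniqueness yields $P_\tau(X) = P_\tau(Q)$, $\mathcal{T}_\tau(X) = t$ and $y_\tau(X) = y_\tau(Q)$, hence $\Theta(X) = (Q,t)$. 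Finally, \eqref{prop property of Theta} follows at once from the $\Gamma_\tau$-equivariance of $\Pi_\tau$ (Proposition~\ref{prop projection Ptau}) together with the $\Gamma_\tau$-invariance of $\mathcal{T}_\tau$ (Proposition~\ref{prop tau cosmological time equivariance}). The only point requiring care is this bookkeeping with the unique decomposition: one must exploit that a point of $S_\tau^1$ carries cosmological time exactly $1$, so that rescaling its normal direction by $t$ changes neither its normal projection nor its projecting normal --- which is exactly what the uniqueness in Proposition~\ref{prop decomposition} guarantees.
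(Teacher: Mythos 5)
Your proposal is correct and follows essentially the same route as the paper: the paper also exhibits the explicit inverse $(Q,t) \longmapsto P_\tau(Q) + t\,\G_{\S}^{-1}\bp{y_\tau(Q)}$, cites Proposition~\ref{prop TK, PK and yK are continuous} for its continuity, and derives continuity and equivariance of $\Theta$ from Proposition~\ref{prop projection Ptau}. The only difference is that you spell out, via the uniqueness in Proposition~\ref{prop decomposition}, why this map is a two-sided inverse, a verification the paper leaves implicit.
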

\begin{proof}
The continuity of $\Theta$ and condition \eqref{prop property of Theta} are consequences of Proposition~\ref{prop projection Ptau}. To see that $\Theta$ is a homeomorphism just notice that its inverse map is explicitly expressed as
\begin{equation*}
\Theta^{-1}=\namelessfunction{S_\tau^1 \times \R_+^*}{D_\tau}{(X,t)}{ P_{\tau}(X) + t \, \G_{\S}^{-1}\bp{y_{\tau}(X)}} \, ,
\end{equation*}
which is continuous by Proposition~\ref{prop TK, PK and yK are continuous}.
\end{proof}

\begin{proposition}
\label{prop action is properly discontinuous}
The action of $\Gamma_\tau$ on $D_\tau$ is free and properly discontinuous and there is a homeomorphism
\begin{equation*}
D_\tau /\Gamma_\tau \simeq S_\tau^1 /\Gamma_\tau \times \R \, .
\end{equation*}
\end{proposition}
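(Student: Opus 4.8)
The plan is to reduce everything to the single level hypersurface $S_\tau^1$ together with the $\Gamma_\tau$-equivariant homeomorphism $\Theta$ already produced in Corollary~\ref{cor definition of Theta}. First I would observe that, by Lemma~\ref{lem DspK =DK}, $S_\tau^1 = \partial K(s_\tau + \omega_\C) = \partial_{sp} K(s_\tau + \omega_\C)$ is the spacelike boundary of the $\C$-convex domain whose support function is $s_\tau + \omega_\C$. This support function is $\tau$-equivariant (as recalled in the proof of Proposition~\ref{prop cosmological foliation of Dtau}), so $K(s_\tau + \omega_\C)$ is a $\tau$-convex domain; moreover it is strictly convex, since $\omega_\C$ is $C^2_+$ by condition~\ref{cdt C2+} while $s_\tau$ is convex. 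Hence Proposition~\ref{prop free and properly discontinuous action on C1 tau-invariant hypersurfaces} applies directly and yields that the action of $\Gamma_\tau$ on $S_\tau^1$ is free and properly discontinuous.

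Next I would transfer these properties to $D_\tau$ through $\Theta$. The crucial point is that the equivariance relation \eqref{prop property of Theta} states exactly that $\Theta$ intertwines the affine action of $\Gamma_\tau$ on $D_\tau$ with the product action on $S_\tau^1 \times \R_+^*$ given by $P \mapsto \gamma P + \tau_\gamma$ on the factor $S_\tau^1$ (well defined since $S_\tau^1$ is $\Gamma_\tau$-invariant by Proposition~\ref{prop cosmological foliation of Dtau}) and by the trivial action on the factor $\R_+^*$. Since the action on $S_\tau^1$ is free and properly discontinuous and the second factor carries the trivial action, the product action on $S_\tau^1 \times \R_+^*$ is itself free and properly discontinuous; pulling these properties back through the homeomorphism $\Theta$ shows the same for the action of $\Gamma_\tau$ on $D_\tau$.

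Finally I would pass to the quotients. Because $\Theta$ is a $\Gamma_\tau$-equivariant homeomorphism, it descends to a homeomorphism
\begin{equation*}
D_\tau/\Gamma_\tau \simeq \bp{S_\tau^1 \times \R_+^*}/\Gamma_\tau.
\end{equation*}
As the action on the second factor is trivial, the right-hand side is $\bp{S_\tau^1/\Gamma_\tau} \times \R_+^*$, and identifying $\R_+^*$ with $\R$ gives the announced homeomorphism $D_\tau/\Gamma_\tau \simeq S_\tau^1/\Gamma_\tau \times \R$. I expect no serious obstacle: the substantive work is already packaged in Corollary~\ref{cor definition of Theta} and Proposition~\ref{prop free and properly discontinuous action on C1 tau-invariant hypersurfaces}, and the only thing requiring genuine care is checking that the action really splits as a product with the second factor trivial, which is immediate from \eqref{prop property of Theta}.
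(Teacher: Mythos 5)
Your proposal is correct and follows essentially the same route as the paper: the paper's proof also combines the equivariant homeomorphism $\Theta$ of Corollary~\ref{cor definition of Theta} with the free and properly discontinuous action of $\Gamma_\tau$ on $S_\tau^1 = \partial K(s_\tau + \omega_\C)$ given by Proposition~\ref{prop free and properly discontinuous action on C1 tau-invariant hypersurfaces}. You merely spell out details the paper leaves implicit (that $s_\tau + \omega_\C$ is a strictly convex $\tau$-equivariant support function, so that proposition applies, and that the intertwined action on $S_\tau^1 \times \R_+^*$ is a product action trivial on the second factor), all of which are verified correctly.
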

\begin{proof}
Using the homeomorphism $\Theta$ from Corollary~\ref{cor definition of Theta} this is a consequence of the free and properly discontinuous action of $\Gamma_\tau$ on the $C^1$ hypersurface $S_\tau^1 = \partial (D_\tau + \S)$ proved in Proposition~\ref{prop free and properly discontinuous action on C1 tau-invariant hypersurfaces}.
\end{proof}

\begin{lemma}
\label{lem concrete homeomorphism between Cauchy surfaces}
Let $K$ is a $\C$-spacelike $\tau$-convex domain of $(\R^{d+1},\C)$ with $C^1$ boundary. Then $\partial K$ is a Cauchy surface of $(D_\tau,\C)$ and it is $\Gamma_\tau$-equivariantly homeomorphic to $S_\tau^1$.
\end{lemma}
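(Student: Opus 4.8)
The plan is to first identify $\partial K$ as a Cauchy surface of $(D_\tau,\C)$, and then to use the cosmological foliation of $D_\tau$ together with its associated flow to produce the equivariant homeomorphism with $S_\tau^1$.

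For the first step I would start from the fact that, being $\tau$-convex, $K$ is contained in the maximal $\tau$-convex domain $D_\tau$ (Proposition~\ref{prop Dtau is maximal}). Since $K$ is $\C$-spacelike, its boundary $\partial K=\partial_{sp}K$ is the full graph of the finite convex function $s_K^*$ (Proposition~\ref{prop K is epigraph}), hence an acausal topological hypersurface met \emph{at most once} by any $\C$-causal curve. The support function $s_K$ extends lower semi-continuously to the boundary function $g_\tau$ (Proposition~\ref{prop existence of boundary function gtau}), so the argument of Proposition~\ref{prop maximal domain = Cauchy development} applies: it only uses that $\partial K$ is a complete spacelike convex graph with boundary data $g_\tau$, and it yields $D(\partial K)=D_\tau$. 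In particular $\partial K\subseteq D(\partial K)=D_\tau$, and every inextensible $\C$-causal curve of $D_\tau$ meets $\partial K$; combined with acausality this gives exactly one intersection, i.e.\ $\partial K$ is a Cauchy surface of $(D_\tau,\C)$.

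For the second step, I would invoke the $\Gamma_\tau$-equivariant homeomorphism $\Theta\colon D_\tau\to S_\tau^1\times\R_+^*$ of Corollary~\ref{cor definition of Theta}, whose inverse flows each $Z\in S_\tau^1$ along the timelike ray $t\mapsto\Theta^{-1}(Z,t)=P_\tau(Z)+t\,\G_{\S}^{-1}\bp{y_\tau(Z)}$. Each such ray is an inextensible future-directed $\C$-causal curve of $D_\tau$, so by the Cauchy surface property it meets $\partial K$ exactly once; consequently $\Pi_\tau|_{\partial K}\colon\partial K\to S_\tau^1$ is a bijection, continuous and $\Gamma_\tau$-equivariant by Propositions~\ref{prop TK, PK and yK are continuous} and~\ref{prop projection Ptau}. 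It then remains only to show that its inverse is continuous. Writing $\Theta(\partial K)=\graph(\phi)$ with $\phi(Z)=\mathcal{T}_\tau\bp{(\Pi_\tau|_{\partial K})^{-1}(Z)}$, the inverse is $Z\mapsto\Theta^{-1}(Z,\phi(Z))$, so it suffices to prove $\phi$ continuous; and since $\partial K$ is closed in $D_\tau$ and $\Theta$ a homeomorphism, $\phi$ has closed graph, so continuity will follow as soon as $\phi$ is locally bounded into the locally compact space $\R_+^*$.

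The main point is thus the local boundedness of $\phi$, which I would extract directly from the causal structure rather than from compactness of the quotient. Fixing $Z_0\in S_\tau^1$ and setting $t_0=\phi(Z_0)>0$, the points $\Theta^{-1}(Z_0,t_0/2)$ and $\Theta^{-1}(Z_0,2t_0)$ lie respectively in $I^-(\partial K)$ and $I^+(\partial K)$, and these two sets are open and partition $D_\tau\setminus\partial K$. By continuity of $\Theta^{-1}$ there is a neighbourhood $U\ni Z_0$ in $S_\tau^1$ on which $\Theta^{-1}(\,\cdot\,,t_0/2)$ stays in $I^-(\partial K)$ and $\Theta^{-1}(\,\cdot\,,2t_0)$ stays in $I^+(\partial K)$; since each ray crosses $\partial K$ exactly once, from past to future, this forces $t_0/2<\phi(Z)<2t_0$ for all $Z\in U$. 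Hence $\phi$ is locally bounded, therefore continuous, and $\Pi_\tau|_{\partial K}$ is the desired $\Gamma_\tau$-equivariant homeomorphism onto $S_\tau^1$. I expect this local-boundedness step to be the only delicate point: it is precisely what prevents the leaf $\partial K$ from drifting to the initial singularity or to infinity along the foliation, and it is where the full Cauchy property (acausality together with global crossing) is genuinely used.
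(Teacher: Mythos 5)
Your proof is correct and follows essentially the same route as the paper's: the Cauchy surface property is deduced from the Cauchy-development result (Proposition~\ref{prop maximal domain = Cauchy development}, which the paper invokes via Corollary~\ref{cor C2+ implies Cauchy surface}), and the equivariant homeomorphism is the restriction $\Pi_{\tau\vert \partial K}$ of the projection from Proposition~\ref{prop projection Ptau}. The paper simply asserts that this restriction is a homeomorphism, whereas your argument (bijectivity from the exactly-once crossing of the timelike fibres of $\Pi_\tau$, and continuity of the inverse via the closed-graph and local-boundedness argument) supplies the verification it leaves implicit.
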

\begin{proof}
The fact that $\partial K$ is a Cauchy surface of $D_\tau$ is a driect consequence of Proposition~\ref{prop maximal domain = Cauchy development}. An explicit $\Gamma_\tau$-equivariant homeomorphism between $\partial K$ and $S_\tau^1$ is given by the restriction $\Pi_{\tau\vert \partial K}$ of the projection from Proposition~\ref{prop projection Ptau}.
\end{proof}

\begin{remark}
In Lemma~\ref{lem concrete homeomorphism between Cauchy surfaces}, the fact that $\partial K$ is $\Gamma_\tau$-equivariantly homeomorphic to $S_\tau^1$ can also be seen as a direct consequence of Proposition~\ref{prop Geroch Cauchy surfaces} for the affine spacetime $(D_\tau / \Gamma_\tau,\C)$. 
\end{remark}

\begin{theorem}[Theorem~\ref{Theorem Intro domain}~\ref{theo11}]
\label{the action is properly discontinuous + homeo}
The action of $\Gamma_\tau$ on $D_\tau$ is free and properly discontinuous and there is a homeomorphism
\begin{equation*}
D_\tau /\Gamma_\tau \simeq \Omega /\Gamma \times \R \, .
\end{equation*}
\end{theorem}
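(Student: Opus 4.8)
The freeness and proper discontinuity of the action, together with the homeomorphism $D_\tau/\Gamma_\tau \simeq S_\tau^1/\Gamma_\tau \times \R$, are already the content of Proposition~\ref{prop action is properly discontinuous}, so the whole task reduces to producing a homeomorphism $S_\tau^1/\Gamma_\tau \simeq \Omega/\Gamma$. The plan is to route this through an auxiliary $\C$-spacelike $\tau$-convex domain $K$ whose boundary is on one hand $\Gamma_\tau$-equivariantly homeomorphic to $S_\tau^1$, and on the other hand has quotient $\Omega/\Gamma$. The subtlety that forces this detour is that $S_\tau^1 = \partial K(s_\tau + \omega_\C)$ has support function $s_\tau + \omega_\C$ which, although strictly convex, need not be $C^1$: the convex envelope $s_\tau$ may fail to be differentiable (this is precisely the singular nature of the initial singularity), so Proposition~\ref{prop free and properly discontinuous action on C1 tau-invariant hypersurfaces} cannot be invoked on $S_\tau^1$ directly.

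To build a domain that is simultaneously $\C$-spacelike and carries a $C^1$ support function, I would start from the smooth $\tau$-support function given by Proposition~\ref{prop exists tau-support function extending to boundary}: by its construction it is of the form $v + T\omega_\C$ with everywhere positive definite Hessian, hence $C^2_+$ in the sense of Proposition~\ref{prop C2+ support function has positive Hessian}; call it $s_1$. Since $\omega_\C$ satisfies the equivariance relation \eqref{eq action on Omega* support function} for the trivial cocycle, the function $s \coloneqq s_1 + \omega_\C$ is again a $\tau$-support function, and it is $C^2_+$ as a sum of two functions with positive definite Hessian, so in particular strictly convex and $C^1$. Finally, writing $s = s_1 + t\,\omega_\C$ with $t = 1$ and applying Lemma~\ref{lem DspK =DK} gives $\partial_{sp} K(s) = \partial K(s)$, so $K \coloneqq K(s)$ is $\C$-spacelike.

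With this $K$ the two halves of the argument are immediate. Since $K$ is a $\C$-spacelike $\tau$-convex domain, Lemma~\ref{lem concrete homeomorphism between Cauchy surfaces} furnishes a $\Gamma_\tau$-equivariant homeomorphism $\partial K \simeq S_\tau^1$, whence $\partial K/\Gamma_\tau \simeq S_\tau^1/\Gamma_\tau$; and since $s$ is strictly convex and $C^1$, Proposition~\ref{prop free and properly discontinuous action on C1 tau-invariant hypersurfaces} gives $\partial_{sp} K/\Gamma_\tau \simeq \Omega/\Gamma$, with $\partial_{sp} K = \partial K$ by spacelikeness. Chaining these homeomorphisms yields $S_\tau^1/\Gamma_\tau \simeq \partial K/\Gamma_\tau = \partial_{sp} K/\Gamma_\tau \simeq \Omega/\Gamma$, and therefore
\begin{equation*}
D_\tau/\Gamma_\tau \simeq S_\tau^1/\Gamma_\tau \times \R \simeq \Omega/\Gamma \times \R.
\end{equation*}
The only genuine obstacle is the construction in the middle paragraph: exhibiting a single domain that is both $\C$-spacelike (as required by Lemma~\ref{lem concrete homeomorphism between Cauchy surfaces}) and has a $C^1$ support function (as required by Proposition~\ref{prop free and properly discontinuous action on C1 tau-invariant hypersurfaces}). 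Adding one copy of $\omega_\C$ to an already smooth $\tau$-support function reconciles the two, since $\omega_\C$ supplies, via Lemma~\ref{lem DspK =DK}, exactly the boundary behaviour responsible for spacelikeness while preserving the $C^2_+$ regularity.
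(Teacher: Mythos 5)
Your proposal is correct and follows essentially the same route as the paper's proof: both combine Proposition~\ref{prop action is properly discontinuous}, a spacelike $C^2_+$ $\tau$-convex domain obtained from Proposition~\ref{prop exists tau-support function extending to boundary}, the Gauss-map identification of its boundary quotient with $\Omega/\Gamma$ (Proposition~\ref{prop free and properly discontinuous action on C1 tau-invariant hypersurfaces} together with Proposition~\ref{prop dual convex is divisible}), and Lemma~\ref{lem concrete homeomorphism between Cauchy surfaces}. The only difference is that you explicitly add one copy of $\omega_\C$ to the smooth $\tau$-support function and invoke Lemma~\ref{lem DspK =DK} to secure $\partial K = \partial_{sp}K$, which merely spells out the spacelikeness claim that the paper asserts directly when citing Proposition~\ref{prop exists tau-support function extending to boundary}.
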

\begin{proof}
By Proposition~\ref{prop exists tau-support function extending to boundary}, there exist a $C^2_+$ $\tau$-convex $K$ with $\partial K = \partial_{sp}K$. Then the Gauss map $\G_K$ on $K$ gives a $C^1$ diffeomorphism $\partial K /\Gamma_\tau \approx \Omega^*/\Gamma$. Propositions~\ref{prop dual convex is divisible} and~\ref{prop action is properly discontinuous}, and Lemma~\ref{lem concrete homeomorphism between Cauchy surfaces} then conclude the proof.
\end{proof}

\subsection{Spacetime structure of the quotient}
\label{subsec Spacetime structure of the quotient}

In this subsection, we prove the GHCC part of point~\ref{theo12} (we shall treat maximality in subsection~\ref{subsec Maximality of the quotient spacetimes}) from Theorem~\ref{Theorem Intro domain}.

As $\Gamma$, the linear part of $\Gamma_\tau$, preserves the cone $\C$, the parallel proper cone structure of $(D_\tau,\C)$ descends to the equiaffine manifold $D_\tau / \Gamma_\tau$, endowing it with an affine spacetime structure. 

Proposition~\ref{prop tau cosmological time equivariance} indicates that the cosmological time, normal projection and projecting normal descends to maps $\mathcal{T}_\tau: D_\tau / \Gamma_\tau \to \R_+^*$, $P_\tau: D_\tau / \Gamma_\tau \to \partial_{sp}D_\tau / \Gamma_\tau$, and $y_\tau: D_\tau / \Gamma_\tau \to \Omega^*/\Gamma$ (that we abusively denoted in the same way).

\begin{theorem} [Theorem~\ref{Theorem Intro domain}, GHCC part of~\ref{theo12}]
The affine spacetime $(D_\tau / \Gamma_\tau,\C)$ is a globally hyperbolic spacetimes admitting a $C^2$ locally uniformly $\C$-future-convex (Definition~\ref{def Convex Cauchy surface}) and compact Cauchy surface.
\end{theorem}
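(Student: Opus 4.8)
The plan is to exhibit a single hypersurface that simultaneously witnesses global hyperbolicity, Cauchy-compactness and local uniform convexity, obtained by descending a well-chosen Cauchy surface of $D_\tau$ to the quotient. First I would produce, via Proposition~\ref{prop exists tau-support function extending to boundary}, a $C^2_+$ $\C$-spacelike $\tau$-convex domain $K$ (that is, one with $\partial K = \partial_{sp}K$). By Corollary~\ref{cor C2+ implies Cauchy surface}, its boundary $\partial K$ is then a $\Gamma_\tau$-invariant, locally uniformly convex Cauchy surface of the affine spacetime $\bp{D_\tau,\C}$. Since by Theorem~\ref{the action is properly discontinuous + homeo} the group $\Gamma_\tau$ acts freely and properly discontinuously on $D_\tau$, the projection $\pi: D_\tau~\longrightarrow~D_\tau/\Gamma_\tau$ is a covering map, and the $\Gamma_\tau$-invariant hypersurface $\partial K$ descends to an embedded hypersurface $S \coloneqq \partial K/\Gamma_\tau$ of the quotient.

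Next I would check that all the desired features of $S$ descend. Because $\partial K$ is $\Gamma_\tau$-invariant it equals its full preimage $\pi^{-1}(S)$, so $\partial K~\longrightarrow~S$ is itself a $\Gamma_\tau$-covering; combining Proposition~\ref{prop free and properly discontinuous action on C1 tau-invariant hypersurfaces} (which identifies $\partial K/\Gamma_\tau$ with $\Omega/\Gamma$) with the compactness of $\Omega/\Gamma$ from Proposition~\ref{prop dual convex is divisible}, the surface $S$ is compact. The equiaffine and cone structures of $\bp{D_\tau,\C}$ descend to $\bp{D_\tau/\Gamma_\tau,\C}$ and $\pi$ is a local isomorphism of affine spacetimes, so $\C$-spacelikeness and local uniform convexity, being conditions read locally in affine charts (Definitions~\ref{def C-sp C-n hyperplanes} and~\ref{def Convex Cauchy surface}), pass from $\partial K$ to $S$. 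Thus $S$ is a compact, $C^2$-embedded, locally uniformly convex $\C$-spacelike hypersurface of $\bp{D_\tau/\Gamma_\tau,\C}$.

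It then remains to prove that $S$ is a Cauchy surface, i.e. that every inextensible $\C$-causal curve of the quotient meets $S$ exactly once. Given such a curve $\bar\gamma$, I would lift it through the covering $\pi$ to a $\C$-causal curve $\gamma$ in $D_\tau$ (the lift is causal since $\pi$ is a local isomorphism of cone structures, and inextensible, as any causal extension of $\gamma$ would project to a causal extension of $\bar\gamma$). As $\partial K$ is a Cauchy surface of $\bp{D_\tau,\C}$ — equivalently $D(\partial K)=D_\tau$ by Proposition~\ref{prop maximal domain = Cauchy development} — the lift $\gamma$ meets $\partial K$ in exactly one point, whose image under $\pi$ gives the intersection of $\bar\gamma$ with $S$. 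Uniqueness follows from $\pi^{-1}(S)=\partial K$: a second intersection of $\bar\gamma$ with $S$ would force a second intersection of $\gamma$ with $\partial K$, contradicting the Cauchy property upstairs. Hence $\bp{D_\tau/\Gamma_\tau,\C}$ is globally hyperbolic in the sense of Definition~\ref{def Cauchy surface}, with compact locally uniformly convex Cauchy surface $S$.

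The hard part will be this last step, namely the causal descent: verifying that the lift of an inextensible $\C$-causal curve remains inextensible, and that uniqueness of the intersection survives the deck action. Both rest on the identity $\pi^{-1}(S)=\partial K$ coming from the $\Gamma_\tau$-invariance of $\partial K$, which sets up a bijection between intersection points of $\bar\gamma$ with $S$ and intersection points of the chosen lift $\gamma$ with $\partial K$. The remaining assertions are essentially formal, flowing from the covering being a local isomorphism of affine spacetimes together with the compactness of $\Omega/\Gamma$ guaranteed by divisibility.
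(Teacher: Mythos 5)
Your proposal is correct and follows essentially the same route as the paper: take the $C^2_+$ spacelike $\tau$-convex domain $K$ from Proposition~\ref{prop exists tau-support function extending to boundary}, get compactness of $\partial K/\Gamma_\tau$ from the Gauss map and divisibility (Proposition~\ref{prop dual convex is divisible}), and obtain the Cauchy property from Corollary~\ref{cor C2+ implies Cauchy surface} descending to the quotient. The only difference is that you spell out the curve-lifting argument (using $\pi^{-1}(S)=\partial K$) that the paper compresses into the phrase ``descends to the quotient,'' which is a welcome clarification rather than a deviation.
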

\begin{proof}
In the proof of Proposition~\ref{prop exists tau-support function extending to boundary}, we have shown that there exists a $C^2_+$ $\tau$-convex domain $K$ with $\partial K = \partial_{sp}K$. The $C^2_+$ nature of $K$ implies that $\partial K / \Gamma_\tau$ is a $\C$-spacelike $C^2$ locally uniformly convex hypersurface in $(D_\tau / \Gamma_\tau,\C)$. It is also compact as the Gauss map $\G_K$ descends to a diffeomorphism from $\partial K / \Gamma_\tau$ to $\Omega^* / \Gamma$ which is compact by Proposition~\ref{prop dual convex is divisible}. Finally, Corollary~\ref{cor C2+ implies Cauchy surface} descends to the quotient and implies that $\partial K / \Gamma_\tau$ is a Cauchy surface of $(D_\tau / \Gamma_\tau,\C)$.
\end{proof}

\begin{theorem}
The affine spacetime $(D_\tau / \Gamma_\tau,\C)$ is foliated by the level hypersurfaces of its cosmological time, which are $C^1$ $\C$-future-convex Cauchy surfaces.
\end{theorem}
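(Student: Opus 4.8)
The plan is to transfer to the quotient the entire foliated structure already established on $D_\tau$, exploiting that the quotient map is a local affine isomorphism and that every object involved is $\Gamma_\tau$-equivariant. Write $\pi\colon D_\tau \to D_\tau/\Gamma_\tau$ for the quotient map; since $\Gamma_\tau$ acts freely and properly discontinuously on $D_\tau$ (Theorem~\ref{the action is properly discontinuous + homeo}), $\pi$ is a covering map and a local diffeomorphism preserving the equiaffine structure and the cone distribution $\C$. By Proposition~\ref{prop tau cosmological time equivariance} the cosmological time $\mathcal{T}_\tau$ is $\Gamma_\tau$-invariant, so it descends to a function on $D_\tau/\Gamma_\tau$, still denoted $\mathcal{T}_\tau$, and this descended function is $C^1$ because $\mathcal{T}_\tau$ is $C^1$ on $D_\tau$ (Proposition~\ref{prop TK is C1}) and $\pi$ is a local $C^1$ diffeomorphism.

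First I would establish the foliation and the regularity of the leaves. The explicit gradient formula of Proposition~\ref{prop TK is C1}, namely $\grad\mathcal{T}_\tau(X) = \bp{y_\tau(X),-1}/\omega_\C\bp{y_\tau(X)}$, has last coordinate $-1/\omega_\C(y_\tau(X))$, which is nonzero since $\omega_\C < 0$ on $\Omega^*$; hence $\mathcal{T}_\tau$ is a submersion, both on $D_\tau$ and, by equivariance, on the quotient. Its level sets therefore form a $C^1$ foliation of $D_\tau/\Gamma_\tau$ by $C^1$ embedded hypersurfaces, which are exactly the images $\pi(S_\tau^t)=S_\tau^t/\Gamma_\tau$ of the leaves $S_\tau^t$ foliating $D_\tau$ (Proposition~\ref{prop cosmological foliation of Dtau}); the restriction of $\pi$ to each $\Gamma_\tau$-invariant leaf is a covering onto its image by Proposition~\ref{prop free and properly discontinuous action on C1 tau-invariant hypersurfaces}. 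Being $\C$-spacelike and being convex (Definition~\ref{def Convex Cauchy surface}) are local conditions read in affine charts, so they are inherited from the corresponding properties of $S_\tau^t$ in $(D_\tau,\C)$ (Propositions~\ref{prop cosmological foliation of Dtau} and~\ref{prop level set of cosmo time is Cauchy surf}) through the local affine isomorphism $\pi$.

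The remaining and most delicate point is that each $S_\tau^t/\Gamma_\tau$ is a Cauchy surface of $\bp{D_\tau/\Gamma_\tau,\C}$ in the sense of Definition~\ref{def Cauchy surface}, i.e. that every inextensible $\C$-causal curve meets it exactly once. I would argue by lifting: an inextensible $\C$-causal curve $\overline{\gamma}$ in the quotient lifts, via the covering $\pi$, to an inextensible $\C$-causal curve $\gamma$ in $D_\tau$ (inextensibility is preserved because $\pi$ is a local homeomorphism and $\Gamma_\tau$ acts by cone-preserving affine maps). The key observation is that, $S_\tau^t$ being $\Gamma_\tau$-invariant, its full preimage satisfies $\pi^{-1}\bp{S_\tau^t/\Gamma_\tau}=S_\tau^t$; consequently $\overline{\gamma}$ meets $S_\tau^t/\Gamma_\tau$ at a parameter if and only if $\gamma$ meets $S_\tau^t$ at that same parameter. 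Since $S_\tau^t$ is a Cauchy surface of $(D_\tau,\C)$ by Proposition~\ref{prop level set of cosmo time is Cauchy surf}, the lift $\gamma$ meets $S_\tau^t$ at exactly one parameter, whence $\overline{\gamma}$ meets $S_\tau^t/\Gamma_\tau$ at exactly one point. This invariance of the leaves is precisely what converts the ``exactly once'' property upstairs into the ``exactly once'' property downstairs, and is the crux of the argument; the rest is the routine verification that the descended objects retain their local regularity.
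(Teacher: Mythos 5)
Your proof is correct and follows essentially the same route as the paper's: everything (the $C^1$ foliation, the spacelike and convexity properties, and the Cauchy-surface property of the leaves) is descended from $D_\tau$ to the quotient using the $\Gamma_\tau$-invariance of $\mathcal{T}_\tau$ and its level sets, together with Propositions~\ref{prop cosmological foliation of Dtau} and~\ref{prop level set of cosmo time is Cauchy surf}. The only difference is one of detail: you make explicit the lifting argument (via $\pi^{-1}\bp{S_\tau^t/\Gamma_\tau}=S_\tau^t$ and lifting inextensible causal curves) showing that the Cauchy property descends, a step the paper treats as immediate when it says the relevant statements ``descend to the quotient.''
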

\begin{proof}
The fact that level hypersurfaces of the cosmological time are a $C^1$ foliation is a quotiented version of Proposition~\ref{prop cosmological foliation of Dtau}. Those are quotients of $\C$-spacelike convex hypersurfaces, and thus, by Proposition~\ref{prop maximal domain = Cauchy development}, are future-convex Cauchy surfaces of $(D_\tau / \Gamma_\tau,\C)$.
\end{proof}

\begin{remark}
\label{rem even dimension}
Remembering the discussion in subsection~\ref{subsec The two maximal domains}, we can consider the two GHCC affine spacetimes $(D_{\tau}^{\C} / \Gamma_\tau , \C)$ and $(D_{\tau}^{-\C} / \Gamma_\tau , -\C)$. In general, they have no reason to be isomorphic affine spacetimes. In the case where $d+1$ is even and $\tau = 0 $, they actually are: indeed $-\Id \in \SL (\R^{d+1})$ and we have (see subsection~\ref{subsec The two maximal domains}) that $D^{\C}_0 = - D^{-\C}_{0}$.

In the case of flat Lorentzian spacetimes \cite{Mess_07,Bonsante_05}, $-\Id$ never belongs to $\SO_0(d,1) \subset \SL(\R^{d+1})$. Moreover, in that case, one can arbitrarily fix a time orientation on the common ambient affine Minkowski space containing all the maximal domains (the cone $\C$ does not vary) in order to have a favourite choice between the two spacetimes given by the two quotients of those maximal domains.
\end{remark}

\section{Correspondence between affine deformations of dividing groups and affine spacetimes}
\label{sec Correspondence between affine deformations of dividing groups and affine spacetimes}

This section is devoted to the proofs of the remaining maximality part of Theorem~\ref{Theorem Intro domain}~\ref{theo12} and of Theorem~\ref{Theorem Intro dev}, through the study of the developing maps of \emph{Globally Hyperbolic affine spacetimes admitting a $C^2$ locally uniformly future-Convex and Compact Cauchy surface} (GHCC affine spacetimes).

\subsection{\texorpdfstring{Restriction of a developing map to a compact $C^2$ locally uniformly convex Cauchy surface}{Restriction of a developing map to a compact C2 locally uniformly convex Cauchy surface}}

Let $(M,\C^M)$ be a GHCC affine spacetime. Using the equiaffine structure of $M$, we choose a pair $(\dev,\rho)$ of developing and holonomy maps (see subsection~\ref{subsec Equiaffine manifolds}). The developing map $ \dev: \widetilde{M} \to \A^{d+1}$, 
is then a local diffeomorphism and it is $\rho$-equivariant, that is for all $p \in \widetilde{M}$ and $\gamma \in \pi_1 M$, 
\begin{equation*}
\dev(\gamma \cdot p)=\rho(\gamma) \cdot \dev(p) \, .
\end{equation*}

The cone structure $\C^M$ on $M$ lifts to a cone structure $\C^{\widetilde{M}}$ on $\widetilde{M}$, which is parallel for the flat connection induced by the equiaffine structure of $\widetilde{M}$. Thus, we set $\C \subset \V^{d+1}$ to be the proper open convex cone such that in each tangent space $\mathrm{T}_{\dev(p)}\A^{d+1} = \V^{d+1},$ we have 
\begin{equation*}
\C = \dif_{p} \dev(\C^{\widetilde{M}}_p) \, .
\end{equation*}
From now on, we shall abusively write that this cone is defined as $\C \coloneqq \dif \, \dev(\C^M)$.

Let $S$ be a compact $C^2$ locally uniformly future-convex Cauchy surface of $(M,\C^M)$. We recall that Proposition~\ref{prop Geroch Cauchy surfaces} implies that $\pi_1 M = \pi_1S$ and that $\widetilde{S}$, the preimage of $S$ in the universal covering $\widetilde{M} \to M$, is a universal cover of $S$.

By definition of a $C^2$ locally uniformly $\C^M$-future-convex Cauchy surface (Definition~\ref{def Convex Cauchy surface}), the developing map $\dev$ induces a $\C$-spacelike locally uniformly convex immersion $\dev_{\vert \widetilde{S}}: \widetilde{S} \to \A^{d+1}$, namely $\dev_{\vert \widetilde{S}}$ is locally a diffeomorphism onto a $\C$-spacelike graph of a uniformly convex $C^2$ map. We can thus consider the Blaschke metric $h$ on $\widetilde{S}$ induced by that immersion (Definition~\ref{def Blaschke metric}). We can also fix any Euclidean metric $g_{euc}$ on $\A^{d+1}$ and consider its pullback by the restriction $\dev_{\vert\widetilde{S}}$, denoted by $\dev_{\vert\widetilde{S}}^*g_{euc}$.

As the immersion $\dev_{\vert \widetilde{S}}: \widetilde{S} \to \A^{d+1}$ is $\rho$-equivariant and $\rho$ takes values in $\SA(\A^{d+1})$, the Blaschke metric $h$ on $\widetilde{S}$ is preserved by $\pi_1S$ and descends to a Riemannian metric $\bar{h}$ on $S = \widetilde{S}/\pi_1 S$. The Riemannian manifold $(S,\bar{h})$ is closed and thus complete, so $(\widetilde{S},h)$ is also complete. We are thus in the setting of the following theorem of Trudinger and Wang \cite[Theorem~A]{Trudinger_Wang_02}.

\begin{theorem}[Trudinger--Wang \cite{Trudinger_Wang_02}]
\label{theo Trudinger--Wang}
Let $i: M \to \A^{d+1}$ be a $C^2$ locally uniformly convex immersed hypersurface, namely the map $i$ is locally a diffeomorphism onto a graph (in some well chosen affine coordinates) of a uniformly convex $C^2$ function. Let $h$ be the Blaschke metric on $M$ induced by that immersion and $g_{euc}$ be a Euclidean metric on $\A^{d+1}$. If the Riemannian manifold $(M,h)$ is complete, then $(M, i^*g_{euc})$ is also complete.
\end{theorem}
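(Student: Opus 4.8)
The plan is to prove the contrapositive: assuming $(M,i^*g_{euc})$ is \emph{incomplete}, I would construct a divergent path of finite $h$-length, contradicting the completeness of $(M,h)$. The reverse implication (Euclidean completeness forces affine completeness) is classical and elementary; the implication asserted here is the substantive one — it was conjectured by Calabi — so the entire difficulty lies in bounding the affine (Blaschke) metric by the Euclidean one along escaping paths.

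First I would localize to convex graphs. Since $i$ is a locally uniformly convex immersion, around each point $M$ is, in suitable affine coordinates on $\A^{d+1}$, the graph $x_{d+1}=u(x)$ of a smooth uniformly convex function $u$ on a domain $\Omega\subseteq\R^d$. In these coordinates the two line elements on $M$ become the explicit expressions
\[
ds_h^2 = (\det \Hess u)^{-1/(d+2)}\,u_{ij}\,dx^i\,dx^j, \qquad ds_g^2 = (\delta_{ij}+u_i u_j)\,dx^i\,dx^j,
\]
representing $h$ and $i^*g_{euc}$ respectively. The comparison I need is therefore an integrated estimate between the matrix $(\det \Hess u)^{-1/(d+2)}\,\Hess u$ and the matrix $\delta + \grad u\otimes \grad u$ along an escaping path.

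Next, using the assumed $i^*g_{euc}$-incompleteness, I would take a path $\gamma$ in $M$ of finite $g$-length that eventually leaves every compact subset of $M$. Since the $g$-length of $\gamma$ equals the Euclidean length of $i\circ\gamma$ in $\A^{d+1}$, the image $i(\gamma)$ has finite Euclidean length and hence converges to some $P_\infty\in\A^{d+1}$. In the graph picture the base point $x(\gamma)$ then converges to a point $x_\infty\in\partial\Omega$ on the boundary of the domain — the interior case is excluded, because near an interior smooth point $i$ is a local diffeomorphism, so a path whose image converges there cannot escape every compact set of $M$. Thus the problem reduces to understanding the behaviour of the affine metric $h$ near $x_\infty$, and the goal becomes to show $\int_\gamma ds_h<\infty$.

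The crux — and the step I expect to be the main obstacle — is an a priori estimate for the Monge--Ampère quantity $\det \Hess u$ near $x_\infty$. Concretely, I would need a lower bound on $\det \Hess u$, so that the conformal factor $(\det \Hess u)^{-1/(d+2)}$ stays bounded above, together with control of $\Hess u$ relative to $\delta + \grad u\otimes \grad u$, so that $ds_h^2\le C\,ds_g^2$ holds along $\gamma$. Such estimates are the genuinely hard part: they are obtained by constructing convex sub- and supersolutions (barriers) for the Monge--Ampère equation adapted to the geometry near $x_\infty$ and applying the comparison principle, in the spirit of the interior estimates of Trudinger and Wang. Granting the inequality $ds_h^2\le C\,ds_g^2$ along $\gamma$, one concludes $\int_\gamma ds_h\le C^{1/2}\int_\gamma ds_g<\infty$, so $\gamma$ is a path that escapes every compact set yet has finite $h$-length; this contradicts the completeness of $(M,h)$ and completes the argument.
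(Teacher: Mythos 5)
First, a point of context: the paper does not prove this statement at all --- it is quoted as Theorem A of Trudinger and Wang \cite{Trudinger_Wang_02}, i.e. the affirmative resolution of Calabi's conjecture that affine completeness implies Euclidean completeness, and it is invoked as a black box (in the proof of Proposition~\ref{prop devS is diffeo}). So there is no internal proof to compare yours against; the only question is whether your argument stands on its own, and it does not. Everything you do before the step you yourself call the crux (the contrapositive set-up, reduction to a graph, the correct formulas $h=(\det\Hess u)^{-1/(d+2)}\,\Hess u$ and $i^*g_{euc}=\delta+\grad u\otimes\grad u$, convergence of a finite-length escaping path to some $P_\infty$ over a boundary point $x_\infty$) is routine; everything after it is one application of H\"older; and the crux itself --- the bound $ds_h^2\le C\,ds_g^2$ along the path --- is explicitly ``granted''. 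Since that bound is exactly what separates the theorem from triviality, the proposal is a plan for a proof, not a proof.

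Moreover, the mechanism you propose for the crux cannot work as described. You suggest ``constructing convex sub- and supersolutions (barriers) for the Monge--Amp\`ere equation'' near $x_\infty$ and applying the comparison principle, but in this theorem there is no Monge--Amp\`ere equation: $u$ is an arbitrary locally uniformly convex $C^2$ function with no prescribed $\det\Hess u$, and the only hypothesis is the global one that the Blaschke metric is complete, so there is nothing for a comparison principle to act on. Relatedly, no local analysis near $x_\infty$ can produce a lower bound $\det\Hess u\ge c>0$ (hence boundedness of the conformal factor): a Euclidean-incomplete graph may perfectly well flatten as $x\to x_\infty$, i.e. $\det\Hess u\to 0$, and the only thing that excludes this is the global affine completeness --- extracting a contradiction from that global hypothesis is the entire content of Trudinger and Wang's argument, which is genuinely global (affine renormalizations, sections of convex functions, the Legendre transform, and their interior Pogorelov-type estimates), not a pointwise metric comparison along a single escaping path. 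Finally, a peripheral but real error: your remark that the converse implication (Euclidean completeness forces affine completeness) is ``classical and elementary'' is false. For example, the graph over $\R^d$ of $u(x)=\frac{1}{2}\bp{x_1^2+\dots+x_{d-1}^2}+f(x_d)$ with $f''(t)=e^{-t^2}$ is Euclidean complete and locally uniformly convex, yet the curve above the $x_d$-axis has finite Blaschke length $\int_\R (f''(t))^{(d+1)/(2(d+2))}\,\dif t<\infty$, so this hypersurface is affine incomplete.
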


We can also parametrise $\A^{d+1}$ as $\R^{d+1}$ so that we have 
\begin{equation*}
\C = \left\lbrace t (x,1) \st x \in \Omega, t>0 \right\rbrace ,
\end{equation*}
where $\Omega$ is a bounded open convex domain of $\R^d$ containing $0$. In that setting, the following lemma holds.

\begin{lemma}
\label{lem projection is bilipschitz}
Let $\C \subset \V^{d+1}$ be an open proper convex cone and $g_{euc}$ a Euclidean metric on $\A^{d+1}$. Parametrise $\A^{d+1}$ as $\R^{d+1}$ such that $\C = \{ t (x,1) \ \vert \ x \in \Omega, t>0 \}$ where $\Omega$ is a bounded open convex domain of $\R^d$ containing $0$, and let $\pi: \R^{d+1} \to \R^d$ be the projection onto the first $d$ coordinates. If $i: M \to \A^{d+1}$ is a $\C$-spacelike immersed hypersurface, then the map $\pi\circ i: (M,i^*g_{euc}) \to (\R^d, g_{euc\vert\R^d})$ is bi-Lipschitz.
\end{lemma}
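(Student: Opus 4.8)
The plan is to prove the \emph{infinitesimal} bi-Lipschitz bound: that there are constants $0 < c \le C$ such that $c\|v\|_{i^*g_{euc}} \le \|\dif_p(\pi\circ i)(v)\|_{g_{euc\vert\R^d}} \le C\|v\|_{i^*g_{euc}}$ for every $p \in M$ and every $v \in \mathrm{T}_pM$, uniformly in $p$. Since $\pi$ is linear we have $\dif_p(\pi\circ i) = \pi\circ\dif_p i$, and by definition of the pullback metric $\|v\|_{i^*g_{euc}} = \|\dif_p i(v)\|_{g_{euc}}$; hence everything reduces to comparing, for a vector $w = \dif_p i(v)$ lying in the tangent hyperplane $\dif_p i(\mathrm{T}_pM)$, the norms $\|\pi(w)\|$ and $\|w\|$. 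Such uniform two-sided control of the differential forces $\pi\circ i$ to multiply the length of every $C^1$ curve by a factor in $[c,C]$, which is the operative content of the bi-Lipschitz assertion.

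First I would record the shape of the tangent hyperplanes. Because $i$ is $\C$-spacelike, at each $p$ the linear hyperplane $\dif_p i(\mathrm{T}_pM)$ belongs to $\mathcal{H}_{sp}(\C)$ (Definition~\ref{def C-sp C-n hyperplanes}); under the identification of $\mathcal{H}_{sp}(\C)$ with $\Omega^*$ sending $y$ to $\{X : X\cdot(y,-1)=0\}$, it equals $H_{y(p)} = \{(v,\mu)\in\R^d\times\R : \mu = v\cdot y(p)\}$ for a unique $y(p)\in\Omega^*$. In particular $H_{y(p)}$ never contains the vertical vector $(0,\dots,0,1)\in\C$, so $\pi$ restricts to a linear isomorphism $H_{y(p)}\to\R^d$, confirming that $\pi\circ i$ is a local diffeomorphism.

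Next I would carry out the elementary norm computation, first for the standard Euclidean metric $g_0$ on $\R^{d+1}$ (passing to an arbitrary $g_{euc}$ only costs a fixed factor, since any two Euclidean metrics on a finite-dimensional space are uniformly comparable, and likewise their restrictions to $\R^d$). For $w = (v, v\cdot y) \in H_y$ one has $\|w\|_{g_0}^2 = |v|^2 + (v\cdot y)^2$ and $\|\pi(w)\|_{g_0} = |v|$, giving at once the upper bound $\|\pi(w)\|_{g_0}\le\|w\|_{g_0}$, while Cauchy--Schwarz gives $(v\cdot y)^2\le|v|^2|y|^2$ and hence $\|\pi(w)\|_{g_0}\ge(1+(r^*)^2)^{-1/2}\|w\|_{g_0}$, where $r^* \coloneqq \sup_{y\in\Omega^*}|y|$. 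Applying this to $w=\dif_p i(v)\in H_{y(p)}$ and absorbing the comparability constants produces the uniform bounds $c,C$ above.

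The crux --- and the only genuinely non-formal point --- is the finiteness of $r^*$, i.e. the \emph{uniform} lower bound. This is exactly where $\C$-spacelikeness is indispensable: it confines every conormal direction $y(p)$ to the \emph{bounded} convex set $\Omega^*$, so all the tangent hyperplanes stay uniformly transverse to the vertical direction $(0,\dots,0,1)$, the transversality being quantified by the single constant $r^*$ regardless of $p$. Were the hypersurface allowed to be merely convex rather than $\C$-spacelike, its tangent hyperplanes could degenerate toward the vertical, $\pi$ would cease to be a uniform (or even a genuine) local diffeomorphism, and the lower Lipschitz bound would fail; the boundedness of $\Omega^*$ is precisely what rules this out.
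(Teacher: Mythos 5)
Your proposal is correct and takes essentially the same route as the paper: both reduce the lemma to a uniform two-sided bound on the differential of $\pi \circ i$ over $\C$-spacelike tangent vectors (and then read off bi-Lipschitzness from the resulting control on lengths of curves), the lower bound coming from the fact that properness of $\C$ keeps all spacelike hyperplanes uniformly transverse to the vertical direction. The only differences are bookkeeping: you quantify this transversality dually, via $r^* = \sup_{y \in \Omega^*} \vert y \vert$, which is finite because $0 \in \Omega$, whereas the paper extracts its constant from $\Omega$ itself (where, incidentally, the displayed $\sup_{x \in \Omega}$ should really be an infimum over $\partial \Omega$, so your dual constant is the correct one), and your explicit reduction to the standard Euclidean metric treats a general $g_{euc}$ more carefully than the paper does.
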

\begin{proof}
As $\Omega$ is a bounded open convex domain containing $0$, we can set
\begin{equation*}
M \coloneqq \sup\left\lbrace g_{euc\vert\R^d}(x,x) \mid x \in \Omega\right\rbrace >0 \, .
\end{equation*}
For every $\C$-spacelike vector $V \in \R^{d+1}$, we then have 
\begin{equation*}
g_{euc\vert\R^d}\bp{\pi(V),\pi(V)} \leq g_{euc}(V,V) \leq \vp{1+\frac{1}{M}}g_{euc\vert\R^d}\bp{\pi(V),\pi(V)} \, .
\end{equation*}
Applying that inequality to vectors $V$ tangent to the immersed hypersurface $i: M \to \A^{d+1}$, we get the lemma.
\end{proof}

We can now prove the following. 

\begin{proposition}
\label{prop devS is diffeo}
Let $(\dev,\rho)$ be a pair of developing and holonomy maps of a GHCC affine spacetime $(M,\C^M)$ and parametrise $\A^{d+1}$ as $\R^{d+1}$ so that the open proper convex cone $\C \coloneqq \dif \, \dev (\C^M)$ is expressed as $\C = \{ t (x,1) \ \vert \ x \in \Omega, t>0 \}$, where $\Omega$ is a bounded open convex domain of $\R^d$ containing $0$. If $S$ is a compact $C^2$ locally uniformly future-convex Cauchy surface of $(M,\C^M)$, the developing map $\dev$ on $\widetilde{M}$ embeds its universal cover $\widetilde{S} \subset \widetilde{M}$ into $\R^{d+1}$ as the graph of a $\C$-spacelike and $C^2$ locally uniformly convex function $f: \R^d \to \R$.
\end{proposition}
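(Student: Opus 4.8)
The plan is to reduce everything to the vertical projection $\pi\colon\R^{d+1}\to\R^d$ onto the first $d$ coordinates, and to show that its composition with $\dev_{\vert\widetilde S}$ is a diffeomorphism onto $\R^d$. First I would set $\phi\coloneqq\pi\circ\dev_{\vert\widetilde S}\colon\widetilde S\to\R^d$ and observe that $\phi$ is a local diffeomorphism. Indeed, since $0\in\Omega$ the vertical vector $(0,\dots,0,1)$ lies in $\C$, hence it is transverse to every $\C$-spacelike hyperplane; as $\dev_{\vert\widetilde S}$ is a $\C$-spacelike immersion, each tangent plane $\dif_p\dev_{\vert\widetilde S}\bp{\mathrm T_p\widetilde S}$ is a $\C$-spacelike hyperplane, so $\dif\pi$ restricts to an isomorphism from it onto $\R^d$, and therefore $\dif_p\phi$ is invertible for every $p$. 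I would then record the two metric inputs already prepared above: by Theorem~\ref{theo Trudinger--Wang} the Riemannian manifold $\bp{\widetilde S,\dev_{\vert\widetilde S}^*g_{euc}}$ is complete, and by Lemma~\ref{lem projection is bilipschitz} the map $\phi$ is bi-Lipschitz from this complete metric onto $\bp{\R^d,g_{euc\vert\R^d}}$; in particular there is $c>0$ with $\lvert\dif_p\phi(v)\rvert\geq c\,\lvert v\rvert$ for all $v$.

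The key step is to upgrade $\phi$ from a local diffeomorphism to a global one, and this is where I expect the only real (though standard) work to lie. I would establish the path-lifting property directly: given a piecewise $C^1$ path $\sigma\colon[0,1]\to\R^d$ and a point $p_0\in\widetilde S$ with $\phi(p_0)=\sigma(0)$, the set of $t$ for which a lift of $\sigma_{\vert[0,t]}$ starting at $p_0$ exists is open and nonempty. To see it is closed, note that the lower bound $\lvert\dif\phi(v)\rvert\geq c\,\lvert v\rvert$ forces any lift $\widetilde\sigma$ to satisfy $\lvert\widetilde\sigma'\rvert\leq c^{-1}\lvert\sigma'\rvert$, so its length is at most $c^{-1}\,\mathrm{length}(\sigma)<+\infty$; hence as $t$ tends to a limit the lift is Cauchy for $\dev_{\vert\widetilde S}^*g_{euc}$ and, by completeness, extends continuously to the limit, where it continues to lift by the local-diffeomorphism property. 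Thus every path lifts, so $\phi$ is a covering map onto $\R^d$: its image is open because $\phi$ is a local diffeomorphism and closed because of path lifting, hence equal to the connected space $\R^d$. Since $\widetilde S$ is connected and $\R^d$ is simply connected, this covering is one-sheeted, i.e. $\phi\colon\widetilde S\to\R^d$ is a diffeomorphism.

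Finally I would read off the conclusion. As $\phi=\pi\circ\dev_{\vert\widetilde S}$ is a diffeomorphism, $\dev_{\vert\widetilde S}$ is in particular injective, and $\dev\bp{\widetilde S}$ is exactly the graph of the function $f\coloneqq\mathrm{pr}_{d+1}\circ\dev\circ\phi^{-1}\colon\R^d\to\R$, where $\mathrm{pr}_{d+1}$ is the last coordinate. The graphing map $x\mapsto\bp{x,f(x)}$ is then a homeomorphism $\R^d\to\dev\bp{\widetilde S}$, which exhibits $\dev_{\vert\widetilde S}$ as an embedding. Because $\dev_{\vert\widetilde S}$ is a $\C$-spacelike immersion, every tangent hyperplane of its image is $\C$-spacelike, so $f$ is $\C$-spacelike; because the immersion is locally uniformly convex, $f$ is locally the graph of a $C^2$ function with positive definite Hessian, and since $\R^d$ is convex this local uniform convexity propagates to a global one on all of $\R^d$.

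The main obstacle is precisely the covering-map argument of the second paragraph; its two essential ingredients are the completeness of $\bp{\widetilde S,\dev_{\vert\widetilde S}^*g_{euc}}$ supplied by Theorem~\ref{theo Trudinger--Wang} and the lower Lipschitz bound supplied by Lemma~\ref{lem projection is bilipschitz}, the first guaranteeing that lifts do not escape to infinity in finite parameter time and the second bounding their length. Everything preceding and following this step is a direct verification.
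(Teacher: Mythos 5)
Your proof is correct, and it reorganises the argument around a genuinely different core step than the paper's. The paper uses the same two inputs as you (completeness of $\bp{\widetilde{S},\dev_{\vert\widetilde{S}}^*g_{euc}}$ via Theorem~\ref{theo Trudinger--Wang}, and Lemma~\ref{lem projection is bilipschitz}), but in a different order: it first cites the lemma to assert that $\pi\circ\dev_{\vert\widetilde{S}}$ is \emph{injective}, so that the local graphs glue into the graph of a function over some open $U\subseteq\R^d$; it then argues that $U$ is bi-Lipschitz to a complete manifold, hence complete, hence all of $\R^d$; and it finally gets the embedding from Hopf--Rinow (proper injective immersion). You instead upgrade the local diffeomorphism $\phi=\pi\circ\dev_{\vert\widetilde{S}}$ to a global one by the classical covering-space argument (finite length of lifts from the lower Lipschitz bound, closedness from completeness, then one-sheetedness since $\R^d$ is simply connected), and injectivity, surjectivity of the projection, and the embedding property all fall out at once. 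This buys real rigour at the delicate point: the proof of Lemma~\ref{lem projection is bilipschitz} only establishes the \emph{infinitesimal} two-sided bound on tangent vectors, and an infinitesimally bi-Lipschitz local diffeomorphism need not be injective --- a spacelike helicoid-type immersion over an annulus in $\R^{2,1}$ has non-injective vertical projection (it is of course incomplete, consistent with the statement). So the paper's appeals to the lemma for injectivity of the projection and for completeness of $U$ implicitly rest on a distance-level bi-Lipschitz property whose honest justification is essentially your path-lifting argument; your write-up makes the one genuinely global step explicit, at the cost of being somewhat longer.
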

\begin{proof}
By definition of a $C^2$ locally uniformly $\C^M$-future-convex Cauchy surface (Definition~\ref{def Convex Cauchy surface}), in our well chosen parametrisation of $\A^{d+1}$, the image of $\dev_{\vert\widetilde{S}}$ is locally the graph of a $\C$-spacelike and $C^2$ locally uniformly convex function of the first $d$ coordinates. By Lemma~\ref{lem projection is bilipschitz}, $\pi \circ \dev_{\vert\widetilde{S}}$ is injective, so that $\dev(\widetilde{S})$ is globally the graph of a $\C$-spacelike $C^2$ locally uniformly convex function $f:U \to \R$ on some open subset $U \subseteq\R^d$.

Now, let us show that $U$ is the whole $\R^d$. We recall that the Blaschke metric $h$ on $\widetilde{S}$ given by the immersion $\dev_{\vert\widetilde{S}}:\widetilde{S} \to \R^{d+1}$ descends to the closed manifold $S = \widetilde{S}/\pi_1S$, so that $(\widetilde{S},h)$ is complete. By Theorem~\ref{theo Trudinger--Wang}, $(\widetilde{S},\dev_{\vert\widetilde{S}}^*g_{euc})$ is also complete. As bi-Lipschitz maps preserve completeness, Lemma~\ref{lem projection is bilipschitz} implies that $(U = \pi \circ \dev (\widetilde{S}),g_{euc\vert\R^d})$ is complete, so that $U$ is the whole $\R^d$.

Finally, let us prove that $\dev_{\vert\widetilde{S}}:\widetilde{S} \to \R^{d+1}$ is an embedding. By Lemma~\ref{lem projection is bilipschitz}, $\pi \circ \dev_{\vert\widetilde{S}}$ is injective on $\widetilde{S}$, thus $\dev$ is also injective on $\widetilde{S}$. As $(\widetilde{S},\dev_{\vert\widetilde{S}}^*g_{euc})$ is complete, the Hopf--Rinow Theorem implies that $\dev_{\vert\widetilde{S}}$ is proper. Thus, $\dev_{\vert\widetilde{S}}$ is a proper injective immersion, that is $\dev_{\vert\widetilde{S}}$ is an embedding.
\end{proof}

As we have parametrised $\A^{d+1}$ as $\R^{d+1}$, we can also see $\SA(\A^{d+1})$ as $\SA(\R^{d+1}) = \SL(\R^{d+1}) \ltimes \R^{d+1}$. Consider the linear part of the representation $\rho: \pi_1S \to \SA(\R^{d+1})$, it is a representation $\rho_{lin}: \pi_1S \to \SL(\R^{d+1})$. Then, the following holds.

\begin{proposition}
\label{prop Gauss on S is equivariant diffeo}
In setting of Proposition~\ref{prop devS is diffeo}, let $\G: \dev(\widetilde{S}) \to \Omega^*$ be the Gauss map of the $\C$-spacelike hypersurface $\graph(f) = \dev(\widetilde{S})$. The map $\phi \coloneqq \G \circ \dev: \widetilde{S} \to \Omega^*$ is a diffeomorphism onto its image $\Theta^*\coloneqq \G \circ \dev(\widetilde{S})$, which is an open convex subset of $\Omega^*$ preserved by the dual projective action (described at the beginning of Section~\ref{sec Affine deformations of divisible convex cones}) of $\rho_{lin}(\pi_1S)$, the linear part of $\rho$ on $\Omega^* \simeq\P(\C^*)$ .
Moreover, it is $\rho_{lin}$-equivariant, that is for all $p \in \widetilde{S}$ and $\gamma \in \pi_1 S$, 
\begin{equation*}
\phi(\gamma \cdot p)=\rho_{lin}(\gamma) * \phi(p) \, .
\end{equation*}

\end{proposition}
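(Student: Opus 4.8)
The plan is to reduce everything to the global convex-analytic picture furnished by Proposition~\ref{prop devS is diffeo}: by that proposition I may assume $\dev\bp{\widetilde{S}} = \graph(f)$ for a single $\C$-spacelike, locally uniformly convex $C^2$ function $f:\R^d~\longrightarrow~\R$, so that the Gauss map of this graph is the single-valued map sending $\bp{x,f(x)}$ to the direction of its unique supporting hyperplane, which under the identification of $\mathcal{H}_{sp}(\C)$ with $\Omega^*$ is exactly $\grad f(x)$. First I would record that $\phi$ is a diffeomorphism onto its image. Since $f$ has everywhere positive definite Hessian it is strictly convex, so $\grad f$ is injective (two points with the same gradient $y$ would both minimise the strictly convex function $x \mapsto f(x) - x\cdot y$), while the invertibility of $\Hess f$ makes $\grad f$ a local diffeomorphism; hence $\grad f$ is a diffeomorphism onto the open set $\Theta^* \coloneqq \grad f(\R^d)$. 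Composing with the embedding $\dev_{\vert \widetilde{S}}$ and the projection $\pi$ then shows $\phi = \G \circ \dev$ is a diffeomorphism onto $\Theta^*$, and $\Theta^* \subseteq \Omega^*$ precisely because $\graph(f)$ is $\C$-spacelike.

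The hard part will be the convexity of $\Theta^*$, for which I plan to identify it with the interior of $\dom(f^*)$. On one hand, for each $x$ the equality case of the Fenchel inequality (Proposition~\ref{prop equality for subgradients}) gives $f^*\bp{\grad f(x)} = x \cdot \grad f(x) - f(x) < +\infty$, so the open set $\Theta^*$ is contained in $\dom(f^*)$, hence in $\mathrm{int}\,\dom(f^*)$. On the other hand, if $y \in \mathrm{int}\,\dom(f^*)$ then $f^*$ is subdifferentiable at $y$ (Definition~\ref{def subgrad subdif}), so there is some $x \in \partial f^*(y)$; by the equivalence in Proposition~\ref{prop equality for subgradients} this forces $y \in \partial f(x) = \left\lbrace \grad f(x) \right\rbrace$ (Proposition~\ref{prop C1 convex function}), whence $y \in \Theta^*$. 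Thus $\Theta^* = \mathrm{int}\,\dom(f^*)$, which is convex as the interior of the convex set $\dom(f^*)$.

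Finally, for the equivariance I would first observe that $\rho_{lin}(\pi_1 S)$ preserves $\C$: since $\C = \dif\,\dev\bp{\C^M}$ with $\C^{\widetilde{M}}$ parallel and $\pi_1 M$-invariant, differentiating $\dev(\gamma \cdot p) = \rho(\gamma)\,\dev(p)$ yields $\rho_{lin}(\gamma)\,\C = \C$, so $\rho_{lin}(\gamma) \in \mathrm{Aut}_{\SL}(\C)$ and the dual projective action $*$ on $\Omega^*$ is defined on it. Writing $\rho(\gamma) = \bp{\rho_{lin}(\gamma),\tau_\gamma}$ and using the affine covariance of the Gauss map — the elementary fact, underlying Proposition~\ref{prop equivariance of the Gauss map}, that an affine map with linear part in $\mathrm{Aut}_{\SL}(\C)$ carries supporting hyperplanes to supporting hyperplanes compatibly with $*$ — I would then compute
\begin{equation*}
\phi(\gamma \cdot p) = \G\bp{\rho_{lin}(\gamma)\,\dev(p) + \tau_\gamma} = \rho_{lin}(\gamma) * \G\bp{\dev(p)} = \rho_{lin}(\gamma) * \phi(p).
\end{equation*}
As $\widetilde{S}$ is $\pi_1 S$-invariant, applying $\rho_{lin}(\gamma)\,*$ to $\Theta^* = \phi\bp{\widetilde{S}}$ gives $\phi\bp{\gamma \cdot \widetilde{S}} = \Theta^*$, so $\Theta^*$ is preserved by the dual action of $\rho_{lin}(\pi_1 S)$. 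The only delicate points are the well-definedness of $*$ on $\rho_{lin}(\pi_1 S)$, secured by $\rho_{lin}(\gamma) \in \mathrm{Aut}_{\SL}(\C)$, and the convexity of $\Theta^*$ handled above.
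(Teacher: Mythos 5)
Your proof is correct and follows essentially the same route as the paper's: identify the Gauss map with $\grad f$, recognise $\Theta^*$ as the interior of the domain of the Legendre--Fenchel transform $f^*$ to get convexity, and deduce invariance and equivariance from the $\rho$-equivariance of $\dev$ together with the affine covariance of the Gauss map. The only difference is that you spell out details the paper leaves implicit (injectivity of $\grad f$, the Fenchel-equality argument for $\Theta^* = \mathrm{int}\,\dom(f^*)$, and the verification that $\rho_{lin}(\pi_1 S) \subseteq \mathrm{Aut}_{\SL}(\C)$), which is all sound.
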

\begin{proof}
As $\graph(f)$ is a $\C$-spacelike $C^2$ locally uniformly convex hypersurface, the Gauss map $\G$ is defined as 
\begin{equation*}
\function{\G}{\graph(f)}{\Omega^*}{\bp{x,f(x)}}{\grad f (x)}
\end{equation*}
and is a $C^1$ diffeomorphism onto its image $\Theta^*$ which is an open subset of $\Omega^*$. This subset $\Theta$ is the interior of the support of the convex function $f^*$, the Legendre--Fenchel transform of $f$, and thus it is convex.

By Proposition~\ref{prop devS is diffeo}, the restriction $\dev_{\vert\widetilde{S}}$ is a diffeomorphism onto $\graph(f)$ so that $\phi = \G \circ \dev$ is a diffeomorphism onto $\Theta^*$.

As $\widetilde{S}$ is preserved by $\pi_1S=\pi_1M$ and $\dev$ is $\rho$-equivariant, $\dev(\widetilde{S})=\graph(f)$ is preserved by $\rho(\pi_1S)$. Using the equivariance of the Gauss map $\G$, we get that $\Theta^* = \G \circ \dev(\widetilde{S})$ is preserved by $\rho_{lin}(\pi_1S)$ and that $\phi = \G \circ \dev$ is $\rho_{lin}$-equivariant.
\end{proof}

\begin{corollary}
\label{cor rholin injective}
Let $\rho :\pi_1M \to \SL(\R^{d+1})\ltimes\R^{d+1}$ be a holonomy map of a GHCC affine spacetime $(M,\C^M)$. The linear part of $\rho$ is an injective representation $\rho_{lin}: \pi_1M \to \SL(\R^{d+1})$. That means, by Remark~\ref{rem necessary condition for affine deformation}, that $\rho(\pi_1M)$ is an affine deformation of its linear part $\rho_{lin}(\pi_1S)$.
\end{corollary}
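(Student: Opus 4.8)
The plan is to exploit the $\rho_{lin}$-equivariant diffeomorphism $\phi = \G \circ \dev \colon \widetilde{S}~\longrightarrow~\Theta^*$ produced by Proposition~\ref{prop Gauss on S is equivariant diffeo}, together with the fact that $\pi_1 M = \pi_1 S$ acts freely on $\widetilde{S}$ as the deck group of the universal cover. The guiding idea is that injectivity of $\rho_{lin}$ can be read off directly from the dynamics of this equivariant map: an element acting trivially after $\rho_{lin}$ would have to act trivially on the source, and freeness then forces it to be the identity.

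Concretely, I would first suppose that $\gamma \in \pi_1 M = \pi_1 S$ lies in the kernel of $\rho_{lin}$, i.e.\ $\rho_{lin}(\gamma) = \Id$. Then the dual projective action of $\rho_{lin}(\gamma)$ on $\Omega^*$ is trivial: by the definition of the $*$-action given at the beginning of Section~\ref{sec Affine deformations of divisible convex cones}, one has $\rho_{lin}(\gamma) * y = R^*\bp{\rho_{lin}(\gamma)^{-\top}\,(R^*)^{-1}(y)} = R^*\bp{(R^*)^{-1}(y)} = y$ for every $y \in \Omega^*$. Feeding this into the equivariance relation $\phi(\gamma \cdot p)=\rho_{lin}(\gamma) * \phi(p)$ from Proposition~\ref{prop Gauss on S is equivariant diffeo} yields $\phi(\gamma \cdot p) = \phi(p)$ for all $p \in \widetilde{S}$.

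Since $\phi$ is a diffeomorphism onto its image, and in particular injective, this forces $\gamma \cdot p = p$ for every $p \in \widetilde{S}$. But $\pi_1 S$ acts freely on its universal cover $\widetilde{S}$, so $\gamma$ must be the identity element; hence $\ker \rho_{lin}$ is trivial and $\rho_{lin}$ is injective. Finally, injectivity of $\rho_{lin}$ implies that any two elements of $\rho(\pi_1 M)$ sharing the same linear part must coincide: their quotient has trivial linear part, hence equals $\rho(\gamma)$ for some $\gamma$ with $\rho_{lin}(\gamma)=\Id$, i.e.\ $\gamma=1$, so the two elements are equal. By Remark~\ref{rem necessary condition for affine deformation}, this is precisely the condition for $\rho(\pi_1 M)$ to be an affine deformation of its linear part $\rho_{lin}(\pi_1 S)$.

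I expect no serious obstacle here: all the analytic content---constructing the equivariant embedding and proving it is a diffeomorphism onto an open convex subset of $\Omega^*$---has already been carried out in Propositions~\ref{prop devS is diffeo} and~\ref{prop Gauss on S is equivariant diffeo}. The only points requiring mild care are the verification that a trivial linear part acts trivially on $\Omega^*$ under the $*$-action (immediate from $\Id^{-\top}=\Id$) and the invocation of freeness of the deck action, both of which are routine.
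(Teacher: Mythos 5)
Your proof is correct and follows exactly the paper's argument: the paper likewise deduces injectivity of $\rho_{lin}$ from the freeness of the $\pi_1S$-action on $\widetilde{S}$ combined with the $\rho_{lin}$-equivariance of the diffeomorphism $\phi$ from Proposition~\ref{prop Gauss on S is equivariant diffeo}, and then invokes Remark~\ref{rem necessary condition for affine deformation} for the affine-deformation conclusion. You have merely spelled out the same steps (kernel element acts trivially on $\Theta^*$, injectivity of $\phi$ transfers this to the source, freeness forces triviality) in more detail than the paper's one-sentence proof.
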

\begin{proof}
As $S$ is a closed manifold, the action of $\pi_1S$ on $\widetilde{S}$ is free, and thus using the $\rho_{lin}$-equivariance of the diffeomorphism $\phi$ between $\widetilde{S}$ and $\Theta^*$ (Proposition~\ref{prop Gauss on S is equivariant diffeo}), $\rho_{lin}$ must be injective.
\end{proof}
 
\begin{corollary}
\label{cor theta divisible}
In the setting of Proposition~\ref{prop Gauss on S is equivariant diffeo}, the subgroup $\rho_{lin}(\pi_1S) < \SL(\R^{d+1})$ is discrete and the open convex domain $\Theta^*$, seen as a projective convex domain inside $\Omega^* \simeq\P(\C^*)$, is divisible by the dual projective action of $\rho_{lin}(\pi_1S) $.
\end{corollary}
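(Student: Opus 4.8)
The plan is to transport, via the equivariant diffeomorphism $\phi$ of Proposition~\ref{prop Gauss on S is equivariant diffeo}, the deck transformation action of $\pi_1 S$ on $\widetilde S$ to the dual projective action of $\rho_{lin}(\pi_1 S)$ on $\Theta^*$, and then to read off proper discontinuity, cocompactness, and discreteness from the corresponding properties of the deck action on the universal cover of the closed manifold $S$.

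First I would note that $\Theta^*$ is a \emph{proper} open convex domain: by Proposition~\ref{prop Gauss on S is equivariant diffeo} it is an open convex subset of the bounded domain $\Omega^*$, hence bounded in the affine chart $\Omega^* \subset \P(\C^*)$, so properly convex. Next, since $\rho_{lin}$ is injective (Corollary~\ref{cor rholin injective}) and $\phi\colon \widetilde S \to \Theta^*$ is a $\rho_{lin}$-equivariant diffeomorphism, $\phi$ conjugates the deck action of $\pi_1 S$ on $\widetilde S$ to the dual projective action of $\rho_{lin}(\pi_1 S)$ on $\Theta^*$. Because $S$ is closed, the deck action is free, properly discontinuous and cocompact with quotient $S$; transporting these three properties through $\phi$ shows that $\rho_{lin}(\pi_1 S)$ acts freely, properly discontinuously and cocompactly on $\Theta^*$, with $\Theta^*/\rho_{lin}(\pi_1 S) \simeq S$ compact.

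It then remains to prove discreteness of $\rho_{lin}(\pi_1 S)$ in $\SL(\R^{d+1})$. Let $\psi\colon \SL(\R^{d+1}) \to \PGL(\R^{d+1})$ be the homomorphism $\gamma \mapsto [\gamma^{-\top}]$ through which the dual projective action factors; it is a finite covering onto its image (kernel $\{\pm\Id\}$), hence a local homeomorphism. Setting $H \coloneqq \psi(\rho_{lin}(\pi_1 S))$, the transported action shows that $H$ is a subgroup of the automorphism group $\mathrm{Aut}(\Theta^*) < \PGL(\R^{d+1})$ of the properly convex domain $\Theta^*$, acting properly discontinuously on $\Theta^*$ (the finite kernel of $\rho_{lin}(\pi_1 S) \to H$ does not affect finiteness of the collision sets). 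Since $\mathrm{Aut}(\Theta^*)$ is a closed subgroup of $\PGL(\R^{d+1})$ acting continuously on $\Theta^*$, a non-discrete $H$ would contain a sequence of distinct elements converging to the identity, which by continuity of the action would move a fixed compact neighbourhood of a point so as to meet itself infinitely often, contradicting proper discontinuity; thus $H$ is discrete in $\PGL(\R^{d+1})$. Finally, $\rho_{lin}(\pi_1 S) \subseteq \psi^{-1}(H)$, and the preimage of a discrete set under the local homeomorphism $\psi$ is discrete, so $\rho_{lin}(\pi_1 S)$ is discrete in $\SL(\R^{d+1})$. Combined with the previous paragraph, this is exactly the assertion that $\Theta^*$ is divisible by the dual projective action of the discrete group $\rho_{lin}(\pi_1 S)$.

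The main obstacle is the passage from proper discontinuity of the action on $\Theta^*$ to genuine discreteness of the group inside the Lie group $\SL(\R^{d+1})$: proper discontinuity is an abstract-group statement about the action, and one must use the continuity (and closedness in $\PGL(\R^{d+1})$) of the automorphism group of the convex domain to upgrade it, while keeping track of the finite kernel $\{\pm\Id\}$ of the projectivisation. Alternatively, one can obtain discreteness by equipping $\Theta^*$ with the complete $\mathrm{Aut}(\Theta^*)$-invariant Blaschke metric of its Cheng--Yau affine sphere, exactly as in the proof of Lemma~\ref{lem inclusion and divisibility}, and invoking that a properly discontinuous group of isometries of a complete Riemannian manifold is discrete in its isometry group, then transferring to $\SL(\R^{d+1})$ through the finite-kernel map.
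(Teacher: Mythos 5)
Your proof is correct and follows essentially the same route as the paper: transport the free, properly discontinuous, cocompact deck action of $\pi_1S$ on $\widetilde{S}$ through the $\rho_{lin}$-equivariant diffeomorphism $\phi$ of Proposition~\ref{prop Gauss on S is equivariant diffeo}, and conclude divisibility from compactness of $S$. The only difference is that you spell out in detail the step from proper discontinuity of the action on $\Theta^*$ to discreteness of $\rho_{lin}(\pi_1S)$ in $\SL\bp{\R^{d+1}}$ (including the finite kernel of the projectivisation), a point the paper asserts in a single clause as a standard fact.
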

\begin{proof}
As $S$ is a closed manifold, the action of $\pi_1S$ on $\widetilde{S}$ is free and properly discontinuous, and thus using the $\rho_{lin}$-equivariance of the diffeomorphism $\phi$ between $\widetilde{S}$ and $\Theta^*$ (Proposition~\ref{prop Gauss on S is equivariant diffeo}), we get that the dual projective action of $\rho_{lin}(\pi_1S) $ on $\Theta^*$ is free and properly discontinuous so $\rho_{lin}(\pi_1S) $ is discrete. Moreover we also have that the quotients $S=\widetilde{S}/\pi_1S$ and $\Theta^*/\rho_{lin}(\pi_1S)$ are diffeomorphic. As $S$ is compact, that means that $\Theta^*$ is divisible by $\rho_{lin}(\pi_1S) $.
\end{proof}

Corollary~\ref{cor theta divisible} indicates that we are in a setting where we can apply Lemma~\ref{lem inclusion and divisibility} to $\Theta^*\subseteq \Omega^*$. That implies that $\Theta^*=\Omega^*$, i.e. that the Gauss map $\G$ in Proposition~\ref{prop Gauss on S is equivariant diffeo} is surjective. Thus, the epigraph of $f$ is a $\rho(\pi_1M)$-invariant $\C$-convex domain (Definition~\ref{def C-convex}). As $\rho_{lin}(\pi_1M) = \rho_{lin}(\pi_1S)$ divides $\C^*$, the cone over $\Omega^*$, by Proposition~\ref{prop dual convex is divisible}, it also divides $\C$. To summarise, we have the following. 

\begin{proposition}
\label{prop dev embeds u.l.c. Cauchy surface}
Let $(\dev,\rho)$ be pair of developing and holonomy maps of a GHCC affine spacetime $(M,\C^M)$. Parametrise $\A^{d+1}$ as $\R^{d+1}$ and consider the proper convex cone $\C \coloneqq \dif \, \dev (\C^M) \subset \R^{d+1}$. The holonomy map $\rho$ is injective and its image is an affine deformation of its linear part $\Gamma\coloneqq \rho_{lin}(\pi_1M)$, which divides the cone $\C$. Introducing $\tau: \Gamma \to \R^{d+1}$ the cocycle such that $\rho(\pi_1M) = \Gamma_\tau$, if $S$ is a compact $C^2$ locally uniformly future-convex Cauchy surface of $(M,\C^M)$, the developing map $\dev$ embeds $\widetilde{S}$ in $\R^{d+1}$ as the boundary of a $\C$-spacelike and $C^2_+$ $\tau$-convex domain.
\end{proposition}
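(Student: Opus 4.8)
The plan is to assemble the results established earlier in this subsection, since the statement is essentially a summary of them. First I would record the injectivity of $\rho$: by Corollary~\ref{cor rholin injective} the linear part $\rho_{lin}$ is an injective representation, and since $\rho_{lin}$ is obtained from $\rho$ by composing with the linear-part projection $\SA\bp{\R^{d+1}}\to\SL\bp{\R^{d+1}}$, injectivity of $\rho_{lin}$ forces injectivity of $\rho$. The same corollary, through Remark~\ref{rem necessary condition for affine deformation}, identifies $\rho(\pi_1M)$ as an affine deformation of its linear part $\Gamma\coloneqq\rho_{lin}(\pi_1M)$, so I may introduce the cocycle $\tau$ with $\rho(\pi_1M)=\Gamma_\tau$.

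The crux of the argument is upgrading the immersed spacelike surface into a genuine $\C$-convex domain, which amounts to proving that its Gauss map is surjective. For this I would apply Lemma~\ref{lem inclusion and divisibility} to the inclusion $\Theta^*\subseteq\Omega^*$ of convex subsets of $\P\bp{\C^*}$: by Proposition~\ref{prop Gauss on S is equivariant diffeo} both sets are preserved by the dual projective action of $\rho_{lin}(\pi_1S)=\Gamma$ (recall $\pi_1S=\pi_1M$), and by Corollary~\ref{cor theta divisible} the smaller one $\Theta^*$ is divisible by that action. The lemma then yields $\Theta^*=\Omega^*$, so the Gauss map $\G$ of $\graph(f)=\dev\bp{\widetilde{S}}$ is surjective onto $\Omega^*$. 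In particular $\Omega^*\simeq\P\bp{\C^*}$ is divisible by $\Gamma$, whence Proposition~\ref{prop dual convex is divisible} gives that $\Gamma$ also divides $\C$.

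It then remains to read off the statement about the Cauchy surface. Proposition~\ref{prop devS is diffeo} already embeds $\widetilde{S}$ via $\dev$ as $\graph(f)$ for a $\C$-spacelike, locally uniformly convex $f\colon\R^d\to\R$; combined with the surjectivity of $\G$ just obtained and the definition of a $\C$-convex domain (Definition~\ref{def C-convex and C-regular domains}), the epigraph $\epi(f)$ is a $\C$-convex domain, and it is $C^2_+$ since $f$ is $C^2$ with positive definite Hessian and $\G$ is a $C^1$ diffeomorphism onto $\Omega^*$ (Proposition~\ref{prop C2+ support function has positive Hessian}). Finally, because $\dev$ is $\rho$-equivariant and $\widetilde{S}$ is $\pi_1M$-invariant, the set $\graph(f)$ and hence $\epi(f)$ is $\Gamma_\tau$-invariant, so $\epi(f)$ is a $\tau$-convex domain (Definition~\ref{def tau-convex domain and tau-support functions}); it is $\C$-spacelike because $f$ is, i.e. $\partial\,\epi(f)=\partial_{sp}\epi(f)$.

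The hard part, and the only step carrying genuine content, is the application of Lemma~\ref{lem inclusion and divisibility}: it is the divisibility of $\Theta^*$ inherited from the compactness of $S$ that converts the a priori merely immersed convex hypersurface into the boundary of an honest $\C$-convex domain, by forcing the Gauss map to be onto all of $\Omega^*$. Everything else is bookkeeping of equivariance and regularity already prepared in the preceding propositions.
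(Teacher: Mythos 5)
Your proposal is correct and follows essentially the same route as the paper: the paper states this proposition as a summary immediately after applying Lemma~\ref{lem inclusion and divisibility} to $\Theta^*\subseteq\Omega^*$ (using Proposition~\ref{prop Gauss on S is equivariant diffeo} and Corollary~\ref{cor theta divisible}) to get surjectivity of the Gauss map, invoking Corollary~\ref{cor rholin injective} for injectivity and the affine-deformation structure, and Proposition~\ref{prop dual convex is divisible} to pass from divisibility of $\C^*$ to that of $\C$. Your assembly of these ingredients, including reading off the $\C$-spacelike $C^2_+$ $\tau$-convex domain from Proposition~\ref{prop devS is diffeo} and the equivariance of $\dev$, matches the paper's argument.
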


\subsection{ Properties of developing maps on GHCC affine spacetimes}
\label{subsec Properties of developing maps on a GHCC affine spacetime}

In this subsection, we extend the results in Proposition~\ref{prop dev embeds u.l.c. Cauchy surface} to properties on developing maps of GHCC affine spacetimes, in order to prove Theorem~\ref{theo GHCC embedding}, which is a classification theorem for GHCC implying Theorem~\ref{Theorem Intro dev}.

\begin{proposition}
\label{prop dev(M) included in Dtau}
Let $(\dev,\rho)$ be pair of developing and holonomy maps of a GHCC affine spacetime $(M,\C^M)$. Parametrise $\A^{d+1}$ as $\R^{d+1}$, consider the proper convex cone $\C \coloneqq \dif \, \dev (\C^M) \subset \R^{d+1}$ and, by Proposition~\ref{prop dev embeds u.l.c. Cauchy surface}, introduce $\tau$ a cocycle such that $\rho(\pi_1M) = \Gamma_\tau$ is an affine deformation of the subgroup $\Gamma \coloneqq \rho_{lin}(\pi_1M)$ dividing $\C$. The image $\dev(\widetilde{M})$ is contained in the maximal $\tau$-convex domain $D_\tau$ of $(\R^{d+1},\C)$.
\end{proposition}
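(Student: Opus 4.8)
The plan is to identify $\dev(\widetilde{M})$ with a subset of the Cauchy development of the image of the Cauchy surface, and then to invoke Proposition~\ref{prop maximal domain = Cauchy development} to recognise that Cauchy development as $D_\tau$. By Proposition~\ref{prop dev embeds u.l.c. Cauchy surface}, a compact locally uniformly convex Cauchy surface $S$ of $(M,\C^M)$ develops so that $\dev$ embeds its universal cover $\widetilde{S}$ onto $\partial K$, where $K$ is a $\C$-spacelike $C^2_+$ $\tau$-convex domain. In particular $\partial K = \partial_{sp}K$ is $C^1$, so Proposition~\ref{prop maximal domain = Cauchy development} applies and gives $D(\partial K) = D_\tau$, the Cauchy development of $\partial K$ in the model $\bp{\R^{d+1},\C}$. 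Since $S$ is a Cauchy surface of the globally hyperbolic $M$, its lift $\widetilde{S}$ is a Cauchy surface of $\widetilde{M}$ (equivalently, $\widetilde{M} \simeq \widetilde{S} \times \R$ by lifting the splitting of Proposition~\ref{prop Geroch Cauchy surfaces}). Thus it suffices to prove the inclusion $\dev\bp{\widetilde{M}} \subseteq D\bp{\dev\bp{\widetilde{S}}}$, i.e. that $\dev$ carries the Cauchy development of $\widetilde{S}$ into the Cauchy development of $\dev\bp{\widetilde{S}}$.

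To establish this, I would fix $p \in \widetilde{M}$, set $X \coloneqq \dev(p)$, and show that every inextensible $\C$-causal curve $c$ through $X$ meets $\partial K = \dev\bp{\widetilde{S}}$. First I would lift $c$ through the local diffeomorphism $\dev$: starting from $p$ over $c(0) = X$, there is a unique maximal lift $\widetilde{c}$ of $c$ in $\widetilde{M}$. Because $\dif \, \dev$ sends $\C^{\widetilde{M}}$ to $\C$, the lift $\widetilde{c}$ is a $\C^{\widetilde{M}}$-causal curve through $p$.

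The decisive point — and the main obstacle, since $\dev$ is only a local diffeomorphism and need not be a covering map — is that the maximal lift $\widetilde{c}$ is inextensible in $\widetilde{M}$. Suppose $\widetilde{c}$ admitted an endpoint $z \in \widetilde{M}$ at one end of its domain, say $\widetilde{c}(t) \to z$. By continuity $c(t) = \dev\bp{\widetilde{c}(t)}$ would then converge to $\dev(z)$. If the corresponding parameter were interior to the domain of $c$, then choosing a neighbourhood of $z$ on which $\dev$ restricts to a diffeomorphism and composing $c$ with its local inverse would extend $\widetilde{c}$ past $z$, contradicting maximality of the lift; if instead the parameter were at the end of the domain of $c$, then $\dev(z)$ would be an endpoint of $c$, contradicting the inextensibility of $c$. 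Hence $\widetilde{c}$ has no endpoint (the remaining possibility being that $\widetilde{c}$ leaves every compact set of $\widetilde{M}$, which is itself inextensibility), so $\widetilde{c}$ is an inextensible $\C^{\widetilde{M}}$-causal curve. Since $\widetilde{S}$ is a Cauchy surface of $\widetilde{M}$, $\widetilde{c}$ meets $\widetilde{S}$, and therefore $c = \dev \circ \widetilde{c}$ meets $\dev\bp{\widetilde{S}} = \partial K$. As $c$ was an arbitrary inextensible $\C$-causal curve through $X$, this shows $X \in D(\partial K) = D_\tau$. Letting $p$ range over $\widetilde{M}$ yields $\dev\bp{\widetilde{M}} \subseteq D_\tau$, as desired.
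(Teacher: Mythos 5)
Your proposal is correct and follows essentially the same route as the paper: both reduce the statement to the identity $D\bp{\dev\bp{\widetilde{S}}} = D_\tau$ (via Propositions~\ref{prop dev embeds u.l.c. Cauchy surface} and~\ref{prop maximal domain = Cauchy development}) and then show that every inextensible $\C$-causal curve through a point of $\dev\bp{\widetilde{M}}$ meets $\dev\bp{\widetilde{S}}$ by taking the maximal lift through the local diffeomorphism $\dev$ and arguing that this lift is inextensible in $\widetilde{M}$. Your treatment of the inextensibility of the maximal lift is in fact slightly more detailed than the paper's, which asserts it directly from the maximality of the lifting interval.
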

\begin{proof}
Let us set $S$ to be compact $C^2$ locally uniformly future-convex Cauchy surface of $(M,\C^M)$. Notice that, using Proposition~\ref{prop maximal domain = Cauchy development} and Proposition~\ref{prop dev embeds u.l.c. Cauchy surface}, we have $D(\dev(\widetilde{S})) = D_\tau$.

Let $X =\dev(p) \in \dev(\widetilde{M})$ and $X(t)$ be an inextensible $\C$-causal curve in $\R^{d+1}$ parametrised by $t \in (-\infty,+\infty)$ and such that $X(0)=X$. As $\dev$ is a local diffeomorphism around $p$, we have that locally around $t=0$ the path $X(t)$ lifts to a $\C$-causal curve $p(t)$ in $\widetilde{M}$ such that $\dev(p(t))=X(t)$. 

Let $t^- \in \R_{-}^* \cup \{-\infty\}$ and $t^+ \in \R_{+}^* \cup \{+\infty\}$ be such that $(t^-,t^+)$ is the maximal interval on which the developing map lifts $X(t)$ to a $\C$-causal curve $p(t)$. By definition of $t^-$ and $t^+$, $\{ p(t) \ \vert \ t \in (t^-,t^+) \}$ is an inextensible $\C$-causal curve in $\widetilde{M}$, thus there exist a unique $t_s \in (t^-,t^+)$ such that $p(t_s)$ belongs to the Cauchy surface $\widetilde{S}$. Then, $\dev(p(t_s))=X(t_s) \in \dev(\widetilde{S})$.

Thus, any inextensible $\C$-causal curve going through $X = \dev(p)$ intersects $\dev(\widetilde{S})$, meaning precisely that $\dev(p) \in D(\dev(\widetilde{S})) = D_\tau$.
\end{proof}

\begin{proposition}
\label{prop dev is injective}
A developing map $\dev: \widetilde{M} \to \A^{d+1}$ of a GHCC affine spacetime $(M,\C^M)$ is necessarily injective.
\end{proposition}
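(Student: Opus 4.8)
The plan is to prove the sharper statement that $\dev$ is a homeomorphism onto the maximal $\tau$-convex domain $D_\tau$; injectivity then follows at once. By Proposition~\ref{prop dev(M) included in Dtau} we already know $\dev(\widetilde{M})\subseteq D_\tau$, and $D_\tau$, being an open convex subset of $\R^{d+1}$ (the epigraph of the convex function $s_\tau^*$), is contractible, hence simply connected; $\widetilde{M}$ is simply connected as a universal cover. Since $\dev$ is a local diffeomorphism, it therefore suffices to upgrade it to a covering map $\widetilde{M}\to D_\tau$, because a covering of a simply connected space by a connected total space is automatically a homeomorphism. Thus the entire problem reduces to a completeness/properness statement for the local diffeomorphism $\dev$.

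To obtain this I would use the cosmological time of $D_\tau$ as a global control, setting $T\coloneqq\mathcal{T}_\tau\circ\dev:\widetilde{M}\to\R_+^*$. As $\dev$ is a local diffeomorphism and $\mathcal{T}_\tau$ is $C^1$ with everywhere $\C$-timelike gradient, $T$ is a $C^1$ submersion; since $\dev$ carries future-directed $\C$-causal curves of $\widetilde{M}$ to future-directed $\C$-causal curves of $D_\tau$, and the gradient formula of Proposition~\ref{prop TK is C1} shows that the derivative of $\mathcal{T}_\tau$ along any nonzero future-directed causal direction is strictly positive, $T$ is strictly increasing along every future-directed $\C$-causal curve. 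Its level sets $\Sigma_t\coloneqq T^{-1}(t)=\dev^{-1}(S_\tau^t)$ are then $\C$-spacelike $C^1$ hypersurfaces, each mapped by $\dev$ into the cosmological level set $S_\tau^t$. Moreover $T$ is $\pi_1M$-invariant, since $\mathcal{T}_\tau$ is $\Gamma_\tau$-invariant (Proposition~\ref{prop tau cosmological time equivariance}) and $\dev$ is $\rho$-equivariant, so every $\Sigma_t$ descends to a compact Cauchy surface of $M$.

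With this in hand I would trivialise both spaces and compare. On the target, Corollary~\ref{cor definition of Theta} provides the homeomorphism $\Theta:D_\tau\xrightarrow{\sim}S_\tau^1\times\R_+^*$ built from the normal projection, the projecting normal and $\mathcal{T}_\tau$; on the source, flowing along a $\C$-causal vector field as in Proposition~\ref{prop Geroch Cauchy surfaces} gives $\widetilde{M}\cong\Sigma_1\times\R$. As $\dev$ intertwines $T$ with $\mathcal{T}_\tau$ and sends leaves to leaves, it is enough to check that $\dev$ restricts to a homeomorphism $\Sigma_t\to S_\tau^t$ on one leaf. For the reference leaf this is precisely Proposition~\ref{prop dev embeds u.l.c. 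Cauchy surface}; for a general leaf one reruns the equivariant Gauss-map argument of Proposition~\ref{prop Gauss on S is equivariant diffeo} and Corollary~\ref{cor theta divisible}, using compactness of $\Sigma_t/\pi_1M$, to see that the image of the leaf is a $\rho_{lin}(\pi_1M)$-invariant divisible convex subset of $\Omega^*$, which by Lemma~\ref{lem inclusion and divisibility} is all of $\Omega^*$, forcing the restriction to be onto and injective.

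The main obstacle is the step showing that $T$ is a genuine Cauchy time function exhausting $\R_+^*$, that is, that an inextensible $\C$-causal curve of $\widetilde{M}$ develops to a causal curve of $D_\tau$ with no interior endpoint and along which the cosmological time sweeps out all of $(0,\infty)$. The absence of an interior endpoint is the standard local-diffeomorphism continuation argument (an interior accumulation point of the developed curve would allow one to extend the lift, contradicting inextensibility upstairs), while the full range rests on the properness of $\mathcal{T}_\tau$ along inextensible causal curves of the $\C$-regular domain $D_\tau$, namely that $\mathcal{T}_\tau\to 0$ towards the initial singularity $\partial_{sp}D_\tau$ and $\mathcal{T}_\tau\to+\infty$ at future infinity. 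Pinning down these two causal/analytic facts — rather than the subsequent formal covering and trivialisation bookkeeping — is where the real content of the proof lies.
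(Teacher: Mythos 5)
Your reduction proves too much, and the statement you reduce to is false at the stated level of generality. Proposition~\ref{prop dev is injective} concerns GHCC spacetimes, not maximal ones, and for a non-maximal GHCC spacetime the developing map is \emph{not} onto $D_\tau$: surjectivity is precisely what maximality adds (compare Theorem~\ref{theo GHCC embedding}, which concludes only that the image is \emph{included} in $(D_\tau,\C)$, with Definition~\ref{def Maximal} and the deduction of Theorem~\ref{Theorem Intro dev}). A concrete counterexample already exists for $\tau=0$, where $D_0=\C$: take the truncated cone $U=\left\lbrace X \in \C \st 1 < \mathcal{T}_\C(X) < 3 \right\rbrace$, the region between the affine spheres $\S$ and $3\,\S$. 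It is a simply connected, $\Gamma$-invariant open set, and $2\,\S$ is a locally uniformly convex Cauchy surface of $(U,\C)$ with compact quotient (monotonicity of $\mathcal{T}_\C$ along causal curves together with Proposition~\ref{prop level set of cosmo time is Cauchy surf}), so $\bp{U/\Gamma,\C}$ is a GHCC affine spacetime whose developing map is the inclusion $U\hookrightarrow\R^{d+1}$, with image a proper subset of $D_0$. In this example your function $T=\mathcal{T}_\tau\circ\dev$ has range $(1,3)$ rather than $\R_+^*$, and every inextensible causal curve of $\widetilde{M}$ develops to a curve with interior endpoints in $D_0$ (on $\S$ and $3\,\S$). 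So there is no covering map onto $D_\tau$ to be had, and the simple-connectivity argument collapses.

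This also identifies why the steps you flag as ``the real content'' cannot be carried out. The ``standard local-diffeomorphism continuation argument'' is not available: path lifting is a property of covering maps, which is exactly what you are trying to establish; for a mere local diffeomorphism the developed curve can converge to an interior point of $D_\tau$ that simply does not lie in $\dev\bp{\widetilde{M}}$ (as in the example above), in which case no extension of the lift exists and no contradiction arises, while extending a lift through a limit point that \emph{does} lie in the image requires an evenly covered neighbourhood, i.e.\ injectivity-type control — the argument is circular. There is a further mismatch in your foliation bookkeeping: $\dev\bp{\widetilde{S}}$ is the boundary of a $C^2_+$ $\tau$-convex domain (Proposition~\ref{prop dev embeds u.l.c. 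Cauchy surface}), which is in general \emph{not} a level set $S^t_\tau=\partial K(s_\tau+t\omega_\C)$ of the cosmological time (those are only $C^1$), so your foliation has no ``reference leaf'' on which that proposition applies; and for images pinched between two $C^2_+$ boundaries that are not level sets, the sets $\Sigma_t$ near the ends of the range of $T$ descend to proper open, hence non-compact, pieces of $S^t_\tau/\Gamma_\tau$, which are not Cauchy surfaces. The paper's proof needs none of this global structure: assuming $\dev(p)=\dev(q)$ with $p\neq q$, it lifts a single inextensible $\C$-causal curve through the common image both through $p$ and through $q$, notes that both lifts meet $\widetilde{S}$ at the same parameter and agree there by injectivity of $\dev_{\vert\widetilde{S}}$, and then reaches a contradiction at the last parameter where the two lifts agree, using only that $\dev$ is a local diffeomorphism.
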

\begin{proof}
Let us work in the setting of Proposition~\ref{prop dev(M) included in Dtau} and prove it by contradiction. Assume there are $p \neq q \in \widetilde{M}$ such that $\dev(p)=\dev(q)=X \in D_\tau$ (by Proposition~\ref{prop dev(M) included in Dtau}). Let $X(t)$ be an inextensible $\C$-causal curve with $X(0) = X$. As $X \in D_\tau = D (\dev(\widetilde{S}))$, there is a unique $t_s \in \R$ such that $X(t_s) \in \dev(\widetilde{S})$. Following what was done in the proof of Proposition~\ref{prop dev(M) included in Dtau}, there are real numbers $t^-<\min(0,t_s)$ and $t^+>\max(0,t_s)$ such that for all $t \in (t^-, t^+)$,
\begin{equation*}
X(t)= \dev\bp{p(t)} = \dev\bp{q(t)} \, ,
\end{equation*}
where $p(t),q(t)$ are $\C^M$-causal curves in $\widetilde{M}$ and $p(t_s),q(t_s) \in \widetilde{S}$. As 
\begin{equation*}
X(t_s)=\dev\bp{p(t_s) } = \dev\bp{q(t_s)} \, ,
\end{equation*}
using the injectivity of the developing map on $\widetilde{S}$ (Proposition~\ref{prop dev embeds u.l.c. Cauchy surface}), we get that $p(t_s)=q(t_s)$ (thus $t_s\neq 0$).

If $t_s<0$, let $T = \max \{ t \in [t_s,0] \ \vert \ p(t)=q(t)\}$ which is well-defined because of the continuity of $p(t)$ and $q(t)$ and that $p(t_s)=q(t_s)$. By assumption $p(0) \neq q(0)$ and thus $t_s \leq T<0$. Then for all $\varepsilon >0$ such that $T< T + \varepsilon < 0$, we have 
\begin{equation*}
p(T +\varepsilon) \neq q(T+ \varepsilon) \quad \text{and} \quad X(T+\varepsilon) = \dev\bp{p(T + \varepsilon) } = \dev\bp{q(T + \varepsilon) } \, ,
\end{equation*} 
which contradicts the fact that $\dev$ is a local diffeomorphism at $p(T)=q(T)$ (see Figure~\ref{fig Developing}).

The case where $t_s>0$ can be treated in a similar way.
\end{proof}

\begin{figure}[ht]
\centering
\includegraphics{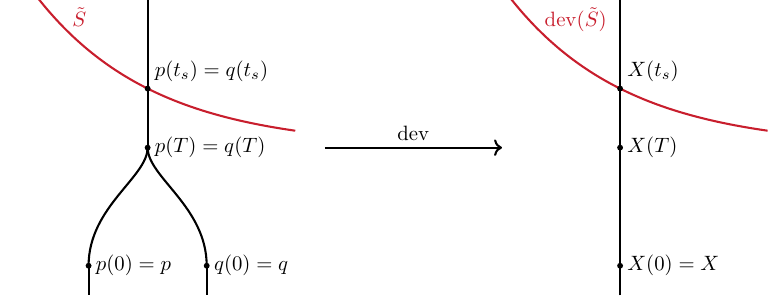}
\caption{Illustration of the situation in the proof of Proposition~\ref{prop dev is injective}.}
\label{fig Developing}
\end{figure}

\begin{theorem}
\label{theo GHCC embedding}
Let $(M,\C^M)$ be a GHCC affine spacetime. Any pair $(\dev,\rho)$ of a developing map
\begin{equation*}
\dev: \widetilde{M} \longrightarrow \R^{d+1} \, ,
\end{equation*}
and a holonomy map 
\begin{equation*}
\rho: \pi_1M \longrightarrow \SA(\R^{d+1}) \, ,
\end{equation*}
given by its equiaffine structure satisfies the following:
\begin{itemize}
\item the holonomy map $\rho$ is a group isomorphism onto its image $ \rho(\pi_1M)$ which is an affine deformation $\Gamma_\tau$ of a subgroup $\Gamma < \SL(\R^{d+1})$ dividing the cone $\C \coloneqq \dif \, \dev(\C^M)$, 
\item the developing map $\dev$ is an isomorphism of affine spacetimes onto its image which is included in $(D_\tau,\C)$, where $D_\tau $, the domain associated with $\Gamma_\tau \coloneqq \rho(\pi_1M)$ and $\C$, is given by Theorem~\ref{Theorem Intro domain},

\item the developing map $\dev$ induces an embedding $(M,\C^M) \lhook\joinrel\to (D_{\tau} / \Gamma_\tau, \C)$ preserving the equiaffine and cone structures, and mapping any $C^2$ locally uniformly $\C^M$-future-convex and compact Cauchy surface of $(M,\C^M)$ to a $C^2$ locally uniformly $\C$-future-convex and compact Cauchy surface of $(D_{\tau} / \Gamma_\tau, \C)$.
\end{itemize}
\end{theorem}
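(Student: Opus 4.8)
The plan is to assemble the preceding propositions rather than to prove anything substantially new; the genuine work — completeness via Trudinger--Wang (Theorem~\ref{theo Trudinger--Wang}), divisibility of the image via Lemma~\ref{lem inclusion and divisibility}, injectivity of $\dev$ through causality, and the containment $\dev\bp{\widetilde{M}} \subseteq D_\tau$ — has already been carried out, so the theorem is a synthesis. First I would dispatch the statement about $\rho$. A holonomy representation is by construction a group homomorphism, so it suffices to recall from Proposition~\ref{prop dev embeds u.l.c. Cauchy surface} that $\rho$ is injective; hence $\rho$ is an isomorphism onto its image. That same proposition identifies $\rho(\pi_1M)$ with an affine deformation $\Gamma_\tau$ of $\Gamma \coloneqq \rho_{lin}(\pi_1M)$ and states that $\Gamma$ divides $\C \coloneqq \dif\,\dev(\C^M)$, which is precisely the first bullet.

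Next I would treat $\dev$ itself. By the general theory of $(G,X)$-structures, $\dev$ is a local diffeomorphism whose restriction to each chart lies in $\SA\bp{\A^{d+1}}$, so it preserves the equiaffine structure; and by the very definition of $\C$ it satisfies $\dif_p\dev\bp{\C^{\widetilde{M}}_p} = \C$, so it preserves the cone structure. Proposition~\ref{prop dev is injective} provides injectivity, and a local diffeomorphism is an open map, so $\dev$ is a diffeomorphism onto the open set $\dev\bp{\widetilde{M}}$, which Proposition~\ref{prop dev(M) included in Dtau} places inside $D_\tau$. This exhibits $\dev$ as an isomorphism of affine spacetimes onto its image in $(D_\tau,\C)$, which is the second bullet.

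Finally, for the descent, I would use the $\rho$-equivariance $\dev(\gamma\cdot p) = \rho(\gamma)\,\dev(p)$ together with the free and properly discontinuous action of $\Gamma_\tau$ on $D_\tau$ (Theorem~\ref{the action is properly discontinuous + homeo}) to pass to the quotients $M = \widetilde{M}/\pi_1M$ and $D_\tau/\Gamma_\tau$, obtaining a map $\overline{\dev}\colon M \to D_\tau/\Gamma_\tau$. Global injectivity of $\overline{\dev}$ follows from injectivity of $\dev$ and freeness of the action: if $\dev(p)$ and $\dev(q)$ lie in the same $\Gamma_\tau$-orbit, then $\dev(p) = \rho(\gamma)\,\dev(q) = \dev(\gamma\cdot q)$ for some $\gamma$, whence $p = \gamma\cdot q$, i.e. $[p]=[q]$ in $M$. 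Since both covering projections and $\dev$ are local diffeomorphisms, $\overline{\dev}$ is an injective local diffeomorphism, hence an open embedding preserving the equiaffine and cone structures. For the Cauchy surface claim I would take a locally uniformly convex compact Cauchy surface $S$ of $M$: Proposition~\ref{prop dev embeds u.l.c. Cauchy surface} embeds $\widetilde{S}$ as the boundary of a $\C$-spacelike $C^2_+$ $\tau$-convex domain $K$, and Corollary~\ref{cor C2+ implies Cauchy surface} shows that $\partial K = \partial_{sp}K$ is a $\Gamma_\tau$-invariant locally uniformly convex Cauchy surface of $(D_\tau,\C)$; passing to the quotient gives $\overline{\dev}(S) = \partial K/\Gamma_\tau$, which is the desired Cauchy surface of $\bp{D_\tau/\Gamma_\tau,\C}$.

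The argument is essentially bookkeeping, and the only step demanding genuine care is this last descent: one must check that $\overline{\dev}$ is \emph{globally} injective — not merely a local diffeomorphism — from the injectivity of $\dev$ and the freeness of the $\Gamma_\tau$-action, and that it is a homeomorphism onto its (open) image, so that it qualifies as a topological embedding and not merely an injective immersion.
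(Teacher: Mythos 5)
Your proof is correct and follows essentially the same route as the paper's: the first bullet from Proposition~\ref{prop dev embeds u.l.c. Cauchy surface}, the second from Propositions~\ref{prop dev(M) included in Dtau} and~\ref{prop dev is injective}, and the third by descending via $\rho$-equivariance and the free, properly discontinuous action of $\Gamma_\tau$ on $D_\tau$. The paper states this assembly in three lines; your write-up merely makes explicit the bookkeeping (global injectivity of the descended map, openness, and the identification $\overline{\dev}(S)=\partial K/\Gamma_\tau$ via Corollary~\ref{cor C2+ implies Cauchy surface}) that the paper leaves implicit.
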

\begin{proof}
The first point is a consequence of Proposition~\ref{prop dev embeds u.l.c. Cauchy surface} and the second one of Propositions~\ref{prop dev(M) included in Dtau} and~\ref{prop dev is injective}. Finally, using those two first points and the equivariance of the developing map we get the third one.
\end{proof}

In Theorem~\ref{theo GHCC embedding}, if moreover $(M,\C^M)$ is a MGHCC, then the definition of \emph{maximality of globally hyperbolic affine spacetimes} (Definition~\ref{def Maximal}) implies that the embedding $(M,\C^M) \lhook\joinrel\to (D_{\tau} / \Gamma_\tau, \C)$ given by the developing map must be surjective, proving Theorem~\ref{Theorem Intro dev}.

\subsection{\texorpdfstring{Maximality of the quotient spacetimes $D_\tau / \Gamma_\tau$}{Maximality of the quotient spacetimes D-tau / Gamma-tau}}
\label{subsec Maximality of the quotient spacetimes}

In this subsection, we use the preceding work in order to prove the maximality of spacetimes built with Theorem~\ref{Theorem Intro domain}.

Let $\Gamma_\tau < \SL (\R^{d+1}) \ltimes \R^{d+1}$ be an affine deformation of a group $\Gamma < \SL (\R^{d+1})$ dividing an open proper convex cone $\C$. We consider the associated spacetimes $(D_{\tau}^{\C} / \Gamma_\tau , \C)$ given by Theorem~\ref{Theorem Intro domain}. In Section~\ref{sec Quotient of the maximal domain} we have proved that it is a GHCC affine spacetime. Using the results from subsection~\ref{subsec Properties of developing maps on a GHCC affine spacetime} we have the following.

\begin{theorem}[Theorem~\ref{Theorem Intro domain}~\ref{theo12}]
\label{theo maximality of spacetime}
Let $\Gamma_\tau < \SL (\R^{d+1}) \ltimes \R^{d+1}$ is an affine deformation of a group $\Gamma < \SL (\R^{d+1})$ dividing an open proper convex cone $\C$. The GHCC affine spacetime $(D_{\tau} / \Gamma_\tau, \C)$ is a maximal globally hyperbolic spacetime.
\end{theorem}

\begin{proof} 
Let $i: (D_{\tau}/ \Gamma_\tau, \C) \lhook\joinrel\to (M',\C')$ be an embedding preserving the equiaffine and cone structures, and mapping a $C^2$ locally uniformly $\C$-future-convex compact Cauchy surface $S$ of $(D_{\tau} / \Gamma_\tau,\C)$ to a Cauchy surface $S'$ of $(M',\C')$.

First, notice that, because $i$ preserves the equiaffine structure, $S'$ is a $C^2$ locally uniformly $\C'$-future-convex Cauchy surface. Thus, $(M',\C')$ is also a GHCC affine spacetime. The embedding $i$ lifts to an embedding $\tilde{i}: (D_{\tau}, \C) \lhook\joinrel\to (\widetilde{M'},\C')$ also preserving the equiaffine and cone structures, and mapping a $C^2$ locally uniformly $\C$-future-convex Cauchy surface $\widetilde{S}$ of $D_{\tau}$ to a $C^2$ locally uniformly $\C'$-future-convex Cauchy surface $\widetilde{S'}$ of $\widetilde{M'}$. We want to show that $i$ is surjective.

Now, let $(\dev, \rho)$ be a couple of developing and holonomy maps for the GHCC affine spacetime $(M',\C')$. Theorem~\ref{theo GHCC embedding} states that, $\rho(\pi_1 M')$ is an affine deformation $\Gamma''_{\tau''}$ of its linear part $\Gamma'' < \SL(\R^{d+1})$, which divides a proper convex cone $\C''$. It also states that the developing map $\dev$ on $(\widetilde{M'},\C')$ embeds it into $(D''_{\tau''}/ \Gamma''_{\tau''}, \C'')$.

Consider the following embedding $\tilde{j} = \dev \circ \tilde{i}: (D_{\tau},\C) \lhook\joinrel\to (D_{\tau''},C'')$, which descends to an embedding $j: (D_{\tau}/ \Gamma_\tau, \C) \lhook\joinrel\to (D_{\tau''}/ \Gamma_{\tau''}, \C'')$. It preserves the equiaffine structure between two convex domains of $\R^{d+1}$ so it is of the form $\tilde{j}: X~\longmapsto~AX$, where $A \in \SL(\R^{d+1})$. Thus, up to post-composing we can assume that $\tilde{j}$ is a restriction of the identity. 

As the embedding $\tilde{j}$ is a restriction of the identity, we clearly have $D_\tau \subseteq D_{\tau''}$. Moreover as $\tilde{j}$ preserves the cone structure, we also have $\C= \C''$. 

Using the immersion $i : (D_{\tau}/ \Gamma_\tau, \C) \lhook\joinrel\to (M',\C') $ we can see $\pi_1 (D_{\tau}/ \Gamma_\tau)\simeq \Gamma_\tau$ as included in $\pi_1M'$. Actually, as $S$ is a Cauchy surface of $M$ and $S'=i(S)$ is a Cauchy surface of $M'$, by Proposition~\ref{prop Geroch Cauchy surfaces}, we have an identification
\begin{equation*}
 \Gamma_\tau \simeq \pi_1(D_{\tau}/ \Gamma_\tau) \simeq \pi_1S \overset{i}{\simeq}\pi_1 S'\simeq \pi_1M' \, .
\end{equation*}
Then, using that identification, the embedding $\tilde{j} = \dev \circ \tilde{i}$ is $\rho$-equivariant in the sense that for all $(\gamma,\tau_\gamma) \in \Gamma_\tau \simeq \pi_1M'$ and $X\in D_\tau$, we have 
\begin{equation*}
\tilde{j} (\gamma X + \tau_\gamma) = \rho(\gamma,\tau_\gamma) \cdot \widetilde{X} \, ,
\end{equation*}
where $\rho(\gamma,\tau_\gamma)\in \SA(\R^d)$. As $\tilde{j}$ is a restriction of the identity and $D_\tau$ is open, we get that the affine transformations $(\gamma,\tau_\gamma)$ and $\rho(\gamma,\tau_\gamma)$ are equal. Thus, $\Gamma''_{\tau''} = \rho(\pi_1M')=\Gamma_\tau$.

Finally, that implies that $\tilde{j} = \Id_{\vert D_{\tau}}$ is surjective. Hence, as $\tilde{j} = \dev \circ \tilde{i}$ and $\dev$ is a diffeomorphism onto its image, $\tilde{i}$ is also surjective and so is $i$.
\end{proof}

\begin{remark}
Let $(M,\C)$ be a MGHCC affine spacetime, like in subsection~\ref{subseclength of causal curves}, using affine spheres in the tangent spaces of $M$, one can again define a norm $F$ on $\C$-null and $\C$-timelike vectors, a length $l_F$ of $\C$-causal curve and a $\C$-distance $\rho$ satisfying the time inequality. Theorem~\ref{Theorem Intro dev} implies that again this norm endows $(M, \C)$ with a timelike space structure in the sense of Busemann \cite{Busemann_67}, and that the cosmological time on $(M,\C)$ satisfies 
\begin{align*}
\mathcal{T}(p) & = \sup \left\lbrace \rho(q,p) \st q \in J^-(p) \right\rbrace \\
& = \sup \left\lbrace l_F(\gamma) \st \gamma \ \text{is a future directed $\C$-causal curve from a point of $J^-(p)$ to $p$} \right\rbrace .
\end{align*}
\end{remark}

\appendix

\section{Examples of dividing groups admitting affine deformations}
\label{sec Examples of affine deformations}

This appendix is devoted to the construction of examples proving the following proposition.

\begin{proposition}
In any dimension $d \geq 2$, there exists a discrete subgroup $\Gamma < \SL(\R^{d+1})$ dividing a proper convex cone $\C \subset \R^{d+1}$ over a proper convex domain $\Omega \subset\P(\R^{d+1})$ which is not projectively equivalent to an ellipse (giving the Klein ball model of the hyperbolic space).
\end{proposition}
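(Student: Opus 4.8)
The plan is as follows. The literal statement already admits a very simple solution by a reducible example: taking $\C$ to be the open positive orthant in $\R^{d+1}$, its projectivisation $\Omega = \P(\C)$ is the open $d$-simplex, and the subgroup $A \cong \R^d$ of positive diagonal matrices of determinant one acts simply transitively on $\Omega$. Any cocompact lattice $\Gamma \cong \mathbb{Z}^d$ inside $A$ is then a discrete torsion-free subgroup of $\SL\bp{\R^{d+1}}$ dividing $\Omega$ (with quotient a torus), and $\Omega$, having flat faces, is not strictly convex and hence not projectively equivalent to an ellipsoid. To obtain instead the \emph{strictly convex} examples carrying non-trivial affine deformations that are the real purpose of this appendix, I would \emph{bend} a hyperbolic structure, following Johnson and Millson.

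First I would fix, for every $d \geq 2$, a closed oriented hyperbolic $d$-manifold $M$ containing an embedded two-sided closed totally geodesic hypersurface $N$: for $d=2$ any simple closed geodesic works, and for $d \geq 3$ such manifolds exist among the arithmetic manifolds of simplest type, as recalled in \cite[Theorem 3.4]{Barbot_Fillastre_20}. Writing $\Gamma_0 = \rho_0\bp{\pi_1 M} < \SO_0(d,1) < \SL\bp{\R^{d+1}}$ for the holonomy, $\Gamma_0$ divides the Klein ellipsoid $\Omega_0$. Since $N$ is totally geodesic, $\rho_0\bp{\pi_1 N}$ lies in a copy of $\SO_0(d-1,1)$ fixing a spacelike line $W^\perp$ and acting on its orthogonal $W$ of signature $(d-1,1)$; the centraliser of $\SO_0(d-1,1)$ in $\SL\bp{\R^{d+1}}$ contains a one-parameter group $(c_t)_{t \in \R}$ acting as a scalar on $W$ and reciprocally on $W^\perp$, whose infinitesimal generator is self-adjoint for the ambient form. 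Using the splitting of $\pi_1 M$ as an amalgam or HNN extension over $\pi_1 N$ induced by $N$, inserting $c_t$ across the splitting defines a path $\rho_t\colon \pi_1 M \to \SL\bp{\R^{d+1}}$ through $\rho_0$. By the openness of the divisible locus (Koszul's theorem \cite{Koszul_62}, see also \cite{Benoist_04}), for $t$ small the group $\Gamma_t \coloneqq \rho_t\bp{\pi_1 M}$ stays discrete and torsion-free and divides a proper convex domain $\Omega_t \subset \P\bp{\R^{d+1}}$.

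Finally I would check that $\Omega_t$ is not projectively equivalent to an ellipsoid for small $t \neq 0$. A divisible ellipsoid corresponds exactly to a holonomy preserving a non-degenerate quadratic form, i.e. conjugate into $\SO_0(d,1)$, so it suffices to show $\rho_t$ is not. Decomposing $\mathfrak{sl}\bp{\R^{d+1}} = \mathfrak{so}(d,1) \oplus \mathfrak{p}$ into $\SO_0(d,1)$-modules, the derivative of the bending path is the Johnson--Millson cocycle, which takes values in the self-adjoint part $\mathfrak{p}$ and defines a non-zero class in $H^1\bp{\pi_1 M, \mathfrak{p}}$ precisely because $N$ is totally geodesic. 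As the tangent space to the locus of $\SO_0(d,1)$-representations at $\rho_0$ is $H^1\bp{\pi_1 M, \mathfrak{so}(d,1)}$, this transverse non-zero class forces the curve $\rho_t$ to leave that locus for small $t \neq 0$; for $d \geq 3$ the conclusion is even cleaner since Weil--Mostow rigidity makes $H^1\bp{\pi_1 M, \mathfrak{so}(d,1)} = 0$. Hence each such $\Omega_t$ is a strictly convex divisible domain which is not an ellipsoid.

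The hard part will be this last step: proving that the bending class is genuinely non-zero and transverse to the Fuchsian directions $H^1\bp{\pi_1 M, \mathfrak{so}(d,1)}$. This is exactly where the embedded totally geodesic hypersurface is indispensable, through the Johnson--Millson identification of its class with a non-zero element of $H^1\bp{\pi_1 M, \mathfrak{p}}$, and controlling the centraliser one-parameter group $(c_t)$ is the technical heart of the argument. The non-triviality of the affine deformations announced in the main text is a logically separate point, concerning $H^1\bp{\Gamma_t, \R^{d+1}}$ for the standard action rather than the adjoint one; I would establish it by an analogous argument built on the same totally geodesic hypersurface, extending to $\Gamma_t$ the $\SO_0(d,1)$ computation of \cite[Theorem 3.4]{Barbot_Fillastre_20}.
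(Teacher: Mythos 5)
Your proposal is correct, and its core construction is the same as the paper's: the paper deforms the holonomy $\Gamma_0$ of a closed hyperbolic $d$-manifold with a separating embedded closed totally geodesic hypersurface by what it calls \emph{bulging}, i.e. exactly your one-parameter group (in the paper, $A_s = \mathrm{Diag}(e^s,\dots,e^s,e^{-ds})$ acting as a homothety on the hyperplane of $\pi_1N$ and reciprocally on the transverse eigenvector $X$), inserted across the amalgam $\Gamma_s = \Gamma_0' *_\Lambda \bp{A_s\Gamma_0''A_s^{-1}}$. The differences lie in how the two remaining points are settled, and in your shortcut. For divisibility of $\Gamma_s$ you invoke only openness of the dividing locus (hence small $t$), while the paper cites Koszul \cite{Koszul_70} together with Benoist \cite{Benoist_05}, whose combination (openness plus the components statement) gives divisibility for every $s$; for bare existence either suffices. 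For the non-ellipsoid property the paper simply cites \cite[Section 8.2]{Benoist_08}, whereas you re-prove it via the Johnson--Millson class in $H^1\bp{\pi_1M,\mathfrak{p}}$ transverse to the $\SO_0(d,1)$-locus, with Weil--Mostow rigidity when $d \geq 3$; this is a legitimate self-contained substitute, provided you carry out the standard (but not entirely free) step that a curve whose derivative has non-zero component in $H^1\bp{\pi_1M,\mathfrak{p}}$ actually leaves the locus of representations conjugate into $\SO_0(d,1)$, which requires knowing that near $\rho_0$ that locus has tangent directions only in $H^1\bp{\pi_1M,\mathfrak{so}(d,1)}$ plus coboundaries.

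Two remarks on scope. Your simplex example ($\C$ the positive orthant, $\Gamma \cong \mathbb{Z}^d$ a lattice in the positive diagonal subgroup) is indeed a complete and elementary proof of the literal statement, which the paper's formulation does not exclude. But it is useless for the appendix's declared purpose (its title, and the remark in Section 4): such a $\Gamma$ admits \emph{no} non-trivial affine deformation, since $H^1\bp{\mathbb{Z}^d,\R^{d+1}}$ splits along the diagonal coordinate characters, all of which are non-trivial on a cocompact lattice, so each summand vanishes. This is precisely why the paper works with the bulged groups: it then produces non-trivial classes in $\mathrm{H}^1\bp{\Gamma_s,\R^{d+1}}$ by a second bending with translation parts $B_s = (I,sX)$ along the common fixed vector $X$ of $\Lambda$, and proves non-triviality by an eigenvector argument; that is the part you only sketch as ``an analogous argument,'' and it is where most of the paper's actual work in this appendix lies.
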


\subsection*{Bending of holonomy groups}

An amalgamated product $\Gamma = \Gamma'*_\Lambda \Gamma'' < \SL(\R^{d+1})$ can be algebraically deformed through the following general procedure called \emph{bending} and deeply studied by Johnson and Millson \cite{Johnson_Millson_87}. Let $(A_s)_{s\in\R}$ be a continuous path of elements of $\SL(\R^{d+1})$ such that $A_0$ is the identity and every $A_s$ commutes with all the elements of $\Lambda$. The bending procedure consists in continuously deforming $\Gamma$ by looking at the family $\Gamma_s = \Gamma'*_\Lambda (A_s\Gamma''A_s^{-1})$.

\subsection*{Bulging the holonomy groups of hyperbolic manifolds}

Let $\Gamma_0 < \SO_0(d,1)$ be the holonomy group of a closed hyperbolic $d$-dimensional manifold $M = \B^d/\Gamma_0$ (using the Klein ball projective model of the hyperbolic space) containing a totally geodesic embedded compact connected hypersurface $N$ separating $M$ into two connected components: $M \setminus N = M' \sqcup M''$. We shall deform $\Gamma_0$ through a particular bending procedure called \emph{bulging} (following the nomenclature in \cite{Blayac_Viaggi_24}).

Using the inclusion $i:\pi_1 N \hookrightarrow \pi_1 M$ and taking the image by the holonomy of $\pi_1 N$ we get a subgroup $\Lambda < \Gamma_0$ preserving a hyperplane $H$ intersecting the Minkowski light cone $\C_0$. Then $H \cap (\B^d \times \{1\})$ is a connected preimage of $N$ by the projection $\B^d \times \{1\} \to M= \B^d/\Gamma_0$ and a universal cover over $N$. For $k \in \{1,2\}$, using the inclusion $i^{(k)}:\pi_1 M^{(k)} \hookrightarrow \pi_1 M$ and setting $\Gamma_0^{(k)}$ to be the holonomy of $\pi_1 M^{(k)}$, by the Seifert--van Kampen Theorem we can see the holonomy group of $M$ as the amalgamated product $\Gamma_0 = \Gamma_0' *_\Lambda \Gamma_0''$.

Let $X \in \R^{d,1}=\R^{d+1}$ be a unit Minkowski spacelike vector orthogonal to the hyperplane $H$. For later constructions, it is worth noting that $X$ is a common eigenvector, with associated eigenvalue $1$, of all elements of $\Lambda$. For $s \in \R$, let $A_s \in \SL(\R^{d+1})$ be the element that restricts to the homothety $e^s I_n$ on $H$ and sends $X$ to $e^{-d s}X$ (in a well-chosen basis it has form $\mathrm{Diag}(e^s,\dots,e^s,e^{-ds})$ and thus its determinant is indeed $1$). As $A_s \gamma A_s^{-1} = \gamma$, for all $\gamma \in \Lambda$, we can continuously deform $\Gamma_0$ by bulging by considering $\Gamma_s \coloneqq \Gamma_0' *_\Lambda (A_s \Gamma_0'' A_s^{-1}) < \SL(\R^{d+1})$. Then, by the work of Koszul \cite{Koszul_70} and Benoist \cite{Benoist_05}, each group $\Gamma_s$ happens to also divide an open proper convex cone $C_s$, which is not affinely equivalent to the Minkowski light cone if $s\neq 0$ \cite[Section 8.2]{Benoist_08}. 

\subsection*{Bending to get affine deformations of groups obtained by bulging}

Let us now prove that such a group $\Gamma \coloneqq\Gamma_{s_0}$ (we fix $s_0$ for the rest of this section) obtained by bulging admits a non-trivial cocycle $\tau \in \mathrm{H}^1(\Gamma, \R^{d+1})$, i.e. that $\Gamma$ admits a non-trivial affine deformation $\Gamma_\tau < \SL(\R^{d+1})\ltimes \R^{d+1}$. We shall proceed by seeing $\SL(\R^{d+1})$ as included in $\SL(\R^{d+1})\ltimes \R^{d+1}$ in order to bend the amalgamated product $\Gamma = \Gamma'*_\Lambda \Gamma''$ (where $\Gamma'=\Gamma_0'$ and $\Gamma''= A_s \Gamma_0'' A_s^{-1})$. 

Remembering that there is $X \in \R^{d+1}$, a common eigenvector, with associated eigenvalue $1$, of all elements of $\Lambda$, we set for all $s \in \R$, $B_s = (I,sX) \in \SL(\R^{d+1})\ltimes \R^{d+1}$, where $I$ is the identity in $\SL(\R^{d+1})$. Then notice that for all $\gamma \in \Gamma$ and $s \in \R$,
\begin{equation}
\label{eq linear part of bending}
B_s(\gamma,0)B_s^{-1} = \bp{\gamma, s(X - \gamma X)} \, .
\end{equation}
If moreover $\gamma \in \Lambda$, as $X$ is an eigenvector of $\gamma$ with associated eigenvalue $1$, it becomes 
\begin{equation*}
B_s(\gamma,0)B_s^{-1} = (\gamma, 0) \, .
\end{equation*}
Thus, we can consider the bending of $\Gamma = \Gamma_{\tau=0} < \SL(\R^{d+1})\ltimes \R^{d+1}$ by the family $(B_s)_{s \in \R}$. We get a family of groups $G_s = \Gamma' *_\Lambda (B_s \Gamma'' B_{s}^{-1})$ in $\SL(\R^{d+1})\ltimes \R^{d+1}$. Equality \eqref{eq linear part of bending} implies that we have actually built a family of affine deformations $(\Gamma_{\tau_s})_{s \in \R}$ of $\Gamma$. 

To conclude, let us prove that for $s \neq 0$, such a deformation is non-trivial. It suffices to prove it for $s=1$. In order to lighten expressions we shall denote $\tau_1$ by $\tau$. By contradiction, assume that $\tau$ is a coboundary, that is that we can set $V \in \R^{d+1}$ such that for all $\gamma \in \Gamma$, $\tau(\gamma) = (I - \gamma)V$. For all $\gamma' \in \Gamma'$, we have 
\begin{equation*}
\tau(\gamma') = (I - \gamma')V = 0 \, ,
\end{equation*}
thus $V$ is either $0$ or an eigenvector of all $\gamma' \in \Gamma'$ with associated eigenvalue $1$.

As $\Lambda$ is conjugate to a subgroup of the form
\begin{equation*}
\vp{\begin{array}{c|c}
G & 0 \\
\hline 0 & 1 
\end{array}} < \SL(\R^{d+1}) \, ,
\end{equation*}
where $G < \SO(d-1,1)$ is the holonomy group of a closed hyperbolic $(d-1)$-dimensional manifold $N$, its only common eigenvector is $X$. Thus, $V$ is necessarily in the span of $X$. 

We claim that for all $\gamma \in \Gamma \setminus \Lambda$, $X$ is not an eigenvector of $\gamma$ with associated eigenvalue $1$. Then, for $\gamma' \in \Gamma' \setminus \Lambda$, as $X$ is not a an eigenvector with associated eigenvalue $1$ of $\gamma'$, if $V \neq 0$, then
\begin{equation*}
\tau(\gamma') = (I - \gamma')V \neq 0 \, ,
\end{equation*}
so necessarily $V=0$. But then for $\gamma'' \in \Gamma'' \setminus \Lambda$, as $X$ is not a an eigenvector with associated eigenvalue $1$ of $\gamma''$,
\begin{equation*}
\tau(\gamma'') = (I - \gamma'')X \neq 0 \, ,
\end{equation*}
and thus $V \neq 0$. That is a contradiction.

Now let us prove our claim. Let $\gamma \in \Gamma \setminus \Lambda$, if $X$ was an eigenvector of $\gamma$ with associated eigenvalue $1$ we would have that $\gamma$ preserves the hyperplane $H$, thus any straight path from a point $P \in \C \cap H$ to $\gamma P$ would descends to a loop in $N$ with holonomy $\gamma$, which is not in $\Lambda$, the image of $\pi_1 N$ by the holonomy .

\section{The projective approach to affine deformations}
\label{sec Projective approach}
This appendix offers a more projective approach to affine deformations in order to relate the work of this article to the notion of convex cocompact representation \cite{DGK_17} .

Let $\Gamma$ be a torsion-free discrete subgroup of $\SL (\R^{d+1})$ dividing an open proper convex cone $\C$ in $\R^{d+1}$. Seeing the affine space $\R^{d+1}$ as an affine chart of the $(d+1)$-dimensional real projective space $\P(\R^{d+2})$:
\begin{equation*}
\namelessembedding{\R^{d+1}}{\P(\R^{d+2})}{X}{(X:1)} \, ,
\end{equation*}
we can see $\SA (\R^{d+1}) = \SL(\R^{d+1}) \ltimes \R^{d+1}$ as a Lie subgroup of $\PGL(\R^{d+2})$:
\begin{equation*}
\namelessembedding{\SL(\R^{d+1}) \ltimes \R^{d+1}}{\PGL(\R^{d+2})}{ (\gamma,\tau)}{\left[\begin{matrix} \gamma & \tau \\ 0 & 1 \end{matrix}\right]} \, .
\end{equation*}
As $\Gamma$ divides the cone $\C = \{ t(x,1) \ \vert \ x \in \Omega, t>0 \}$, the domain $\Omega_\P$ defined by
\begin{equation*}
\Omega_\P \coloneqq \left\lbrace (x:1:0) \st x \in \Omega \right\rbrace \subseteq \P(\R^{d+2})
\end{equation*}
is preserved by $\Gamma$ seen a subgroup of $\PGL(\R^{d+2})$. Let $\C_\P$ be the cone over $\Omega_\P$ with vertex at $[e_{d+2}] = (0:\dots:0:1)\in \P(\R^{d+2})$, i.e.
\begin{equation*}
\C_\P \coloneqq \Omega_\P \cup \left\lbrace (tx:t:1) \st x \in \Omega,t \neq 0 \right\rbrace = \Omega_\P \cup \left\lbrace (X:1) \st X \in \C \right\rbrace .
\end{equation*}
Then $\C_\P$ is also preserved by $\Gamma$ and moreover, noticing that 
\begin{equation*}
\C_\P = \left\lbrace (x:1:\lambda) \st x \in \Omega,\lambda \in \R \right\rbrace,
\end{equation*}
we easily see that $\C_\P$ is convex and homeomorphic to $\Omega \times \R$. Its polar cone in the dual projective space $\P(\R^{d+2})^*$, identified with $\P(\R^{d+2})$ using the usual scalar product on $\R^{d+2}$, is homeomorphic to $\Omega^* \times \R$ and can be expressed as 
\begin{equation}
\label{eq projective polar cone as a pipe}
\C_\P^* = \left\lbrace (y:-1:\mu) \st y \in \Omega^*,\mu \in \R \right\rbrace .
\end{equation}
Let $\Gamma_\tau$ be an affine deformation of $\Gamma$. An element $ (\gamma, \tau_\gamma) \in \Gamma_\tau$ acts:
\begin{itemize}
\item on $\P(\R^{d+2})$ by 
\begin{equation*}
(\gamma, \tau_\gamma) \cdot (X:1) = (\gamma X + \tau_\gamma: 1) \quad \text{and} \quad (\gamma, \tau_\gamma) \cdot (X:0) = (\gamma X:0) \, ,
\end{equation*}
\item on $\P(\R^{d+2})^*$ by 
\begin{equation*}
(\gamma, \tau_\gamma) \star (Y:1) = (\gamma \star Y: 1 + \tau_{\gamma^{-1}} \cdot Y) \quad \text{and} \quad (\gamma, \tau_\gamma) \star (Y:0) = (\gamma \star Y:0) \, ,
\end{equation*}
where $\gamma \star Y = \gamma^{\top}Y$. That dual action comes from the action of $\SA^* (\A^{d+1})$, the group of \emph{coaffine transformations} \cite{Bobb_Farre_24} seen as a subgroup of $\PGL^*(\R^{d+1})$:
\begin{equation*}
\namelessembedding{\SL^*(\R^{d+1}) \ltimes (\R^{d+1})^*}{\PGL^*(\R^{d+2})}{(\eta,\theta)}{\left[\begin{matrix} \eta & 0 \\ \theta & 1 \end{matrix}\right]} \, ,
\end{equation*} 
in which $\Gamma_\tau$ is seen as
\begin{equation*}
\left\lbrace \left[\begin{matrix} \gamma^{-\top} & 0 \\ (\tau_{\gamma^{-1}})^\top & 1 \end{matrix}\right] \in \PGL^*(\R^{d+1}) \st \gamma \in \Gamma \right\rbrace .
\end{equation*}
\end{itemize}
If $s$ is the support function of a $\tau$-convex domain, using the identification \eqref{eq projective polar cone as a pipe}, we can see its graph as embedded in $\P(\R^{d+2})^*$:
\begin{equation*}
\graph_\P(s) \coloneqq \left\lbrace \bp{y:-1:-s(y)} \st y \in \Omega^*\right\rbrace = \left\lbrace \bp{Y:-\tilde{s}(Y)} \st Y \in \C^*\right\rbrace .
\end{equation*}
Then the $\tau$-equivariance \eqref{eq action on C* support function} of $s$ precisely tells that $\graph_\P(s)$ is $\Gamma_\tau$-invariant. The existence of the unique boundary function $g_\tau: \partial \Omega^* \to \R$ (Proposition~\ref{prop existence of boundary function gtau}) thus tells us that there exist a $\Gamma_\tau$-invariant limit set $\Lambda_{\tau}^* \subset \partial \C_\P^*$ defined by
\begin{equation*}
\Lambda_{\tau}^* \coloneqq \graph_\P(g_\tau) \coloneqq \left\lbrace \bp{y:-1:-g_\tau(y)} \st y \in \Omega^*\right\rbrace .
\end{equation*} 

\begin{definition} [{\cite[Definition 1.1]{DGK_17}}]
Let $G < \PGL(\R^{d+2})$ be an infinite discrete subgroup.
\begin{itemize}
\item Let $W$ be a $G$-invariant properly convex open subset of $\P(\R^{d+2})$. The action of $G$ on $W$ is \emph{strongly convex cocompact} if $W$ is strictly convex with $C^1$ boundary and that for some $p \in W$, the convex hull in $W$ of the \emph{orbital limit set} $\overline{\Gamma_\tau \cdot p}\cap \partial W$ is non-empty
and has compact quotient by $G$.
\item The group $G$ is \emph{strongly convex cocompact in $\P (\R^{d+2})$} if it admits a strongly convex cocompact action on some properly convex open subset $W$ of $\P(\R^{d+2})$.
\end{itemize}
\end{definition}

\begin{proposition}
When $\Gamma$ is hyperbolic, $\Gamma_\tau$ is strongly convex cocompact in $\P (\R^{d+2})^*$, and $\Lambda_{\tau}^*$ is its orbital limit set.
\end{proposition}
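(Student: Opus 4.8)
The plan is to deduce the statement from the theory of Danciger, Guéritaud, and Kassel \cite{DGK_17}: first I would exhibit a $\Gamma_\tau$-invariant properly convex open set of $\P\bp{\R^{d+2}}^*$ on which the action is convex cocompact with orbital limit set $\Lambda_\tau^*$, and then upgrade convex cocompactness to the \emph{strong} notion using the word hyperbolicity of $\Gamma$. The relevant properly convex set is built from the two boundary functions of subsection~\ref{subsec The two maximal domains}. Using the identification \eqref{eq projective polar cone as a pipe}, the graphs $\graph_\P(s_\tau^-)$ and $\graph_\P(s_\tau^+)$ of the convex function $s_\tau^-$ and the concave function $s_\tau^+$ bound, in the affine chart of $\P\bp{\R^{d+2}}^*$ in which the second homogeneous coordinate is normalised to $-1$, the projectivisation $W_\tau$ of the convex hull $\mathrm{CH}^*(\tau)$. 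Since there $y$ ranges in the bounded set $\Omega^*$ and the fibre coordinate ranges in the bounded segment $\left[ s_\tau^-(y), s_\tau^+(y)\right]$, the set $W_\tau$ is a bounded convex body in this chart, hence properly convex; the $\tau$-equivariance \eqref{eq action on C* support function} of $s_\tau^\pm$, equivalently the $\Gamma_\tau$-invariance of $g_\tau$, shows that $W_\tau$ is $\Gamma_\tau$-invariant, and its ideal boundary meets $\partial \C_\P^*$ exactly along $\Lambda_\tau^* = \graph_\P(g_\tau)$.

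Next I would establish convex cocompactness and compute the orbital limit set. The dual projective coordinate $y$ defines a continuous $\Gamma_\tau$-equivariant fibration $W_\tau \longrightarrow \Omega^*$ (equivariant for the dual projective action $*$ of $\Gamma$) whose fibre over $y$ is the closed segment $\left[ s_\tau^-(y), s_\tau^+(y)\right]$. Since $\Gamma$ divides $\Omega^*$ (Proposition~\ref{prop dual convex is divisible}), the base quotient $\Omega^*/\Gamma$ is a closed $d$-manifold, so $W_\tau$ is realised as a compact interval bundle over it; discreteness together with an invariant proper metric (as in Lemma~\ref{lem inclusion and divisibility}) gives that $\Gamma_\tau$ acts properly discontinuously and cocompactly on the core. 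Because the $\Gamma$-orbits in $\Omega^*$ accumulate on all of $\partial \Omega^*$ (cocompact division forces the full limit set), the uniqueness and $\tau$-equivariant boundary behaviour of $g_\tau$ from Proposition~\ref{prop existence of boundary function gtau} force every accumulation point on $\partial W_\tau$ of a $\Gamma_\tau$-orbit to have the prescribed height $-g_\tau(y)$ over its projection $y \in \partial \Omega^*$; hence $\overline{\Gamma_\tau \cdot p} \cap \partial W_\tau = \Lambda_\tau^*$. As $\mathrm{Conv}(\Lambda_\tau^*) = W_\tau$ has compact quotient core, this is precisely a convex cocompact action with orbital limit set $\Lambda_\tau^*$.

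Finally I would upgrade to the strong notion, which is where word hyperbolicity enters. When $\Gamma$ is word hyperbolic, Benoist's characterisation of divisible convex sets \cite{Benoist_04} gives that $\Omega^*$, as well as $\Omega$, is strictly convex with $C^1$ boundary. A nontrivial projective segment contained in $\Lambda_\tau^* = \left\lbrace (y:-1:-g_\tau(y)) \st y \in \partial \Omega^* \right\rbrace$ would, after lifting to $\R^{d+2}$, force its $y$-projection to sweep out a genuine affine segment of $\partial \Omega^*$, which is impossible by strict convexity; thus $\Lambda_\tau^*$ contains no segment, and the dual statement holds symmetrically from strict convexity of $\Omega$. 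These are exactly the transversality and spanning properties making the boundary identification $\partial_\infty \Gamma_\tau \xrightarrow{\ \sim\ } \Lambda_\tau^*$ projective Anosov, so by the equivalences of \cite{DGK_17} the convex cocompact action of $\Gamma_\tau$ can be enlarged to one on a strictly convex domain with $C^1$ boundary, meaning that $\Gamma_\tau$ is strongly convex cocompact in $\P\bp{\R^{d+2}}^*$ with orbital limit set $\Lambda_\tau^*$.

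I expect the last step to be the main obstacle. The core $W_\tau$ is manifestly \emph{not} strictly convex: its boundary carries the flat faces of the convex and concave envelopes $s_\tau^\pm$ together with the edge $\Lambda_\tau^*$ where they meet. Passing from this non-regular convex core to a strictly convex enclosing domain with $C^1$ boundary is exactly the content of the DGK equivalence between convex cocompactness plus the projective Anosov property and strong convex cocompactness, and the delicate point is that the \emph{a priori} merely continuous, non-smooth boundary function $g_\tau$ must nonetheless yield a $C^1$-regular, segment-free limit sphere once $\Gamma$ is assumed hyperbolic; verifying this regularity, rather than the more formal bookkeeping of the earlier steps, is the heart of the argument.
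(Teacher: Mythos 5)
Your overall strategy (establish plain convex cocompactness, then use word hyperbolicity and the equivalences of \cite{DGK_17} to upgrade to the strong notion) is the alternative route that the paper mentions but does not follow; unfortunately, as executed, the first step contains a genuine gap, and it is not the one you flag in your last paragraph. The ambient set in the definition of (strong) convex cocompactness must be a $\Gamma_\tau$-invariant \emph{properly convex open} subset, and the convex core is the convex hull of the orbital limit set \emph{intersected with that open set}. Your candidate $W_\tau$ is the projectivisation of $\mathrm{CH}^*(\tau)$, i.e.\ the region bounded by the envelopes $s_\tau^\pm$ of subsection~\ref{subsec The two maximal domains}. This fails on two counts. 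First, $W_\tau$ may have empty interior: when $\tau$ is a coboundary (in particular $\tau=0$), $g_\tau$ is the restriction of an affine function, $s_\tau^-=s_\tau^+$, and $\mathrm{CH}^*_\P(\tau)$ is a flat $d$-dimensional disc, so there is no open domain at all, yet the proposition must still hold for such $\Gamma_\tau$. Second, even when $s_\tau^-<s_\tau^+$, taking the open set to be the interior of the hull makes the convex core equal to the \emph{whole} domain, and the quotient of that domain is an \emph{open}-interval bundle over the compact base $\Omega^*/\Gamma$, hence non-compact: if $(y_0,\mu_n)$ with $\mu_n \nearrow s_\tau^+(y_0)$ had a convergent image in the quotient, proper discontinuity of the $\Gamma$-action on $\Omega^*$ would force the relevant group elements to be eventually constant, contradicting $(y_0,\mu_n)\to\partial W_\tau$. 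So the action on your domain is \emph{not} convex cocompact in the sense of \cite{DGK_17} (what is cocompact is the action on the \emph{closed} hull, which is not an admissible ambient set), and there is nothing to feed into the DGK upgrade theorem.

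The repair is exactly what the paper does: one must take the ambient domain \emph{strictly larger} than the hull. The paper sets $W^*$ to be the open region between the graphs of the smooth, locally uniformly convex/concave $\tau$-equivariant functions $s_-<s_+$ of Proposition~\ref{prop exists tau-support function extending to boundary}. One checks $s_-<s_\tau^-$ and $s_\tau^+<s_+$ on $\Omega^*$, so the convex core $\mathrm{CH}^*_\P(\tau)\cap W^*$ is closed in $W^*$ and has compact quotient $\simeq \Omega^*/\Gamma \times [0,1]$ (via the barycentre foliation). Moreover $W^*$ is itself strictly convex with $C^1$ boundary: strict convexity comes from strict convexity of $\Omega^*$ (Benoist, $\Gamma$ hyperbolic) together with that of $s_\pm$; and $C^1$-ness along $\Lambda_\tau^*$ needs no regularity of $g_\tau$ whatsoever --- the gradient blow-up of $s_\pm$ at $\partial\Omega^*$ forces any supporting hyperplane at a point of $\Lambda_\tau^*$ to support the tube $\C_\P^*$, whose boundary is $C^1$ because $\partial\Omega^*$ is (again Benoist \cite{Benoist_04}). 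So strong convex cocompactness is obtained directly from the definition, with no need to verify the Anosov property or invoke the equivalence theorem; your last paragraph's worry about extracting $C^1$ regularity from the merely continuous $g_\tau$ is precisely the issue this construction circumvents.
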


\begin{remark}
\label{rem hyperbolic group gives C1 domain}
By the work of Benoist \cite[Theorem~1.1]{Benoist_04}, if $\Gamma$ is hyperbolic, then the domain $\Omega^*$ is strictly convex and $\partial \Omega^*$ is $C^1$.
\end{remark}

\begin{proof}
For the fact that $\Gamma_\tau$ is strongly convex cocompact, a way to proceed is to realise that $\Gamma_\tau \simeq \Gamma$ is hyperbolic and show that the action of $\Gamma_\tau$ is projective-Anosov, as in \cite[Lemma 3.6]{Bobb_Farre_24}, and then use Theorem~1.4 from \cite{DGK_17}. In fact, we can explicitly build a $C^1$ strictly convex $\Gamma_\tau$-invariant domain of $\P (\R^{d+2})^*$ on which the action of $\Gamma_\tau$ is strongly convex cocompact. In Proposition~\ref{prop exists tau-support function extending to boundary} we proved the existence of $s_-$ and $s_+$, two smooth $\tau$-equivariant function on $\Omega^*$ which are respectively $C^2$ locally uniformly convex and $C^2$ locally uniformly concave satisfying that for all $y \in \partial \Omega^*$,
\begin{equation}
\label{eq gradient of s+_}
\left\vert \grad s_\pm(y')\right\vert \xrightarrow[y' \to y]{} +\infty \, .
\end{equation}
Consider the following convex domain in $\P (\R^{d+2})^*$:
\begin{equation*}
W^* \coloneqq \left\lbrace (y:-1:-\mu) \st y \in \Omega^*, s_-(y) < \mu < s_+(y)\right\rbrace \subset \C_\P^* \, .
\end{equation*}
We claim that the dual action of $\Gamma_\tau$ on $W^*$ is strongly convex cocompact.

We decompose the boundary of $W^*$ as 
\begin{equation*}
\partial W^* = \graph_\P(s_-) \cup \Lambda_{\tau}^* \cup \graph_\P(s_-) \, .
\end{equation*}
The strict convexity of $\Omega^*$ and the respective strict convexity and concavity of $s_-$ and $s_+$ tells us that there is no line segment in $\partial W^*$, i.e. that $W^*$ is strictly convex. 

By Proposition~\ref{prop C1 convex function}, in order to prove that $\partial W^*$ is $C^1$, we only have to prove that at each point of its boundary $W^*$ admits only one supporting hyperplane. Using the smoothness of $s_-$ and $s_+$, that is clear for points in $ \graph_\P(s_+) \cup \graph_\P(s_-)$. Let $P = (y:-1:-g_\tau(y))$ be a point of $\Lambda_{\tau}^* \subset \partial W^*$, then condition \eqref{eq gradient of s+_} implies that any supporting hyperplane to $W^*$ at $P$ is a supporting hyperplane (at $P$) to the convex domain $\C^*_\P = \{ (y:-1:\mu) \ \vert \ y \in \Omega^*,\mu \in \R \}$ which has $C^1$ boundary because of Remark~\ref{rem hyperbolic group gives C1 domain}, and is thus unique. Hence, we have that $\partial W^*$ is $C^1$.

Let $P = (y:-1:-\mu)$ be a point of $W^*$. Using the well-known dynamic of the action of $\Gamma$ on $\Omega^*$ \cite[Proposition~5.1]{Benoist_04} and the fact that any $\tau$-invariant function extends continuously to $g_\tau$, we get that the proximal limit set of the action of $\Gamma_\tau$ on $W^*$ is $\overline{\Gamma_\tau \cdot P}\cap \partial W^* = \Lambda_{\tau}^*$.

Finally, $\mathrm{CH}_\P^*(\tau)$,the convex hull of $\Lambda_\tau^*$,  is foliated by $\Gamma_\tau$-invariant graphs of the linear combinations $((1-t)s_-+ts_+)_{t \in [0,1]}$. Thus, the action of $\Gamma_\tau$ on $\mathrm{CH}_\P^*(\tau)$ is cocompact, and  $\mathrm{CH}_\P^*(\tau)$ is the universal cover of the convex core $\mathrm{CC}^*_\P(\tau) \coloneqq \mathrm{CH}^*_\P(\tau) / \Gamma_\tau \simeq \Omega^* / \Gamma \times [0,1]$.
\end{proof}

\begin{remark}
Notice that when $\Gamma$ is not hyperbolic, we still have a limit set $\Lambda_\tau^*$, but it does not come from a projective-Anosov property: there is no limit map $\partial \Gamma_\tau \to \Lambda_\tau^*$.
\end{remark}

\end{spacing}

\bibliography{Aff_spacetimes}
\bibliographystyle{alpha}

\end{document}